\newcommand{\HK}{\textnormal{HK}}
\newcommand{\cone}{\mathbf{C}}
\newcommand{\coneProj}{\mathbf{P}}
\newcommand{\Lebesgue}{\mc{L}}
\newcommand{\cont}{C}
\newcommand{\contNC}{C_0}
\newcommand{\WCTM}{W_{\tn{C2M}}}
\newcommand{\WMM}{W_{\tn{MM}}}
\newcommand{\WT}{T}
\newcommand{\cWMM}{c_{\tn{W,MM}}}
\newcommand{\cKL}{c_{\tn{HK,KL}}}
\newcommand{\cKLMM}{c_{\tn{MM,KL}}}
\newcommand{\HKCTM}{\HK_{\tn{C2M}}}
\newcommand{\HKMM}{\HK_{\tn{MM}}}
\newcommand{\cMM}{c_{\tn{MM}}}
\newcommand{\cMMHull}{c_{\tn{MM}}^{\ast \ast}}
\newcommand{\QMM}{Q_{\tn{MM}}}
\newcommand{\QMMTilde}{\tilde{Q}_{\tn{MM}}}
\newcommand{\CSet}{S}
\newcommand{\HKT}{\mathbf{T}}
\DeclareMathOperator{\Cos}{Cos}
\title{Barycenters for the Hellinger--Kantorovich distance over $\R^d$}
\author{Gero Friesecke, Daniel Matthes, Bernhard Schmitzer}
\date{\today}
\begin{document}
\maketitle
\begin{abstract}
	We study the barycenter of the Hellinger--Kantorovich metric over non-negative measures on compact, convex subsets of $\R^d$.
	The article establishes existence, uniqueness (under suitable assumptions) and equivalence between a coupled-two-marginal and a  multi-marginal formulation. We analyze the $\HK$ barycenter between Dirac measures in detail, and find that it differs substantially from the Wasserstein barycenter by exhibiting a local `clustering' behaviour, depending on the length scale of the input measures.
	In applications it makes sense to simultaneously consider all choices of this scale, leading to a 1-parameter family of barycenters. We demonstrate the usefulness of this family by analyzing point clouds sampled from a mixture of Gaussians and inferring the number and location of the underlying Gaussians.
\end{abstract}
\section{Introduction}
\subsection{Overview}
\paragraph{Optimal transport.}
The optimal transport problem dates back to the seminal work of Monge \cite{MongeOT1781} and its modern formulation by Kantorovich \cite{KantorovichOT1958}.
Recent years have seen a tremendous development of the corresponding theory and applications.
We refer to the monographs \cite{Villani-TOT2003,Villani-OptimalTransport-09,Santambrogio-OTAM} for detailed introductions to the transport problem, a historical account and applications in the analysis of PDEs, Riemannian geometry and traffic modelling. An exposition of applications in economics can be found in \cite{GalichonOTEconomics}.

Due to the robustness of Wasserstein distances to `positional noise' and quantization errors optimal transport is also becoming a valuable tool in data analysis. For the viability of these applications efficient numerical algorithms are required.
An overview on the computational aspects of optimal transport and numerical applications is given in \cite{PeyreCuturiCompOT}.

\paragraph{Wasserstein barycenter and multi-marginal problems.}
A common problem in geometric data analysis is the computation of an average between various samples. The standard Euclidean average or mean can be generalized to the center of mass on Riemannian manifolds. The Wasserstein barycenter introduced in \cite{WassersteinBarycenter} shows that this notion is also meaningful in Wasserstein spaces.

The Wasserstein barycenter was defined in two different formulations that were then shown to be equivalent. One is an explicit `center of mass'-based formulation where the weighted (squared) distance of the sought-after mean to all reference measures is minimized. For reasons that will soon become apparent we refer to this as the coupled-two-marginal formulation (see Section \ref{sec:W}).
The other is based on a multi-marginal transport problem introduced by \cite{GangboSwiechMultimarginal98}.
A discussion on numerical applications of the Wasserstein barycenter can be found, for instance, in \cite[Chapter 9.2]{PeyreCuturiCompOT}.

More generally, multi-marginal problems were studied since the 1960s in operations research and probability theory \cite{Kell64, Pi68, RachevRueschendorfVolBoth}, and more recently also found application in economics \cite{CarEke2010} and the approximation of the electronic structure of molecules \cite{CFK-DFT-2013,BDePGG-DFT-2012,CFK18}.

\paragraph{Unbalanced transport.}
The standard Wasserstein distance is restricted to the comparison of measures with equal mass which is evidently restrictive in many applications.
A potential remedy is to use the Hellinger--Kantorovich (or Wasserstein--Fisher--Rao) distance instead \cite{KMV-OTFisherRao-2015,ChizatOTFR2015,LieroMielkeSavare-HellingerKantorovich-2015a}, see also \cite{LieroMielkeSavare-HellingerKantorovich-2015b,ChizatDynamicStatic2018}.
It can be interpreted as a Riemannian infimal convolution between the Wasserstein and Hellinger distances and thus provides a trade-off between transport at small length scales and pointwise interpolation at large length scales.
A corresponding numerical algorithm based on entropic regularization is given in \cite{ChizatEntropicNumeric2018}.

\paragraph{Outline and contribution.}
In this article we address the natural question of the barycenter with respect to the Hellinger--Kantorovich distance. To make the analysis somewhat less technical yet cover typical situations in applications, we focus on non-negative measures $\measp(\Omega)$ on a compact, convex subset $\Omega$ of $\R^d$. Notation and technical preliminaries are established in Section \ref{sec:Notation}.
We then give some reminders about the Wasserstein barycenter (Section \ref{sec:W}) and the Hellinger--Kantorovich distance (Section \ref{sec:HK}).
In particular, it was shown in \cite{LieroMielkeSavare-HellingerKantorovich-2015a} that the Hellinger--Kantorovich distance between two non-negative measures $\mu_1, \mu_2 \in \measp(\Omega)$ can be written as a lifted transport problem over the product space $\cone \assign \Omega \times [0,\infty)$ (where intuitively a point $(x,m) \in \cone$ describes a particle with mass $m$ at location $x$) with respect to a suitable transport cost $c : \cone^2 \to \R$, \eqref{eq:HKC}, and projected marginal constraints. More concretely,
\begin{equation}
	\label{eq:IntroHK}
	\HK(\mu_1,\mu_2)^2 = \inf\left\{
		\int_{\cone^2} c(x,r,y,s)\,\diff \gamma((x,r),(y,s)) \,\middle|\,
		\gamma \in \measp(\cone^2),\,
		\coneProj \pi_{i\sharp} \gamma=  \mu_i
		\right\}
\end{equation}
where $\pi_i : \cone^2 \to \cone$ denotes the projection onto the first and second coordinate and $\coneProj$ is informally given by $(\coneProj \nu)(x) \assign \int_{[0,\infty)} m\,\nu(x,m)\,\diff m$.

In Section \ref{sec:HKCTM} we study a coupled-two-marginal formulation of the Hellinger--Kantorovich barycenter, similar to the Riemannian center of mass,
\begin{equation}
	\label{eq:IntroHKCTM}
	\HKCTM(\mu_1,\ldots,\mu_N)^2 \assign
	\inf \left\{ \sum_{i=1}^N \lambda_i\,\HK(\mu_i,\nu)^2 \middle|
		\nu \in \measp(\Omega)
		\right\}
\end{equation}
where $\mu_1,\ldots,\mu_N \in \measp(\Omega)$ are given reference measures and the weights $\lambda_1,\ldots,\lambda_N \in [0,1]$ sum to one. A minimizing $\nu$ in \eqref{eq:IntroHKCTM} is called an $\HK$-barycenter. Intuitively, as $\lambda_i$ increases, $\nu$ moves towards $\mu_i$.
We show existence of the $\HK$-barycenter. In addition we establish uniqueness under the natural assumption that at least one marginal is Lebesgue-absolutely continuous. Our proof is based solely on the coupled-two-marginal formulation and does not involve the multi-marginal formulation, deviating from \cite{WassersteinBarycenter}.
The argument also applies to the standard Wasserstein barycenter, providing an alternative strategy to that of \cite{WassersteinBarycenter} for which it has been used in \cite{BrendanInfMarginal2013,FixedPointWassersteinBarycenters2016}.

In Section \ref{sec:HKMM} we give a multi-marginal formulation of the HK barycenter.
In analogy to the Wasserstein barycenter this involves the definition of a suitable multi-marginal cost function via computation of a `pointwise barycenter' on the space $\cone$,
\begin{equation}
	\label{eq:IntroCMM}
	\cMM(x_1,m_1,\ldots,x_N,m_M) \assign \inf_{(y,s) \in \cone} \sum_{i=1}^N \lambda_i c(x_i,m_i,y,s).
\end{equation}
Similar to the multi-marginal formulation for the Wasserstein barycenter we then use $\cMM$ to extend the two marginal problem \eqref{eq:IntroHK} to multiple marginals and study the problem
\begin{multline}
	\label{eq:IntroHKMM}
	\HKMM(\mu_1,\ldots,\mu_N)^2 \assign \inf\left\{
		\int_{\cone^N} \cMM(x_1,m_1,\ldots,x_N,m_N)\,\diff \gamma((x_1,m_1),\ldots,(x_N,m_N))
		\right. \\
		\left. \vphantom{\int_{\cone^N}}
		\gamma \in \measp(\cone^N),\,
		\coneProj \pi_{i\sharp} \gamma = \mu_i
		\tn{ for } i \in \{1,\ldots,N\}
		\right\}.
\end{multline}
The pointwise barycenter problem \eqref{eq:IntroCMM} is considerably more intricate than for the Wasserstein distance. First, in general the minimization problem in \eqref{eq:IntroCMM} is non-convex. Second, the resulting function $\cMM$ is non-convex in masses and locations (the latter being a shared feature with the Coulomb cost \cite{CFK-DFT-2013,BDePGG-DFT-2012,CFK18}). Third, minimizers in \eqref{eq:IntroCMM} cannot be given explicitly.
We circumvent these obstacles by a detailed analysis of the convexification of $\cMM$ (in the mass arguments) and invoking dual problems for the HK barycenter and therefore eventually establish the equivalence between both formulations \eqref{eq:IntroHKCTM} and \eqref{eq:IntroHKMM} for the HK barycenter.
The section is concluded by an analysis of the one-dimensional case ($d=1$) at small distances where a more explicit form of $\cMM$ can be given.

Section \ref{sec:HKMMDirac} is dedicated to the HK barycenter between Dirac measures. It turns out that the behaviour is encoded in the convexity properties of $\cMM$ (studied in the previous section) and fundamentally different from the Wasserstein barycenter. While the Wasserstein barycenter between Dirac measures is always a Dirac measure this no longer holds for the HK barycenter. Instead, it may consist of several Dirac measures that can be interpreted as a `clustering' of the Diracs in the reference measures (the barycenter may even be a diffuse measure).
We first give some general results about the characterization of the HK barycenter between Dirac measures and then provide exemplary discussions of the cases $N=2$ and $N=3$ as well as some illustrations.

The `clustering' behaviour of the HK barycenter depends on the length scale of the marginal measures. Instead of merely considering a single scale, in data analysis applications it is natural to consider a whole range of scales.
We call this the \emph{HK barycenter tree}, cf.~Figure \ref{fig:HKBarycenterTree}.
It is formally introduced in Section \ref{sec:HKBarycenterTree} and we illustrate its striking similarity to concepts from topological data analysis. A deeper study of this connection appears to be a promising avenue for future research.

Finally, Section \ref{sec:NonExistence} investigates a potential `soft-marginal formulation' of the HK barycenter. Among the several formulations of the HK distance is a particularly elegant `soft-marginal formulation' \cite{LieroMielkeSavare-HellingerKantorovich-2015a}. We show that under reasonable assumptions no analogous formulation can exist for the HK barycenter for $N\geq3$.

\paragraph{Relation to \cite{HKBarycenters2019}.}
After this work was essentially completed we became aware of the related concurrent article \cite{HKBarycenters2019} which studies the HK barycenter on general metric spaces under some suitable assumptions concerning the existence of the `pointwise barycenter'. The authors study a coupled-two-marginal and a multi-marginal formulation of the HK barycenter, their equivalence, as well as existence and uniqueness (over $\R^d$).
In our article the setting is restricted to compact, convex subsets of $\R^d$, and the exposition is rather different and complementary. First, we provide an alternative proof for the uniqueness of the HK barycenter over $\R^d$.
By a detailed study of the convexity properties of the multi-marginal cost we explicitly show existence of a continuous (and thus measurable) `pointwise barycenter map' without invocation of measurable selection theorems.
In addition we provide a detailed analysis of the HK barycenter between Dirac measures based on the relation to the convexity properties of the multi-marginal cost and provide corresponding numerical illustrations.
Finally, we discuss the existence of a multi-marginal soft-marginal formulation for the HK barycenter.

\subsection{Notation and preliminaries}
\label{sec:Notation}
\begin{itemize}
	\item $\R_+ \assign [0,\infty)$, $\R_{++} \assign (0,\infty)$.
	\item $\Omega$ is a compact, convex subset of $\R^d$ with non-empty interior.
	\item For a Polish space $X$, $\cont(X)$ denotes the set of continuous functions from $X$ to $\R$, equipped with the sup-norm; $\contNC(X)$ denotes the subspace of continuous functions that vanish at infinity.
	\item $\meas(X)$ denotes the set of Radon measures on $X$; $\measp(X)$ the set of non-negative Radon measures; and $\prob(X)$ the set of Radon probability measures.
	\item For compact $X$, we identify the dual space of $\cont(X)$ with $\meas(X)$; for non-compact $X$, we identify the dual space of $\contNC(X)$ with $\meas(X)$.
	\item For a product space $X^n$ we denote by $\pi_i$ the canonical projection $(x_1,\ldots,x_n) \mapsto x_i$. For a measurable map $f$, $f_\sharp$ denotes the push-forward operation acting on corresponding measures.
	\item For $\mu \in \meas(X)^n$ (i.e.~$\mu$ may be a vector-valued measure), $|\mu| \in \measp(X)$ denotes the variation measure. For $\nu \in \measp(X)$, $\mu \ll \nu$ denotes absolute continuity of $\mu$ with respect to $\nu$. For $\sigma \in \measp(X)$, $\mu = \RadNik{\mu}{\sigma} \cdot \sigma + \mu^\perp$ denotes the Lebesgue decomposition of $\mu$ with respect to $\sigma$, into the absolutely continuous part where $\RadNik{\mu}{\sigma}$ denotes the corresponding density, and the singular part $\mu^\perp$.
	\item For a function $f:\R^n \to \RCupInf$, we denote by $f^\ast$ its Fenchel--Legendre conjugate, its subdifferential by $\partial f$. For a set $C \subset \R^n$, we denote by $\iota_C(s) \assign 0$ if $s \in C$ and $+\infty$ otherwise, its indicator function.
	Similar notation will be used for functionals on $\cont(X)$ and $\measp(X)$.
	We assume that the reader has a basic familiarity with convex analysis (in finite dimensions). For an introduction we refer, for instance, to \cite{Rockafellar1972Convex}.
	\item $N \in \N$ is an integer $\geq 2$, specifying the number of reference measures of which we want to compute the barycenter.
	\item $(\lambda_1,\ldots,\lambda_N) \in (0,1)^N$ are the respective weights of the reference measures, satisfying $\sum_{i=1}^N \lambda_i = 1$. Reference measures with $\lambda_i=0$ can be removed from the problem, thus we assume $\lambda_i>0$ (and thus $\lambda_i<1$).
\end{itemize}

The results in this article rely heavily on convex duality between positively 1-homogeneous integral functionals on measures and indicator functions on continuous functions which is provided in the following Lemma, which is essentially due to Rockafellar \cite[Theorem 6]{Rockafellar-IntegralConvexFunctionals71} with assumptions slightly simplified due to \cite[Lemma A2]{BoVa1988}. It was used in this form already in \cite[Lemma 2.9]{ChizatDynamicStatic2018}.
\begin{lemma}
\label{lem:IntConjugation}
Let $X$ be a compact metric space and $f : X \times \R^n \to \RCupInf$ a lower-semi\-con\-tinuous function such that for all $x \in X$, $f_x(\cdot)\assign f(x,\cdot)$ is convex, positively 1-homogeneous and proper. Then $f_x = \iota_{Q(x)}^\ast$ for some family of closed convex sets $Q(x) \subset \R^n$.
Then $I_f: \meas(X)^n \to \RCupInf$ and $I_{f^\ast} : \cont(X)^n \to \RCupInf$ defined as
\begin{align*}
	I_f(\mu) \assign \int_X f(x,\RadNik{\mu}{\sigma})\,\diff \sigma
	\qquad \tn{and} \qquad
	I_{f^\ast}(\phi) \assign \begin{cases}
		0 & \tn{if } \phi(x) \in Q(x) \,\forall\, x \in X, \\
		+ \infty & \tn{else,}
		\end{cases}
\end{align*}
form a pair of convex, proper, lower-semicontinuous conjugate functions with respect to the sup-norm topology on $\cont(X)^n$ and the weak$\ast$ topology on $\meas(X)^n$.
In the definition of $I_f$, $\sigma$ is some arbitrary measure in $\measp(X)$ with $\mu \ll \sigma$. By positive 1-homogeneity of $f(x,\cdot)$ the value of the functional does not depend on the choice of $\sigma$.
\end{lemma}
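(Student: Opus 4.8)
The plan is to reduce the statement to the classical conjugacy theory for integral functionals on spaces of measures, after first pinning down the pointwise structure of $f_x$. Fixing $x\in X$: since $f_x$ is proper, convex and lower semicontinuous, Fenchel--Moreau gives $f_x=f_x^{\ast\ast}$; and since $f_x$ is positively $1$-homogeneous (hence sublinear) a one-line computation shows that its Fenchel conjugate is the indicator of $Q(x)\assign\{y\in\R^n:\langle y,z\rangle\le f_x(z)\text{ for all }z\in\R^n\}=\partial f_x(0)$, a closed convex set that is nonempty because $f_x^\ast$ is proper. Hence $f_x=\iota_{Q(x)}^\ast$ (and $f_x(0)=0$), which is the asserted factorisation, with $f^\ast(x,y)=\iota_{Q(x)}(y)$.

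Next I would set up measurability. Because $X$ is a compact (hence complete separable) metric space and $f$ is jointly lower semicontinuous, $f$ is a normal convex integrand; this is precisely where the hypotheses of Rockafellar's theory are verified using the simplification from \cite[Lemma A2]{BoVa1988}, which is what allows one to assume only joint lower semicontinuity. It follows that $x\mapsto Q(x)$ is a measurable, closed-valued multifunction, that $(x,y)\mapsto f^\ast(x,y)$ is again a normal integrand, and that $I_f$ is well defined on $\meas(X)^n$; its independence of the dominating measure $\sigma$ is immediate from the positive $1$-homogeneity of each $f_x$ (compare any two choices $\sigma_1,\sigma_2$ with $\sigma_1+\sigma_2$ via the chain rule for densities), and convexity of $I_f$ follows from convexity of the $f_x$.

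The heart of the argument is the conjugacy. Integrating the Fenchel--Young inequality $\langle\phi(x),v\rangle\le f(x,v)+f^\ast(x,\phi(x))$ against any $\sigma\gg\mu$ with $v=\RadNik{\mu}{\sigma}(x)$ yields weak duality $\langle\mu,\phi\rangle\le I_f(\mu)+I_{f^\ast}(\phi)$, where $\langle\mu,\phi\rangle\assign\sum_{j=1}^n\int_X\phi_j\,\diff\mu_j$; hence $I_f^\ast\ge I_{f^\ast}$ and $I_{f^\ast}^\ast\ge I_f$. For the reverse inequalities I would invoke Rockafellar's integral-functional conjugation theorem \cite[Theorem 6]{Rockafellar-IntegralConvexFunctionals71} in the duality between $\cont(X)^n$ and $\meas(X)^n$: because the integrand is positively $1$-homogeneous, the recession part of the Bouchitt\'e--Valadier representation of the conjugate of $I_f$ merges with the absolutely continuous part, so that the conjugate of $I_f$ is $\phi\mapsto\int_X f^\ast(x,\phi(x))\,\diff\lambda$ for any fixed $\lambda\in\measp(X)$ of full support, i.e. it vanishes when $\phi(x)\in Q(x)$ for $\lambda$-a.e.\ $x$ and is $+\infty$ otherwise. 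Two short observations then bridge the ``a.e.'' constraint of the general theory and the pointwise constraint in the statement: if $\phi(x_0)\notin Q(x_0)$ for some $x_0$, choosing $z$ with $\langle\phi(x_0),z\rangle>f(x_0,z)$ and testing $I_f$ against the vector measures $t\,z\,\delta_{x_0}$ as $t\to\infty$ forces $I_f^\ast(\phi)=+\infty$; and if $\phi(x)\in Q(x)$ for every $x$ then $\langle\mu,\phi\rangle\le I_f(\mu)$ for all $\mu$, so evaluating at $\mu=0$ gives $I_f^\ast(\phi)=0$. Finally, lower semicontinuity of $I_{f^\ast}$ for the sup-norm is clear since each $Q(x)$ is closed, and once $I_f=I_{f^\ast}^\ast$ is established, lower semicontinuity of $I_f$ for the weak$\ast$ topology is automatic as a supremum of weak$\ast$-continuous affine functionals; properness of both then follows from $I_f(0)=0$ together with the existence of at least one continuous selection of $Q$.

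I expect the main obstacle to be the genuinely hard half of the previous step: showing that for a given $\mu$ one can find continuous competitors $\phi$ with $\phi(x)\in Q(x)$ everywhere and $\langle\mu,\phi\rangle$ arbitrarily close to $I_f(\mu)$. This needs a measurable selection of (approximate) maximizers of $y\mapsto\langle y,\RadNik{\mu}{\sigma}(x)\rangle$ over $y\in Q(x)$, followed by a Lusin-type regularization to continuity that respects the constraint $\phi(x)\in Q(x)$ --- exactly the content imported from \cite{Rockafellar-IntegralConvexFunctionals71}, with the normality needed to run it supplied by the joint lower semicontinuity of $f$ and \cite{BoVa1988}.
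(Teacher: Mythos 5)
The paper does not prove Lemma~\ref{lem:IntConjugation} at all: it cites \cite[Theorem 6]{Rockafellar-IntegralConvexFunctionals71}, \cite[Lemma A2]{BoVa1988} and \cite[Lemma 2.9]{ChizatDynamicStatic2018} and treats the statement as known. Your sketch is a sound reconstruction of how the lemma follows from exactly those sources: the pointwise identification $Q(x)=\partial f_x(0)$ and $f_x^\ast=\iota_{Q(x)}$ is correct, the bridge from the ``$\lambda$-a.e.'' constraint delivered by the general theory to the everywhere constraint in $I_{f^\ast}$ via testing against the vector Dirac measures $t\,z\,\delta_{x_0}$ is a clean and correct argument (you also need, as you implicitly use, that the witness $z$ has $f(x_0,z)<\infty$, which holds since otherwise the strict inequality $\langle\phi(x_0),z\rangle>f(x_0,z)$ could not hold), and you correctly locate the genuine content in the measurable-selection-plus-Lusin step of Rockafellar's theorem, with the joint lower semicontinuity of $f$ and \cite{BoVa1988} supplying normality. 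Two minor points on presentation. The logical order in your final paragraph is slightly circular: you first invoke Rockafellar to compute $I_f^\ast$ and then separately write ``once $I_f=I_{f^\ast}^\ast$ is established''; the cleanest route is to prove the hard inequality $I_{f^\ast}^\ast\le I_f$ first (that is what \cite[Theorem 6]{Rockafellar-IntegralConvexFunctionals71} really buys you), after which $I_f=I_{f^\ast}^\ast$ is weak$\ast$ lower semicontinuous automatically, and then $I_f^\ast=I_{f^\ast}^{\ast\ast}=I_{f^\ast}$ follows by Fenchel--Moreau from convexity, lower semicontinuity and properness of $I_{f^\ast}$. Second, your appeal to ``the existence of at least one continuous selection of $Q$'' to get properness of $I_{f^\ast}$ is not directly granted by the hypotheses (joint lower semicontinuity of $f$ does not by itself yield lower hemicontinuity of the set-valued map $x\mapsto Q(x)$); it is simpler to observe that once $I_{f^\ast}^\ast=I_f$ is proved, properness of $I_f$ (from $I_f(0)=0$ together with $f_x>-\infty$) forces $I_{f^\ast}\not\equiv+\infty$.
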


\section{Reminder: Wasserstein barycenter}
\label{sec:W}
We introduce the standard Wasserstein-2 distance on $\Omega$ in the Kantorovich-formulation and its corresponding dual.
\begin{definition}[Wasserstein distance]
\label{def:W}
For $\mu_1,\mu_2 \in \prob(\Omega)$ set
\begin{align}
	\label{eq:WPrimal}
	W(\mu_1,\mu_2)^2 & \assign \inf \left\{ \int_{\Omega^2} |x_1-x_2|^2\,\diff \gamma(x_1,x_2) \middle|
		\gamma \in \measp(\Omega^2),\,\pi_{i\sharp} \gamma=\mu_i\right\}. \\
	\intertext{A dual formulation is given by}
	\label{eq:WDual}
	W(\mu_1,\mu_2)^2 & = \sup \left\{ \sum_{i=1}^N \int_\Omega \psi_i\,\diff \mu_i \middle|
			\psi_1,\psi_2 \in \cont(\Omega),\,\sum_{i=1}^2 \psi_i(x_i)\leq |x_1-x_2|^2
			\tn{ for } x_1,x_2 \in \Omega
			\right\}.
\end{align}
The set $\Pi(\mu_1,\mu_2)\assign\{\gamma \in \measp(\Omega^2)\,|\,\pi_{i\sharp} \gamma=\mu_i\}$ is called the set of couplings or transport plans between $\mu_1$ and $\mu_2$.
\end{definition}
For a comprehensive introduction to Wasserstein spaces we refer to the monographs \cite{Villani-OptimalTransport-09,Santambrogio-OTAM}.
\begin{remark}
\label{rem:WDuality}
We briefly illustrate how duality between \eqref{eq:WPrimal} and \eqref{eq:WDual} can be established via the Fenchel--Rockafellar theorem as this will be instructive for more involved proofs later on.
Let
\begin{align*}
	G & : \cont(\Omega)^2 \to \R, & (\psi_1,\psi_2) & \mapsto -\sum_{i=1}^2 \int_\Omega \psi_i\,\diff \mu_i, \\
	F & : \cont(\Omega^2) \to \RCupInf, & \phi & \mapsto \begin{cases}
		0 & \tn{if } \phi(x_1,x_2) \leq |x_1-x_2|^2 \tn{ for all } x_1,x_2 \in \Omega, \\
		+\infty & \tn{else,}
		\end{cases} \\
	A & : \cont(\Omega)^2 \to \cont(\Omega^2), & (\psi_1,\psi_2) & \mapsto \phi
	\tn{ with } \phi(x_1,x_2) = \sum_{i=1}^2 \psi_i(x_i).
\end{align*}
Note that $F$ and $G$ are convex, $G$ is continuous, at $\phi=(x,y) \mapsto -2=A(x \mapsto -1,y \mapsto -1)$ the function $F$ is finite and continuous and $A$ is a bounded linear map.
With the assistance of Lemma \ref{lem:IntConjugation}, explicit calculations yield
\begin{align*}
	G^\ast & : \meas(\Omega)^2 \to \RCupInf, & (\rho_1,\rho_2) & \mapsto \sum_{i=1}^2 \iota_{\{-\mu_i\}}(\rho_i), \\
	F^\ast & : \meas(\Omega^2) \to \RCupInf, & \gamma & \mapsto \begin{cases}
		\int_{\Omega^2} |x_1-x_2|^2\,\diff \gamma(x_1,x_2) & \tn{if } \gamma \geq 0, \\
		+\infty & \tn{else,}
		\end{cases} \\
	A^\ast & : \meas(\Omega^2) \to \meas(\Omega)^2, & \gamma & \mapsto (\pi_{1\sharp}\gamma,\pi_{2\sharp} \gamma)\,.
\end{align*}
Then, by Fenchel--Rockafellar duality, \eqref{eq:WDual} can be rewritten as
\begin{align*}
	\tn{\eqref{eq:WDual}} & = -\inf\left\{ G(\psi_1,\psi_2) + F(A(\psi_1,\psi_2))
		\middle| (\psi_1,\psi_2) \in \cont(\Omega)^2 \right\} \\
	& = \inf\left\{ G^\ast(-A^\ast \gamma)+F^\ast(\gamma) \middle| \gamma \in \meas(\Omega^2) \right\} \\
	& = \inf \left\{  \sum_{i=1}^2 \iota_{\{\mu_i\}}(\pi_{i\sharp} \gamma)
		+ \int_{\Omega^2} |x_1-x_2|^2\,\diff \gamma(x_1,x_2)
		\middle| \gamma \in \measp(\Omega^2)
		\right\}
	= \tn{\eqref{eq:WPrimal}}.
\end{align*}
\end{remark}

Similar to Euclidean space or Riemannian manifolds one can now wonder what the weighted center of mass of a tuple of probability measures $\mu_1,\ldots,\mu_N$ in $\prob(\Omega)$ with weights $\lambda_1,\ldots,\lambda_N$ ($\lambda_i>0$, $\sum_{i=1}^N \lambda_i=1$, see Section \ref{sec:Notation}) with respect to the squared Wasserstein distance is.
\begin{definition}[Coupled-two-marginal formulation for Wasserstein barycenter]
\label{def:WCTM}
For $\mu_1,\ldots,\mu_N\allowbreak\in \prob(\Omega)$ set
\begin{align}
	\label{eq:WCTM}
	\WCTM(\mu_1,\ldots,\mu_N)^2 & \assign \inf\left\{
		\sum_{i=1}^N \lambda_i \cdot W(\mu_i,\nu)^2 \middle|
		\nu \in \measp(\Omega) \right\}.
\end{align}
\end{definition}
This is a nested optimization problem where one needs to minimize over $\nu \in \measp(\Omega)$ and over each $\gamma \in \measp(\Omega^2)$ within the $W(\mu_i,\nu)^2$ terms. Hence, we refer to this as the coupled-two-marginal formulation, as opposed to the multi-marginal formulation introduced below.
Since $W(\mu_i,\nu)^2=+\infty$ when $\|\mu_i\| \neq \|\nu\|$ (as the feasible set in \eqref{eq:WPrimal} is empty), we need not add the constraint $\nu \in \prob(\Omega)$, as it is enforced automatically.
\begin{proposition}
	Minimizers $\nu$ of \eqref{eq:WCTM} exist. A minimizer is called Wasserstein barycenter of $(\mu_1,\ldots,\mu_N)$ with weights $(\lambda_1,\ldots,\lambda_N)$.
\end{proposition}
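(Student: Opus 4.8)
The plan is to argue by the direct method of the calculus of variations, using only the primal and dual formulations of $W$ from Definition \ref{def:W}. First I would check that the infimum in \eqref{eq:WCTM} is finite and non-negative. Non-negativity is clear. For finiteness, take the competitor $\nu=\mu_1\in\prob(\Omega)$: since $\mu_i$ and $\mu_1$ have the same (unit) mass, the product measure lies in $\Pi(\mu_i,\mu_1)$, and any coupling is supported in $\Omega^2$, so $W(\mu_i,\mu_1)^2\le\operatorname{diam}(\Omega)^2<\infty$; hence $\sum_{i}\lambda_i W(\mu_i,\mu_1)^2\le\operatorname{diam}(\Omega)^2$.

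Next, let $(\nu_n)_n\subset\measp(\Omega)$ be a minimizing sequence. For $n$ large the value $\sum_i\lambda_i W(\mu_i,\nu_n)^2$ is finite, and since the feasible set in \eqref{eq:WPrimal} is empty, hence $W(\mu_i,\nu_n)^2=+\infty$, whenever $\|\nu_n\|\neq\|\mu_i\|=1$, we get $\nu_n\in\prob(\Omega)$ for all large $n$. Because $\Omega$ is compact, $\cont(\Omega)$ is separable and $\prob(\Omega)$ is sequentially weak$\ast$ compact; passing to a subsequence (not relabelled) we obtain $\nu_n\rightharpoonup\nu$ weak$\ast$ for some $\nu\in\prob(\Omega)\subset\measp(\Omega)$.

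It remains to show weak$\ast$ lower semicontinuity of the objective along this subsequence. For each fixed $i$, the dual formulation \eqref{eq:WDual} writes $\nu\mapsto W(\mu_i,\nu)^2$ as the supremum, over admissible pairs $(\psi_i,\phi_i)\in\cont(\Omega)^2$ with $\psi_i(x)+\phi_i(y)\le|x-y|^2$, of the maps $\nu\mapsto\int_\Omega\psi_i\,\diff\mu_i+\int_\Omega\phi_i\,\diff\nu$, each of which is affine and weak$\ast$ continuous in $\nu$ (as $\phi_i\in\cont(\Omega)$); a supremum of weak$\ast$ continuous functions is weak$\ast$ lower semicontinuous. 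Taking the finite, non-negative convex combination, $\nu\mapsto\sum_i\lambda_i W(\mu_i,\nu)^2$ is weak$\ast$ lower semicontinuous, so $\sum_i\lambda_i W(\mu_i,\nu)^2\le\liminf_n\sum_i\lambda_i W(\mu_i,\nu_n)^2=\WCTM(\mu_1,\ldots,\mu_N)^2$, i.e.\ $\nu$ is a minimizer.

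There is no serious obstacle here; the only points needing a little care are the bookkeeping around the mass constraint — so that a minimizing sequence may be taken inside the weak$\ast$-compact set $\prob(\Omega)$ and compactness of $\Omega$ can be exploited — and the weak$\ast$ lower semicontinuity of $W^2$, for which the already-stated dual formulation \eqref{eq:WDual} gives an immediate proof. (Alternatively one may recall that on the compact set $\Omega$ the Wasserstein distance metrizes weak$\ast$ convergence on $\prob(\Omega)$, giving even continuity of $W(\mu_i,\cdot)$, but lower semicontinuity is all that is needed.)
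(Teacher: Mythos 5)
Your proof is correct and follows exactly the route the paper indicates but does not spell out: weak$\ast$ compactness of $\prob(\Omega)$ (after reducing the minimizing sequence to probability measures via the mass constraint), together with weak$\ast$ lower semicontinuity of $\nu\mapsto W(\mu_i,\nu)^2$, which you cleanly extract from the dual formulation \eqref{eq:WDual}. The paper itself merely cites \cite{WassersteinBarycenter} or remarks that the result follows from these standard arguments, so your write-up is essentially the paper's sketch turned into a complete argument.
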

A proof can be found in \cite{WassersteinBarycenter} or follows from standard arguments about weak$\ast$ compactness of bounded measures and weak$\ast$ continuity of the Wasserstein distance on compact metric spaces.

Complementarily, the Wasserstein barycenter problem can also be formulated as a multi-marginal transport problem on $\Omega^N$ with a suitable cost function.
\begin{definition}[Multi-marginal formulation for Wasserstein barycenter]
\label{def:WMM}
\begin{align}
	\label{eq:WCMM}
	\cWMM(x_1,\ldots,x_N) & \assign \inf_{y \in \Omega} \sum_{i=1}^N \lambda_i \, |x_i-y|^2
	= \sum_{i=1}^N \lambda_i\, |x_i-\WT(x_1,\ldots,x_N)|^2 = \sum_{i,j=1}^N \frac{\lambda_i\,\lambda_j}{2} |x_i-x_j|^2 \\
	\label{eq:WTMap}
	& \tn{where} \qquad T(x_1,\ldots,x_N) \assign \sum_{i=1}^N \lambda_i\, x_i \
	\intertext{takes the points $(x_1,\ldots,x_N)$ to the unique minimizer $y=T(x_1,\ldots,x_N)$ in the first line.}
	\label{eq:WMM}
	\WMM(\mu_1,\ldots,\mu_N)^2 & \assign \inf\left\{
		\int_{\Omega^N} \cWMM\,\diff \gamma \,\middle|\,
		\gamma \in \measp(\Omega^N), \, \pi_{i\sharp} \gamma= \mu_i \right\}
\end{align}
\end{definition}

\begin{proposition}[Agueh--Carlier \cite{WassersteinBarycenter}]
	\label{prop:WBarycenterEquivalence}
	$\WCTM(\mu_1,\ldots,\mu_N)^2 = \WMM(\mu_1,\ldots,\mu_N)^2$.
	$\nu$ is a minimizer of \eqref{eq:WCTM} if and only if there exists a minimizer $\gamma$ of \eqref{eq:WMM} such that $\WT_\sharp \gamma=\nu$ (with $T$ given by \eqref{eq:WTMap}).
	Consequently, for minimizers $\gamma$ of \eqref{eq:WMM} we will also call $\WT_\sharp \gamma$ a barycenter.
\end{proposition}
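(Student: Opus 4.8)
The plan is to obtain the identity $\WCTM(\mu_1,\ldots,\mu_N)^2=\WMM(\mu_1,\ldots,\mu_N)^2$ from two matching inequalities and then read off the correspondence between minimizers from the cases of equality. We may assume all $\mu_i$ have the same total mass (otherwise both quantities are $+\infty$), in which case both are finite because $\Omega$ is compact. For $\WMM^2\le\WCTM^2$, fix $\nu\in\measp(\Omega)$ and optimal plans $\gamma_i\in\Pi(\mu_i,\nu)$ (which exist by weak$\ast$ compactness and lower semicontinuity of the transport functional). By the gluing lemma there is $\sigma\in\measp(\Omega^{N+1})$ whose marginal on the coordinate pair $(i,N{+}1)$ equals $\gamma_i$ for each $i$; put $\gamma\assign(\pi_{1,\ldots,N})_\sharp\sigma$, so that $\pi_{i\sharp}\gamma=\mu_i$ and $\gamma$ is admissible in \eqref{eq:WMM}. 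Since $\cWMM(x_1,\ldots,x_N)=\inf_{y\in\Omega}\sum_{i=1}^N\lambda_i|x_i-y|^2\le\sum_{i=1}^N\lambda_i|x_i-y|^2$ for every $y$,
\[
	\WMM^2\le\int_{\Omega^N}\cWMM\,\diff\gamma=\int_{\Omega^{N+1}}\cWMM\,\diff\sigma
	\le\int_{\Omega^{N+1}}\sum_{i=1}^N\lambda_i|x_i-y|^2\,\diff\sigma
	=\sum_{i=1}^N\lambda_i\int_{\Omega^2}|x_i-y|^2\,\diff\gamma_i=\sum_{i=1}^N\lambda_i\,W(\mu_i,\nu)^2,
\]
and taking the infimum over $\nu$ proves the bound (here $\WMM^2$ abbreviates $\WMM(\mu_1,\ldots,\mu_N)^2$, and similarly for $\WCTM^2$).

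For the reverse inequality, take any admissible $\gamma$ in \eqref{eq:WMM}, set $\nu\assign\WT_\sharp\gamma$ (note $\WT$ maps $\Omega^N$ into $\Omega$ by convexity of $\Omega$) and $\gamma_i\assign(\pi_i,\WT)_\sharp\gamma\in\Pi(\mu_i,\nu)$. Then $W(\mu_i,\nu)^2\le\int_{\Omega^N}|x_i-\WT(x_1,\ldots,x_N)|^2\,\diff\gamma$ for each $i$; summing against $\lambda_i$ and using the first identity in \eqref{eq:WCMM} gives $\sum_i\lambda_i W(\mu_i,\nu)^2\le\int_{\Omega^N}\cWMM\,\diff\gamma$, and the infimum over $\gamma$ yields $\WCTM^2\le\WMM^2$. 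Together with the previous paragraph this gives $\WCTM^2=\WMM^2$.

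For the correspondence of minimizers: if $\gamma$ minimizes \eqref{eq:WMM}, the estimate just made shows $\nu\assign\WT_\sharp\gamma$ attains $\sum_i\lambda_i W(\mu_i,\nu)^2\le\WMM^2=\WCTM^2$, so $\nu$ minimizes \eqref{eq:WCTM} (this is the ``if'' direction, and it also justifies calling $\WT_\sharp\gamma$ a barycenter). Conversely, let $\nu$ minimize \eqref{eq:WCTM} and run the gluing construction with optimal $\gamma_i\in\Pi(\mu_i,\nu)$; since $\sum_i\lambda_i W(\mu_i,\nu)^2=\WCTM^2=\WMM^2$, the chain of inequalities above collapses to equalities, so $\gamma\assign(\pi_{1,\ldots,N})_\sharp\sigma$ minimizes \eqref{eq:WMM}. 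Equality in the middle step forces $\int_{\Omega^{N+1}}(\sum_i\lambda_i|x_i-y|^2-\cWMM(x_1,\ldots,x_N))\,\diff\sigma=0$ with a nonnegative integrand, hence $y$ minimizes $y'\mapsto\sum_i\lambda_i|x_i-y'|^2$ for $\sigma$-a.e.\ $(x_1,\ldots,x_N,y)$; this minimizer being unique and equal to $\WT(x_1,\ldots,x_N)$ by \eqref{eq:WCMM}--\eqref{eq:WTMap}, we get $y=\WT(x_1,\ldots,x_N)$ $\sigma$-a.e., so the marginal of $\sigma$ on the last coordinate, which is $\nu$, equals $\WT_\sharp\gamma$.

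The only non-routine ingredients are the gluing lemma, used to assemble $\sigma$ with the prescribed two-marginals $\gamma_i$, and the strict convexity of $y\mapsto\sum_i\lambda_i|x_i-y|^2$, which is exactly what turns the $\sigma$-a.e.\ minimality of $y$ into the pointwise identity $y=\WT(x_1,\ldots,x_N)$ and thereby upgrades the equality of optimal values to the stated bijection between minimizers; everything else is bookkeeping with marginals and monotonicity of the integral.
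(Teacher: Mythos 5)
Your argument is correct and is essentially the same as the paper's proof: both directions use the gluing/disintegration construction to pass from optimal two-marginal plans to an admissible multi-marginal plan, the cheap inequality $\cWMM\le\sum_i\lambda_i|x_i-y|^2$ in one direction and the pushforward $(\pi_i,\WT)_\sharp\gamma\in\Pi(\mu_i,\WT_\sharp\gamma)$ in the other, and the case of equality is resolved via the (strict) uniqueness of the minimizer $y=\WT(\vec x)$ in \eqref{eq:WCMM}. The paper spells out the gluing explicitly as \eqref{eq:WMMW2TMEquivGluing} where you invoke the gluing lemma as a black box, but this is the same construction.
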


This result was shown in \cite{WassersteinBarycenter}. Since we will later re-use some of its central arguments we give a short sketch as illustration.
\begin{proof}[Sketch of proof]
	Let $\gamma$ be a minimizer of \eqref{eq:WMM}, set $\nu \assign \WT_\sharp \gamma$ and $\gamma_i \assign (\pi_i,\WT)_\sharp \gamma$ for $i=1,\ldots,N$, where $(\pi_i,\WT) : \Omega^N \to \Omega^2$, $(x_1,\ldots,x_N) \mapsto (x_i,\WT(x_1,\ldots,x_N))$.
	One finds that $\pi_{1\sharp} \gamma_i= (\pi_1 \circ( \pi_i,\WT))_\sharp \gamma=\pi_{i\sharp} \gamma=\mu_i$ and similarly $\pi_{2\sharp}\gamma_i=\nu$, so that $\gamma_i \in \Pi(\mu_i,\nu)$. Therefore, one finds that
	\begin{align}
		\WMM(\mu_1,\ldots,\mu_N)^2 & = \int_{\Omega^N} \cWMM\,\diff \gamma
		= \int_{\Omega^N} \left[ \sum_{i=1}^N \lambda_i |x_i-\WT(x_1,\ldots,x_N)|^2 \right] \,\diff \gamma(x_1,\ldots,x_N) 
		\nonumber \\
		& = \sum_{i=1}^N \lambda_i \int_{\Omega^N} |\pi_i(x_1,\ldots,x_N)-\WT(x_1,\ldots,x_N)|^2\,\diff \gamma(x_1,\ldots,x_N)
		\nonumber \\
		& = \sum_{i=1}^N \lambda_i \int_{\Omega^2} |x-y|^2\,\diff [(\pi_i,T)_\sharp \gamma](x,y)
		\nonumber \\ 
		& = \sum_{i=1}^N \lambda_i \int_{\Omega^2} |x-y|^2\,\diff \gamma_i(x,y) \geq \WCTM(\mu_1,\ldots,\mu_N)^2\,.
		\label{eq:WMMWCTMEquivProofA}
	\end{align}
	
	Conversely, let now $\hat{\nu}$ be a minimizer of \eqref{eq:WCTM} and let $\hat{\gamma}_i \in \Pi(\mu_i,\hat{\nu})$ be a minimizer of $W(\mu_i,\hat{\nu})^2$ in \eqref{eq:WPrimal} for $i=1,\ldots,N$. Further, let $(\hat{\gamma}_i^y)_{y \in \Omega}$ be the disintegration of $\hat{\gamma}_i$ w.r.t.~its second marginal.
	Introduce now the measure $\hat{\gamma} \in \measp(\Omega^N)$ via
	\begin{align}
		\label{eq:WMMW2TMEquivGluing}
		\int_{\Omega^N} \phi\,\diff \hat{\gamma} & \assign \int_{\Omega^{N+1}} \phi(x_1,\ldots,x_N)\,
			\diff \hat{\gamma}_1^y(x_1)\ldots \diff \hat{\gamma}_N^y(x_N)\,\diff \hat{\nu}(y)
	\end{align}
	for test functions $\phi \in \cont(\Omega^N)$.
	One then finds for $\phi \in \cont(\Omega)$ that
	\begin{align*}
		 \int_\Omega \phi \circ \pi_i \,\diff \hat{\gamma} &
		 = \int_{\Omega^2} \phi(x_i)\,\diff\hat{\gamma}_i^y(x_i)\,\diff \hat{\nu}(y)
		 = \int_{\Omega^2} \phi \circ \pi_1 \,\diff \hat{\gamma}_i = \int_\Omega \phi\,\diff \mu_i
	\end{align*}
	and therefore that $\pi_{i\sharp} \hat{\gamma} = \mu_i$. Consequently,
	\begin{align}
		\WMM(\mu_1,\ldots,\mu_N)^2 & \leq
		\int_{\Omega^N} \cWMM\,\diff \hat{\gamma}
		=\int_{\Omega^{N+1}} \left( \inf_{z \in \Omega} \sum_{i=1}^N \lambda_i |x_i-z|^2 \right)
		\diff \hat{\gamma}_1^y(x_1) \ldots \diff \hat{\gamma}_N^y(x_N)\,\diff \hat{\nu}(y) \nonumber \\
		& \leq \int_{\Omega^{N+1}} \left(\sum_{i=1}^N \lambda_i |x_i-y|^2\right)
		\diff \hat{\gamma}_1^y(x_1) \ldots \diff \hat{\gamma}_N^y(x_N)\,\diff \hat{\nu}(y) \nonumber \\		
		& = \sum_{i=1}^N \lambda_i \int_{\Omega^2} |x_i-y|^2
		\diff \hat{\gamma}_i^y(x_i) \,\diff \hat{\nu}(y)
		= \sum_{i=1}^N \lambda_i \int_{\Omega^2} |x_i-y|^2 \diff \hat{\gamma}_i(x_i,y) \nonumber \\
		& = \sum_{i=1}^N \lambda_i\,W(\mu_i,\hat{\nu})^2 = \WCTM(\mu_1,\ldots,\mu_N)^2.
		\label{eq:WMMWC2MEquivProofB}
	\end{align}
	Combining \eqref{eq:WMMWCTMEquivProofA} and \eqref{eq:WMMWC2MEquivProofB} one finds that $\WMM(\mu_1,\ldots,\mu_N)^2 = \WCTM(\mu_1,\ldots,\mu_N)^2$ and that $\nu$ constructed from $\gamma$ is optimal for $\WCTM(\mu_1,\ldots,\mu_N)^2$ and $\hat{\gamma}$ constructed from $\hat{\nu}$ is optimal for $\WMM(\mu_1,\ldots,\mu_N)^2$.
	
	In addition, by equality of $\WMM(\mu_1,\ldots,\mu_N)^2$ and $\WCTM(\mu_1,\ldots,\mu_N)^2$ the second inequality in \eqref{eq:WMMWC2MEquivProofB} must be an equality and thus one must have that $y$ is a minimizer of $z \mapsto \sum_{i=1}^N \lambda_i\,|x_i-z|^2$ $\diff \hat{\gamma}_1^y(x_1) \ldots \diff \hat{\gamma}_N^y(x_N)\,\diff \hat{\nu}(y)$-almost everywhere, i.e.~$y=T(x_1,\ldots,x_N)$ almost surely. Therefore, one finds
	\begin{align*}
		\int_{\Omega^N} \phi \circ \WT\,\diff \hat{\gamma} = \int_{\Omega} \phi\,\diff \hat{\nu}
	\end{align*}
	for $\phi \in \cont(\Omega)$ and thus $\WT_\sharp \hat{\gamma}=\hat{\nu}$.
\end{proof}

It will also be instructive to study a dual formulation of the coupled-two-marginal formulation for the Wasserstein barycenter, \eqref{eq:WCTM}.
\begin{proposition}[Dual formulation of Wasserstein barycenter]
\label{prop:WCTMDual}
\begin{multline}
	\WCTM(\mu_1,\ldots,\mu_N)^2 = \sup \left\{
		\sum_{i=1}^N \int_\Omega \psi_i\,\diff \mu_i \middle|
		\psi_1,\ldots,\psi_N,\phi_1,\ldots,\phi_N \in \cont(\Omega),
		\right.
	 \\
	\left. \vphantom{\sum_{i=1}^N}
		\psi_i(x)+\phi_i(y) \leq \lambda_i\,|x-y|^2 \,\forall\, i \in \{1,\ldots, N\},\, x,y \in \Omega,\,
		\sum_{i=1}^N \phi_i \geq 0
		\right\}
	\label{eq:WCTMDual}
\end{multline}
\end{proposition}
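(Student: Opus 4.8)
The plan is to obtain \eqref{eq:WCTMDual} as the Fenchel--Rockafellar dual of the coupled-two-marginal problem \eqref{eq:WCTM}, exactly in the spirit of Remark \ref{rem:WDuality}. I read the supremum in \eqref{eq:WCTMDual} as the negative of a minimization problem over $(\psi_1,\dots,\psi_N,\phi_1,\dots,\phi_N)\in\cont(\Omega)^{2N}$ and set
\begin{align*}
	G &: \cont(\Omega)^{2N}\to\RCupInf, &
	(\psi,\phi) &\mapsto -\sum_{i=1}^N\int_\Omega\psi_i\,\diff\mu_i
		+\iota_{\{\sum_i\phi_i\geq0\}}(\phi_1,\dots,\phi_N), \\
	F &: \cont(\Omega^2)^N\to\RCupInf, &
	(\theta_1,\dots,\theta_N) &\mapsto \sum_{i=1}^N\iota_{\{\theta_i(x,y)\leq\lambda_i|x-y|^2\,\forall\,x,y\in\Omega\}}(\theta_i), \\
	A &: \cont(\Omega)^{2N}\to\cont(\Omega^2)^N, &
	(\psi,\phi) &\mapsto (\theta_1,\dots,\theta_N),\quad \theta_i(x,y)=\psi_i(x)+\phi_i(y),
\end{align*}
so that $-\inf_{(\psi,\phi)}\{G(\psi,\phi)+F(A(\psi,\phi))\}$ is precisely the right-hand side of \eqref{eq:WCTMDual}. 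Here $F,G$ are convex, proper and lower-semicontinuous, $A$ is bounded linear, and at $\psi_i\equiv-2$, $\phi_i\equiv1$ (all $i$) one has $G$ finite while $F$ is finite and continuous at the image point $\theta_i\equiv-1$ (indeed $F$ vanishes on a sup-norm ball there, since $\lambda_i|x-y|^2\geq0>-1$); this provides the constraint qualification needed to apply the Fenchel--Rockafellar theorem.

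Next I compute the conjugates. With the help of Lemma \ref{lem:IntConjugation} (applied componentwise) one gets $F^\ast(\gamma_1,\dots,\gamma_N)=\sum_i\int_{\Omega^2}\lambda_i|x-y|^2\,\diff\gamma_i$ if all $\gamma_i\geq0$ and $+\infty$ otherwise, while $A^\ast\gamma=(\pi_{1\sharp}\gamma_1,\dots,\pi_{1\sharp}\gamma_N,\pi_{2\sharp}\gamma_1,\dots,\pi_{2\sharp}\gamma_N)$. The crucial computation is $G^\ast(\rho,\tau)$: the supremum over the free $\psi_i$ forces $\rho_i=-\mu_i$, and the supremum of $\sum_i\int\phi_i\,\diff\tau_i$ over the closed convex cone $K\assign\{\phi\in\cont(\Omega)^N:\sum_i\phi_i\geq0\}$ is the indicator of its polar cone. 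Parametrising $\phi\in K$ by $\phi_N=-\sum_{i<N}\phi_i+g$ with $\phi_1,\dots,\phi_{N-1}\in\cont(\Omega)$ and $g\in\cont(\Omega)$, $g\geq0$, arbitrary, one reads off that this polar cone is exactly $\{\tau:\tau_1=\dots=\tau_N=-\nu\text{ for some }\nu\in\measp(\Omega)\}$. Hence $G^\ast(\rho,\tau)=0$ precisely when $\rho_i=-\mu_i$ for all $i$ and $\tau_1=\dots=\tau_N=-\nu$ for a common $\nu\in\measp(\Omega)$, and $+\infty$ otherwise.

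Putting these ingredients into the Fenchel--Rockafellar identity $-\inf_{(\psi,\phi)}\{G+F\circ A\}=\inf_\gamma\{G^\ast(-A^\ast\gamma)+F^\ast(\gamma)\}$, the right-hand side becomes
\[
	\inf\Big\{\,\textstyle\sum_{i=1}^N\int_{\Omega^2}\lambda_i|x-y|^2\,\diff\gamma_i\ \Big|\ \gamma_i\in\measp(\Omega^2),\ \pi_{1\sharp}\gamma_i=\mu_i,\ \pi_{2\sharp}\gamma_1=\dots=\pi_{2\sharp}\gamma_N\,\Big\}.
\]
Writing $\nu\assign\pi_{2\sharp}\gamma_1\in\measp(\Omega)$, for fixed $\nu$ this infimum decouples into $\sum_i\lambda_i\inf_{\gamma_i\in\Pi(\mu_i,\nu)}\int|x-y|^2\,\diff\gamma_i=\sum_i\lambda_i W(\mu_i,\nu)^2$ by \eqref{eq:WPrimal}, and the remaining infimum over $\nu\in\measp(\Omega)$ is exactly $\WCTM(\mu_1,\dots,\mu_N)^2$ as in \eqref{eq:WCTM}. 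This yields the claimed equality.

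The main obstacle is the computation of $G^\ast$, and within it the identification of the polar cone of $K$: this is the step that converts the single scalar constraint $\sum_i\phi_i\geq0$ appearing in \eqref{eq:WCTMDual} into the structural requirement that the $\gamma_i$ share a common non-negative second marginal $\nu$ in \eqref{eq:WCTM}. The remaining conjugate computations, the verification of the constraint qualification, and the decoupling over $\nu$ are routine and closely parallel Remark \ref{rem:WDuality} and the sketch of Proposition \ref{prop:WBarycenterEquivalence}.
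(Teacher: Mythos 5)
Your proposal is correct and takes essentially the same route as the paper: the same choice of $G$, $F$, $A$, the same conjugate computations, and Fenchel--Rockafellar duality. You simply make explicit a few details the paper's sketch leaves implicit (the constraint qualification point and the polar-cone identification inside $G^\ast$), which is a sound elaboration of the same argument.
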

A very similar dual formulation was established in \cite{WassersteinBarycenter}. Based on Remark \ref{rem:WDuality} we sketch a proof.
\begin{proof}
Let
\begin{align*}
	G & : \cont(\Omega)^{2N} \to \RCupInf, & & (\psi_1,\ldots,\psi_N,\phi_1,\ldots,\phi_N) \mapsto \begin{cases}
		-\sum_{i=1}^N \int_\Omega \psi_i\,\diff \mu_i & \tn{if } \sum_{i=1}^N \phi_i \geq 0, \\
		+ \infty & \tn{else,}
		\end{cases} \\
	F & : \cont(\Omega^2)^N \to \RCupInf, & & (\xi_1,\ldots,\xi_N) \mapsto \begin{cases}
		0 & \tn{if } \xi_i(x,y) \leq \lambda_i\,|x-y|^2 \\
		& \tn{for all } i \in \{1,\ldots,N\},\, x,y \in \Omega, \\
		+\infty & \tn{else,}
		\end{cases} \\
	A & : \cont(\Omega)^{2N} \to \cont(\Omega^2)^N, & & (\psi_1,\ldots,\psi_N,\phi_1,\ldots,\phi_N) \mapsto (\xi_1,\ldots,\xi_N) \\
	& & & \qquad \qquad \tn{with } \xi_i(x,y) = \psi_i(x) + \phi_i(y).
\end{align*}
Then one can write
\begin{multline*}
	\tn{\eqref{eq:WCTMDual}}
	 = - \inf \{ G(\psi_1,\ldots,\psi_N,\phi_1,\ldots,\phi_N) + F(A(\psi_1,\ldots,\psi_N,\phi_1,\ldots,\phi_N)) \,| \\
	\psi_1,\ldots,\psi_N,\phi_1,\ldots,\phi_N \in \cont(\Omega) \}\,.
\end{multline*}
For conjugates of $F$ and $G$ and the adjoint of $A$ one finds:
\begin{align*}
	G^\ast & : \meas(\Omega)^{2N} \to \RCupInf, & & (\rho_1,\ldots,\rho_N,\sigma_1,\ldots,\sigma_N) \mapsto
		\begin{cases}
		0 & \tn{if } \exists\, \nu \in \measp(\Omega) \tn{ s.t. }
			\rho_i=-\mu_i\\
			& \wedge\, \sigma_i=-\nu \tn{ for } i \in\{1,\ldots,N\}, \\
		+ \infty & \tn{else,}
		\end{cases}
		\\
	F^\ast & : \meas(\Omega^2)^N \to \R, & & (\gamma_1,\ldots,\gamma_N) \!\mapsto\! \begin{cases}
		\sum_{i=1}^N \lambda_i\,\int_{\Omega^2} |x-y|^2\,\diff \gamma_i(x,y) & \tn{if } \gamma_i \geq 0 \\
			& \tn{for } i \in \{1,\ldots,N\}, \\
		+\infty & \tn{else,}
		\end{cases} \\
	A^\ast & : \meas(\Omega^2)^N \to \meas(\Omega)^{2N}, & & (\gamma_1,\ldots,\gamma_N) \mapsto	
		(\pi_{1\sharp}\gamma_1,\ldots,\pi_{1\sharp} \gamma_N, \pi_{2\sharp} \gamma_1,\ldots,\pi_{2\sharp} \gamma_N)
\end{align*}
and again by Fenchel--Rockafellar duality that
\begin{align*}
	\tn{\eqref{eq:WCTMDual}} = \inf \{ G^\ast(-A(\gamma_1,\ldots,\gamma_N))
		+ F^\ast(\gamma_1,\ldots,\gamma_N) | \gamma_1,\ldots,\gamma_N \in \meas(\Omega^2) \}
		= \tn{\eqref{eq:WCTM}}.
	& \qedhere
\end{align*}
\end{proof}
\section{Reminder: Hellinger--Kantorovich distance}
\label{sec:HK}
\subsection{Dynamic formulation}
\label{sec:HKDynamic}
Possibly the most intuitive way to define the Hellinger--Kantorovich distance is via modifying the Benamou--Brenier formula for the Wasserstein metric by introducing a source term into the continuity equation and a corresponding penalty into the energy.
\begin{definition}[Dynamic Benamou--Brenier-type formulation \cite{KMV-OTFisherRao-2015,ChizatOTFR2015,LieroMielkeSavare-HellingerKantorovich-2015a}]
For $\mu_1, \mu_2 \in \measp(\Omega)$ the Hellinger--Kantorovich distance between them is given by
\begin{align}
	\label{eq:HKBB}
	\HK(\mu_0,\mu_1)^2 & \assign \inf \left\{
		\int_{[0,1] \times \Omega}
		\left[
		\left(\RadNik{\omega}{\rho}\right)^2 + \tfrac14 \left(\RadNik{\zeta}{\rho}\right)^2
		\right]\diff \rho
		\middle| (\rho,\omega,\zeta) \in \CE(\mu_0,\mu_1)
		\right\}
\end{align}
where $\CE(\mu_0,\mu_1) \subset \meas([0,1] \times \Omega)^{1 \times d \times 1}$ are distributional solutions of the continuity equation with source, satisfying $\omega, \zeta \ll \rho$, and interpolating between $\mu_0$ and $\mu_1$, i.e.~they solve
\begin{align}
	\partial_t \rho + \ddiv \omega = \zeta, \qquad
	\rho(0)=\mu_0, \qquad \rho(1)= \mu_1
\end{align}
in a distributional sense.
\end{definition}
For more details we refer to \cite{KMV-OTFisherRao-2015,ChizatOTFR2015,LieroMielkeSavare-HellingerKantorovich-2015a} where it was shown that $\HK$ is a geodesic distance on $\measp(\Omega)$.
The distance between two Dirac measures is of particular interest.
\begin{proposition}[$\HK$-distance between Dirac measures \cite{ChizatOTFR2015,LieroMielkeSavare-HellingerKantorovich-2015a}]
\label{prop:HKDirac}
\begin{align}
	\HK(\delta_{x_1} \cdot m_1,\delta_{x_2} \cdot m_2)^2 & =
	m_1 + m_2 - 2\sqrt{m_1\,m_2} \Cos(|x_1-x_2|)
\end{align}
where $\Cos(s) \assign \cos(\min\{s, \tfrac{\pi}{2}\})$.
\end{proposition}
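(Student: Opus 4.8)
The plan is to use the lifted (``cone'') formulation \eqref{eq:IntroHK} of $\HK$. Recall that, in the variables $(x,m)\in\cone$, the transport cost has the form $c((x,m),(y,n)) = m + n - 2\sqrt{mn}\,\Cos(|x-y|)$ — this is \eqref{eq:HKC}, with the truncation at $\tfrac{\pi}{2}$ already built into $\Cos(s)=\cos(\min\{s,\tfrac{\pi}{2}\})$. For the Dirac marginals $\mu_i = m_i\,\delta_{x_i}$ the key structural fact is that, since $\coneProj\pi_{1\sharp}\gamma = m_1\,\delta_{x_1}$, the measure $m\cdot\pi_{1\sharp}\gamma$ on $\cone$ is concentrated on $\{x_1\}\times\R_+$; hence $\gamma$-a.e.\ on $\{m>0\}$ one has $x=x_1$, and symmetrically $\gamma$-a.e.\ on $\{n>0\}$ one has $y=x_2$. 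Writing $\kappa\assign\Cos(|x_1-x_2|)$ and using that $\sqrt{mn}$ vanishes off $\{m>0,\,n>0\}$, every admissible $\gamma$ therefore satisfies
\[
	\int_{\cone^2}\! c\,\diff\gamma = \int m\,\diff\gamma + \int n\,\diff\gamma - 2\int\sqrt{mn}\,\Cos(|x-y|)\,\diff\gamma = m_1 + m_2 - 2\kappa\!\int\!\sqrt{mn}\,\diff\gamma,
\]
where $\int m\,\diff\gamma = m_1$ and $\int n\,\diff\gamma = m_2$ are just the total masses of $\coneProj\pi_{1\sharp}\gamma$ and $\coneProj\pi_{2\sharp}\gamma$.

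From here the two matching bounds are immediate. Since $\kappa\ge 0$, the quantity above is minimised by maximising $\int\sqrt{mn}\,\diff\gamma$, and Cauchy--Schwarz in $L^2(\gamma)$ applied to $\sqrt m$ and $\sqrt n$ gives $\int\sqrt{mn}\,\diff\gamma \le (\int m\,\diff\gamma)^{1/2}(\int n\,\diff\gamma)^{1/2} = \sqrt{m_1 m_2}$; this yields $\HK(m_1\delta_{x_1},m_2\delta_{x_2})^2 \ge m_1 + m_2 - 2\sqrt{m_1 m_2}\,\Cos(|x_1-x_2|)$. For the reverse inequality it suffices to exhibit one admissible plan attaining the value: for any $w>0$ take $\gamma_w \assign w\,\delta_{((x_1,\,m_1/w),\,(x_2,\,m_2/w))}$, which satisfies $\coneProj\pi_{i\sharp}\gamma_w = m_i\,\delta_{x_i}$ and, by $1$-homogeneity of $c$ in the mass variables, has $\int c\,\diff\gamma_w = m_1 + m_2 - 2\sqrt{m_1 m_2}\,\Cos(|x_1-x_2|)$ independently of $w$. (In particular the optimal plan is highly non-unique, a feature that resurfaces in the barycenter analysis.)

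The only genuinely delicate point is the structural reduction in the first step — that the $\Cos$-factor may be replaced by the constant $\Cos(|x_1-x_2|)$ because the $\sqrt{mn}$-weight forces the two base points to be $x_1$ and $x_2$ — together with pinning down the correct normalisation of $c$ relative to the convention for $\coneProj$ in \eqref{eq:IntroHK}, so that the homogeneity used for $\gamma_w$ is the right one; everything else is a sharp Cauchy--Schwarz estimate and a one-line computation. Note that no case distinction on whether $|x_1-x_2|$ exceeds $\tfrac{\pi}{2}$ is required, since this truncation is already absorbed into $\Cos$ and into $c$. One could alternatively read the formula off the dynamic formulation \eqref{eq:HKBB}, optimising over curves $t\mapsto m(t)\,\delta_{x(t)}$ that interpolate the two base Diracs along a segment, but that route requires explicitly constructing the optimal curve, whereas the cone formulation sidesteps it.
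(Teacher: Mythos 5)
Your proof is correct, and it supplies a derivation that the paper itself does not give: Proposition~\ref{prop:HKDirac} is stated with only a citation to \cite{ChizatOTFR2015,LieroMielkeSavare-HellingerKantorovich-2015a} and no argument. The strategy you choose — read the formula off the cone lifting \eqref{eq:HKLifted} — is clean and essentially the standard one. The two key steps are both sound: (i) the projected-marginal constraint $\coneProj\pi_{1\sharp}\gamma = m_1\delta_{x_1}$ means $p_\sharp(m\cdot\pi_{1\sharp}\gamma)$ is concentrated at $x_1$, and since this is a nonnegative measure it must itself be concentrated on $\{x_1\}\times\R_+$, so $x=x_1$ holds $\gamma$-a.e.\ on $\{m>0\}$ (symmetrically for $y=x_2$ on $\{n>0\}$), and the $\sqrt{mn}$ weight kills the complementary set; (ii) Cauchy–Schwarz gives the sharp lower bound $\int\sqrt{mn}\,\diff\gamma\le\sqrt{m_1 m_2}$, matched by the explicit plans $\gamma_w$, whose admissibility and cost both check out (including the finite-moment requirement $\gamma_w\in\measpt(\cone^2)$, which you do not state but which holds since $\int(m+n)\,\diff\gamma_w=m_1+m_2$). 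The boundary cases $m_i=0$ and $|x_1-x_2|\ge\pi/2$ are handled automatically, as you observe.

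One expository caveat worth flagging: within this paper's own narrative, Proposition~\ref{prop:HKDirac} is used to \emph{motivate} the cost $c$ in \eqref{eq:HKC}, which then appears in the cone lifting Proposition~\ref{prop:HKLifted}. Deriving the Dirac formula \emph{from} the cone lifting therefore runs backwards against the paper's order of exposition. This is not an actual circularity — in the cited references the cone lifting formula is established from the dynamic/LET formulations without presupposing the Dirac identity, so your argument is a legitimate standalone proof — but if one wanted a proof aligned with the paper's logical arc one would instead compute directly in the Benamou–Brenier formulation \eqref{eq:HKBB} along curves $t\mapsto m(t)\delta_{x(t)}$, the route you mention and reasonably set aside as more laborious.
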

This motivates search for equivalent Kantorovich-type formulations of $\HK$. Several such formulations are given in \cite{LieroMielkeSavare-HellingerKantorovich-2015a,ChizatDynamicStatic2018}. We review them in following subsections.

\subsection{Kantorovich-type formulations}
\label{sec:HKStatic}
Motivated by Proposition \ref{prop:HKDirac} we introduce the following function.
\begin{definition}[HK cost function]
\begin{align}
	\label{eq:HKC}
	c(x_1,m_1,x_2,m_2) \assign \begin{cases}
		m_1 + m_2 - 2\sqrt{m_1\,m_2} \Cos(|x_1-x_2|) & \tn{if } m_1,m_2 \geq 0, \\
		+ \infty & \tn{else.}
	\end{cases}
\end{align}
\end{definition}
The extension to negative masses by $+\infty$ allows us to look at the Fenchel--Legendre conjugate of $c$ with respect to the mass arguments $m_1,m_2$ (for fixed positions $x_1,x_2$), which will naturally appear in a dual problem.
We now very briefly introduce a formulation of $\HK$ that was discovered in \cite{LieroMielkeSavare-HellingerKantorovich-2015a} where more details can be found (with slightly different conventions).
\begin{definition}[Cone over $\Omega$]
In the following, let
\begin{align}
	\cone \assign \Omega \times \R_+
\end{align}
where we recall our notation $\R_+ \assign [0,\infty)$.
If one identifies all points $\Omega \times \{0\}$ in $\cone$ then $\cone^2 \ni ((x_1,m_1),(x_2,m_2)) \mapsto \sqrt{c(x_1,m_1,x_2,m_2)}$ is a metric on $\cone$. With this identification $\cone$ becomes a \emph{cone} over $\Omega$. For our purposes the identification is not required and thus we dispense with it.

Further, we introduce the sets
\begin{align*}
	\meast(\cone) & \assign \left\{\gamma \in \meas(\cone) \middle| \int_{\cone} m\,\diff |\gamma|(x,m) < +\infty \right\}, \\
	\meast(\cone^N) & \assign \left\{\gamma \in \meas(\cone^N) \middle| \int_{\cone^N} \left(\sum_{i=1}^N m_i\right)\,\diff |\gamma|((x_1,m_1),\ldots,(x_N,m_N)) < +\infty \right\},
\end{align*}
of measures with bounded moment along the $\R_+$-axis of $\cone$ and the equivalent on $\cone^N$.
By $\measpt(\cone)$ and $\measpt(\cone^N)$ we denote the sets of non-negative measures in $\meast(\cone)$ and $\meast(\cone^N)$.

Finally, let
\begin{align}
	\label{eq:ConeMeasureProj}
	p &: \cone \to \Omega, \quad (x,m) \mapsto x, &
	\coneProj &: \meast(\cone) \to \meas(\Omega), \quad
	\mu \mapsto p_\sharp (m \cdot \mu).
\end{align}
\end{definition}
We can interpret a Dirac measure $\delta_{(x,m)} \in \measp(\cone)$ for $(x,m) \in \cone$ as representing a Dirac mass at $x \in \Omega$ with mass $m \in \R_+$. The projection of $\delta_{(x,m)}$ to a measure on $\Omega$ is obtained by the operator $\coneProj$. One has $\coneProj \delta_{(x,m)}=m \cdot \delta_x$.
More generally, the representation of $m \cdot \delta_x$ by $\delta_{(x,m)}$ is not unique. Let $\rho \in \measp(\R_+)$ with $\int_{\R_+} \tilde{m}\,\diff \rho(\tilde{m})=m$. Then $\coneProj (\delta_x \otimes \rho)=m \cdot \delta_x$ since
\begin{align*}
	\int_\Omega \phi\,\diff\coneProj(\delta_x \otimes \rho)=
	\int_{\Omega \times \R_+} \phi(\tilde{x}) \cdot \tilde{m}\,\diff \delta_x(\tilde{x})\,\diff \rho(\tilde{m})
	= m \cdot \phi(x).
\end{align*}
It is easy to see that any measure $\mu \in \measp(\Omega)$ can be represented (in a non-unique way) by a measure on $\meast(\cone)$ and it was realized in \cite{LieroMielkeSavare-HellingerKantorovich-2015a} that $\HK(\mu_1,\mu_2)^2$ can be formulated as transport problem on $\cone$ between pairs of measures $\sigma_1,\sigma_2 \in \meast(\cone)$ with $\coneProj \sigma_i=\mu_i$ with respect to the cost function $c$.
\begin{proposition}[Cone lifting formulation \cite{LieroMielkeSavare-HellingerKantorovich-2015a}]
\label{prop:HKLifted}
For $\mu_1,\mu_2 \in \measp(\Omega)$ one has
\begin{align}
	\HK(\mu_1,\mu_2)^2 & = \inf\left\{
		\int_{\cone^2} c(x,r,y,s)\,\diff \gamma((x,r),(y,s)) \,\middle|\,
		\gamma \in \measpt(\cone^2),\,
		\coneProj \pi_{i\sharp} \gamma=  \mu_i
		\right\}.
	\label{eq:HKLifted}
\end{align}
Minimizers exist.
\end{proposition}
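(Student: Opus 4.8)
The plan is to establish the claimed identity by a two-sided inequality, combined with a direct compactness argument for the existence of minimizers. For the hard direction I would start from the dynamic formulation \eqref{eq:HKBB} and connect it to the cone-lifting functional on the right-hand side of \eqref{eq:HKLifted}. Since this proposition is attributed to \cite{LieroMielkeSavare-HellingerKantorovich-2015a}, the cleanest route for an exposition is to reduce to the known structure: one shows that any $(\rho,\omega,\zeta) \in \CE(\mu_1,\mu_2)$ can be lifted to a curve of measures on the cone $\cone$ whose endpoints project (via $\coneProj$) to $\mu_1$ and $\mu_2$, and conversely that a transport plan $\gamma$ on $\cone^2$ with the projected marginal constraints can be geodesically interpolated — using that $\sqrt{c}$ is the cone metric and that geodesics in the cone are explicitly known (radial plus angular motion, with $\Cos$ capping the angular part at $\pi/2$) — to produce a competitor in \eqref{eq:HKBB} with the same energy. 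The bookkeeping of the source term $\zeta$ against the mass-creation along cone geodesics is where Proposition \ref{prop:HKDirac} enters: on a single geodesic between $(x_1,m_1)$ and $(x_2,m_2)$ the dynamic cost equals exactly $c(x_1,m_1,x_2,m_2)$, and one integrates this against $\gamma$.

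More concretely I would proceed in the following steps. First, reduce \eqref{eq:HKLifted} to an optimization over \emph{plans}: since $c$ is a genuine metric squared on the cone (after identifying the apex), the infimum over $\gamma \in \measpt(\cone^2)$ with fixed projected marginals is attained along a midpoint/geodesic construction, and the static cone-transport value coincides with a dynamic one on $\cone$. Second, use the projection $\coneProj$ to pass between measures on $\cone$ and measures on $\Omega$: given $\mu_i \in \measp(\Omega)$, pick any lift (e.g.\ $\mu_i \otimes \delta_1$ has $\coneProj(\mu_i\otimes\delta_1)=\mu_i$), and conversely $\coneProj$ of a cone measure is always a well-defined element of $\meas(\Omega)$ because of the bounded-moment condition defining $\meast(\cone)$. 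Third, verify the energy identity: the Benamou--Brenier-type integrand in \eqref{eq:HKBB}, namely $(\RadNik{\omega}{\rho})^2 + \tfrac14(\RadNik{\zeta}{\rho})^2$, is precisely the Lagrangian whose action along cone geodesics reproduces $c$, so the infima agree. Fourth, for existence, use that $\Omega$ is compact, that the projected-marginal constraints control the first moment $\int_\cone m\,\diff\gamma \leq \|\mu_1\| + $ (something), hence tightness on $\cone^2$ up to truncating large masses, that $c$ is lower semicontinuous and bounded below (indeed $c \geq 0$), and apply the direct method; the marginal constraints $\coneProj\pi_{i\sharp}\gamma = \mu_i$ are preserved under weak\(\ast\) limits because $\coneProj$ is continuous on sets with uniformly bounded moment.

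The main obstacle I anticipate is the rigorous equivalence between the dynamic cost \eqref{eq:HKBB} and the static cone-transport cost — in particular showing that the optimal dynamic curve can be \emph{disintegrated} into cone geodesics indexed by a transport plan $\gamma$, and conversely that \emph{any} plan can be ``geodesically glued'' into an admissible triple $(\rho,\omega,\zeta)$ in $\CE$. This is the superposition-principle step, which requires some care because the continuity equation here carries a source and the cone metric is degenerate at the apex $\Omega\times\{0\}$ (so geodesics that pass through zero mass need a separate, slightly delicate treatment — this is exactly where the cap in $\Cos(s)=\cos(\min\{s,\pi/2\})$ comes from, as transport over distance $> \pi/2$ is never advantageous compared to annihilate-and-recreate). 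For an expository proof I would not redo this from scratch but cite \cite{LieroMielkeSavare-HellingerKantorovich-2015a} for the equivalence and focus the written argument on the marginal-projection bookkeeping and the compactness argument for existence, which are the parts genuinely needed in the sequel.
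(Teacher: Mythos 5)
The paper does not actually prove this proposition; it is quoted as a known result of \cite{LieroMielkeSavare-HellingerKantorovich-2015a}, and the subsequent subsection ``Some equivalence proofs'' only redoes the static--static equivalences (between \eqref{eq:HKLifted}, \eqref{eq:HKSemiCoupling} and \eqref{eq:HKDual}), explicitly deferring the dynamic--static equivalence and the superposition step to the cited references. Your ultimate resolution --- cite \cite{LieroMielkeSavare-HellingerKantorovich-2015a} for the geodesic-disintegration part and present only the bookkeeping --- is therefore aligned with what the paper does for this statement, and the rough road-map you give for that part (lift $\CE$-curves to the cone, reproduce $c$ as the action of the Benamou--Brenier Lagrangian along cone geodesics, take care of the $\pi/2$ cap) is the right one.

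There is, however, a genuine gap in the direct-method argument you propose for the existence of minimizers, and it matters because it is precisely the reason the paper introduces the semi-coupling formulation \eqref{eq:HKSemiCoupling}. The class $\measpt(\cone^2)$ carries a first-moment bound but no bound on the total mass $\|\gamma\|$: since $c$ vanishes at the apex and mass there is invisible to $\coneProj$, any feasible $\gamma$ can be perturbed by $k\,\delta_{((x,0),(y,0))}$ with no change to cost or to the projected marginals, so a minimizing sequence may have $\|\gamma_k\|\to\infty$ and Banach--Alaoglu/Prokhorov does not apply. Independently, the constraint $\coneProj\pi_{i\sharp}\gamma=\mu_i$ is \emph{not} weak$\ast$-closed under bounded moment alone: the operator $\coneProj$ tests $\gamma$ against unbounded functions $(x,m)\mapsto\phi(x)\,m$, and $\gamma_k = k^{-1}\delta_{(x,k)}$ has bounded moment, $\gamma_k\rightharpoonup^\ast 0$, yet $\coneProj\gamma_k\equiv\delta_x\not\to 0$; so your claim that ``$\coneProj$ is continuous on sets with uniformly bounded moment'' is false, and the limit plan could drop mass from the marginals. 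The truncation you gesture at handles escape of the moment to $m=\infty$, not the accumulation of raw mass near the apex nor the failure of $\coneProj$-continuity. Within the paper's own toolbox the clean route to existence is to minimize \eqref{eq:HKSemiCoupling} (compact $\Omega^2$, plain direct method, with lower semicontinuity from Lemma \ref{lem:IntConjugation}) and then push the optimizer forward by the map $((\pi_1,u_1),(\pi_2,u_2))$ of Remark \ref{rem:HKLiftedSCEquiv}, Part~1, which by construction yields a plan in $\measpt(\cone^2)$ achieving the same value; alternatively one normalizes the $\cone$-marginals as in \cite{LieroMielkeSavare-HellingerKantorovich-2015a} to restore a mass bound.
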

Since this is a formulation of $\HK$ in terms of a transport plan $\gamma$ (albeit on $\cone^2$, and with somewhat generalized marginal constraints) we refer to this as a Kantorovich-type formulation in analogy to Definition \ref{def:W}.

An alternative Kantorovich-type formulation was given in \cite{ChizatDynamicStatic2018} in terms of two transport plans $\gamma_1, \gamma_2$ (and an auxiliary measure $\gamma$).
\begin{proposition}[Semi-coupling formulation \cite{ChizatDynamicStatic2018}]
\begin{align}
	\HK(\mu_1,\mu_2)^2 & = \inf\left\{
		\int_{\Omega^2}
		c\big(x_1,\RadNik{\gamma_1}{\gamma},x_2,\RadNik{\gamma_2}{\gamma}\big)
		\diff \gamma(x_1,x_2)
		\middle|
		\gamma,\gamma_1,\gamma_2 \in \measp(\Omega^2),\,
		\pi_{i\sharp} \gamma_i = \mu_i, \, \gamma_i \ll \gamma
		\right\}
		\label{eq:HKSemiCoupling}
\end{align}
Minimizers exist.
\end{proposition}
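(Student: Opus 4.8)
\emph{Proof strategy.} The plan is to deduce \eqref{eq:HKSemiCoupling} from the cone lifting formulation \eqref{eq:HKLifted} of Proposition~\ref{prop:HKLifted} by explicit transfer constructions between the two problems, rather than running a Fenchel--Rockafellar argument from scratch as in Remark~\ref{rem:WDuality}; the latter would be more awkward here because the semi-coupling functional depends nonlinearly and non-separably on the pair $(\gamma_1,\gamma_2)$ through the density $\RadNik{\gamma_1}{\gamma}$. Both constructions rest on the observation that, for fixed positions, the mass-argument map $(m_1,m_2)\mapsto c(x_1,m_1,x_2,m_2)$ is convex, proper, lower semicontinuous and positively $1$-homogeneous on $\R^2$; convexity follows because the geometric mean $(m_1,m_2)\mapsto\sqrt{m_1m_2}$ is concave on $\R_+^2$ and $\Cos(|x_1-x_2|)\ge0$. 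This should be recorded as a short preliminary lemma, being the only non-formal ingredient. As in the remark following Lemma~\ref{lem:IntConjugation}, positive $1$-homogeneity also implies that the value attained by a pair $(\gamma_1,\gamma_2)$ in \eqref{eq:HKSemiCoupling} is independent of the chosen dominating measure $\gamma$, so one may always normalize $\gamma=\gamma_1+\gamma_2$.

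For the inequality ``$\HK(\mu_1,\mu_2)^2\le$ \eqref{eq:HKSemiCoupling}'' I would take an admissible triple $(\gamma,\gamma_1,\gamma_2)$, set $u_i\assign\RadNik{\gamma_i}{\gamma}\ge0$, and push $\gamma$ forward to $\cone^2$ along the Borel ``graph map'' $(x_1,x_2)\mapsto\bigl((x_1,u_1(x_1,x_2)),(x_2,u_2(x_1,x_2))\bigr)$, obtaining $\hat\gamma\in\measpt(\cone^2)$ with $\int_{\cone^2}(r+s)\,\diff\hat\gamma=\gamma_1(\Omega^2)+\gamma_2(\Omega^2)<\infty$. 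Unwinding the definition~\eqref{eq:ConeMeasureProj} of $\coneProj$ one checks $\coneProj\pi_{i\sharp}\hat\gamma=\pi_{i\sharp}\gamma_i=\mu_i$, so $\hat\gamma$ is admissible in \eqref{eq:HKLifted}, while by construction $\int_{\cone^2}c\,\diff\hat\gamma=\int_{\Omega^2}c(x_1,u_1,x_2,u_2)\,\diff\gamma$ equals the semi-coupling value of $(\gamma,\gamma_1,\gamma_2)$; taking the infimum yields the inequality. For the reverse inequality I would start from a minimizer $\gamma^\ast$ of \eqref{eq:HKLifted} (which exists by Proposition~\ref{prop:HKLifted}), disintegrate the \emph{finite} measure $\eta\assign(r+s)\cdot\gamma^\ast$ over its projection $\gamma\assign(p,p)_\sharp\eta$ to $\Omega^2$, writing $\eta=\int_{\Omega^2}\eta_z\,\diff\gamma(z)$ with $\eta_z$ probability measures supported on the fibres, and set $\gamma_i\assign(p,p)_\sharp(m_i\cdot\gamma^\ast)$. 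Using $m_i\cdot\gamma^\ast=\tfrac{m_i}{r+s}\eta$ with the convention $0/0\assign0$, one finds $\gamma_i\ll\gamma$ with $\RadNik{\gamma_i}{\gamma}(z)=\int\tfrac{m_i}{r+s}\,\diff\eta_z(r,s)$ and $\pi_{i\sharp}\gamma_i=\mu_i$, so $(\gamma,\gamma_1,\gamma_2)$ is admissible in \eqref{eq:HKSemiCoupling}, and positive $1$-homogeneity of $c$ in the masses together with Jensen's inequality applied fibrewise to the convex function $(a,b)\mapsto c(x_1,a,x_2,b)$ gives
\begin{equation*}
	\int_{\Omega^2}c\!\left(x_1,\RadNik{\gamma_1}{\gamma},x_2,\RadNik{\gamma_2}{\gamma}\right)\diff\gamma
	\;\le\;\int_{\Omega^2}\!\int c\!\left(x_1,\tfrac{r}{r+s},x_2,\tfrac{s}{r+s}\right)\diff\eta_z\,\diff\gamma(z)
	\;=\;\int_{\cone^2}c\,\diff\gamma^\ast=\HK(\mu_1,\mu_2)^2.
\end{equation*}
The two inequalities together give \eqref{eq:HKSemiCoupling}. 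For existence it then suffices to note that the triple produced in the last step already realizes a value $\le\HK(\mu_1,\mu_2)^2$ and is therefore a minimizer; alternatively, the direct method applies, since on compact $\Omega$ the admissible set (after normalizing $\gamma=\gamma_1+\gamma_2$) is weak$\ast$ compact and the functional is weak$\ast$ lower semicontinuous by Lemma~\ref{lem:IntConjugation} with $X=\Omega^2$, $n=2$ and $f((x_1,x_2),(a,b))\assign c(x_1,a,x_2,b)$.

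The admissibility and marginal checks are routine, reducing to the identity $\int_{\cone}m\,\diff(\pi_{i\sharp}\gamma^\ast)=\mu_i(\Omega)<\infty$ and to the definition of $\coneProj$. The main obstacle is the reverse inequality: one has to see that convexity of $c$ in the mass variables is precisely what licenses the fibrewise Jensen estimate, and one has to manage the measure-theoretic bookkeeping --- disintegrating the finite measure $(r+s)\gamma^\ast$ rather than the possibly infinite $\gamma^\ast$, the Borel measurability of $z\mapsto\RadNik{\gamma_i}{\gamma}(z)$, and the degenerate set $\{r+s=0\}$, on which every integrand in sight vanishes.
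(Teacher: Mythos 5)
Your proposal is correct and follows essentially the same route as the paper's Remark~\ref{rem:HKLiftedSCEquiv}, which proves the equivalence \eqref{eq:HKSemiCoupling} $=$ \eqref{eq:HKLifted} by the same two transfer constructions (graph-map pushforward in one direction, disintegration plus fibrewise Jensen in the other). Your one refinement --- disintegrating the finite weighted measure $\eta = (r+s)\cdot\gamma^\ast$ over $\gamma = (p,p)_\sharp\eta = \gamma_1+\gamma_2$ rather than disintegrating $\gamma^\ast$ over $(p,p)_\sharp\gamma^\ast$ --- is a sound and slightly more careful way to sidestep the possibility that an admissible $\gamma^\ast\in\measpt(\cone^2)$ has infinite total mass near the cone vertex (the moment bound in $\meast$ controls only $\int(m_1+m_2)\,\diff|\gamma^\ast|$), but the underlying argument is the same.
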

Note that the value of the objective in \eqref{eq:HKSemiCoupling} does not depend on the choice of $\gamma$, as long as $\gamma_i \ll \gamma$, due to the joint positive 1-homogeneity of $c$ in the mass arguments.
While the objective is now non-linear in the transport plans it has the technical advantage of involving only measures on compact spaces which somewhat simplifies our duality arguments.
In general, throughout the article we find it convenient to have multiple formulations of $\HK$ available as it allows to pick the one most suitable for any given proof.

It is intriguing that $\HK$ can also be rewritten as an optimization problem over a single transport plan on $\measp(\Omega^2)$ with a particular linear transport cost (linear as opposed to the more general non-linear cost used in \eqref{eq:HKSemiCoupling}) where the strict marginal constraints $\pi_{i\sharp} \gamma=\mu_i$ are replaced by `soft-marginal' constraints, that penalize the deviation between $\pi_{i\sharp} \gamma$ and $\mu_i$ with the Kullback--Leibler divergence.

\begin{proposition}[Soft-marginal formulation \cite{LieroMielkeSavare-HellingerKantorovich-2015a}]
\label{prop:HKSoftMarginal}
\begin{align}
	\HK(\mu_1,\mu_2)^2 & = \inf\left\{ \int_{\Omega^2} \cKL\,\diff \gamma
		+ \sum_{i=1}^N \KL(\pi_{i\sharp} \gamma|\mu_i) \,\middle|\,
		\gamma \in \measp(\Omega^2)
		\right\}
	\label{eq:HKSoftMarginal}
\end{align}
where
\begin{align}
	\KL(\mu|\nu) & = \begin{cases}
		\int \varphi(\RadNik{\mu}{\nu})\,\diff \nu & \tn{if } \mu,\nu \geq 0, \mu \ll \nu, \\
		+ \infty & \tn{else,}
		\end{cases} \\
	\varphi(s) & = s\,\log(s)-s+1, \\
	\cKL(x_1,x_2) & = \begin{cases}
		-2\log(\cos(|x_1-x_2|)) & \tn{if } |x_1-x_2| < \tfrac{\pi}{2}, \\
		+ \infty & \tn{else.}
	\end{cases}
\end{align}
Minimizers of \eqref{eq:HKSoftMarginal} exist.
\end{proposition}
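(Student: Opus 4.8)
The plan is to prove the identity by convex duality: I would compute the Fenchel dual of the soft-marginal problem on the right-hand side of \eqref{eq:HKSoftMarginal}, separately compute a dual Kantorovich-type formulation of $\HK(\mu_1,\mu_2)^2$, and check that the two dual problems coincide under an exponential change of the potentials; existence of minimizers is then treated by the direct method. To dualize \eqref{eq:HKSoftMarginal} I would follow the template of Remark \ref{rem:WDuality}, with $G(\phi_1,\phi_2)\assign\sum_i\int_\Omega(e^{\phi_i}-1)\,\diff\mu_i$ on $\cont(\Omega)^2$, $A(\phi_1,\phi_2)\assign\big[(x_1,x_2)\mapsto\phi_1(x_1)+\phi_2(x_2)\big]$, and $F\assign\iota_{\{\xi\,:\,\xi+\cKL\,\geq\,0\}}$ on $\cont(\Omega^2)$; here $G$ is finite and continuous on all of $\cont(\Omega)^2$ and $F(A(0,0))=0$, so Fenchel--Rockafellar applies. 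Using the classical conjugacy between $\KL(\cdot|\mu_i)$ and $\phi\mapsto\int(e^{\phi}-1)\,\diff\mu_i$ to identify $G^\ast$, and Lemma \ref{lem:IntConjugation} applied to the positively $1$-homogeneous, lower-semicontinuous integrand equal to $-\cKL(x_1,x_2)\,t$ for $t\leq0$ and $+\infty$ for $t>0$ to identify $F^\ast$, and then renaming the minimization variable via $\gamma\leftrightarrow-\tilde\gamma$ exactly as in Remark \ref{rem:WDuality}, one finds that \eqref{eq:HKSoftMarginal} equals
\[
	\sup\Big\{\textstyle\sum_i\int_\Omega(1-e^{\phi_i})\,\diff\mu_i \;\Big|\; \phi_1,\phi_2\in\cont(\Omega),\ \phi_1(x_1)+\phi_2(x_2)\geq 2\log\cos|x_1-x_2|\text{ when }|x_1-x_2|<\tfrac{\pi}{2}\Big\}.
\]

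Next I would show that $\HK(\mu_1,\mu_2)^2$ equals
\[
	\sup\Big\{\textstyle\sum_i\int_\Omega q_i\,\diff\mu_i \;\Big|\; q_1,q_2\in\cont(\Omega),\ q_i\leq1,\ (1-q_1(x_1))(1-q_2(x_2))\geq\Cos(|x_1-x_2|)^2\ \forall\,x_1,x_2\Big\}.
\]
The convenient vehicle here is the semi-coupling formulation \eqref{eq:HKSemiCoupling}, which lives on the \emph{compact} space $\Omega^2$ (so that Lemma \ref{lem:IntConjugation} applies verbatim), rather than the cone lifting \eqref{eq:HKLifted} over the non-compact $\cone$. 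Since the objective in \eqref{eq:HKSemiCoupling} does not depend on $\gamma$, the choice $\gamma=\gamma_1+\gamma_2$ rewrites it as $\inf\{I_c(\gamma_1,\gamma_2)\,|\,\gamma_i\in\measp(\Omega^2),\ \pi_{i\sharp}\gamma_i=\mu_i\}$, where $I_c$ is the integral functional of $c$, which is convex, positively $1$-homogeneous and lower-semicontinuous in its mass arguments. A short computation — parametrize the boundary curve by $1-q_1=t\,\Cos(|x_1-x_2|)$ and $1-q_2=\Cos(|x_1-x_2|)/t$ and optimize over $t>0$ — gives $c(x_1,\cdot,x_2,\cdot)=\iota_{Q(x_1,x_2)}^\ast$ with $Q(x_1,x_2)=\{(q_1,q_2):q_i\leq1,\ (1-q_1)(1-q_2)\geq\Cos(|x_1-x_2|)^2\}$. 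Dualizing the two marginal constraints $\pi_{i\sharp}\gamma_i=\mu_i$ as in the proof of Proposition \ref{prop:WCTMDual}, with Lemma \ref{lem:IntConjugation} supplying $I_c^\ast=I_{c^\ast}$, then yields the displayed supremum.

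Finally I would observe that $\phi_i\assign\log(1-q_i)$ is a bijection between the feasible set of the second dual problem and $\cont(\Omega)^2$: every feasible $q_i$ satisfies $q_i<1$ pointwise, since the diagonal constraint forces $(1-q_1(x))(1-q_2(x))\geq\Cos(0)^2=1$ while $q_i\leq1$, so $\log(1-q_i)$ is a genuine continuous function on the compact set $\Omega$, and conversely $1-e^{\phi_i}<1$. Under this substitution $\int q_i\,\diff\mu_i=\int(1-e^{\phi_i})\,\diff\mu_i$, while $(1-q_1(x_1))(1-q_2(x_2))\geq\Cos(|x_1-x_2|)^2$ becomes $\phi_1(x_1)+\phi_2(x_2)\geq2\log\Cos(|x_1-x_2|)$, i.e.\ $\phi_1(x_1)+\phi_2(x_2)\geq2\log\cos|x_1-x_2|$ for $|x_1-x_2|<\tfrac{\pi}{2}$ and no constraint otherwise, which is exactly the constraint of the first dual problem. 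Hence the two suprema coincide and \eqref{eq:HKSoftMarginal} equals $\HK(\mu_1,\mu_2)^2$. Existence of a minimizer of \eqref{eq:HKSoftMarginal} then follows by the direct method: the objective is weak$\ast$ lower-semicontinuous (the $\cKL$-integral because $\cKL$ is lower-semicontinuous and bounded below, the $\KL$ terms by weak$\ast$ continuity of the push-forwards $\pi_{i\sharp}$), and by Jensen's inequality $\KL(\pi_{i\sharp}\gamma|\mu_i)$ grows superlinearly in $\|\pi_{i\sharp}\gamma\|=\|\gamma\|$, so sublevel sets are bounded, hence weak$\ast$ compact.

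The main obstacle will be the careful verification of the no-duality-gap (constraint qualification) hypotheses in both dualizations, made awkward by the fact that $\cKL$ and the bound $2\log\cos|x_1-x_2|$ blow up to $+\infty$, respectively $-\infty$, as $|x_1-x_2|\to\tfrac{\pi}{2}$: one must exhibit an interior point at which the relevant convex functionals are finite and continuous, and confirm that feasible dual potentials stay automatically bounded away from the degenerate values $q_i=1$ (equivalently $\phi_i=-\infty$) by continuity on the compact $\Omega$. The remaining steps — the computation of $Q(x_1,x_2)$ and the identification of the two dual problems — are elementary. As a cross-check, and an independent proof that $\HK(\mu_1,\mu_2)^2$ is bounded above by the infimum in \eqref{eq:HKSoftMarginal}, one may instead lift any $\gamma\in\measp(\Omega^2)$ feasible for the soft-marginal problem, with marginals $\nu_i=\pi_{i\sharp}\gamma$, to the cone plan carrying mass $\RadNik{\mu_i}{\nu_i}$ in the $i$-th factor; the resulting pointwise inequality then reduces, via $\log v\leq v-1$, to the elementary estimate $1-v+\log v\leq0$.
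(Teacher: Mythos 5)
Your approach — dualize the soft-marginal problem via Fenchel--Rockafellar, dualize a Kantorovich-type formulation of $\HK$, and match the two dual feasible sets by the logarithmic change of variables $\phi_i=\log(1-q_i)$ — is exactly the argument the paper attributes to the original reference and sketches at the end of Section~\ref{sec:NonExistence}: there the set $Q(x_1,x_2)$ of \eqref{eq:HKQ} is rewritten as $\{(\psi_1,\psi_2):g(\psi_1)+g(\psi_2)\le\cKL(x_1,x_2)\}$ with $g(s)=-\log(1-s)$ (i.e.\ $g=-\phi$ in your notation), after which the generic duality computation of Remark~\ref{rem:HKMMSoftMarginalDual} converts \eqref{eq:HKDual} into \eqref{eq:HKSoftMarginal}.

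One correction is needed. The sentence ``$G$ is finite and continuous on all of $\cont(\Omega)^2$ and $F(A(0,0))=0$, so Fenchel--Rockafellar applies'' does not establish the constraint qualification: one needs $F$ to be finite and \emph{continuous} at $A\phi_0$ for some admissible $\phi_0$, and $F$ is not continuous at the origin of $\cont(\Omega^2)$. The reason is the vanishing of $\cKL$ on the diagonal, not the blow-up near $|x_1-x_2|=\tfrac\pi2$ which you single out at the end (the blow-up merely removes the constraint). Since $\cKL(x,x)=0$, the inequality $\xi\ge-\cKL$ holds with equality along the diagonal at $\xi\equiv0$, and it is violated by $\xi$ taking a small negative value there. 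The fix is trivial: take $\phi_0\equiv(1,1)$, so $A\phi_0\equiv2>0\ge-\cKL$ with a uniform gap. The remainder of your argument is sound: the diagonal constraint indeed forces every feasible pair to satisfy $q_i<1$ pointwise so the substitution is a genuine bijection; the integrand you feed into Lemma~\ref{lem:IntConjugation} (namely $|t|\cKL(x)$ for $t\le0$, $+\infty$ for $t>0$) is proper and jointly lower-semicontinuous even where $\cKL=+\infty$, with the usual convention $0\cdot\infty=0$; and the direct-method existence argument is standard.
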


\begin{corollary}
\label{cor:HKLowerBound}
	$\HK(\mu_1,\mu_2)^2 \geq (\sqrt{\|\mu_1\|}-\sqrt{\|\mu_2\|})^2$.
\end{corollary}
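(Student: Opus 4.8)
The plan is to read the bound off directly from the cone lifting formulation \eqref{eq:HKLifted}, where it reduces to the pointwise estimate $\Cos\le 1$ followed by a single application of Cauchy--Schwarz. First I would fix an arbitrary competitor $\gamma\in\measpt(\cone^2)$ in \eqref{eq:HKLifted}, so that $\coneProj\pi_{i\sharp}\gamma=\mu_i$ for $i=1,2$, and write a generic point of $\cone^2$ as $((x,r),(y,s))$. Membership in $\measpt(\cone^2)$ guarantees that $\int_{\cone^2} r\,\diff\gamma$ and $\int_{\cone^2} s\,\diff\gamma$ are separately finite, and the marginal constraint, together with $\coneProj\nu=p_\sharp(m\cdot\nu)$ and mass-preservation of push-forward, gives the bookkeeping identities $\int_{\cone^2} r\,\diff\gamma((x,r),(y,s)) = \|\coneProj\pi_{1\sharp}\gamma\| = \|\mu_1\|$ and, symmetrically, $\int_{\cone^2} s\,\diff\gamma = \|\mu_2\|$.

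Next, since $\Cos(|x-y|)\in[0,1]$ and $\sqrt{rs}\ge 0$ on $\cone^2$, the HK cost obeys $c(x,r,y,s)=r+s-2\sqrt{rs}\,\Cos(|x-y|)\ge r+s-2\sqrt{rs}$ pointwise. Integrating this against $\gamma$ and applying Cauchy--Schwarz to the functions $\sqrt{r}$ and $\sqrt{s}$, i.e.\ $\int_{\cone^2}\sqrt{rs}\,\diff\gamma\le\big(\int_{\cone^2} r\,\diff\gamma\big)^{1/2}\big(\int_{\cone^2} s\,\diff\gamma\big)^{1/2}$, I obtain
\begin{align*}
	\int_{\cone^2} c\,\diff\gamma
	&\ge \int_{\cone^2}(r+s)\,\diff\gamma - 2\int_{\cone^2}\sqrt{rs}\,\diff\gamma \\
	&\ge \|\mu_1\|+\|\mu_2\| - 2\sqrt{\|\mu_1\|}\sqrt{\|\mu_2\|} = \big(\sqrt{\|\mu_1\|}-\sqrt{\|\mu_2\|}\big)^2.
\end{align*}
Since $\gamma$ was an arbitrary competitor (the feasible set is nonempty, as every $\mu\in\measp(\Omega)$ admits a cone lift, and a minimizer even exists by Proposition \ref{prop:HKLifted}), taking the infimum over $\gamma$ in \eqref{eq:HKLifted} yields $\HK(\mu_1,\mu_2)^2\ge(\sqrt{\|\mu_1\|}-\sqrt{\|\mu_2\|})^2$.

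I do not expect any substantial obstacle here; the estimate is essentially immediate. The only points deserving a word of care are that the two mass integrals are individually finite — which is precisely what $\gamma\in\measpt(\cone^2)$ provides, legitimizing Cauchy--Schwarz and ruling out any $\infty-\infty$ ambiguity — and the identification of $\int_{\cone^2} r\,\diff\gamma$ with $\|\mu_1\|$ through $\coneProj$. The same two-step computation runs verbatim starting from the semi-coupling formulation \eqref{eq:HKSemiCoupling} (with the densities $\RadNik{\gamma_i}{\gamma}$ in place of $r,s$ and $\gamma$ in place of the cone plan, using $\|\gamma_i\|=\|\pi_{i\sharp}\gamma_i\|=\|\mu_i\|$); one can also derive it from the soft-marginal formulation \eqref{eq:HKSoftMarginal} via $\cKL\ge 0$, Jensen's inequality for $\KL(\pi_{i\sharp}\gamma\,|\,\mu_i)$, and a one-dimensional minimization over the common mass $\|\pi_{1\sharp}\gamma\|=\|\pi_{2\sharp}\gamma\|$, but the cone lifting route is the most transparent.
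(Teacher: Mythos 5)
Your proof is correct, but it takes a genuinely different route from the paper's. The paper reads the bound off the soft-marginal formulation \eqref{eq:HKSoftMarginal}: drop the nonnegative cost term $\cKL\ge 0$, apply the Jensen-type estimate $\KL(\pi_{i\sharp}\gamma\mid\mu_i)\ge\varphi(\|\gamma\|/\|\mu_i\|)\cdot\|\mu_i\|$ for $i=1,2$, and then optimize the resulting scalar expression in $m=\|\gamma\|$, whose minimum $(\sqrt{\|\mu_1\|}-\sqrt{\|\mu_2\|})^2$ is attained at $m=\sqrt{\|\mu_1\|\,\|\mu_2\|}$. You instead argue from the cone lifting formulation \eqref{eq:HKLifted}: the pointwise estimate $c(x,r,y,s)\ge r+s-2\sqrt{rs}=(\sqrt{r}-\sqrt{s})^2$ (since $\Cos\le 1$ and $\sqrt{rs}\ge 0$), the bookkeeping identities $\int r\,\diff\gamma=\|\mu_1\|$ and $\int s\,\diff\gamma=\|\mu_2\|$, and a single application of Cauchy--Schwarz. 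Both arguments are short and elementary; yours avoids the scalar minimization entirely (Cauchy--Schwarz does that work in one step), while the paper's stays on $\Omega^2$ and exercises the $\KL$ structure directly. You correctly flag the soft-marginal route as an alternative at the end of your proposal --- that is exactly the paper's proof.
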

\begin{proof}
	This follows from \eqref{eq:HKSoftMarginal} by using $\cKL\geq 0$ and $\KL(\pi_{i\sharp} \gamma|\mu_i) \geq \varphi(\|\gamma\|/\|\mu_i\|) \cdot \|\mu_i\|$.
\end{proof}

Finally, we complete our collection of formulations with a corresponding dual.
\begin{proposition}[Dual formulation]
\label{prop:HKDual}
\begin{align}
	\HK(\mu_1,\mu_2)^2 & = \sup \left\{
		\sum_{i=1}^N \int_\Omega \psi_i\,\diff \mu_i \,\middle|\,
		\psi_1,\psi_2 \in C(\Omega), \,
		(\psi_1(x_1),\psi_2(x_2)) \in Q(x_1,x_2)\,\forall\,x_1,x_2 \in \Omega
		\right\}
	\label{eq:HKDual}
\end{align}
where the closed convex set $Q(x_1,x_2)$ is characterized by $c^\ast(x_1,\cdot,x_2,\cdot)=\iota_{Q(x_1,x_2)}$ and
\begin{align}
	\label{eq:HKQ}
	Q(x_1,x_2) & = \left\{ (a,b) \in (-\infty,1]^2 \,|\,
		(1-a)\,(1-b) \geq \Cos(|x_1-x_2|)^2 \right\}.
\end{align}
Here $c^\ast(x_1,\cdot,x_2,\cdot)$ denotes the Fenchel--Legendre conjugate of $c$ w.r.t.~the second and fourth arguments for fixed $x_1, x_2 \in \Omega$.
\end{proposition}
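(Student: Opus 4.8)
The plan is to obtain the dual formulation via Fenchel--Rockafellar duality applied to the semi-coupling formulation \eqref{eq:HKSemiCoupling}, following the template of Remark \ref{rem:WDuality}, since that formulation involves only measures on the compact space $\Omega^2$ and so avoids the technicalities of working on the non-compact cone $\cone$. First I would fix the cost $c$ and note that, for each $(x_1,x_2) \in \Omega^2$, the function $(m_1,m_2) \mapsto c(x_1,m_1,x_2,m_2)$ is convex, proper, lower-semicontinuous and positively $1$-homogeneous on $\R^2$ (extended by $+\infty$ off $\R_+^2$), so Lemma \ref{lem:IntConjugation} applies and yields $c^\ast(x_1,\cdot,x_2,\cdot) = \iota_{Q(x_1,x_2)}$ for a closed convex set $Q(x_1,x_2) \subset \R^2$. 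The explicit computation of $Q(x_1,x_2)$ is a direct exercise in two-variable convex conjugation: for fixed $a,b$ one maximizes $a\,m_1 + b\,m_2 - m_1 - m_2 + 2\sqrt{m_1 m_2}\,\Cos(|x_1-x_2|)$ over $m_1,m_2 \ge 0$; writing $u = \sqrt{m_1}$, $v = \sqrt{m_2}$ turns this into a quadratic form in $(u,v)$, which is bounded above (with supremum $0$, by homogeneity) precisely when $1-a \ge 0$, $1-b \ge 0$ and the associated $2\times 2$ matrix $\left(\begin{smallmatrix} 1-a & -\Cos \\ -\Cos & 1-b \end{smallmatrix}\right)$ is positive semidefinite, i.e.\ $(1-a)(1-b) \ge \Cos(|x_1-x_2|)^2$. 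This gives exactly \eqref{eq:HKQ}.

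Next I would set up the Fenchel--Rockafellar pair. On the predual side take $G : \cont(\Omega)^2 \to \R$, $(\psi_1,\psi_2) \mapsto -\sum_{i=1}^2 \int_\Omega \psi_i\,\diff \mu_i$; the constraint functional $F : \cont(\Omega^2)^2 \to \RCupInf$ given by $F(\xi_1,\xi_2) = 0$ if $(\xi_1(x_1,x_2),\xi_2(x_1,x_2)) \in Q(x_1,x_2)$ for all $x_1,x_2$ and $+\infty$ otherwise; and the bounded linear map $A : \cont(\Omega)^2 \to \cont(\Omega^2)^2$, $(\psi_1,\psi_2) \mapsto (\xi_1,\xi_2)$ with $\xi_i(x_1,x_2) = \psi_i(x_i)$ (this is well-defined and continuous since each $\xi_i$ depends on only one variable). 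Then the right-hand side of \eqref{eq:HKDual} equals $-\inf\{G(\psi_1,\psi_2) + F(A(\psi_1,\psi_2))\}$. The qualification hypothesis of Fenchel--Rockafellar holds because $G$ is finite and continuous everywhere, while $F$ is finite and continuous at the admissible point $\xi_1 = \xi_2 \equiv 0$, which lies in the interior of the constraint set (note $(0,0) \in Q(x_1,x_2)$ for all $x_1,x_2$ since $\Cos \le 1$, and the sets $Q(x_1,x_2)$ contain a fixed neighbourhood of the origin uniformly in $(x_1,x_2)$). Computing conjugates: $G^\ast(\rho_1,\rho_2) = \sum_i \iota_{\{-\mu_i\}}(\rho_i)$; by Lemma \ref{lem:IntConjugation}, $F^\ast(\gamma_1,\gamma_2) = \int_{\Omega^2} c(x_1,\RadNik{\gamma_1}{\gamma},x_2,\RadNik{\gamma_2}{\gamma})\,\diff \gamma$ (independent of any dominating $\gamma$, and $+\infty$ unless $\gamma_1,\gamma_2 \ge 0$); and $A^\ast(\gamma_1,\gamma_2) = (\pi_{1\sharp}\gamma_1, \pi_{2\sharp}\gamma_2)$, where one uses that the adjoint of $(\psi_1,\psi_2) \mapsto (\psi_1 \circ \pi_1, \psi_2 \circ \pi_2)$ is $(\gamma_1,\gamma_2) \mapsto (\pi_{1\sharp}\gamma_1, \pi_{2\sharp}\gamma_2)$. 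Fenchel--Rockafellar then rewrites the right-hand side of \eqref{eq:HKDual} as $\inf\{G^\ast(-A^\ast(\gamma_1,\gamma_2)) + F^\ast(\gamma_1,\gamma_2)\}$, which is precisely the constraint $\pi_{i\sharp}\gamma_i = \mu_i$, $\gamma_i \ge 0$, together with the objective of \eqref{eq:HKSemiCoupling}; hence it equals $\HK(\mu_1,\mu_2)^2$.

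The main obstacle I anticipate is the careful verification of the conjugate formulas, and in particular getting Lemma \ref{lem:IntConjugation} to apply cleanly: one must check that $c^\ast$ computed arguments-wise genuinely integrates to $F^\ast$ as an integral functional on $\meas(\Omega^2)^2$, which requires the measurability/lower-semicontinuity of $(x_1,x_2) \mapsto c(x_1,\cdot,x_2,\cdot)$ in the sense of Lemma \ref{lem:IntConjugation} (continuity of $\Cos(|x_1-x_2|)$ suffices) and a matching of the identification of the dual of $\cont(\Omega^2)^2$ with $\meas(\Omega^2)^2$. A subtler point is ensuring that no duality gap occurs, i.e.\ that the value of \eqref{eq:HKSemiCoupling} is attained and finite (true by the cited existence result and by boundedness of $\Omega$), so that the equality of primal and dual is genuine rather than merely an inequality. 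Once these bookkeeping points are settled, the identification of $Q(x_1,x_2)$ in \eqref{eq:HKQ} is the only genuinely computational step, and it is short.
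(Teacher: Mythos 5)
Your proposal follows the same Fenchel--Rockafellar route as the paper's Remark \ref{rem:HKDual}, applied to the compact semi-coupling formulation \eqref{eq:HKSemiCoupling}; the choices of $G$, $F$, $A$, the use of Lemma \ref{lem:IntConjugation} to identify $F^\ast$, and the adjoint $A^\ast$ all coincide with the paper's argument, and your explicit derivation of $Q(x_1,x_2)$ via the quadratic form in $(\sqrt{m_1},\sqrt{m_2})$ is correct (indeed more explicit than what the paper records).

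There is, however, a genuine error in the constraint-qualification step. You claim that $F$ is finite and continuous at $\xi_1=\xi_2\equiv 0$ because the sets $Q(x_1,x_2)$ contain a fixed neighbourhood of the origin uniformly in $(x_1,x_2)$. This is false: when $x_1=x_2$ one has $\Cos(|x_1-x_2|)=1$, so $Q(x_1,x_2)=\{(a,b)\in(-\infty,1]^2 : (1-a)(1-b)\geq 1\}$, and $(0,0)$ satisfies $(1-0)(1-0)=1$ with equality -- it lies on the boundary, not in the interior. Hence no sup-norm ball around $\xi_1=\xi_2\equiv 0$ is contained in $\{F=0\}$, $F$ is not continuous there, and the qualification hypothesis is not met at that point. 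The paper instead checks the condition at $A(x\mapsto -1, y\mapsto -1)$, i.e.\ the constant pair $(-1,-1)$; there $(1-(-1))(1-(-1))=4>1\geq\Cos(|x_1-x_2|)^2$ with strict inequality uniformly over $\Omega^2$, so $F$ is indeed finite and continuous. Replacing your qualification point by $(-1,-1)$ repairs the argument, and the rest of your proof then goes through exactly as written.
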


\subsection{Some equivalence proofs}
Equivalence of all formulations in Sections \ref{sec:HKDynamic} and \ref{sec:HKStatic} has already been established in \cite{LieroMielkeSavare-HellingerKantorovich-2015a} and \cite{ChizatDynamicStatic2018}. However, analogous to Section \ref{sec:W} we provide here some selected arguments in preparation for later proofs.
To the best of our knowledge the construction given in Remark \ref{rem:HKLiftedSCEquiv} is new and we will reuse it for Theorem \ref{thm:HKMMSemiCoupling}.
A sketch on how to prove the equivalence of \eqref{eq:HKSoftMarginal} can be found in Section \ref{sec:NonExistence}.

\begin{remark}[Equivalence between cone lifting and semi-coupling formulation]
\label{rem:HKLiftedSCEquiv}
\hfill\\
\textbf{Part 1: \eqref{eq:HKSemiCoupling} $\geq$ \eqref{eq:HKLifted}.}
Let $\mu_1,\mu_2 \in \measp(\Omega)$ be given.
Let $\gamma, \gamma_1, \gamma_2$ be minimizers in \eqref{eq:HKSemiCoupling}. Then $\gamma_1,\gamma_2 \ll \gamma$ and we abbreviate $u_i \assign \RadNik{\gamma_i}{\gamma}$. One has $u_1,u_2 \in L^1(\Omega^2,\gamma)$ and in particular they are measurable. Denote by $((\pi_1,u_1),(\pi_2,u_2))$ the map $\Omega^2 \to \cone^2$ that assigns $(x_1,x_2) \mapsto ((x_1,u_1(x_1,x_2)),(x_2,u_2(x_1,x_2)))$.
Set now $\hat{\gamma} \assign ((\pi_1,u_1),(\pi_2,u_2))_\sharp \gamma$. For $\phi \in \cont(\Omega)$ one has
\begin{align*}
	\int_\Omega \phi\,\diff \coneProj \pi_{i\sharp} \hat{\gamma}
	& = \int_{\cone^2} \phi(x_i)\,m_i\,\diff \hat{\gamma}((x_1,m_1),(x_2,m_2))
	= \int_{\Omega^2} \phi(x_i)\,u_i(x_1,x_2)\,\diff \gamma(x_1,x_2) \\
	& = \int_{\Omega^2} \phi(x_i)\,\diff \gamma_i(x_1,x_2)
	= \int_{\Omega} \phi\,\diff \pi_{i\sharp} \gamma_i
	= \int_{\Omega} \phi\,\diff \mu_i	
\end{align*}
and therefore $\coneProj \pi_{i\sharp} \hat{\gamma} = \mu_i$ for $i=1,2$.
Setting $\phi(x)=1$ above we find that $\hat{\gamma} \in \meast(\cone^2)$ and since $\hat{\gamma}$ is the push-forward of a non-negative measure it is non-negative. Therefore, $\hat{\gamma}$ is feasible in \eqref{eq:HKLifted} and we find
\begin{align*}
	\tn{\eqref{eq:HKLifted}} & \leq \int_{\cone^2} c(x_1,m_1,x_2,m_2)\,\diff \hat{\gamma}((x_1,m_1),(x_2,m_2)) \\
	& = \int_{\Omega^2} c(x_1,u_1(x_1,x_2),x_2,u_2(x_1,x_2))\,\diff \gamma(x_1,x_2) = \tn{\eqref{eq:HKSemiCoupling}}.
\end{align*}

\textbf{Part 2: \eqref{eq:HKSemiCoupling} $\leq$ \eqref{eq:HKLifted}.}
Let now $\hat{\gamma}$ be a minimizer of \eqref{eq:HKLifted} and set $\gamma \assign (p,p)_\sharp \hat{\gamma}$, $\gamma_i \assign (p,p)_\sharp m_i \cdot \hat{\gamma}$ for $i=1,2$.
Since $m_i \in L^1(\cone^2,\hat{\gamma})$ by construction we have $\gamma_i \ll \gamma$.
For $\phi \in \cont(\Omega)$ we find
\begin{align*}
	\int_{\Omega^2} \phi(x_i)\,\diff \gamma_i(x_1,x_2) & = \int_{\cone^2} \phi(x_i)\,m_i\,\diff \hat{\gamma}((x_1,m_1),(x_2,m_2))
	= \int_{\Omega} \phi\,\diff (\coneProj \pi_{i\sharp} \hat{\gamma}) = \int_\Omega \phi\, \diff \mu_i
\end{align*}
and thus $(\gamma,\gamma_1,\gamma_2)$ are feasible for \eqref{eq:HKSemiCoupling}.
Let $(\hat{\gamma}^{(x_1,x_2)})_{(x_1,x_2) \in \Omega^2}$ be the disintegration of $\hat{\gamma}$	with respect to its $\Omega^2$-marginal $\gamma$, i.e.~for $\phi \in L^1(\cone^2,\hat{\gamma})$ one has
\begin{align*}
	\int_{\cone^2} \phi\,\diff \hat{\gamma}
	= \int_{\Omega^2} \left[ \int_{\R_+^2} \phi(x_1,m_1,x_2,m_2)\,\diff \hat{\gamma}^{(x_1,x_2)}(m_1,m_2) \right]
		\diff \gamma(x_1,x_2).	
\end{align*}
For $\phi \in \cont(\Omega^2)$ one finds
\begin{align*}
	\int_{\Omega^2} \phi\,\diff \gamma_i
	& = \int_{\cone^2} \phi(x_1,x_2)\,m_i\,\diff \hat{\gamma}(x_1,m_1,x_2,m_2) \\
	& = \int_{\Omega^2} \phi(x_1,x_2) \left[
		\int_{\R_+^2} m_i\,\diff \hat{\gamma}^{(x_1,x_2)}(m_1,m_2) \right] \diff \gamma(x_1,x_2).
\end{align*}
and we conclude that $\gamma$-almost everywhere
\begin{align}
	u_i(x_1,x_2) & \assign \RadNik{\gamma_i}{\gamma}(x_1,x_2) = \int_{\R_+^2} m_i\,\diff \hat{\gamma}^{(x_1,x_2)}(m_1,m_2).
	\label{eq:ProofLiftedSCEquivDensity}
\end{align}
Finally, we find
\begin{align*}
	\tn{\eqref{eq:HKSemiCoupling}} & \leq \int_{\Omega^2} c(x_1,u_1(x_1,x_2),x_2,u_2(x_1,x_2))\,\diff \gamma(x_1,x_2) \\
	& \leq \int_{\Omega^2} \left[ \int_{\R_+^2} c(x_1,m_1,x_2,m_2)\,\diff \hat{\gamma}^{(x_1,x_2)}(m_1,m_2) \right]
		\diff \gamma(x_1,x_2) \\
	& = \int_{\cone^2} c(x_1,m_1,x_2,m_2)\,\diff \hat{\gamma}((x_1,m_1),(x_2,m_2)) = \tn{\eqref{eq:HKLifted}}
\end{align*}
where the second inequality is due to Jensen, \eqref{eq:ProofLiftedSCEquivDensity} and the joint convexity of $c$ in the mass arguments.
\end{remark}
\begin{remark}[Duality]
\label{rem:HKDual}
We now show duality between \eqref{eq:HKSemiCoupling} and \eqref{eq:HKDual}.
Standard arguments can be applied since \eqref{eq:HKSemiCoupling} only involves measures on compact spaces (as opposed to \eqref{eq:HKLifted}).
In analogy to Remark \ref{rem:WDuality} we can reformulate \eqref{eq:HKDual} as
\begin{align*}
	\tn{\eqref{eq:HKDual}} = -\inf \{ G(\psi_1,\psi_2) + F(A(\psi_1,\psi_2)) | (\psi_1,\psi_2) \in \cont(\Omega)^2 \}
\end{align*}
by choosing
\begin{align*}
G & : \cont(\Omega)^2 \to \R, & (\psi_1,\psi_2) & \mapsto -\sum_{i=1}^2 \int_\Omega \psi_i\,\diff \mu_i, \\
F & : \cont(\Omega^2)^2 \to \RCupInf, & (\phi_1,\phi_2) & \mapsto \begin{cases}
	0 & \tn{if } (\phi_1(x_1,x_2),\phi_2(x_1,x_2)) \in Q(x_1,x_2) \\
		& \tn{for all } x_1,x_2 \in \Omega, \\
	+\infty & \tn{else,}
	\end{cases} \\
A & : \cont(\Omega)^2 \to \cont(\Omega^2)^2, & (\psi_1,\psi_2) & \mapsto (\phi_1,\phi_2)
\tn{ with } \phi_i(x_1,x_2) = \psi_i(x_i)\,.
\end{align*}
Note that $F$ and $G$ are convex, $G$ is continuous, at $A(x \mapsto -1,y \mapsto -1)$ the function $F$ is finite and continuous and $A$ is a bounded linear map.
For the conjugates of $F$ and $G$ and the adjoint of $A$ one finds (once more invoking Lemma \ref{lem:IntConjugation}):
\begin{align*}
	G^\ast & : \meas(\Omega)^2 \to \RCupInf, & (\rho_1,\rho_2) & \mapsto \sum_{i=1}^2 \iota_{\{-\mu_i\}}(\rho_i), \\
	F^\ast & : \meas(\Omega^2)^2 \to \RCupInf, & (\gamma_1,\gamma_2) & \mapsto
		\int_{\Omega^2}
		c\big(x_1,\RadNik{\gamma_1}{\gamma},x_2,\RadNik{\gamma_2}{\gamma}\big)
		\diff \gamma(x_1,x_2), \\
	A^\ast & : \meas(\Omega^2)^2 \to \meas(\Omega)^2, & (\gamma_1,\gamma_2) & \mapsto (\pi_{1\sharp}\gamma_1,\pi_{2\sharp} \gamma_2)\,,
\end{align*}
where $\gamma \in \measp(\Omega^2)$ in the expression of $F^\ast$ is some measure with $\gamma_i \ll \gamma$. Due to the joint positive 1-homogeneity of $c$ in the mass arguments the definition of $F^\ast$ does not depend on the choice of $\gamma$, as long as $\gamma_i \ll \gamma$.
Then one finds
\begin{align*}
	\tn{\eqref{eq:HKLifted}} = \inf\left\{ G^\ast(-A^\ast (\gamma_1,\gamma_2))+F^\ast(\gamma_1,\gamma_2) \middle| \gamma_1,\gamma_2 \in \measp(\Omega^2) \right\}
\end{align*}
and thus equivalence of \eqref{eq:HKLifted} and \eqref{eq:HKDual} by Fenchel--Rockafellar duality.
\end{remark}

\section{HK barycenter: coupled-two-marginal formulation}
\label{sec:HKCTM}
\subsection{Definition, existence and uniqueness of HK barycenter}
In analogy to Definition \ref{def:WCTM} in this section we introduce a coupled-two-marginal formulation of the $\HK$ barycenter by minimizing a (weighted) sum of squared distances to the reference measures.
This formulation is suitable for showing existence and uniqueness of the barycenter (under suitable assumptions), as well as for numerical approximation, see \cite{ChizatEntropicNumeric2018}.
\begin{definition}[Coupled-two-marginal formulation of $\HK$ barycenter]
For $\mu_1,\ldots,\allowbreak\mu_N \!\in \!\measp(\Omega)$ set
\begin{align}
	\HKCTM(\mu_1,\ldots,\mu_N)^2 & \assign
	\inf \left\{ \sum_{i=1}^N \lambda_i\,\HK(\mu_i,\nu)^2 \middle|
		\nu \in \measp(\Omega)
		\right\}.
	\label{eq:HKCTM}
\end{align}
\end{definition}
Note that we can use all three definitions, (\ref{eq:HKLifted}, \ref{eq:HKSemiCoupling}, \ref{eq:HKSoftMarginal}) or the corresponding dual formulation \eqref{eq:HKDual} for $\HK(\cdot,\cdot)^2$ in \eqref{eq:HKCTM} and this flexibility is very convenient in proofs.

\begin{proposition}[Existence]
	\label{prop:HKCTMExistence}
	For $\mu_1,\ldots,\mu_N \in \measp(\Omega)$ minimizers $\nu$ in \eqref{eq:HKCTM} exist. We call these minimizers $\HK$-barycenters of the tuple $(\mu_1,\ldots,\mu_N)$ with weights $(\lambda_1,\ldots,\lambda_N)$.
\end{proposition}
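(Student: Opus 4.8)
The plan is to use the direct method of the calculus of variations. Let $(\nu_k)_{k \in \N} \subset \measp(\Omega)$ be a minimizing sequence for \eqref{eq:HKCTM}, so that $\sum_{i=1}^N \lambda_i\,\HK(\mu_i,\nu_k)^2$ converges to the infimum, which is finite (it is bounded above by, e.g., taking $\nu = 0$, since $\HK(\mu_i, 0)^2 = \|\mu_i\| < \infty$ by Proposition \ref{prop:HKDirac} applied with $m_2 = 0$, or directly from \eqref{eq:HKSoftMarginal}). First I would extract a uniform mass bound on $\nu_k$: by Corollary \ref{cor:HKLowerBound}, $\HK(\mu_i,\nu_k)^2 \geq (\sqrt{\|\mu_i\|} - \sqrt{\|\nu_k\|})^2$, so if $\|\nu_k\|$ were unbounded along a subsequence the objective would blow up, contradicting that $(\nu_k)$ is minimizing. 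Hence $\sup_k \|\nu_k\| < \infty$, and since $\Omega$ is compact, $(\nu_k)$ is a bounded subset of $\meas(\Omega) = \cont(\Omega)^\ast$; by Banach--Alaoglu (sequential, as $\cont(\Omega)$ is separable) we may pass to a subsequence with $\nu_k \rightharpoonup^\ast \nu$ for some $\nu \in \meas(\Omega)$, and $\nu \geq 0$ since the cone $\measp(\Omega)$ is weak$\ast$ closed.

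The remaining and main point is weak$\ast$ lower semicontinuity of $\nu \mapsto \sum_{i=1}^N \lambda_i\,\HK(\mu_i,\nu)^2$. Since the $\lambda_i$ are fixed positive constants, it suffices to show that $\nu \mapsto \HK(\mu_i,\nu)^2$ is weak$\ast$ lower semicontinuous on bounded subsets of $\measp(\Omega)$ for each fixed $\mu_i$. Here is where having several formulations available pays off: I would use the dual formulation \eqref{eq:HKDual}, which exhibits $\HK(\mu_i,\nu)^2$ as a supremum over a fixed family of pairs $(\psi_1,\psi_2) \in \cont(\Omega)^2$ (namely those with $(\psi_1(x_1),\psi_2(x_2)) \in Q(x_1,x_2)$ for all $x_1,x_2$) of the affine-in-$\nu$ functional $\int_\Omega \psi_1\,\diff \mu_i + \int_\Omega \psi_2\,\diff \nu$. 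For each such admissible pair the map $\nu \mapsto \int_\Omega \psi_1\,\diff \mu_i + \int_\Omega \psi_2\,\diff \nu$ is weak$\ast$ continuous because $\psi_2 \in \cont(\Omega)$ and $\Omega$ is compact; a pointwise supremum of continuous functions is lower semicontinuous, giving the claim. (Alternatively one can argue directly from \eqref{eq:HKSoftMarginal}, using joint weak$\ast$ lower semicontinuity of $\gamma \mapsto \int \cKL \,\diff\gamma$ and of the Kullback--Leibler term together with weak$\ast$ compactness of the optimal $\gamma_k$, but the dual route is cleaner.)

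Putting these together: $\HKCTM(\mu_1,\ldots,\mu_N)^2 \leq \sum_{i=1}^N \lambda_i\,\HK(\mu_i,\nu)^2 \leq \liminf_{k} \sum_{i=1}^N \lambda_i\,\HK(\mu_i,\nu_k)^2 = \HKCTM(\mu_1,\ldots,\mu_N)^2$, so $\nu$ is a minimizer. The only genuine obstacle is the lower semicontinuity step, and the reason it is not entirely trivial is that $\HK(\mu_i,\cdot)^2$ is defined by an infimum (over transport plans), not a supremum; passing to the dual \eqref{eq:HKDual} converts it into a supremum of weak$\ast$-continuous functionals and resolves this cleanly. I would also remark, for completeness, that one need not impose any mass-matching constraint between $\nu$ and the $\mu_i$: unlike the Wasserstein case, $\HK(\mu_i,\nu)^2$ is finite for all $\nu \in \measp(\Omega)$, which is precisely why the cheap $\nu = 0$ competitor shows the infimum is finite and why the coercivity argument via Corollary \ref{cor:HKLowerBound} is the right substitute for the automatic mass constraint used in the Wasserstein setting.
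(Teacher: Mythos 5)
Your proof is correct, but the lower semicontinuity step is argued by a genuinely different route than the paper's. The paper simply invokes \cite[Theorem 7.15]{LieroMielkeSavare-HellingerKantorovich-2015a}, which says that $\HK$ metrizes weak$\ast$ convergence on $\measp(\Omega)$ (for compact $\Omega$), and from this deduces that $\nu \mapsto \HK(\mu_i,\nu)^2$ is weak$\ast$ lower semicontinuous (in fact continuous, since $|\HK(\mu_i,\nu_k)-\HK(\mu_i,\nu)| \le \HK(\nu_k,\nu) \to 0$). You instead derive lower semicontinuity directly from the Kantorovich-type dual \eqref{eq:HKDual}: $\HK(\mu_i,\cdot)^2$ is a pointwise supremum, over a fixed admissible class of test functions, of weak$\ast$-continuous affine functionals $\nu \mapsto \int\psi_1\,\diff\mu_i + \int\psi_2\,\diff\nu$, hence weak$\ast$ lower semicontinuous. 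Both are sound. The paper's argument is shorter on the page but leans on a nontrivial external metrization result; yours is longer but stays entirely within the duality machinery already developed in Section \ref{sec:HK} and gives exactly the semicontinuity needed, nothing more. The metrization result does give a strictly stronger conclusion (continuity rather than just lower semicontinuity), which is occasionally useful elsewhere, but for this existence proof the dual route is self-contained and arguably cleaner. Your coercivity and compactness steps coincide with the paper's.
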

\begin{proof}
	Let $(\nu_k)_k$ be a minimizing sequence in \eqref{eq:HKCTM}.
	From the lower bound $\HK(\mu,\nu)^2 \geq (\sqrt{\|\mu\|}-\sqrt{\|\nu\|})^2$ (Corollary \ref{cor:HKLowerBound}) one concludes that the mass of $(\nu_k)_k$ must be uniformly bounded and thus that the sequence must have a weak$\ast$ cluster point $\nu$.
	
	As $\HK$ metrizes weak$\ast$ convergence on $\measp(\Omega)$ \cite[Theorem 7.15]{LieroMielkeSavare-HellingerKantorovich-2015a} it is weak$\ast$ lower-semi\-con\-tinuous and thus so is the objective in \eqref{eq:HKCTM}. Therefore, $\nu$ must be a minimizer.
\end{proof}

\begin{proposition}[Uniqueness]
	When at least one $\mu_i$ is Lebesgue-absolutely continuous, the $\HK$-barycenter is unique.
\end{proposition}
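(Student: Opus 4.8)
The plan is to show that the barycenter functional $J(\nu):=\sum_{i=1}^N\lambda_i\,\HK(\mu_i,\nu)^2$ becomes strictly convex on $\measp(\Omega)$ as soon as one marginal, say $\mu_1$, is Lebesgue-absolutely continuous; uniqueness of the minimiser furnished by Proposition~\ref{prop:HKCTMExistence} then follows at once. This is exactly the alternative route to the Wasserstein barycenter used in \cite{BrendanInfMarginal2013,FixedPointWassersteinBarycenters2016}: $\nu\mapsto\HK(\mu_i,\nu)^2$ is convex for every $i$, so $J$ is, and for the absolutely continuous marginal it is in fact \emph{strictly} convex because the optimal transport out of $\mu_1$ is unique and rigid.

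First I would record the convexity. In either the semi-coupling formulation \eqref{eq:HKSemiCoupling} or the soft-marginal formulation \eqref{eq:HKSoftMarginal}, each summand $\HK(\mu_i,\cdot)^2$ is the value of a minimisation, over the coupling, of a jointly convex functional whose feasible set depends affinely on $\nu$ — for \eqref{eq:HKSoftMarginal} one only needs joint convexity of $\KL$; for \eqref{eq:HKSemiCoupling}, convexity of the integral functional $F^\ast$ from Remark~\ref{rem:HKDual} together with the joint positive $1$-homogeneity of $c$. Hence each summand, and therefore $J$, is convex, so the set of $\HK$-barycenters is convex. Suppose it contained two distinct elements $\nu_0\neq\nu_1$ and put $\nu_{1/2}:=\tfrac12(\nu_0+\nu_1)$. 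Then $\nu_{1/2}$ is again a barycenter and, every summand being convex and every $\lambda_i>0$, equality must propagate to each term; in particular
\[
	\HK(\mu_1,\nu_{1/2})^2 \;=\; \tfrac12\,\HK(\mu_1,\nu_0)^2 + \tfrac12\,\HK(\mu_1,\nu_1)^2 .
\]

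Next I would exploit the absolutely continuous marginal via the $\HK$-analogue of Brenier's theorem (see \cite{LieroMielkeSavare-HellingerKantorovich-2015a}): for $\mu_1\ll\Lebesgue$ the optimal coupling in \eqref{eq:HKSemiCoupling} between $\mu_1$ and any $\sigma\in\measp(\Omega)$ is unique and induced by a Borel map $T$, with a growth density $q\geq0$ and a ``created'' component $\kappa$ (supported off $\mathrm{graph}\,T$), so that $\gamma_1=(\tn{id},T)_\sharp\mu_1$, $\gamma_2=(\tn{id},T)_\sharp(q\cdot\mu_1)+\kappa$ and $\sigma=T_\sharp(q\cdot\mu_1)+\sigma^{\tn{cr}}$. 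I would take such optimal data $(T^j,q^j,\kappa^j)$ for $\sigma=\nu_j$, $j=0,1$, and form the arithmetic means of the corresponding $(\gamma_1,\gamma_2,\gamma)$. The averaged triple is admissible for $\HK(\mu_1,\nu_{1/2})^2$, and by the displayed equality together with convexity of $c$ in its mass arguments it is optimal, so uniqueness identifies it with the optimal map for $(\mu_1,\nu_{1/2})$. Disintegrating $\tfrac12(\tn{id},T^0)_\sharp\mu_1+\tfrac12(\tn{id},T^1)_\sharp\mu_1=(\tn{id},T^{1/2})_\sharp\mu_1$ with respect to $\mu_1$ gives $\tfrac12\delta_{T^0(x)}+\tfrac12\delta_{T^1(x)}=\delta_{T^{1/2}(x)}$ for $\mu_1$-a.e.\ $x$, hence $T^0=T^1=:T$; and comparing the cost of the averaged triple with $\tfrac12(\cdots)+\tfrac12(\cdots)$, using that $m\mapsto c(x,1,y,m)=1+m-2\sqrt m\,\Cos(|x-y|)$ is \emph{strictly} convex when $|x-y|<\tfrac\pi2$ while $q$ vanishes wherever $|x-y|\geq\tfrac\pi2$, forces $q^0=q^1$ $\mu_1$-a.e. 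Thus the transported parts coincide: $T_\sharp(q^0\mu_1)=T_\sharp(q^1\mu_1)$.

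What remains, and what I expect to be the main obstacle (together with making precise the map-structure input above in the form needed), is to identify the created parts $\nu_0^{\tn{cr}}$ and $\nu_1^{\tn{cr}}$. A useful preliminary is that a barycenter carries no mass at distance $\geq\tfrac\pi2$ from $\bigcup_i\mathrm{supp}\,\mu_i$ — such mass costs one unit per unit mass in every $\HK(\mu_i,\cdot)^2$ and may be deleted — so each $\nu_j^{\tn{cr}}$ is supported near some $\mu_i$ with $i\neq1$. One then reruns the analysis of the previous paragraph for those indices: even when $\mu_i$ is not absolutely continuous, strict convexity of $\KL(\,\cdot\,\vert\,\mu_i)$ in its first argument still forces the $\mu_i$-side marginals of the two optimal plans to agree, and combined with the partial rigidity already obtained this should pin down $\nu_0=\nu_1$ on the mass reached from the remaining marginals. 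Since every point carrying barycenter mass is reached from at least one marginal, chaining over $i$ yields $\nu_0=\nu_1$ — contradicting $\nu_0\neq\nu_1$ — so $J$ is strictly convex and the $\HK$-barycenter is unique. (If $\mathrm{diam}\,\Omega<\tfrac\pi2$ there is neither created nor destroyed mass, and the first three steps already conclude.)
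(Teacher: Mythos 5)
Your overall strategy coincides with the paper's: average two candidate barycenters, invoke convexity to force equality term by term, exploit the Monge structure of the optimal $\HK$-plan out of the absolutely continuous marginal $\mu_1$ (the same ingredient the paper pulls from \cite[Theorem 6.6]{LieroMielkeSavare-HellingerKantorovich-2015a}) to obtain a common transport map $T$, and then use a strict-convexity mechanism to conclude. Steps 1--3 of your sketch (convexity of $J$, equality propagating to each summand, $T^0=T^1$ and $q^0=q^1$ via strict convexity of the map $m\mapsto c(x,1,y,m)$ when $|x-y|<\pi/2$) are sound and run parallel to the paper's treatment of the first marginal. Where the two proofs diverge is in how the barycenter $\nu$ itself is pinned down after the map and growth density are fixed. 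The paper stays entirely in the soft-marginal picture: once the common $t$ and the common $\tilde\gamma=\pi_{1\sharp}\hat\gamma_1$ are in place (the latter via strict convexity of $\tilde\gamma\mapsto\KL(\tilde\gamma|\mu_1)$), it invokes strict convexity of $\nu\mapsto\KL(t_\sharp\tilde\gamma|\nu)$, a single term of the objective involving only the first marginal, to force $\nu^1=\nu^2$ in one step. You instead switch to the semi-coupling picture, separate off a \emph{created} component $\kappa$ explicitly, and propose to handle it by re-running a rigidity argument for the remaining indices.

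It is precisely this last step that does not close, and the gap is more than cosmetic. First, when $\mu_i$ is not absolutely continuous (which is exactly the case you are in for $i\neq 1$), the optimal plan $\gamma_i$ need not be concentrated on a graph; strict convexity of $\sigma\mapsto\KL(\sigma|\mu_i)$ controls only the $\mu_i$-side marginal $\pi_{1\sharp}\gamma_i$, and says nothing about the $\nu$-side marginal $\pi_{2\sharp}\gamma_i$, which is what actually builds the barycenter. Second, the uniqueness you invoke in identifying the averaged triple with ``the'' optimal coupling for $(\mu_1,\nu_{1/2})$ holds only on the graph part: on the created part the integrand $c(x,0,y,m)=m$ is independent of $x$, so the semi-coupling $\gamma$ (and the first-coordinate placement of $\kappa$) is genuinely non-unique, and this freedom cannot be wished away. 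Third, the ``chaining over $i$'' is unsubstantiated: a point of $\nu$ at distance $<\pi/2$ from several $\mathrm{supp}\,\mu_i$ may receive contributions from more than one marginal simultaneously, so the created mass does not decompose into disjoint pieces, each attributable to a single $i$ whose rigidity one can invoke in isolation. You flag this yourself (``should pin down'', ``the main obstacle''), and rightly so — as written the argument stops short of a proof. The paper's route sidesteps the explicit created-mass bookkeeping by noticing that, after $t$ and $\tilde\gamma$ are fixed, the $\KL$ penalty between $t_\sharp\tilde\gamma$ and $\nu$ already lives in the objective and supplies the needed convexity in $\nu$; that is the missing idea here.
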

\begin{proof}
	W.l.o.g.~assume $\mu_1 \ll \Lebesgue$.
	Plugging the soft-marginal formulation of $\HK$, \eqref{eq:HKSoftMarginal}, into \eqref{eq:HKCTM} we find that
	\begin{align*}
		\HKCTM(\mu_1,\ldots,\mu_N)^2 & =
		\inf \left\{ \hat{E}(\nu,\gamma_1,\ldots,\gamma_N) \middle|
			 \nu \in \measp(\Omega), \gamma_1,\ldots,\gamma_N \in \measp(\Omega^2)
			\right\}
		\intertext{with}
		\hat{E}(\nu,\gamma_1,\ldots,\gamma_N) & \assign \sum_{i=1}^N \lambda_i \left[ \int_{\Omega^2} \cKL \,\diff \gamma_i + \KL(\pi_{1\sharp} \gamma_i|\mu_i)
			+ \KL(\pi_{2\sharp} \gamma_i|\nu)\right]
	\end{align*}
	Existence of minimizers for $\hat{E}$ follows from combining Propositions \ref{prop:HKSoftMarginal} and \ref{prop:HKCTMExistence}.
	Let $(\hat{\nu}^1,\allowbreak\hat{\gamma}^1_1,\allowbreak\ldots,\allowbreak\hat{\gamma}^1_N)$ and $(\hat{\nu}^2,\allowbreak \hat{\gamma}^2_1,\allowbreak \ldots,\allowbreak\hat{\gamma}^2_N)$ be two minimizers of $\hat{E}$ (in particular $\hat{\nu}^1$ and $\hat{\nu}^2$ are two $\HK$-barycenters).
	Set now
	\begin{align*}
		(\hat{\nu}^3,\hat{\gamma}^3_1,\ldots,\hat{\gamma}^3_N) \assign \tfrac12
			\sum_{j=1}^2 (\hat{\nu}^j,\hat{\gamma}^j_1,\ldots,\hat{\gamma}^j_N).
	\end{align*}
	Since $\KL$ is jointly convex in its two arguments, $\hat{E}$ is jointly convex in all its arguments.
	Therefore, $(\hat{\nu}^3,\hat{\gamma}^3_1,\ldots,\hat{\gamma}^3_N)$ must also minimize $\hat{E}$.
	
	Note that $\hat{\gamma}^j_1$ must be a minimizer for $\HK(\mu_1,\hat{\nu}^j)^2$ in the formulation \eqref{eq:HKSoftMarginal} for $j \in \{1,2,3\}$.
	Therefore, since $\mu_1 \ll \Lebesgue$ by assumption, from \cite[Theorem 6.6]{LieroMielkeSavare-HellingerKantorovich-2015a} it follows that $\hat{\gamma}^j_1$ is concentrated on the graph of a map $t^j : \Omega \to \Omega$ and can thus be written as $\hat{\gamma}^j_1=(\id,t^j)_\sharp \tilde{\gamma}^j$ for some $\tilde{\gamma}^j \in \measp(\Omega)$ and $(\id,t^j)$ takes $x \mapsto (x,t^j(x))$ for $j \in \{1,2,3\}$.
	
	Since we have $\hat{\gamma}^3=\tfrac12(\hat{\gamma}^1 +\hat{\gamma}^2)$ we find that there must be a single map $t : \Omega \to \Omega$ such that $\hat{\gamma}^j=(\id,t)_\sharp \tilde{\gamma}^j$ for $j \in \{1,2,3\}$ with $\tilde{\gamma}^3=\tfrac12 (\tilde{\gamma}^1+\tilde{\gamma}^2)$.
	Let now
	\begin{align*}
		\tilde{E} & : \measp(\Omega)^2 \times \measp(\Omega^2)^{N-1} \to \RCupInf, &
		(\nu,\tilde{\gamma},\gamma_2,\ldots,\gamma_N) \mapsto \hat{E}(\nu,(\id,t)_\sharp \tilde{\gamma},\gamma_2,\ldots,\gamma_N).
	\end{align*}
	We see directly that
	\begin{align*}	
	\tilde{E}(\hat{\nu}^j,\tilde{\gamma}^j,\hat{\gamma}_2^j,\ldots,\hat{\gamma}_N^j)=
		\hat{E}(\hat{\nu}^j,\hat{\gamma}_1^j,\hat{\gamma}_2^j,\ldots,\hat{\gamma}_N^j)
		\qquad \tn{for} \qquad j \in \{1,2,3\}.
	\end{align*}
	More explicitly one finds that
	\begin{multline*}
		\tilde{E}(\nu,\tilde{\gamma},\gamma_2,\ldots,\gamma_N) =
			\lambda_1 \left[ \int_\Omega \cKL(x,t(x))\,\diff \tilde{\gamma}(x) + \KL(\tilde{\gamma}|\mu_1) + \KL(t_\sharp \tilde{\gamma}|\nu) \right] \\
			+ \sum_{i=2}^N \lambda_i \left[ \int_{\Omega^2} \cKL \,\diff \gamma_i + \KL(\pi_{1\sharp} \gamma_i|\mu_i)
			+ \KL(\pi_{2\sharp} \gamma_i|\nu)\right].
	\end{multline*}
	From convexity of $\tilde{E}$, strict convexity of $\tilde{\gamma} \mapsto \KL(\tilde{\gamma}|\mu_1)$ and from equality of $\tilde{E}(\hat{\nu}^j,\tilde{\gamma}^j,\hat{\gamma}_2^j,\ldots,\hat{\gamma}_N^j)$ for $j\in \{1,2,3\}$ we find that all $\tilde{\gamma}^j$ must be equal. Since $\nu \mapsto \KL(t_\sharp \tilde{\gamma}^j|\nu)$ is strictly convex, again equality of $\tilde{E}$ for all three candidates then implies that all $\hat{\nu}^j$ must agree and that thus the barycenter is unique.
\end{proof}
\begin{remark}
	The same proof strategy also applies to the standard Wasserstein barycenter, and provides an alternative to the proof of uniqueness given in \cite{WassersteinBarycenter}.
	In that case the equivalent of $\hat{E}$ is given by
	\begin{align*}
		\hat{E}(\nu,\gamma_1,\ldots,\gamma_N) \assign
			\sum_{i=1}^N \lambda_i \left[ \int_{\Omega}^2 |x-y|^2\,\diff \gamma_i(x,y) + \iota_{\{\mu_i\}}(\pi_{1\sharp} \gamma_i)
				+ \iota_{\{\nu\}}(\pi_{2\sharp}\gamma_i) \right].
	\end{align*}
	By existence of a transport map one then obtains, as above, that any two optimal $\gamma_1$ are concentrated on the graph of the same map, $\hat{\gamma}_1=(\id,t)_\sharp \tilde{\gamma}$. The term $\iota_{\{\mu_1\}}(\tilde{\gamma})$ that then appears in the equivalent of $\tilde{E}$ then ensures that the optimal $\tilde{\gamma}$ is unique and the term $\iota_{\{\nu\}}(t_\sharp \tilde{\gamma})$ then ensures uniqueness of $\nu$.
	For the uniqueness of the Wasserstein barycenter this argument has essentially been used in \cite[Section 3.2]{BrendanInfMarginal2013} and \cite[Section 2]{FixedPointWassersteinBarycenters2016}.
\end{remark}

\subsection{Dual formulation}
Once more, a dual formulation can be given. It will be particularly useful in establishing the equivalence between the coupled-two-marginal and multi-marginal formulations of the $\HK$ barycenter.
\begin{theorem}
	\begin{align}
	\HKCTM(\mu_1,\ldots,\mu_N)^2 & = \sup \left\{
		\sum_{i=1}^N \int_\Omega \psi_i\,\diff \mu_i \middle| \psi_i,\phi_i \in C(\Omega),\,
		\left(\tfrac{\psi_i(x)}{\lambda_i},\tfrac{\phi_i(y)}{\lambda_i}\right) \in Q(x,y) \right. \nonumber \\
		& \left. \vphantom{\sum_{i=1}^N \int_\Omega} \hphantom{=\sup}
		\forall\,i=1,\ldots,N,\, x,y \in \Omega,\,
		\sum_{i=1}^N \phi_i \geq 0 \right\}
	\label{eq:HKCTMLiftedDual}
	\end{align}
\end{theorem}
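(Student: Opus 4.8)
The plan is to apply Fenchel--Rockafellar duality to the right-hand side of \eqref{eq:HKCTMLiftedDual} in complete analogy with Remarks \ref{rem:WDuality} and \ref{rem:HKDual} and the proof of Proposition \ref{prop:WCTMDual}, combining the "barycenter coupling" structure of the Wasserstein case with the cone-cost sets $Q(x,y)$ of the $\HK$ case. Concretely, I would set the primal (in the Fenchel--Rockafellar sense) variable to be $(\psi_1,\ldots,\psi_N,\phi_1,\ldots,\phi_N)\in\cont(\Omega)^{2N}$ and define
\begin{align*}
	G &: \cont(\Omega)^{2N} \to \RCupInf, &
	(\psi_1,\ldots,\phi_N) &\mapsto \begin{cases}
		-\sum_{i=1}^N \int_\Omega \psi_i\,\diff\mu_i & \tn{if } \sum_{i=1}^N \phi_i \geq 0, \\
		+\infty & \tn{else,}
	\end{cases} \\
	F &: \cont(\Omega^2)^N \to \RCupInf, &
	(\xi_1,\ldots,\xi_N) &\mapsto \begin{cases}
		0 & \tn{if } \big(\tfrac{\xi_i^{(1)}(x,y)}{\lambda_i},\tfrac{\xi_i^{(2)}(x,y)}{\lambda_i}\big)\in Q(x,y) \\
		  & \tn{for all } i,\, x,y\in\Omega, \\
		+\infty & \tn{else,}
	\end{cases}
\end{align*}
where one first absorbs the pair $(\psi_i,\phi_i)$ into a single $\cont(\Omega^2)$-valued component $\xi_i(x,y) = (\psi_i(x),\phi_i(y))$ via a bounded linear map $A$. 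The qualification hypotheses hold exactly as before: $G$ is convex and continuous on the affine subspace $\{\sum\phi_i\ge 0\}$, and at the point $\psi_i\equiv\phi_i\equiv -\tfrac{\lambda_i}{2}$ (note $(\,-\tfrac12,-\tfrac12)\in Q(x,y)$ since $\Cos\le 1$), $F\circ A$ is finite and continuous, so Fenchel--Rockafellar applies.

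Next I would compute the conjugates. Using Lemma \ref{lem:IntConjugation} with the family $x\mapsto Q(x,y)$ (rescaled by $\lambda_i$), the conjugate $F^\ast$ acting on $(\gamma_1,\ldots,\gamma_N)\in\meas(\Omega^2)^N$ equals $\sum_{i=1}^N \lambda_i\,\big[\text{the cone-cost integral}\big]$, i.e.\ $\sum_i \int_{\Omega^2} c(x,\tfrac{\diff\gamma_i^{(1)}}{\diff\gamma},y,\tfrac{\diff\gamma_i^{(2)}}{\diff\gamma})\,\diff\gamma$ in the semi-coupling form \eqref{eq:HKSemiCoupling} (with the $\lambda_i$ pulled inside by $1$-homogeneity), finite only on non-negative measures. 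The adjoint $A^\ast$ sends $(\gamma_1,\ldots,\gamma_N)$ to $(\pi_{1\sharp}\gamma_1^{(1)},\ldots,\pi_{1\sharp}\gamma_N^{(1)},\pi_{2\sharp}\gamma_1^{(2)},\ldots,\pi_{2\sharp}\gamma_N^{(2)})$, and $G^\ast$ forces $\pi_{1\sharp}\gamma_i^{(1)} = \mu_i$ and the existence of a common $\nu\in\measp(\Omega)$ with $\pi_{2\sharp}\gamma_i^{(2)}=\nu$ for all $i$ (the constraint $\sum_i\phi_i\ge 0$ in $G$ dualizes to "all second marginals coincide", just as in Proposition \ref{prop:WCTMDual}). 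Feeding these into the Fenchel--Rockafellar identity
\[
	\tn{\eqref{eq:HKCTMLiftedDual}} = -\inf_{(\psi_i,\phi_i)} \big(G + F\circ A\big) = \inf_{(\gamma_i)} \big(G^\ast(-A^\ast(\gamma_i)) + F^\ast(\gamma_i)\big)
\]
yields $\inf\{\sum_{i=1}^N \lambda_i\,\HK(\mu_i,\nu)^2 \mid \nu\in\measp(\Omega)\}$ after recognizing each $i$-block as the semi-coupling formulation \eqref{eq:HKSemiCoupling} of $\HK(\mu_i,\nu)^2$; by Definition this is exactly $\HKCTM(\mu_1,\ldots,\mu_N)^2$.

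I expect the main obstacle to be the bookkeeping in the $A^\ast$/$G^\ast$ step: verifying that the single scalar inequality constraint $\sum_i\phi_i\ge 0$ dualizes correctly to a shared measure $\nu$ as the second marginal of every $\gamma_i$ (rather than $N$ independent ones). This is the key structural point that makes it a barycenter problem and not $N$ decoupled transport problems, and it requires carefully identifying $G^\ast$ as the indicator of the set $\{(\rho_i,\sigma_i) : \rho_i=-\mu_i,\ \exists\nu\ge 0\ \sigma_i=-\nu\}$ exactly as in the proof of Proposition \ref{prop:WCTMDual}. A secondary technical point is ensuring the $\lambda_i$-rescaling of the $Q$-sets is handled consistently through Lemma \ref{lem:IntConjugation} and the $1$-homogeneity of $c$ in the mass arguments, so that the factor $\lambda_i$ reappears in front of each $\HK(\mu_i,\nu)^2$; but this is a routine scaling computation, identical in spirit to the passage between \eqref{eq:HKDual} and \eqref{eq:HKSemiCoupling} carried out in Remark \ref{rem:HKDual}.
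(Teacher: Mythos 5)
Your plan follows the same Fenchel--Rockafellar route as the paper, with the same choice of $G$, $F$, and $A$ (the paper takes $F : \cont(\Omega^2)^{2N}\to\RCupInf$ with separate scalar components $(\xi_i,\zeta_i)$, whereas you bundle the pair as one $\R^2$-valued component; this is purely cosmetic). Your identification of $G^\ast$ --- that the single scalar constraint $\sum_i\phi_i\geq 0$ dualizes to the existence of a common $\nu\in\measp(\Omega)$ with $\sigma_i=-\nu$ for all $i$ --- is exactly the key bookkeeping step, and you state it correctly; the $\lambda_i$-rescaling of the sets $Q(x,y)$ together with Lemma~\ref{lem:IntConjugation} and the $1$-homogeneity of $c$ also works out as you describe, producing the $\lambda_i$ in front of each $\HK(\mu_i,\nu)^2$ block.

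There is, however, a concrete error in your constraint-qualification check. The point $\psi_i\equiv\phi_i\equiv-\tfrac{\lambda_i}{2}$ does \emph{not} lie in $\dom G$, since $\sum_i\phi_i=-\tfrac12\sum_i\lambda_i=-\tfrac12<0$ violates the indicator constraint built into $G$. The standard Fenchel--Rockafellar hypothesis needs a point $x_0\in\dom G$ at which $F\circ A$ is finite and continuous, so your chosen point fails to certify anything. The fix is immediate: take, e.g., $\phi_i\equiv 0$ and $\psi_i\equiv-\lambda_i$. Then $\sum_i\phi_i=0\geq 0$, so $G$ is finite, and $\bigl(\tfrac{\psi_i}{\lambda_i},\tfrac{\phi_i}{\lambda_i}\bigr)=(-1,0)$ lies in the interior of $Q(x,y)$ uniformly in $x,y\in\Omega$ because $(1-(-1))(1-0)=2>1\geq\Cos(|x-y|)^2$ and both entries are strictly below $1$; hence $F\circ A$ is finite and continuous at this point. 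With that replacement your argument is complete and matches the paper's proof.
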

\begin{proof}
	The proof quickly follows from combining the ideas of Proposition \ref{prop:WCTMDual} and Remark \ref{rem:HKDual}. We merely give the functions $F$ and $G$ and the operator $A$. The rest follows as before. $G: \cont(\Omega)^{2N} \to \RCupInf$ is chosen as in Proposition \ref{prop:WCTMDual}. For $F$ and $A$ we pick:
\begin{align*}
	F & : \cont(\Omega^2)^{2N} \to \RCupInf, & & (\xi_1,\ldots,\xi_N,\zeta_1,\ldots,\zeta_N) \!\mapsto\! \begin{cases}
		0 & \tn{\!\!\!\!if } \left(\tfrac{\xi_i(x_1,x_2)}{\lambda_i},\tfrac{\zeta_i(x_1,x_2)}{\lambda_i}\right) \in Q(x_1,x_2) \\
		& \tn{\!\!\!\!for all } i \in \{1,\ldots,N\},\, x_1,x_2 \in \Omega, \\
		+\infty & \tn{\!\!\!\!else,}
		\end{cases} \\
	A & : \cont(\Omega)^{2N} \to \cont(\Omega^2)^{2N}, & & (\psi_1,\ldots,\psi_N,\phi_1,\ldots,\phi_N) \mapsto (\xi_1,\ldots,\xi_N,\zeta_1,\ldots,\zeta_N) \\
	& & & \qquad \qquad \tn{with } \xi_i(x_1,x_2) = \psi_i(x_1), \, \zeta_i(x_1,x_2) = \phi_i(x_2).
\end{align*}
This yields
\begin{multline*}
	\tn{\eqref{eq:HKCTMLiftedDual}} = - \inf \{ G(\psi_1,\ldots,\psi_N,\phi_1,\ldots,\phi_N) + F(A(\psi_1,\ldots,\psi_N,\phi_1,\ldots,\phi_N)) \,| \\
	\psi_1,\ldots,\psi_N,\phi_1,\ldots,\phi_N \in \cont(\Omega) \}\,.
\end{multline*}
Conjugation of $F$ and $G$ and determining the adjoint of $A$ follows analogously to Proposition \ref{prop:WCTMDual} and Remark \ref{rem:HKDual} and yields, via Fenchel--Rockafellar duality and \eqref{eq:HKSemiCoupling}, equivalence with \eqref{eq:HKCTM}.
\end{proof}
\section{HK barycenter: multi-marginal formulation}
\label{sec:HKMM}
In this section we introduce a multi-marginal formulation of the $\HK$ barycenter problem.
We study the corresponding multi-marginal cost function and show equivalence to the coupled-two-marginal formulation.
\subsection{Cone lifting and semi-coupling multi-marginal formulation}
We now define the analogon for the $\HK$ barycenter to the multi-marginal cost function for the Wasserstein barycenter, \eqref{eq:WCMM}.
\begin{definition}[Multi-marginal cost function]
\label{def:CMM}
For $(x_1,\ldots,x_N) \in \Omega^N$, $(m_1,\ldots,m_N) \in \R^N$ set
\begin{align}
	\cMM(x_1,m_1,\ldots,x_N,m_N) & \assign \inf_{y \in \Omega, s \geq 0} \sum_{i=1}^N \lambda_i\,c(x_i,m_i,y,s) .
	\label{eq:CMM}
\end{align}
As soon as some $m_i<0$ one finds $\cMM(x_1,m_1,\ldots,x_N,m_N)=\infty$.
\end{definition}
Unlike in the Wasserstein case \eqref{eq:WCMM}, a minimizer $y$ can in general not be given explicitly (see Section \ref{sec:CMMExplicit} for an exceptional special case).
Further, due to the minimization over $y \in \Omega$, $\cMM$ is in general not a convex function in the $m_1,\ldots,m_N$ for fixed $x_1,\ldots,x_N$. It turns out that this non-convexity and the corresponding convex hull of the function play a crucial role in understanding the $\HK$ barycenter and this becomes particular explicit in the study of barycenters between Dirac measures, see Section \ref{sec:HKMMDirac}. Therefore, we now extend the above definition.
\begin{definition}
\label{def:CMMConvex}
We introduce the convex conjugate of $\cMM$ with respect to the mass arguments:
\begin{align}
	\cMM^\ast(x_1,\psi_1,\ldots,x_N,\psi_N) & \assign \sup \left\{ \sum_{i=1}^N \psi_i \cdot m_i - \cMM(x_1,m_1,\ldots,x_N,m_N)
		\middle| m_1,\ldots,m_N \in \R
		\right\}
\end{align}
Due to positive 1-homogeneity of $\cMM$ in the mass arguments (which is inherited from the positive 1-homogeneity of $c$ in the mass arguments) one has that $\cMM^\ast(x_1,\cdot,\ldots,x_N,\cdot)$ can be written as the indicator function of a closed convex set which we denote by $\QMM(x_1,\ldots,x_N)$:
\begin{align}
	\cMM^\ast(x_1,\psi_1,\ldots,x_N,\psi_N) & = \iota_{\QMM(x_1,\ldots,x_N)}(\psi_1,\ldots,\psi_N)
\end{align}
A more explicit form of $\QMM$ will be given in Proposition \ref{prop:QMM}. The convex hull of $\cMM$ with respect to the mass arguments is given by
\begin{align}
	\cMMHull(x_1,m_1,\ldots,x_N,m_N) & \assign \sup \left\{ \sum_{i=1}^N \psi_i \cdot m_i \middle|
		(\psi_1,\ldots,\psi_N) \in \QMM(x_1,\ldots,x_N) \right\}.
\end{align}
And finally we introduce the set where $\cMM$ and its convex hull coincide (and a variant with at least one $m_i>0$),
\begin{align}
	\CSet_0 & \assign \left\{((x_1,m_1),\ldots,(x_N,m_N)) \in \cone^N \middle|
		\cMM(x_1,m_1,\ldots,x_N,m_N) = \cMMHull(x_1,m_1,\ldots,x_N,m_N) \right\}, \nonumber \\
	\CSet & \assign \left\{((x_1,m_1),\ldots,(x_N,m_N)) \in \CSet_0 \middle|
		(m_1,\ldots,m_N) \neq 0 \right\}.
		\label{eq:CSet}
\end{align}
\end{definition}

\begin{remark}
\label{rem:ShortNotation}
For brevity we will often write $\vec{x}=(x_1,\ldots,x_N)$, $\vec{m}=(m_1,\ldots,m_N)$, $\vec{\psi}=(\psi_1,\ldots,\psi_N)$ and use the notations
\begin{align*}
	\cMM(\vec{x},\vec{m}) & \assign \cMM(x_1,m_1,\ldots,x_N,m_N), \\
	\cMM^\ast(\vec{x},\vec{\psi}) & \assign \cMM^\ast(x_1,\psi_1,\ldots,x_N,\psi_N), \\
	\cMMHull(\vec{x},\vec{m}) & \assign \cMMHull(x_1,m_1,\ldots,x_N,m_N), \\
	\QMM(\vec{x}) & \assign \QMM(x_1,\ldots,x_N),
\end{align*}
and similar notations for related objects.
\end{remark}

Based on these definitions we can now introduce a multi-marginal $\HK$ barycenter problem by combining the ideas from Definition \ref{def:WMM} and Proposition \ref{prop:HKLifted}: a multi-marginal transport problem on the cone $\cone$ with suitable `projection marginal constraints' and the cost function $\cMMHull$.
\begin{definition}[Multi-marginal barycenter formulation]
\begin{multline}
	\HKMM(\mu_1,\ldots,\mu_N)^2 \assign \inf\left\{
		\int_{\cone^N} \cMMHull(x_1,m_1,\ldots,x_N,m_N)\,\diff \gamma((x_1,m_1),\ldots,(x_N,m_N))
		\right. \\
		\left. \vphantom{\int_{\cone^N}}
		\gamma \in \measpt(\cone^N),\,
		\coneProj \pi_{i\sharp} \gamma = \mu_i
		\tn{ for } i \in \{1,\ldots,N\}
		\right\}
	\label{eq:HKMMLifted}
\end{multline}
\end{definition}
In the above definition we have used the convex relaxation $\cMMHull$ instead of $\cMM$ itself as it will be technically more convenient in the following. However, we will show in Theorem \ref{thm:HKCTMMMEquiv} that both choices yield the same functional and the above definition is equivalent to formula \eqref{eq:IntroHKMM}.

In analogy to \eqref{eq:HKSemiCoupling} we also introduce a semi-coupling multi-marginal formulation of the $\HK$ barycenter.
As before, it is defined in terms of measures over compact spaces.
\begin{theorem}[Semi-coupling multi-marginal barycenter formulation]
\label{thm:HKMMSemiCoupling}
\begin{multline}
	\HKMM(\mu_1,\ldots,\mu_N)^2 = \inf\left\{
		\int_{\Omega^N}
		\cMMHull\big(x_1,\RadNik{\gamma_1}{\gamma},\ldots,x_N,\RadNik{\gamma_N}{\gamma}\big)
		\diff \gamma(x_1,\ldots,x_N)
		\right| \\
		\left. \vphantom{\int_{\Omega^N}}
		\gamma,\gamma_1,\ldots,\gamma_N \in \measp(\Omega^N),\,
		\pi_{i\sharp} \gamma_i = \mu_i, \, \gamma_i \ll \gamma
		\tn{ for } i \in \{1,\ldots,N\}
		\right\}
		\label{eq:HKMMSemiCoupling}
\end{multline}
\end{theorem}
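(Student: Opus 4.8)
The plan is to transcribe the two-step construction of Remark~\ref{rem:HKLiftedSCEquiv} to $N$ marginals, with the cost $c$ replaced by $\cMMHull$, proving separately that the value of \eqref{eq:HKMMLifted} is $\le$ and $\ge$ the value of \eqref{eq:HKMMSemiCoupling}. The only facts about $\cMMHull$ used are that, for each fixed $\vec{x} \in \Omega^N$, the map $\vec{m} \mapsto \cMMHull(\vec{x},\vec{m})$ is convex, lower-semicontinuous and positively $1$-homogeneous (all inherited from the corresponding properties of $c$ in the mass arguments), so that, exactly as for \eqref{eq:HKSemiCoupling}, the objective in \eqref{eq:HKMMSemiCoupling} does not depend on the auxiliary measure $\gamma$ as long as $\gamma_i \ll \gamma$; moreover $\cMMHull \geq 0$, and taking $s=0$ in \eqref{eq:CMM} gives $\cMMHull(\vec{x},\vec{m}) \leq \cMM(\vec{x},\vec{m}) \leq \sum_{i=1}^N \lambda_i m_i$, so that both sides of the asserted identity are finite (admissible competitors of the type $\bigotimes_i \sigma_i$ with $\coneProj \sigma_i = \mu_i$ always exist) and competitors of infinite cost may be disregarded.

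For the inequality $\tn{\eqref{eq:HKMMLifted}} \leq \tn{\eqref{eq:HKMMSemiCoupling}}$, I would take an arbitrary feasible triple $(\gamma,\gamma_1,\ldots,\gamma_N)$ for \eqref{eq:HKMMSemiCoupling}, set $u_i \assign \RadNik{\gamma_i}{\gamma} \in L^1(\Omega^N,\gamma)$ (non-negative, hence $(x_i,u_i(\vec{x})) \in \cone$ for $\gamma$-a.e.\ $\vec{x}$), and push $\gamma$ forward under the Borel map $\Omega^N \to \cone^N$, $(x_1,\ldots,x_N) \mapsto ((x_1,u_1(\vec{x})),\ldots,(x_N,u_N(\vec{x})))$, to obtain $\hat{\gamma} \in \measp(\cone^N)$. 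Testing $\coneProj \pi_{i\sharp} \hat{\gamma}$ against $\phi \in \cont(\Omega)$ reduces, exactly as in Remark~\ref{rem:HKLiftedSCEquiv}, to $\int \phi(x_i)\,u_i\,\diff\gamma = \int \phi(x_i)\,\diff\gamma_i = \int \phi\,\diff\mu_i$, so $\coneProj \pi_{i\sharp} \hat{\gamma} = \mu_i$, and with $\phi \equiv 1$ also $\hat{\gamma} \in \measpt(\cone^N)$; thus $\hat{\gamma}$ is admissible in \eqref{eq:HKMMLifted} and, by the change-of-variables formula, its cost equals $\int_{\Omega^N} \cMMHull(x_1,u_1(\vec{x}),\ldots,x_N,u_N(\vec{x}))\,\diff\gamma(\vec{x})$, which is precisely the semi-coupling objective evaluated at the chosen triple. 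Passing to the infimum finishes this direction.

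For the converse $\tn{\eqref{eq:HKMMLifted}} \geq \tn{\eqref{eq:HKMMSemiCoupling}}$, I would take any admissible $\hat{\gamma}$ for \eqref{eq:HKMMLifted} of finite cost, set $\gamma \assign (p,\ldots,p)_\sharp \hat{\gamma}$ and $\gamma_i \assign (p,\ldots,p)_\sharp (m_i \cdot \hat{\gamma})$ with $N$ copies of $p : \cone \to \Omega$. Then $\gamma_i \ll \gamma$ since $m_i \in L^1(\cone^N,\hat{\gamma})$, and $\pi_{i\sharp}\gamma_i = \coneProj \pi_{i\sharp} \hat{\gamma} = \mu_i$, so $(\gamma,\gamma_1,\ldots,\gamma_N)$ is admissible for \eqref{eq:HKMMSemiCoupling}. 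Disintegrating $\hat{\gamma} = \int \hat{\gamma}^{\vec{x}}\,\diff\gamma(\vec{x})$ over its $\Omega^N$-marginal $\gamma$, with $\hat{\gamma}^{\vec{x}} \in \prob(\R_+^N)$, the computation of Remark~\ref{rem:HKLiftedSCEquiv} identifies $\RadNik{\gamma_i}{\gamma}(\vec{x}) = \int_{\R_+^N} m_i\,\diff\hat{\gamma}^{\vec{x}}(\vec{m})$ for $\gamma$-a.e.\ $\vec{x}$, and then Jensen's inequality on each fibre $\{\vec{x}\} \times \R_+^N$, using the convexity of $\vec{m} \mapsto \cMMHull(\vec{x},\vec{m})$, bounds the semi-coupling objective of $(\gamma,\gamma_1,\ldots,\gamma_N)$ from above by $\int_{\cone^N} \cMMHull\,\diff\hat{\gamma}$; passing to the infimum over $\hat{\gamma}$ gives $\tn{\eqref{eq:HKMMSemiCoupling}} \leq \tn{\eqref{eq:HKMMLifted}}$. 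The one point that is not a literal transcription of Remark~\ref{rem:HKLiftedSCEquiv}, and hence the main thing to check carefully, is measurability and integrability: unlike the explicit cost $c$, the function $\cMMHull$ must be known to be Borel measurable on $\cone^N$ so that composing it with the measurable densities $u_i$ yields a measurable integrand, and so that the fibrewise Jensen step is legitimate. This is exactly what the structural analysis of $\cMM$ and its convexification carried out in the remainder of Section~\ref{sec:HKMM} (in particular the regularity of $\cMMHull$) supplies; everything else is the routine $N$-marginal version of the earlier argument.
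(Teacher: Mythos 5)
Your proof is correct and takes the same route the paper does: the paper's own proof of Theorem~\ref{thm:HKMMSemiCoupling} is a single sentence referring the reader to ``complete analogy with Remark~\ref{rem:HKLiftedSCEquiv}'', and your two-part argument (push-forward of $\gamma$ under $(\pi_1,u_1,\ldots,\pi_N,u_N)$ for one inequality; projection plus disintegration plus Jensen with the mass-convexity of $\cMMHull$ for the other) is precisely the intended $N$-marginal transcription, with the cost $c$ replaced by $\cMMHull$. Your closing remark about measurability of $\cMMHull$ correctly identifies the one non-routine prerequisite; the paper supplies it (lower-semicontinuity, hence Borel measurability) in Lemma~\ref{lem:CMMHullCaratheodory}.
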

\begin{proof}
	The proof works in complete analogy to Remark \ref{rem:HKLiftedSCEquiv}.
\end{proof}

\subsection[Some properties of c\_MM and Q\_MM]{Some properties of $\cMM$ and $\QMM$}
Before we are able to make more detailed statements about the multi-marginal formulation of $\HK$ and its relation to the coupled-two-marginal formulation we need to have a closer look at $\cMM$ and $\QMM$.
We start with some fundamental observations about $\cMM$.
\begin{lemma}\hfill
\label{lem:CMMMinimizerExist}
\begin{enumerate}[(i)]
	\item For given $(x_1,m_1),\ldots,(x_N,m_N) \in \cone$ minimizers $(y,s) \in \cone$ in the definition of $\cMM$, \eqref{eq:CMM}, exist.
	\label{item:CMMMinimizerExist}
	\item $\cMM$ is continuous on $\cone^N$. For $\vec{m} \notin \R_+^N$ one has $\cMM=\infty$, thus, $\cMM$ is lower-semicontinuous on $(\Omega \times \R)^N$.
	\label{item:CMMCont}
	\item The minimization in \eqref{eq:CMM} can be restricted to $(y,s) \in H \times \R_+$ where $H$ denotes the convex hull of the points $x_1,\ldots,x_N$ in $\Omega$.
	\label{item:CMMConvexHull}
\end{enumerate}
\end{lemma}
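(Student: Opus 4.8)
The plan is to reduce the inner minimisation over $(y,s)$ to a closed form, after which all three claims become routine. Fix $\vec{x}\in\Omega^N$ and $\vec{m}\in\R_+^N$. Using $\sum_{i=1}^N\lambda_i=1$ and the explicit form \eqref{eq:HKC} of $c$, the objective in \eqref{eq:CMM} equals $M+s-2\sqrt{s}\,g(y)$, where $M\assign\sum_{i=1}^N\lambda_i m_i$ and $g(y)\assign\sum_{i=1}^N\lambda_i\sqrt{m_i}\,\Cos(|x_i-y|)\geq 0$. For fixed $y$ the map $s\mapsto M+s-2\sqrt{s}\,g(y)$ on $\R_+$ is minimised at $s=g(y)^2$, with value $M-g(y)^2$, so that
\begin{equation*}
	\cMM(\vec{x},\vec{m}) = M - \left(\sup_{y\in\Omega} g(y)\right)^2,
\end{equation*}
where $g\geq 0$ is what lets the square be pulled out of the supremum. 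I would record this identity first. Part (i) is then immediate: $g$ is continuous on the compact set $\Omega$, hence attains its maximum at some $y^\ast\in\Omega$, and $(y^\ast,g(y^\ast)^2)\in\cone$ realises the infimum in \eqref{eq:CMM} (if $\vec{m}=0$ this degenerates and any $(y,0)$ works). Alternatively one can bypass the reduction and argue directly: the objective is continuous on $\{m_i\geq 0\}$ and, $g$ being bounded, coercive in $s$, so the infimum is attained on a compact slab $\Omega\times[0,S]$.

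For (ii), the map $(\vec{x},\vec{m},y)\mapsto g(y)$ is jointly continuous on $\cone^N\times\Omega$, since $\sqrt{\cdot}$ is continuous on $\R_+$ (including at $0$) and $\Cos$ is continuous; as $\Omega$ is compact, $(\vec{x},\vec{m})\mapsto\sup_{y\in\Omega}g(y)$ is continuous — either directly, via the elementary bound $|\sup_y g_1-\sup_y g_2|\leq\sup_y|g_1-g_2|$ together with uniform continuity of $g$ on compacta, or by Berge's maximum theorem applied to the constant correspondence $y\mapsto\Omega$. Continuity of $\cMM$ on $\cone^N$ then follows from the closed form. If some $m_i<0$, then $c(x_i,m_i,y,s)=+\infty$ for every $(y,s)$ while the remaining terms are finite and $\lambda_i>0$, so $\cMM(\vec{x},\vec{m})=+\infty$. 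Since $\cone^N=(\Omega\times\R_+)^N$ is closed in $(\Omega\times\R)^N$, a function that is continuous on this closed set and identically $+\infty$ on its complement is lower-semicontinuous on $(\Omega\times\R)^N$, giving the last assertion.

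For (iii), let $H\subseteq\Omega$ be the (compact, convex) convex hull of $x_1,\ldots,x_N$; convexity of $\Omega$ guarantees $H\subseteq\Omega$. For any $y\in\Omega$ let $y'\assign\mathrm{proj}_H(y)$ be its Euclidean projection onto $H$. The variational inequality characterising the projection onto a convex set yields $|z-y'|\leq|z-y|$ for every $z\in H$, in particular $|x_i-y'|\leq|x_i-y|$ for all $i$. Since $\Cos$ is non-increasing on $\R_+$, this gives $\Cos(|x_i-y'|)\geq\Cos(|x_i-y|)$, and hence $\sum_{i=1}^N\lambda_i c(x_i,m_i,y',s)\leq\sum_{i=1}^N\lambda_i c(x_i,m_i,y,s)$ for every $s\geq 0$. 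Since $H\times\R_+\subseteq\Omega\times\R_+$ and every competitor $(y,s)$ can be replaced by the no-worse competitor $(y',s)\in H\times\R_+$, the infimum in \eqref{eq:CMM} over $H\times\R_+$ equals that over $\Omega\times\R_+$.

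None of the steps is deep. The only point requiring a little care is the continuous dependence of $\sup_{y\in\Omega}g(y)$ on the parameters $(\vec{x},\vec{m})$ in (ii) — the elementary maximum-theorem argument — and, alongside it, making sure that the closed form (and hence continuity) really does extend up to the boundary $\{m_i=0\}$ of $\cone^N$, which is exactly where continuity of $\sqrt{\cdot}$ at $0$ enters. I expect the explicit reduction above to make even this routine.
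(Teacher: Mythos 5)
Your proof is correct, and for parts \eqref{item:CMMMinimizerExist} and \eqref{item:CMMCont} it takes a genuinely different route from the paper's. You first reduce $\cMM$ to the closed form $\cMM(\vec{x},\vec{m})=\sum_i\lambda_i m_i-\bigl(\sup_{y\in\Omega}g(y)\bigr)^2$ with $g(y)=\sum_i\lambda_i\sqrt{m_i}\Cos(|x_i-y|)$, which is precisely the content of \eqref{eq:CMMAux}--\eqref{eq:MMAuxMMRelation} that the paper only establishes afterwards in Lemma~\ref{lem:CMMAux}. Having this identity in hand, existence of a minimizer is just attainment of $\sup_y g$ on the compact set $\Omega$, and continuity of $\cMM$ on $\cone^N$ follows from joint continuity of $g$ (including at $m_i=0$, via continuity of $\sqrt{\cdot}$ at the origin) plus the standard fact that a supremum over a compact parameter of a jointly continuous family is continuous. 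The paper instead argues by compactness directly at the level of minimizing sequences: it uses the bound $c(x_i,m_i,y,s)\geq(\sqrt{m_i}-\sqrt{s})^2$ to control the $s$-variable, extracts cluster points, notes that $\cMM$ is upper-semicontinuous as an infimum of continuous functions, and completes continuity by passing to the limit along minimizers. Your reordering buys a more explicit and arguably cleaner argument at the cost of front-loading the algebra that the paper prefers to isolate in the subsequent lemma; the paper's sequence argument has the advantage of not requiring $\sum_i\lambda_i=1$ to collapse the $s$-dependence. For part \eqref{item:CMMConvexHull} your argument is essentially identical to the paper's (projection onto the convex hull, monotonicity of $\Cos$), though you correctly phrase the projection inequality as $\leq$ rather than strict $<$, which is the cleaner statement. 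Your lower-semicontinuity step at the end of (ii) (continuous on the closed set $\cone^N$, identically $+\infty$ on the complement, hence lsc on $(\Omega\times\R)^N$) is valid and matches the paper's conclusion.
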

\begin{proof}
\textbf{\eqref{item:CMMMinimizerExist}:} Let $(\iterl{y},\iterl{s})_\ell$ be a minimizing sequence in $\cone$.
Since $c(x_i,m_i,\iterl{y},\iterl{s}) \geq (\sqrt{m_i}-\sqrt{\iterl{s}})^2$, $(\iterl{s})_\ell$ must be bounded and therefore, by compactness of $\Omega$, $(\iterl{y},\iterl{s})_\ell$ must have a cluster point.
By continuity of $c$ any cluster point must be a minimizer.

\textbf{\eqref{item:CMMCont}:} $\cMM$ is upper-semicontinuous as it is the infimum over a family of continuous functions.
Let $(\iterl{\vec{x}},\iterl{\vec{m}})_\ell$ be a sequence in $\cone^N$ that converges to $(\vec{x},\vec{m}) \in \cone^N$ and let $(\iterl{y},\iterl{s})_\ell$ be a corresponding sequence of minimizers. Since $(\iterl{\vec{m}})_\ell$ is bounded $(\iterl{s})_\ell$ must be bounded (arguing as above) and thus by compactness of $\Omega$, $(\iterl{y},\iterl{s})_\ell$ must have some cluster point $(y,s)$. Up to selection of a suitable subsequence one then has by continuity of $c$,
\begin{align*}
	\cMM(\iterl{\vec{x}},\iterl{\vec{m}}) & = \sum_{i=1}^N \lambda_i \, c(\iterl{x}_{i},\iterl{m}_{i},\iterl{y},\iterl{s})
	\xrightarrow{\ell \to \infty} \sum_{i=1}^N \lambda_i \, c(x_{i},m_{i},y,s) \geq \cMM(\vec{x},\vec{m})
\end{align*}
and thus $\cMM$ is continuous on $\cone^N$.

\textbf{\eqref{item:CMMConvexHull}:}
Assume $y$ is a minimizer with $y \notin H$ and let $y'$ be the projection of $y$ onto $H$ (which exists, since $H$ is a closed, convex, non-empty set). Then $|x_i-y'|<|x_i-y|$ for all $i \in \{1,\ldots,N\}$ and thus, by monotonicity of $\Cos(\cdot)$ the objective for $y'$ is potentially better.
\end{proof}

As discussed below the introduction of $\cMM$, Definition \ref{def:CMM}, a minimizer $y$ can in general not be given explicitly and it causes issues with the convexity of $\cMM$ in the mass arguments. Therefore, often it is useful to not minimize over $y$ but only over $s$, for instance, when the minimizer $y$ is assumed to be known.
\begin{lemma}
\label{lem:CMMAux}
For $\vec{x} \in \Omega^N$, $\vec{m} \in \R^N$ and $y \in \Omega$ let
\begin{align}
	\label{eq:CMMAux}
	\cMM(\vec{x},\vec{m},y) \assign \inf_{s \in \R_+} \sum_{i=1}^N \lambda_i\,c(x_i,m_i,y,s)
	= \sum_{i=1}^N \lambda_i\,m_i - \left( \sum_{i=1}^N \lambda_i\,\sqrt{m_i}\,\Cos(|x_i-y|)\right)^2
\end{align}
with the second equality holding for $\vec{m} \in \R_+^N$.
For all $\vec{x} \in \Omega^N$, $y \in \Omega$, the function $\R^N \ni \vec{m} \mapsto \cMM(\vec{x},\vec{m},y)$ is convex and lower-semi\-continuous. There is a family $(\vec{x},z) \mapsto \QMM(\vec{x},z) \allowbreak \subset \allowbreak \R^N$ of closed, convex sets such that for $\vec{\psi} \in \R^N$
\begin{align}
	\label{eq:CMMAuxConjugate}
	\cMM(\vec{x},\vec{m},y) & = \iota_{\QMM(\vec{x},y)}^\ast(\vec{m}), &
	\cMM^\ast(\vec{x},\vec{\psi},y) & = \iota_{\QMM(\vec{x},y)}(\vec{\psi}),
\end{align}
where $\cMM^\ast(\vec{x},\cdot,y)$ is the conjugate of $\cMM(\vec{x},\cdot,y)$ with respect to the mass arguments.
More explicitly, one finds
\begin{align}
	\label{eq:QMMAux}
	\QMM(\vec{x},y) & = \left\{ \vec{\psi} \in \R^N \middle|
		\psi_i \leq \lambda_i \tn{ for } i\in\{1,\ldots,N\},\,
		\sum_{i=1}^N \frac{\lambda_i\,\Cos(|x_i-y|)^2}{1-\psi_i/\lambda_i} \leq 1 \right\}
\end{align}
where for $\psi_i=\lambda_i$ we adopt the convention
\begin{align}
	\label{eq:QMMAuxConvention}
	\frac{\lambda_i\,\Cos(|x_i-y|)^2}{1-\psi_i/\lambda_i} = \begin{cases}
		0 & \tn{if } \Cos(|x_i-y|)=0, \\
		+ \infty & \tn{else.}
		\end{cases}
\end{align}
Further, one has
\begin{align}
	\label{eq:MMAuxMMRelation}
	\cMM(\vec{x},\vec{m}) & = \inf_{y \in \Omega} \cMM(\vec{x},\vec{m},y), &
	\QMM(\vec{x}) & = \bigcap_{y \in \Omega} \QMM(\vec{x},y).
\end{align}
\end{lemma}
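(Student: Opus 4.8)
The plan is to treat the statement as a sequence of essentially independent computations, carried out in the order they are listed. First I would establish the closed-form expression for $\cMM(\vec{x},\vec{m},y)$ in \eqref{eq:CMMAux} when $\vec{m} \in \R_+^N$: plugging the definition \eqref{eq:HKC} of $c$ into $\sum_i \lambda_i c(x_i,m_i,y,s)$ gives $\sum_i \lambda_i m_i + s - 2\sqrt{s}\,\big(\sum_i \lambda_i \sqrt{m_i}\,\Cos(|x_i-y|)\big)$, using $\sum_i \lambda_i = 1$; this is a quadratic in $\sqrt{s}$ of the form $t^2 - 2 b t + \mathrm{const}$ with $b = \sum_i \lambda_i \sqrt{m_i}\,\Cos(|x_i-y|) \geq 0$, minimized at $\sqrt{s} = b$, yielding the stated value $\sum_i \lambda_i m_i - b^2$. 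For $\vec{m} \notin \R_+^N$ some term is $+\infty$, so the infimum is $+\infty$; this settles \eqref{eq:CMMAux}.

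Next, convexity and lower semicontinuity of $\vec{m} \mapsto \cMM(\vec{x},\vec{m},y)$: it is an infimum over $s \geq 0$ of the maps $\vec{m} \mapsto \sum_i \lambda_i c(x_i,m_i,y,s)$, each of which is convex and lsc in $\vec{m}$ (the mass-convexity of $c$ is already invoked in Remark \ref{rem:HKLiftedSCEquiv}), but an infimum of convex functions need not be convex, so I would instead argue directly from the closed form: on $\R_+^N$, $m_i \mapsto \lambda_i m_i - (\sum_j \lambda_j \sqrt{m_j}\,\Cos(|x_j-y|))^2$ — more conceptually, the map is positively $1$-homogeneous in $\vec m$ (inherited from $c$) and is the infimum over a linear family, hence one verifies convexity by checking it equals its own double conjugate; concretely I would just compute the conjugate and show the resulting set is nonempty, which simultaneously delivers \eqref{eq:CMMAuxConjugate} and \eqref{eq:QMMAux}. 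Fixing $\vec x, y$ and $\vec\psi \in \R^N$, we need $\sup_{\vec m \in \R_+^N}\big(\sum_i \psi_i m_i - \sum_i \lambda_i m_i + b(\vec m)^2\big)$ where $b(\vec m) = \sum_i \lambda_i \sqrt{m_i}\,\Cos(|x_i-y|)$. Substituting $t_i = \sqrt{m_i}$, this is $\sup_{t \in \R_+^N} \sum_i (\psi_i - \lambda_i) t_i^2 + (\sum_i \lambda_i \Cos(|x_i-y|) t_i)^2$, a quadratic form in $t$; it is $\leq 0$ (hence the sup is $0$, attained at $t=0$) precisely when the matrix $M = \mathrm{diag}(\psi_i - \lambda_i) + v v^{\top}$ with $v_i = \lambda_i \Cos(|x_i-y|)$ is negative semidefinite. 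This requires $\psi_i \leq \lambda_i$ for all $i$ (diagonal entries, equivalently restricting to directions supported on a single coordinate), and, on the subspace where $\psi_i < \lambda_i$, the Schur-complement / rank-one-update condition $\sum_i v_i^2/(\lambda_i - \psi_i) \leq 1$; rewriting $v_i^2/(\lambda_i-\psi_i) = \lambda_i \Cos(|x_i-y|)^2/(1-\psi_i/\lambda_i)$ gives exactly \eqref{eq:QMMAux}. The boundary case $\psi_i = \lambda_i$ needs the sign of the coefficient of $t_i^2$ to be controlled only through the cross terms, producing the convention \eqref{eq:QMMAuxConvention}; if $\vec\psi \notin \QMM(\vec x,y)$ the quadratic form takes a positive value in some direction and, by $1$-homogeneity in $\vec m$, the sup is $+\infty$. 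This shows $\cMM^\ast(\vec x,\cdot,y) = \iota_{\QMM(\vec x,y)}$ with $\QMM(\vec x,y)$ closed and convex, and since $\cMM(\vec x,\cdot,y)$ is lsc, positively $1$-homogeneous and proper, Lemma \ref{lem:IntConjugation}-type biconjugation (or directly the Fenchel--Moreau theorem) gives $\cMM(\vec x,\cdot,y) = \iota_{\QMM(\vec x,y)}^\ast$, in particular its convexity.

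Finally, \eqref{eq:MMAuxMMRelation}: the first identity $\cMM(\vec x,\vec m) = \inf_{y \in \Omega}\cMM(\vec x,\vec m,y)$ is immediate from the definitions \eqref{eq:CMM} and \eqref{eq:CMMAux} by splitting the joint infimum over $(y,s)$ into an infimum over $y$ of an infimum over $s$. For the second, conjugating: $\QMM(\vec x) = \{\vec\psi : \cMM^\ast(\vec x,\vec\psi) \leq 0\}$ where $\cMM^\ast(\vec x,\vec\psi) = \sup_{\vec m}\big(\langle\vec\psi,\vec m\rangle - \inf_y \cMM(\vec x,\vec m,y)\big) = \sup_{\vec m}\sup_y \big(\langle\vec\psi,\vec m\rangle - \cMM(\vec x,\vec m,y)\big) = \sup_y \cMM^\ast(\vec x,\vec\psi,y) = \sup_y \iota_{\QMM(\vec x,y)}(\vec\psi)$, which is $0$ if $\vec\psi \in \QMM(\vec x,y)$ for all $y$ and $+\infty$ otherwise; hence $\QMM(\vec x) = \bigcap_{y\in\Omega}\QMM(\vec x,y)$. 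The main obstacle I anticipate is the careful bookkeeping in the negative-semidefiniteness analysis of $M$, especially handling the degenerate boundary stratum $\psi_i = \lambda_i$ cleanly and matching it to the stated convention rather than hand-waving; the rest is routine once the substitution $t_i = \sqrt{m_i}$ is in place.
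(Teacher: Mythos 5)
Your computation of $\cMM^\ast(\vec x,\cdot,y)$ takes a genuinely different route from the paper's. The paper maximizes explicitly in two stages: for $\psi_i<\lambda_i$ it optimizes each $m_i$ with $s$ held fixed (a scalar problem in $u=\sqrt{m_i}$ with optimum $-b^2/4a$), obtaining an expression linear in $s$, and then reads off that the supremum over $s\in\R_+$ is finite iff the resulting slope $\sum_{i}\frac{\lambda_i\,\Cos(|x_i-y|)^2}{1-\psi_i/\lambda_i}$ is $\le 1$. You instead substitute $t_i=\sqrt{m_i}$ to recast the conjugation as $\sup_{t\in\R_+^N}t^\top M t$ with $M=\operatorname{diag}(\psi_i-\lambda_i)+vv^\top$, $v_i=\lambda_i\Cos(|x_i-y|)$, and identify the constraint as $M\preceq0$, reduced by the rank-one-update criterion $v^\top E^{-1}v\le1$, $E=\operatorname{diag}(\lambda_i-\psi_i)$. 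Both give \eqref{eq:QMMAux}; your matrix picture is slicker and makes the single scalar inequality transparent. One step you should make explicit: the supremum is over $t\in\R_+^N$ rather than $\R^N$, so $M\preceq 0$ is a priori only sufficient; but since $v\ge0$ componentwise, $|t|^\top M|t|\ge t^\top M t$, so the restricted and unrestricted suprema coincide and the equivalence holds.

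There is also a small circularity in how you justify convexity of $\vec m\mapsto\cMM(\vec x,\vec m,y)$ and thereby \eqref{eq:CMMAuxConjugate}. You correctly note that an infimum of convex functions need not be convex, but the fallback you propose — Fenchel--Moreau or Lemma~\ref{lem:IntConjugation}-style biconjugation — already presupposes convexity (and showing the conjugate set is nonempty by itself does not give $f^{\ast\ast}=f$). Two clean repairs: (i) observe that $(\vec m,s)\mapsto\sum_i\lambda_i\,c(x_i,m_i,y,s)$ is \emph{jointly} convex and that partial minimization over $s$ of a jointly convex function is convex; or (ii) as the paper does, read convexity directly off \eqref{eq:CMMAux}: expanding the square, $-\bigl(\sum_i\lambda_i\sqrt{m_i}\,\Cos(|x_i-y|)\bigr)^2=-\sum_{i,j}\lambda_i\lambda_j\Cos(|x_i-y|)\Cos(|x_j-y|)\sqrt{m_im_j}$ is a nonnegative combination of the convex functions $-\sqrt{m_im_j}$ (concavity of the geometric mean), plus the linear term $\sum_i\lambda_i m_i$. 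With convexity and lower semicontinuity (continuity on the closed set $\R_+^N$, $+\infty$ off it) in hand, the biconjugate identity follows as you intended. The remaining pieces of your plan — the minimization over $\sqrt s$ yielding \eqref{eq:CMMAux}, and the two identities in \eqref{eq:MMAuxMMRelation} by splitting the infimum and by conjugation — coincide with the paper's proof.
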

\begin{proof}
The explicit form of $\cMM(\vec{x},\vec{m},y)$ in \eqref{eq:CMMAux} follows from direct maximization over $s \in \R_+$.
For $\vec{m} \notin \R_+^N$, $\cMM(\vec{x},\vec{m},y)=\infty$. For $\vec{m} \in \R_+^N$, from the explicit form we deduce immediately that $\vec{m} \mapsto \cMM(\vec{x},\vec{m},y)$ is continuous, convex and positively 1-homogeneous.
Thus, on $\R^N$ the function is lower-semicontinuous, convex and positively 1-homogeneous.
So it is the conjugate of the indicator function of a closed, convex set and it coincides with its biconjugate.
So it satisfies relations \eqref{eq:CMMAuxConjugate} for some closed, convex $\QMM(\vec{x},y)$.

The explicit form of $\QMM(\vec{x},y)$ can be obtained by direct computations: Let $\vec{\psi} \in \R^N$. Then
\begin{align}
	\iota_{\QMM(\vec{x},y)}(\vec{\psi}) & = \sup_{\vec{m} \in \R_+^N} \sum_{i=1}^N \psi_i \cdot m_i - \inf_{s \in \R_+}
		\sum_{i=1}^N \lambda_i\,c(x_i,m_i,y,s)
		\nonumber \\
	& = \sup_{m_1,\ldots,m_N,s \in \R_+}
		\sum_{i=1}^N \left[ \psi_i \cdot m_i - \lambda_i\,m_i - \lambda_i\,s + 2\lambda_i\,\sqrt{m_i \cdot s}\,\Cos(|x_i-y|) \right]
		\label{eq:ProofQMMAuxConjugation}
\end{align}
If $\psi_i>\lambda_i$ for some $i \in \{1,\ldots,N\}$, we find by sending $m_i \to \infty$ that $\iota_{\QMM(\vec{x},y)}(\vec{\psi})=\infty$.
If $\psi_i=\lambda_i$ and $\Cos(|x_i-y|)>0$, by fixing some $s>0$ and sending $m_i \to \infty$ one also has $\iota_{\QMM(\vec{x},y)}(\vec{\psi})=\infty$.
For $\psi_i=\lambda_i$ and $\Cos(|x_i-y|)=0$ the objective does not depend on $m_i$.
Assume now that $\psi_i \leq \lambda_i$ and that $[\psi_i=\lambda_i]$ $\Rightarrow$ $[\Cos(|x_i-y|)=0]$ (as otherwise $\iota_{\QMM(\vec{x},y)}(\vec{\psi})=\infty$). Set $I(y)=\{i \in \{1,\ldots,N\} | \Cos(|x_i-y|)>0\}$. Then, in \eqref{eq:ProofQMMAuxConjugation}, for $i \in I(y)$ one can explicitly maximize over $m_i$ (for $s$ fixed) and obtains
\begin{align*}
	\iota_{\QMM(\vec{x},y)}(\vec{\psi}) & = \sup_{s \in \R_+} \left[\sum_{i \in I(y)} \frac{\lambda_i\,s\,\Cos(|x_i-y|)^2}{1-\psi_i/\lambda_i} \right] - s.
\end{align*}
Taking now the supremum over $s \in \R_+$ and adopting the convention \eqref{eq:QMMAuxConvention} one arrives at \eqref{eq:QMMAux}.

The first equation of \eqref{eq:MMAuxMMRelation} follows directly from the definitions of $\cMM(\vec{x},\cdot)$ and $\cMM(\vec{x},\cdot,y)$. The second equality follows from
\begin{align*}
	\iota_{\QMM(\vec{x})}(\vec{\psi}) & = \sup_{\vec{m} \in \R^N} \sum_{i=1}^N \psi_i \cdot m_i - \cMM(\vec{x},\vec{m}) \\
		& = \sup_{\vec{m} \in \R^N, y \in \Omega} \sum_{i=1}^N \psi_i \cdot m_i - \cMM(\vec{x},\vec{m},y)
		= \sup_{y \in \Omega} \iota_{\QMM(\vec{x},y)}(\vec{\psi}).
	\qedhere
\end{align*}
\end{proof}

Based on \eqref{eq:MMAuxMMRelation} we can now give a somewhat explicit expression for the set $\QMM(\vec{x})$.
\begin{proposition}
\label{prop:QMM}
For $\vec{x} \in \Omega^N$ one has
\begin{align}
	\QMM(\vec{x})
	& = \left\{ \vec{\psi} \in \R^N \,\middle|\,
	\psi_i \leq \lambda_i-\lambda_i^2 \tn{ for } i \in \{1,\ldots,N\},\,
	\sum_{i=1}^N \frac{\lambda_i\,\Cos(|x_i-y|)^2}{1-\psi_i/\lambda_i} \leq 1\,
	\forall\,y \in \Omega
	\right\}.
	\label{eq:QMM}
\end{align}
It suffices to enforce the constraint in \eqref{eq:QMM} for all $y$ in the convex hull of the points $x_1,\ldots,x_N$.
\end{proposition}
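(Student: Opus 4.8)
The plan is to read off \eqref{eq:QMM} from the identity $\QMM(\vec x)=\bigcap_{y\in\Omega}\QMM(\vec x,y)$ of \eqref{eq:MMAuxMMRelation} together with the explicit per-$y$ description \eqref{eq:QMMAux}, the one genuinely new ingredient being the single well-chosen test point $y=x_i$.

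For the inclusion ``$\subseteq$'' in \eqref{eq:QMM} I would fix $\vec\psi\in\QMM(\vec x)$ and, for each $i$, use that $x_i\in\Omega$ so that $\vec\psi\in\QMM(\vec x,x_i)$: by \eqref{eq:QMMAux} this gives $\psi_j\le\lambda_j$ for all $j$ and $\sum_j \lambda_j\Cos(|x_j-x_i|)^2/(1-\psi_j/\lambda_j)\le1$. Since $\Cos(|x_i-x_i|)=\Cos(0)=1\neq0$, the convention \eqref{eq:QMMAuxConvention} would make the $i$-th summand equal to $+\infty$ if $\psi_i=\lambda_i$; as the sum is at most $1$ this forces $\psi_i<\lambda_i$, so the $i$-th summand equals $\lambda_i^2/(\lambda_i-\psi_i)$, and non-negativity of the remaining summands yields $\lambda_i^2/(\lambda_i-\psi_i)\le1$, i.e.\ $\psi_i\le\lambda_i-\lambda_i^2$. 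The other constraint in \eqref{eq:QMM}, namely $\sum_j\lambda_j\Cos(|x_j-y|)^2/(1-\psi_j/\lambda_j)\le1$ for all $y\in\Omega$, is immediate from $\vec\psi\in\QMM(\vec x,y)$. For ``$\supseteq$'', if $\vec\psi$ lies in the right-hand side of \eqref{eq:QMM} then $\psi_i\le\lambda_i-\lambda_i^2<\lambda_i$ for every $i$ (using $\lambda_i\in(0,1)$), so the convention in \eqref{eq:QMMAuxConvention} is never triggered and \eqref{eq:QMMAux} shows $\vec\psi\in\QMM(\vec x,y)$ for all $y\in\Omega$, hence $\vec\psi\in\QMM(\vec x)$.

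It then remains to see that the constraint need only be imposed for $y$ in the convex hull $H$ of $x_1,\ldots,x_N$, which lies in $\Omega$ by convexity. One direction is trivial since $H\subseteq\Omega$. For the other, given $\vec\psi$ satisfying the constraints over $H$ and an arbitrary $y\in\Omega$, I would let $y'$ be the metric projection of $y$ onto the closed convex set $H$; then $|x_j-y'|\le|x_j-y|$ for all $j$ (because $x_j\in H$), and since $\Cos$ is non-increasing and $[0,1]$-valued while the coefficients $\lambda_j/(1-\psi_j/\lambda_j)$ are non-negative (again $\psi_j<\lambda_j$), the constraint at $y'\in H$ propagates to $y$. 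Alternatively one may invoke Lemma \ref{lem:CMMMinimizerExist}\eqref{item:CMMConvexHull} and re-run the conjugation computation of Lemma \ref{lem:CMMAux} with $H$ in place of $\Omega$, obtaining $\QMM(\vec x)=\bigcap_{y\in H}\QMM(\vec x,y)$ directly. I do not expect a real obstacle here; the only points needing care are the bookkeeping at the boundary value $\psi_i=\lambda_i$ through the convention \eqref{eq:QMMAuxConvention}, and correctly extracting the sharpened bound $\psi_i\le\lambda_i-\lambda_i^2$ from the special point $y=x_i$ rather than the weaker $\psi_i\le\lambda_i$ that holds for a generic $y$.
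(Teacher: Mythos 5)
Your proof is correct and follows essentially the same route as the paper's: combine \eqref{eq:QMMAux} and \eqref{eq:MMAuxMMRelation}, test at $y=x_i$ to upgrade $\psi_i\le\lambda_i$ to $\psi_i\le\lambda_i-\lambda_i^2$ (which lets one drop the convention \eqref{eq:QMMAuxConvention}), and reduce to the convex hull via the metric-projection/monotonicity-of-$\Cos$ argument of Lemma~\ref{lem:CMMMinimizerExist}~\eqref{item:CMMConvexHull}. Your write-up is merely a bit more explicit about the two inclusions and the boundary case $\psi_i=\lambda_i$.
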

\begin{proof}
Combining \eqref{eq:QMMAux} and \eqref{eq:MMAuxMMRelation} one finds
\begin{align*}
	\QMM(\vec{x})
	& = \left\{ \vec{\psi} \in \R^N \,\middle|\,
	\psi_i \leq \lambda_i \tn{ for } i \in \{1,\ldots,N\},\,
	\sum_{i=1}^N \frac{\lambda_i\,\Cos(|x_i-y|)^2}{1-\psi_i/\lambda_i} \leq 1\,
	\forall\,y \in \Omega
	\right\}
\end{align*}
where we still need to keep in mind convention \eqref{eq:QMMAuxConvention}. By considering the constraint for $y=x_i$, we find that $\psi_i\leq\lambda_i-\lambda_i^2$ and can thus restrict $\psi_i$ to $(-\infty,\lambda_i-\lambda_i^2]$ and dispense with convention \eqref{eq:QMMAuxConvention}.

For the convex hull we argue as in Lemma \ref{lem:CMMMinimizerExist} \eqref{item:CMMConvexHull}. If $y$ does not lie in the convex hull, let $y'$ be the corresponding projection. Then $|x_i-y|>|x_i-y|$, and thus by the monotonicity of $\Cos(\cdot)$ the constraint for $y'$ is stricter than the one for $y$ and the latter is redundant.
\end{proof}

The duality between $\cMMHull$ and $\QMM$ will, in the following, become a very useful tool and the next Lemma will play a central role.
\begin{lemma}
\label{lem:PsiUniqueness}
For $\vec{x} \in \Omega^N$ the function $\cMMHull(\vec{x},\cdot)$ is differentiable on $\R_{++}^N$, i.e.~for every $\vec{m} \in \R_{++}^N$ there exists a unique $\vec{\psi} \in \R^N$ such that $\vec{m}$ and $\vec{\psi}$ satisfy the three following equivalent conditions:
\begin{align}
	\label{eq:PsiMSubDiffRelation}
	[\vec{\psi} \in \partial \cMMHull(\vec{x},\vec{m})]
	\Leftrightarrow [\vec{m} \in \partial \iota_{\QMM(\vec{x})}(\vec{\psi})]
	\Leftrightarrow \left[\sum_{i=1}^N \psi_i \cdot m_i = \cMMHull(\vec{x},\vec{m})
	\wedge \vec{\psi} \in \QMM(\vec{x}) \right]
\end{align}
where $\partial \cMMHull$ denotes the subdifferential of $\cMMHull$ with respect to the mass arguments (for fixed $\vec{x}$).
\end{lemma}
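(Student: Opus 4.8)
The plan is to observe that two of the three equivalences in \eqref{eq:PsiMSubDiffRelation} are automatic and that the real content is a single geometric statement about $\QMM(\vec{x})$. By Definition~\ref{def:CMMConvex} the functions $\cMMHull(\vec{x},\cdot)=\iota_{\QMM(\vec{x})}^{\ast}$ and $\iota_{\QMM(\vec{x})}$ are mutually conjugate: $\iota_{\QMM(\vec{x})}$ is proper, convex and lower-semicontinuous because $\QMM(\vec{x})$ is a nonempty closed convex set, and $\cMMHull(\vec{x},\cdot)$ is then its conjugate, namely the support function of $\QMM(\vec{x})$. Given this, the chain in \eqref{eq:PsiMSubDiffRelation} is exactly the Fenchel--Young characterisation of the subdifferential, the third condition being the Fenchel--Young equality $\cMMHull(\vec{x},\vec{m})+\iota_{\QMM(\vec{x})}(\vec{\psi})=\sum_i\psi_i m_i$ written out. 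So the lemma reduces to showing that for every $\vec{m}\in\R_{++}^N$ there is exactly one $\vec{\psi}\in\partial\cMMHull(\vec{x},\vec{m})$; by \cite[Theorem~25.1]{Rockafellar1972Convex} uniqueness of the subgradient at every point of the open set $\R_{++}^N$ is equivalent to differentiability of $\cMMHull(\vec{x},\cdot)$ there.

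Existence is the easy half. I would first note that the effective domain of $\cMMHull(\vec{x},\cdot)$ is exactly $\R_+^N$: it contains $\R_+^N$ since $\cMM(\vec{x},\cdot)$ is finite there (Lemma~\ref{lem:CMMMinimizerExist}\eqref{item:CMMCont}) and $\cMMHull(\vec{x},\cdot)\le\cMM(\vec{x},\cdot)$ pointwise, and it is contained in $\R_+^N$ because the explicit description \eqref{eq:QMM} shows that $\vec{\psi}-t\,e_i\in\QMM(\vec{x})$ for all $t\ge0$ whenever $\vec{\psi}\in\QMM(\vec{x})$, so that $\cMMHull(\vec{x},\vec{m})=\sup_{\vec{\psi}\in\QMM(\vec{x})}\sum_i\psi_i m_i=+\infty$ as soon as some $m_i<0$. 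Hence $\R_{++}^N$ is the interior of the domain, and a proper convex function has a nonempty subdifferential at every interior point of its domain.

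The substantive part is uniqueness. Since $\cMMHull(\vec{x},\cdot)$ is the support function of $\QMM(\vec{x})$, the set $\partial\cMMHull(\vec{x},\vec{m})$ is exactly the face of $\QMM(\vec{x})$ exposed by $\vec{m}$, i.e.\ the set of maximisers of $\vec{\psi}\mapsto\sum_i\psi_i m_i$ over $\QMM(\vec{x})$. Suppose this face contained two distinct points $\vec{\psi},\vec{\psi}'$; put $\vec{v}\assign\vec{\psi}'-\vec{\psi}\neq0$ and $\vec{\psi}^{\circ}\assign\tfrac12(\vec{\psi}+\vec{\psi}')$, which is again a maximiser, and set $Z\assign\{i:v_i=0\}$. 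Using \eqref{eq:QMM}, realise $\QMM(\vec{x})$ as the intersection of the half-spaces $\{\psi_i\le\lambda_i-\lambda_i^2\}$ with the convex sublevel sets $\{h_y\le1\}$, $y$ ranging over the compact convex hull $H$ of $x_1,\dots,x_N$, where $h_y(\vec{\psi})\assign\sum_i\tfrac{\lambda_i\,\Cos(|x_i-y|)^2}{1-\psi_i/\lambda_i}$; on the open set where all $\psi_i<\lambda_i$ (which contains $\QMM(\vec{x})$) the function $h_y$ is finite, jointly continuous in $(\vec{\psi},y)$, convex, and separately strictly convex in each coordinate $\psi_i$ with $\Cos(|x_i-y|)>0$ (and it does not depend on $\psi_i$ when $\Cos(|x_i-y|)=0$). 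The key claim is that every constraint active at $\vec{\psi}^{\circ}$ has its gradient supported in $Z$: a half-space constraint $\psi_i\le\lambda_i-\lambda_i^2$ active at the midpoint of two feasible points forces $\psi_i=\psi_i'=\lambda_i-\lambda_i^2$, hence $v_i=0$; and if $h_y\le1$ is active at $\vec{\psi}^{\circ}$, then $h_y(\vec{\psi})=h_y(\vec{\psi}')=1$ (from $h_y(\vec{\psi}),h_y(\vec{\psi}')\le1$ and convexity), and separate strict convexity of $h_y$ at the midpoint forces $\psi_i=\psi_i'$, i.e.\ $v_i=0$, for every $i$ with $\Cos(|x_i-y|)>0$ --- exactly the support of $\nabla h_y(\vec{\psi}^{\circ})$. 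Because $\QMM(\vec{x})$ admits a Slater point (e.g.\ $-M\,\mathbf{1}$ for large $M$ satisfies every constraint strictly), the normal cone $N_{\QMM(\vec{x})}(\vec{\psi}^{\circ})$ equals the closed convex cone generated by the gradients of the constraints active at $\vec{\psi}^{\circ}$, hence is contained in $\{\vec{w}\in\R^N:w_i=0\tn{ for }i\notin Z\}$. Optimality of $\vec{\psi}^{\circ}$ gives $\vec{m}\in N_{\QMM(\vec{x})}(\vec{\psi}^{\circ})$, so $m_i=0$ for $i\notin Z$; since $\vec{m}\in\R_{++}^N$ this forces $Z=\{1,\dots,N\}$, i.e.\ $\vec{v}=0$ --- a contradiction.

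I expect the main obstacle to be the normal-cone bookkeeping for this semi-infinite constraint system: one has to confirm that $\cMMHull(\vec{x},\cdot)$, equivalently $\vec{\psi}\mapsto\sup_{y\in H}h_y(\vec{\psi})$, is real-valued on a neighbourhood of $\vec{\psi}^{\circ}$, that its subdifferential there is the closed convex hull of $\{\nabla h_y(\vec{\psi}^{\circ}):y\in H,\ h_y(\vec{\psi}^{\circ})=1\}$ via a Danskin-type formula, and that the Slater point legitimises writing the normal cone of the intersection as the sum of the normal cones of the pieces, so that nothing outside the span of $\{e_i:i\in Z\}$ is created when taking closures --- together with the small amount of care needed for coordinates $i$ with $\Cos(|x_i-y|)=0$, which $h_y$ simply ignores.
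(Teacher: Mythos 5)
Your proof is correct, and the overall strategy — take the midpoint of two alleged subgradients, show the normal cone of $\QMM(\vec{x})$ at the midpoint is confined to the coordinate subspace where the two candidates agree, and derive a contradiction from $\vec{m}\in\R_{++}^N$ — is the same as the paper's. What differs is the technical implementation, and in one respect your version is actually tighter. The paper's proof claims that strict convexity of $z\mapsto(1-z)^{-1}$ gives
\[
	\frac{1}{1-\xi_i/\lambda_i}<\tfrac12\frac{1}{1-\psi_i/\lambda_i}+\tfrac12\frac{1}{1-\phi_i/\lambda_i}
	\quad\text{for \emph{all} }i,
\]
and then concludes that the midpoint $\vec\xi$ is in the interior of $\QMM(\vec{x})$, so $\partial\iota_{\QMM(\vec x)}(\vec\xi)=\{0\}$. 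As literally written this is not justified when $\psi_i=\phi_i$ for some (but not all) $i$, since then the $i$-th inequality is an equality and the displayed box $\prod_i(-\infty,a_i]$ has $a_i=\xi_i$, so it is not a neighbourhood of $\vec\xi$. The argument still goes through, but one must weaken the conclusion to: $\QMM(\vec{x})$ contains a coordinate box around $\vec\xi$ that is open in exactly the directions $i$ with $\psi_i\neq\phi_i$, hence $N_{\QMM(\vec x)}(\vec\xi)$ is contained in the normal cone of that box, i.e.\ in $\{\vec w\in\R_+^N:w_i=0\text{ if }\psi_i\neq\phi_i\}$ — which contradicts $\vec m\in\R_{++}^N$ unless $\vec\psi=\vec\phi$. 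Your proof does precisely this bookkeeping, arriving at the identical subspace restriction $N_{\QMM(\vec x)}(\vec\psi^\circ)\subset\vecspan\{e_i:i\in Z\}$, but you obtain it through the heavier route of decomposing the constraint system, invoking a Slater point, a Danskin-type formula for $\partial(\sup_y h_y)$, and the sum rule for normal cones of intersections. That machinery is correct here (the $h_y$ are jointly continuous, differentiable, $H$ is compact, and $-M\mathbf{1}$ is a Slater point), and you rightly flag it as the technically delicate part; but it is not strictly needed. The paper's elementary ``box trick'' already yields the necessary inclusion of the normal cone directly from the comparison $\text{box}\subset\QMM(\vec{x})$, without Danskin or a normal-cone sum rule. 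So: same idea, your write-up is more explicit and robustly handles the partial-agreement case that the paper's prose glosses over, at the cost of bringing in semi-infinite convex programming tools that the intersection-with-a-box observation makes unnecessary.
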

\begin{proof}
Since $\cMMHull$ is by construction proper, convex and lower-semicontinuous in the mass arguments, equivalence of the three conditions \eqref{eq:PsiMSubDiffRelation} is a well-known result from convex analysis.

As $c$ is finite for all non-negative masses and bounded from below, $\cMM$ and $\cMMHull$ are finite for all non-negative masses.
By construction, $\cMMHull$ is convex in its mass arguments and thus continuous in its mass arguments for strictly positive masses $\vec{m} \in \R_{++}^N$.
Therefore, $\partial \cMMHull(\vec{x},\vec{m}) \neq \emptyset$ and some $\vec{\psi}=(\psi_1,\ldots,\psi_N) \in \partial \cMMHull(\vec{x},\vec{m})$ exists, which therefore satisfies \eqref{eq:PsiMSubDiffRelation}.

For uniqueness, assume $\vec{\phi}=(\phi_1,\ldots,\phi_N) \in \partial \cMMHull(\vec{x},\vec{m})$ and thus also satisfies
\begin{align*}
	\vec{\phi} & \in \QMM(\vec{x}), &
	\sum_{i=1}^N \phi_i \cdot m_i = \cMMHull(\vec{x},\vec{m}).
\end{align*}
Let now $\vec{\xi}=\tfrac12 (\vec{\psi} + \vec{\phi})$. One has immediately that $\sum_{i=1}^N \xi_i \cdot m_i = \cMMHull(\vec{x},\vec{m})$ and $\vec{\xi} \in \QMM(\vec{x})$ and thus that
\begin{align}
	\label{eq:ProofCMMMinimizerUniqueSubdiff}
	\vec{m} \in \partial \iota_{\QMM(\vec{x})}(\vec{\xi}).
\end{align}
Assume now $\vec{\phi} \neq \vec{\psi}$. Since $\vec{\xi} \in \QMM(\vec{x})$, which implies $\xi_i <\lambda_i$, and in particular by strict convexity of $(-\infty,1) \ni z \mapsto \tfrac{1}{1-z}$ one has that
\begin{align*}
	\frac{1}{1-\xi_i/\lambda_i} < \frac12 \frac{1}{1-\psi_i/\lambda_i} + \frac12 \frac{1}{1-\phi_i/\lambda_i}
	\qquad \tn{for }  i \in \{1,\ldots,N\}
\end{align*}
and all $\vec{\tilde{\xi}} \in \R^N$ that satisfy
\begin{align*}
	\tilde{\xi}_i < \lambda_i, \qquad
	\frac{1}{1-\tilde{\xi}_i/\lambda_i} \leq \frac12 \frac{1}{1-\psi_i/\lambda_i} + \frac12 \frac{1}{1-\phi_i/\lambda_i}
	\qquad \tn{for }  i \in \{1,\ldots,N\}
\end{align*}
are also contained in $\QMM(\vec{x})$. This implies that $\vec{\xi}$ lies in the interior of $\QMM(\vec{x})$ and therefore $\partial \iota_{\QMM(\vec{x})}(\vec{\xi})=\{0\}$, which contradicts \eqref{eq:ProofCMMMinimizerUniqueSubdiff}. Therefore, $\vec{\phi}=\vec{\psi}$ and thus $\vec{\psi}$ is unique.
\end{proof}

For the equivalence between the coupled-two-marginal and the multi-marginal formulation of the Wasserstein barycenter the map $\WT$, see \eqref{eq:WTMap}, played a central role. For the $\HK$ barycenter we were able to show that minimizers $(y,s)$ exist, Lemma \ref{lem:CMMMinimizerExist}, but we still need a measurable map $(\vec{x},\vec{m})$ to minimizers $(y,s)$ to mimic the proof strategy of Proposition \ref{prop:WBarycenterEquivalence}. The following Proposition, the main result of this section, establishes that such a map also exists for the $\HK$ multi-marginal cost function, at least on a subset of $\cone^N$, and is even continuous.
\begin{proposition}
\label{prop:CMMMinimizerUnique}
For $(\vec{x},\vec{m}) \in \CSet$ (i.e.~if $\cMM(\vec{x},\vec{m})=\cMMHull(\vec{x},\vec{m})$ and $\vec{m} \neq 0$) the minimizer $(y,s)$ in the definition of $\cMM$, \eqref{eq:CMM} is unique. $y$ lies in the convex hull of the $x_i$ for which $m_i>0$.
The map $\HKT: \CSet \ni (\vec{x},\vec{m}) \mapsto (y,s) \in \cone$, which takes $(\vec{x},\vec{m})$ to the unique minimizer $(y,s)$, is continuous. One has $|y-x_i|<\pi/2$ for all $i \in \{1,\ldots,N\}$ where $m_i>0$.
\end{proposition}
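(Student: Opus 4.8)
The plan is to play the minimization over the position $y$ in \eqref{eq:CMM} off against the mass-duality of Lemma~\ref{lem:CMMAux}, Proposition~\ref{prop:QMM} and Lemma~\ref{lem:PsiUniqueness}. Write $h_i(y)\assign\Cos(|x_i-y|)\in[0,1]$ and $g(y)\assign\sum_{i=1}^N\lambda_i\sqrt{m_i}\,h_i(y)$, so that by \eqref{eq:CMMAux} and \eqref{eq:MMAuxMMRelation}
\[
	\cMM(\vec{x},\vec{m},y)=\sum_{i=1}^N\lambda_i m_i-g(y)^2,\qquad
	\cMM(\vec{x},\vec{m})=\sum_{i=1}^N\lambda_i m_i-\max_{y\in\Omega}g(y)^2,
\]
and the $s$ achieving the infimum in \eqref{eq:CMMAux} at a maximizer $y^{\ast}$ of $g$ is $s^{\ast}=g(y^{\ast})^2$. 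Since $\vec{m}\neq0$ one may pick $j$ with $m_j>0$; then $g(x_j)\ge\lambda_j\sqrt{m_j}>0$, so $g(y^{\ast})>0$ and $s^{\ast}>0$ at every maximizer. I would treat $\vec{m}\in\R_{++}^N$ first; the general case reduces to it by discarding the marginals with $m_i=0$, which enter \eqref{eq:CMM} only through the inert summand $\lambda_i s$ (the convex hull in the statement then being the convex hull $H$ of $x_1,\ldots,x_N$).

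The crucial step is to deduce, from $\cMM(\vec{x},\vec{m})=\cMMHull(\vec{x},\vec{m})$, that $h_i(y^{\ast})>0$ for every $i$ at \emph{any} minimizer $(y^{\ast},s^{\ast})$. By Lemma~\ref{lem:PsiUniqueness} there is a unique $\vec{\psi}\in\partial\cMMHull(\vec{x},\vec{m})$, characterized by $\vec{\psi}\in\QMM(\vec{x})$ and $\langle\vec{\psi},\vec{m}\rangle=\cMMHull(\vec{x},\vec{m})$. Using $\cMM=\cMMHull$ at $(\vec{x},\vec{m})$, the identity $\cMM(\vec{x},\vec{m})=\cMM(\vec{x},\vec{m},y^{\ast})$, and $\QMM(\vec{x})=\bigcap_{y}\QMM(\vec{x},y)\subseteq\QMM(\vec{x},y^{\ast})$ from \eqref{eq:MMAuxMMRelation}, one sees that $\vec{\psi}$ attains $\sup\{\langle\vec{\phi},\vec{m}\rangle:\vec{\phi}\in\QMM(\vec{x},y^{\ast})\}=\cMM(\vec{x},\vec{m},y^{\ast})$, i.e.\ $\vec{\psi}\in\partial_{\vec{m}}\cMM(\vec{x},\vec{m},y^{\ast})$ (as $\cMM(\vec{x},\cdot,y^{\ast})$ is the support function of $\QMM(\vec{x},y^{\ast})$ by \eqref{eq:CMMAuxConjugate}). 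Since $\cMM(\vec{x},\cdot,y^{\ast})$ is smooth on $\R_{++}^N$ with $\partial_{m_i}\cMM(\vec{x},\vec{m},y^{\ast})=\lambda_i\bigl(1-h_i(y^{\ast})\sqrt{s^{\ast}/m_i}\bigr)$, this gives $1-\psi_i/\lambda_i=h_i(y^{\ast})\sqrt{s^{\ast}/m_i}$, and as $\vec{\psi}\in\QMM(\vec{x})$ entails $\psi_i\le\lambda_i-\lambda_i^2$ by Proposition~\ref{prop:QMM}, we get $h_i(y^{\ast})\sqrt{s^{\ast}/m_i}\ge\lambda_i>0$, hence $\Cos(|x_i-y^{\ast}|)>0$, i.e.\ $|x_i-y^{\ast}|<\pi/2$. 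This is where the $\CSet$-hypothesis is indispensable, and I expect it to be the main obstacle: without it the maximizer of $g$ need not be unique and configurations with $|x_i-y^{\ast}|\ge\pi/2$ really occur.

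For confinement and uniqueness I would then argue by elementary geometry. If $y^{\ast}\notin H$, its metric projection $p$ onto $H$ satisfies $|x_i-p|<|x_i-y^{\ast}|<\pi/2$ for all $i$, so $h_i(p)>h_i(y^{\ast})$ by strict monotonicity of $\cos$ on $[0,\pi/2]$, whence $g(p)>g(y^{\ast})$, contradicting maximality; thus $y^{\ast}\in H$. For uniqueness, two minimizers $(y_1^{\ast},s_1^{\ast})$ and $(y_2^{\ast},s_2^{\ast})$ realize the same value in \eqref{eq:CMM}, so $g(y_1^{\ast})=g(y_2^{\ast})$ and $s_1^{\ast}=s_2^{\ast}=:s^{\ast}$; feeding the \emph{same} $\vec{\psi}$ into the gradient identity above at $y_1^{\ast}$ and at $y_2^{\ast}$ gives $h_i(y_1^{\ast})=h_i(y_2^{\ast})$ for all $i$, and since these values are positive and $\cos$ is injective on $[0,\pi/2)$, $|x_i-y_1^{\ast}|=|x_i-y_2^{\ast}|$ for all $i$. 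Hence every $x_i$, and therefore $H$, lies on the perpendicular bisector hyperplane of $y_1^{\ast}$ and $y_2^{\ast}$; but $y_1^{\ast}\in H$ then forces $y_1^{\ast}=y_2^{\ast}$ and $s_1^{\ast}=s_2^{\ast}$. This defines $\HKT$ and proves all the pointwise claims.

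For continuity, I would take $(\vec{x}^{\,k},\vec{m}^{\,k})\to(\vec{x},\vec{m})$ in $\CSet$ and set $(y^k,s^k)\assign\HKT(\vec{x}^{\,k},\vec{m}^{\,k})$. The heights obey $s^k\le\bigl(\sum_i\lambda_i\sqrt{m_i^{\,k}}\bigr)^2$ and are thus bounded, so by compactness of $\Omega$ some subsequence of $(y^k,s^k)$ converges to a point $(\bar y,\bar s)\in\cone$; along it $\sum_i\lambda_i c(x_i^{\,k},m_i^{\,k},y^k,s^k)=\cMM(\vec{x}^{\,k},\vec{m}^{\,k})\to\cMM(\vec{x},\vec{m})$ by continuity of $\cMM$ (Lemma~\ref{lem:CMMMinimizerExist}\eqref{item:CMMCont}), while the left-hand side tends to $\sum_i\lambda_i c(x_i,m_i,\bar y,\bar s)$ by continuity of $c$, so $(\bar y,\bar s)$ is a minimizer for $(\vec{x},\vec{m})$ and equals $\HKT(\vec{x},\vec{m})$ by the uniqueness just shown. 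As every subsequence has a further subsequence with this same limit, $(y^k,s^k)\to\HKT(\vec{x},\vec{m})$, which is the asserted continuity.
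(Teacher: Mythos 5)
Your argument for $\vec{m}\in\R_{++}^N$ is correct and closely tracks the paper's proof: the gradient identity $1-\psi_i/\lambda_i=\Cos(|x_i-y^{\ast}|)\sqrt{s^{\ast}/m_i}$ together with $\psi_i\le\lambda_i-\lambda_i^2$ is exactly the mechanism the paper uses to force $\Cos(|x_i-y^{\ast}|)>0$. Your route to uniqueness differs only cosmetically: you deduce from the gradient identity that each distance $|x_i-y_j^{\ast}|$ is determined, place every $x_i$ (hence the hull $H$, hence $y_1^{\ast}$) on the perpendicular-bisector hyperplane of $y_1^{\ast},y_2^{\ast}$, and conclude $y_1^{\ast}=y_2^{\ast}$; the paper phrases the same fact as strict convexity of the intersection $\bigcap_i\ol{B}(x_i,r_i)$. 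The projection argument for $y^{\ast}\in H$ is equivalent to the paper's appeal to Lemma~\ref{lem:CMMMinimizerExist}~\eqref{item:CMMConvexHull} combined with uniqueness, and the continuity step is verbatim the paper's.

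The gap is in the reduction to partially zero masses, which you dismiss in one sentence. When some $m_i=0$ Lemma~\ref{lem:PsiUniqueness} does not apply --- $\partial\cMMHull(\vec{x},\vec{m})$ may be empty on the boundary of $\R_+^N$ --- so your gradient identity is not available in the $N$-variable problem, and it is not enough to note that the $m_i=0$ terms contribute only $\lambda_i s$. To run your argument on the truncated tuple one must pass to reweighted $\lambda_i/\Lambda$ (with $\Lambda=\sum_{i:m_i>0}\lambda_i$) and rescaled masses $\Lambda^2 m_i$ so the hypotheses of the lemmas hold, and, crucially, one must first show that the truncated problem inherits $\cMM^K=\cMM^{K\ast\ast}$ from $(\vec{x},\vec{m})\in\CSet$. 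This is not automatic: the paper devotes Step~5 of its proof to showing, by contradiction, that a nontrivial convexifying decomposition $\vec{m}^K=\sum_j\vec{m}^{K,j}$ of the truncated masses would lift (after padding with zeros and rescaling) to a decomposition of $\vec{m}$ contradicting $\cMM(\vec{x},\vec{m})=\cMMHull(\vec{x},\vec{m})$. Without that verification the gradient identity, and with it the bound $\Cos(|x_i-y^{\ast}|)>0$ and the ensuing uniqueness and confinement claims, are unjustified whenever $\vec{m}\notin\R_{++}^N$.
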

\begin{proof}
\textbf{Step 1: Preparation.} Throughout this proof let $(x_1,m_1),\ldots,(x_N,m_N) \in \cone$ be fixed such that $(\vec{x},\vec{m})\in \CSet$.
Further, assume for now that $\vec{m} \in \R_{++}^N$. Partially zero masses are discussed further below.
By Lemma \ref{lem:PsiUniqueness} there exists a unique $\vec{\psi} \in \R^N$ that satisfies the three equivalent conditions \eqref{eq:PsiMSubDiffRelation}.

\textbf{Step 2: Optimality condition for $y$.}
By Lemma \ref{lem:CMMMinimizerExist} there exists some $y \in \Omega$ such that $\cMM(\vec{x},\vec{m})=\cMM(\vec{x},\vec{m},y)$, cf.~\eqref{eq:CMMAux}.
Since $\vec{\psi} \in \QMM(\vec{x})$ one must find $\vec{\psi} \in \QMM(\vec{x},y)$, cf.~\eqref{eq:MMAuxMMRelation}.
And since $\cMM(\vec{x},\vec{m},y)=\cMM(\vec{x},\vec{m})=\cMMHull(\vec{x},\vec{m})=\sum_{i=1}^N \psi_i \cdot m_i$ one must have that $\vec{m} \in \partial \iota_{\QMM(\vec{x},y)}(\vec{\psi})$, cf.~\eqref{eq:CMMAuxConjugate} and \eqref{eq:PsiMSubDiffRelation}.
In the primal picture this means $\psi \in \partial \cMM(\vec{x},\vec{m},y)$, which is differentiable in $\vec{m}$, see \eqref{eq:CMMAux}. This condition can be written down explicitly to obtain
\begin{align*}
	\psi_i = \lambda_i - \eta \frac{\Cos(|x_i-y|)}{\sqrt{m_i}}
	\quad \tn{for } i \in \{1,\ldots,N\}
	\quad \tn{where} \quad
	\eta \assign \sum_{i=1}^N \lambda_i\,\sqrt{m_i}\,\Cos(|x_i-y|)
\end{align*}
and $\eta$ does not depend on the choice of the minimizer $y$ for $\cMM(\vec{x},\vec{m})$.
Since $\psi_i<\lambda_i$ by $\vec{\psi} \in \QMM(\vec{x})$ this implies $\Cos(|x_i-y|)>0$ (and therefore $|x_i-y|<\pi/2$) and $\eta>0$ and we can thus resolve for $|x_i-y|$ to get
\begin{align*}
	|x_i-y| & = r_i \assign \arccos\left(\frac{\sqrt{m_i}\,(\lambda_i-\psi_i)}{\eta}\right)
	\qquad \tn{for } i\in\{1,\ldots,N\}.
\end{align*}
This implies that $y \in \ol{B}(x_i,r_i)$ for $i \in \{1,\ldots,N\}$ where $\ol{B}(x_i,r_i)$ denotes the closed ball around $x_i$ with radius $r_i$. If the intersection $B \assign \bigcap_{i=1}^N \ol{B}(x_i,r_i)$ contains more than a single point, it must contain some point $y'$ which satisfies $|x_i-y'| < r_i$ for $i \in \{1,\ldots,N\}$ and thus $\Cos(|x_i-y'|)>\Cos(|x_i-y|)$. Plugging this into \eqref{eq:CMM} we find that $y$ could not be a minimizer, which is a contradiction.
Thus, $B = \{y\}$ and $y$ must be unique.

\textbf{Step 3: Convex hull.}
By Lemma \ref{lem:CMMMinimizerExist} \eqref{item:CMMConvexHull} the search for minima $y$ can be restricted to the convex hull of the $x_1,\ldots,x_N$. Therefore, the unique minimizer must lie in the hull.

\textbf{Step 4: Unique $s$.}
Finally, by assumption $m_i>0$ and for the unique minimizer $y$, we have noted above that $\Cos(|x_i-y|)>0$ and thus $s \mapsto c(x_i,m_i,y,s)$ is strictly convex, which implies that the minimizing $s$ in \eqref{eq:CMMAux} is unique.

\textbf{Step 5: Extension to partially zero masses.}
Now we treat the case where some (but not all) $m_i$ are zero.
In this case the subdifferential $\partial \cMM(\vec{x},\vec{m})$ may be empty and thus we argue by reduction to the strictly positive case.
By reordering the arguments we may assume that $m_i>0$ for $i \in \{1,\ldots,K\}$ and $m_i=0$ for $i \in \{K+1,\ldots,N\}$ for some $K \in \{1,\ldots,N-1\}$. Set
\begin{align*}
	\Lambda & \assign \sum_{i=1}^K \lambda_i, &
	\vec{x}^K & \assign (x_1,\ldots,x_K), &
	\vec{m}^K & \assign \Lambda^2 \cdot (m_1,\ldots,m_K)
\end{align*}
and finally
\begin{align*}
	\cMM^K(\vec{x}^K,\vec{m}^K) \assign \inf_{(y,s) \in \cone} \sum_{i=1}^K \frac{\lambda_i}{\Lambda} c(x_i^K,m_i^K,y,s).
\end{align*}
One can quickly verify that the objectives for $\cMM(\vec{x},\vec{m})$ and $\cMM^K(\vec{x}^K,\vec{m}^K)$ differ by $(\sum_{i=1}^K \lambda_i\allowbreak m_i) \cdot (1-\Lambda)$ and thus both problems have the same minimizer set $(y,s) \in \cone$. In particular, the minimizer for $\cMM(\vec{x},\vec{m})$ is unique if the one for $\cMM^K(\vec{x}^K,\vec{m}^K)$ is unique.
Since $m^K_i>0$ for $i\in\{1,\ldots,K\}$, the above arguments apply for the uniqueness of the minimizer when $\cMM^{K\ast\ast}(\vec{x}^K,\vec{m}^K)=\cMM^K(\vec{x}^K,\vec{m}^K)$.

Assume for contradiction that $\cMM^{K\ast\ast}(\vec{x}^K,\vec{m}^K)<\cMM^K(\vec{x}^K,\vec{m}^K)$ which (by 1-homogeneity) implies that there exist $\vec{m}^{K,j} \in \R^K_+$, $j \in \{1,\ldots,J\}$ for some $J \in \N$ such that
\begin{align*}
	\sum_{j=1}^J \vec{m}^{K,j} & = \vec{m}^K, &
	\cMM^K(\vec{x}^K,\vec{m}^K) & > \sum_{j=1}^J \cMM^K(\vec{x}^K,\vec{m}^{K,j}).
\end{align*}
For $j \in \{1,\ldots,J\}$ set now
\begin{align*}
	\vec{m}^j \assign \Lambda^{-2} \cdot (m^{K,j}_1,\ldots,m^{K,j}_K,0,\ldots,0)
\end{align*}
where we fill entries $K+1$ to $N$ with zeros. Then $\sum_{j=1}^J \vec{m}^j = \vec{m}$ and arguing as above we find
\begin{align*}
	\cMM(\vec{x},\vec{m}^j) = \cMM^K(\vec{x}^K,\vec{m}^{K,j}) + (1-\lambda) \cdot \sum_{i=1}^K \lambda_i\, m_i^j.
\end{align*}
Finally, this yields
\begin{multline*}
	\cMM(\vec{x},\vec{m}) = \cMM^K(\vec{x}^K,\vec{m}^K)
	+ (1-\lambda) \cdot \sum_{i=1}^K \lambda_i\, m_i \\
	> \sum_{j=1}^J \left( \cMM^K(\vec{x}^K,\vec{m}^{K,j})
	+ (1-\lambda) \cdot \sum_{i=1}^K \lambda_i\, m_{i}^j \right)
	= \sum_{j=1}^J \cMM(\vec{x},\vec{m}^{j})
\end{multline*}
which contradicts $(\vec{x},\vec{m}) \in \CSet$.

\textbf{Step 6: Continuity.}
Now, let $(\iterl{\vec{x}},\iterl{\vec{m}})_\ell$ be a sequence in $\CSet^N$ that converges to $(\vec{x},\vec{m}) \in \CSet$ and let $(\iterl{y},\iterl{s})\ell$ be the sequence of corresponding unique minimizers. Since $(\iterl{\vec{m}})_\ell$ is bounded, $\iterl{s}$ must be bounded (cf.~Lemma \ref{lem:CMMMinimizerExist}) and thus by compactness of $\Omega$, $(\iterl{y},\iterl{s})_\ell$ must have some cluster point $(y,s) \in \cone$.
By continuity of $\cMM$ (Lemma \ref{lem:CMMMinimizerExist}) and $c$ and up to selection of a suitable subsequence one thus finds:
\begin{align*}
	\cMM(\vec{x},\vec{m}) & = \lim_{\ell \to \infty} \cMM(\iterl{\vec{x}},\iterl{\vec{m}})
	= \lim_{\ell \to \infty} \sum_{i=1}^N c(\iterl{x}_i,\iterl{m}_i,\iterl{y},\iterl{s})
	= \sum_{i=1}^N c(x_i,m_i,y,s).
\end{align*}
Therefore, any cluster point of $(\iterl{y},\iterl{s})_\ell$ must be equal to the unique minimizer for $\cMM(\vec{x},\vec{m})$ and thus the whole sequence must converge to $(y,s)$.
\end{proof}
			
The last Lemma of this section gives an explicit relation between $\cMMHull$ and $\cMM$, which is instructive in understanding the $\HK$ barycenter between Dirac measures, see Proposition \ref{prop:HKMMDirac}. Moreover, it provides the technical aspect that $\cMMHull$ is jointly lower semi-continuous in $(\vec{x},\vec{m})$ which is required for the derivation of the dual of the multi-marginal problem, Theorem \ref{thm:HKMMDual}.
\begin{lemma}
\label{lem:CMMHullCaratheodory}
For $\vec{x} \in \Omega^N$, $\vec{m} \in \R_+^N$ one has the representation
\begin{align}
	\label{eq:CMMHullCaratheodory}
	\cMMHull(\vec{x},\vec{m}) & = \min \left\{
		\sum_{j=1}^N \cMM(\vec{x},\vec{m}^j) \middle|
			\vec{m}^1,\ldots,\vec{m}^N \in \R_+^N,\,
			\sum_{j=1}^N \vec{m}^j = \vec{m}
		\right\}
\end{align}
	Minimizing $\vec{m}^j$ satisfy $(\vec{x},\vec{m}^j) \in \CSet_0$.
	$\cMMHull$ is lower-semicontinuous on $(\Omega \times \R)^N$.
\end{lemma}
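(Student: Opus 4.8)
The plan is to realise $\cMMHull(\vec x,\cdot)$ as the ``cheapest mass-splitting'' functional appearing on the right-hand side of \eqref{eq:CMMHullCaratheodory}, and then to cut the number of summands down to $N$ (with the infimum attained) by applying Carathéodory's theorem on the compact mass simplex. Throughout $\vec x\in\Omega^N$ is fixed. Recall from Definition \ref{def:CMMConvex} that $\cMMHull(\vec x,\cdot)=(\cMM(\vec x,\cdot))^{\ast\ast}$, i.e.~the largest convex lower-semicontinuous minorant of $\vec m\mapsto\cMM(\vec x,\vec m)$; both functions are positively $1$-homogeneous in $\vec m$, non-negative and finite on $\R_+^N$, and identically $+\infty$ off $\R_+^N$ --- the last fact for $\cMMHull$ because, by Proposition \ref{prop:QMM}, $\QMM(\vec x)$ contains points with arbitrarily negative $i$-th coordinate for each $i$. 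I would first introduce $g(\vec m)\assign\inf\{\sum_{j=1}^k\cMM(\vec x,\vec m^j)\mid k\in\N,\ \vec m^j\in\R_+^N,\ \sum_{j=1}^k\vec m^j=\vec m\}$. Using only positive $1$-homogeneity of $\cMM(\vec x,\cdot)$ one checks in two lines that $g$ is convex and positively $1$-homogeneous, with $g\le\cMM(\vec x,\cdot)$ obvious. In the other direction, convexity and $1$-homogeneity make $\cMMHull(\vec x,\cdot)$ subadditive, so $\cMMHull(\vec x,\vec m)\le\sum_j\cMMHull(\vec x,\vec m^j)\le\sum_j\cMM(\vec x,\vec m^j)$, i.e.~$\cMMHull(\vec x,\cdot)\le g$ on $\R_+^N$. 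It then only remains to show that $g$ is lower-semicontinuous (which upgrades $g\le\cMM(\vec x,\cdot)$ to $g\le(\cMM(\vec x,\cdot))^{\ast\ast}=\cMMHull(\vec x,\cdot)$, hence $g=\cMMHull(\vec x,\cdot)$), and that the infimum defining $g$ is in fact a minimum over $N$ summands.

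For both of these I would pass to the simplex $\Delta\assign\{\vec m\in\R_+^N\mid\sum_i m_i=1\}$, which is compact and $(N-1)$-dimensional, and let $f\assign\cMM(\vec x,\cdot)|_\Delta$, continuous by Lemma \ref{lem:CMMMinimizerExist}\eqref{item:CMMCont}. By the classical Carathéodory theorem for the convex hull of a continuous function on a compact convex set of dimension $N-1$, its convex hull $\mathrm{conv}_\Delta f$ is lower-semicontinuous and, at every point of $\Delta$, its value is attained by a convex combination of $f$ evaluated at $N$ points of $\Delta$. Rescaling by $1$-homogeneity (write $\vec m=t\hat{\vec m}$ with $t=\sum_i m_i$, $\hat{\vec m}\in\Delta$, for $\vec m\neq0$; the case $\vec m=0$ is trivial) a short computation identifies $g$ with the $1$-homogeneous extension of $\mathrm{conv}_\Delta f$ to $\R_+^N$. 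This extension is convex (a perspective function of $\mathrm{conv}_\Delta f$ composed with the linear map $\vec m\mapsto\sum_i m_i$) and lower-semicontinuous on $\R_+^N$ (including its boundary and the origin, using that $\mathrm{conv}_\Delta f$ is bounded below), and the minimum defining $g$ is attained with $N$ summands. This establishes \eqref{eq:CMMHullCaratheodory} together with $g=\cMMHull(\vec x,\cdot)$ on $\R_+^N$.

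The statement about minimizers is then immediate: for a minimizing decomposition $\vec m=\sum_{j=1}^N\vec m^j$ the chain $\cMMHull(\vec x,\vec m)=\sum_j\cMM(\vec x,\vec m^j)\ge\sum_j\cMMHull(\vec x,\vec m^j)\ge\cMMHull(\vec x,\vec m)$ (using $\cMMHull(\vec x,\cdot)\le\cMM(\vec x,\cdot)$ and subadditivity) collapses to equalities, forcing $\cMM(\vec x,\vec m^j)=\cMMHull(\vec x,\vec m^j)$, i.e.~$(\vec x,\vec m^j)\in\CSet_0$ by \eqref{eq:CSet}. Finally, for lower semicontinuity of $\cMMHull$ on $(\Omega\times\R)^N$: off $\cone^N$ it equals $+\infty$ and nearby values are eventually $+\infty$ as well, so it suffices to check lower semicontinuity along sequences $(\vec x^\ell,\vec m^\ell)\to(\vec x,\vec m)$ inside $\cone^N$. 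Taking for each $\ell$ an optimal $N$-term decomposition $\vec m^\ell=\sum_{j=1}^N\vec m^{j,\ell}$ from \eqref{eq:CMMHullCaratheodory}, each $\vec m^{j,\ell}$ is dominated componentwise by the bounded $\vec m^\ell$; along a subsequence $\vec m^{j,\ell}\to\vec m^{j,\infty}\in\R_+^N$ with $\sum_j\vec m^{j,\infty}=\vec m$, and continuity of $\cMM$ on $\cone^N$ together with subadditivity of $\cMMHull(\vec x,\cdot)$ gives $\liminf_\ell\cMMHull(\vec x^\ell,\vec m^\ell)=\sum_j\cMM(\vec x,\vec m^{j,\infty})\ge\cMMHull(\vec x,\vec m)$.

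The hardest point is obtaining the sharp bound of exactly $N$ summands (a naive Carathéodory argument on the epigraph in $\R^{N+1}$ only yields $N+1$) together with attainment of the minimum; the reduction to the compact $(N-1)$-dimensional simplex via positive $1$-homogeneity is precisely what buys this, and one has to be a little careful to confirm that the $1$-homogeneous extension of $\mathrm{conv}_\Delta f$ is genuinely convex and lower-semicontinuous on the closed cone $\R_+^N$ --- including its boundary --- so that it may legitimately be identified with $(\cMM(\vec x,\cdot))^{\ast\ast}=\cMMHull(\vec x,\cdot)$.
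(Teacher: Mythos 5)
Your proof is correct and follows the paper's strategy; the only difference is that where the paper obtains the $N$-term representation (together with its convexity and lower semicontinuity) by directly citing Rockafellar's Corollary 17.1.6 for convex hulls of positively $1$-homogeneous functions, you reconstruct that result from scratch via the compact $(N-1)$-dimensional mass simplex $\Delta$. The remaining steps --- attainment of the minimum from compactness and continuity of $\cMM$, the collapsing chain of inequalities forcing $(\vec x,\vec m^j)\in\CSet_0$, and the lower semicontinuity of $\cMMHull$ via convergent subsequences of optimal decompositions --- coincide with the paper's argument, modulo the small bookkeeping of first restricting to a subsequence realizing the $\liminf$ before extracting the convergent sub-subsequence of the $\vec m^{j,\ell}$.
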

\begin{proof}
	For $\vec{m}=0$ formula \eqref{eq:CMMHullCaratheodory} is trivial, for $\vec{m} \neq 0$, and with $\inf$ in place of $\min$, it follows from \cite[Corollary 17.1.6]{Rockafellar1972Convex}.
	Existence of minimizers follows from continuity of $\cMM$ (Lemma \ref{lem:CMMMinimizerExist}) and compactness of the minimization set.
	Minimizers must satisfy $(\vec{x},\vec{m}^j) \in \CSet$, otherwise an even better decomposition could be found.
	
	$\cMM$ is finite and non-negative on $\cone^N$ and $+\infty$ on $(\Omega \times \R)^N \setminus \cone^N$. Thus, $\cMMHull$ shares these properties. It remains to prove that $\cMMHull$ is lower-semicontinuous on $\cone^N$.	
	Let $\vec{x} \in \Omega^N$, $\vec{m} \in \R_+^N$ and let $(\iterl{\vec{x}})_\ell$ and $(\iterl{\vec{m}})_\ell$ be sequences in $\Omega^N$ and $\R_+^N$ converging to $\vec{x}$ and $\vec{m}$. For $\iterl{\vec{x}}$ and $\iterl{\vec{m}}$ let $(\iter{\vec{m}}{1,\ell},\ldots,\iter{\vec{m}}{N,\ell})$ be minimizers for $\cMMHull(\iterl{\vec{x}},\iterl{\vec{m}})$ in \eqref{eq:CMMHullCaratheodory}.
	Since $(\iterl{\vec{m}})_\ell$ is bounded, the sequence of minimizers is bounded and a converging subsequence that attains the limit inferior of the sequence $(\cMMHull(\iterl{\vec{x}},\iterl{\vec{m}}))_\ell$ can be extracted (for simplicity the sequence notation now refers to this subsequence). Let $(\vec{m}^1,\ldots,\vec{m}^N)$ be the limit which satisfies $\sum_{j=1}^N \vec{m}^j = \vec{m}$ and therefore by continuity of $\cMM$ on $\cone^N$,
	\begin{equation*}
		\liminf_{\ell \to \infty} \cMMHull(\iterl{\vec{x}},\iterl{\vec{m}})
		= \lim_{\ell \to \infty} \sum_{j=1}^N \cMM(\iterl{\vec{x}},\iter{\vec{m}}{j,\ell})
		= \sum_{j=1}^N \cMM(\vec{x},\vec{m}^j)
		\geq \cMMHull(\vec{x},\vec{m}).
		\qedhere
	\end{equation*}
\end{proof}

\subsection{Duality and equivalence with coupled-two-marginal formulation}
In this section we establish a dual problem for the multi-marginal barycenter problem. As before, we show duality with the semi-coupling formulation as it merely involves measures on compact spaces.
\begin{theorem}[Dual multi-marginal barycenter problem and existence of minimizers]
\label{thm:HKMMDual}
\begin{multline}
	\HKMM(\mu_1,\ldots,\mu_N)^2 =
	\sup\left\{
	\sum_{i=1}^N \int_\Omega \psi_i \,\diff \mu_i \middle|
	\psi_1,\ldots,\psi_N \in \cont(\Omega), \right. \\
	\left. \vphantom{\sum_{i=1}^N \int_\Omega}
	(\psi_1(x_1),\ldots,\psi_N(x_N)) \in \QMM(x_1,\ldots,x_N)
	\tn{ for all } x_1,\ldots,x_N \in \Omega
	\right\}
	\label{eq:HKMMDual}
\end{multline}
Minimizers of \eqref{eq:HKMMLifted} and \eqref{eq:HKMMSemiCoupling} exist.
\end{theorem}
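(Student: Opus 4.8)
The plan is to run the Fenchel--Rockafellar argument of Remark~\ref{rem:HKDual}, now over the compact product space $\Omega^N$ and with the multi-marginal cost $\cMMHull$, taking the semi-coupling formulation \eqref{eq:HKMMSemiCoupling} as the ``primal'' object (it only involves measures on compact spaces, which is what makes the duality clean). Concretely I would keep $G : \cont(\Omega)^N \to \R$, $(\psi_1,\ldots,\psi_N) \mapsto -\sum_{i=1}^N \int_\Omega \psi_i\,\diff\mu_i$, as in Proposition~\ref{prop:WCTMDual} and Remark~\ref{rem:HKDual}; let $A : \cont(\Omega)^N \to \cont(\Omega^N)^N$ send $(\psi_1,\ldots,\psi_N)$ to $(\phi_1,\ldots,\phi_N)$ with $\phi_i(\vec{x}) = \psi_i(x_i)$; and let $F : \cont(\Omega^N)^N \to \RCupInf$ be $0$ if $(\phi_1(\vec{x}),\ldots,\phi_N(\vec{x})) \in \QMM(\vec{x})$ for all $\vec{x} \in \Omega^N$ and $+\infty$ otherwise, so that the right-hand side of \eqref{eq:HKMMDual} equals $-\inf\{G(\vec{\psi}) + F(A\vec{\psi}) \mid \vec{\psi} \in \cont(\Omega)^N\}$.

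The conjugate $G^\ast(\rho_1,\ldots,\rho_N) = \sum_{i=1}^N \iota_{\{-\mu_i\}}(\rho_i)$ and the adjoint $A^\ast(\gamma_1,\ldots,\gamma_N) = (\pi_{1\sharp}\gamma_1,\ldots,\pi_{N\sharp}\gamma_N)$ are immediate. The essential step is $F^\ast$: I would apply Lemma~\ref{lem:IntConjugation} on $X = \Omega^N$ to $f(\vec{x},\vec{m}) \assign \cMMHull(\vec{x},\vec{m})$, which is lower-semicontinuous on $(\Omega\times\R)^N$ by Lemma~\ref{lem:CMMHullCaratheodory} and convex, proper and positively $1$-homogeneous in $\vec{m}$ with $f(\vec{x},\cdot)^\ast = \iota_{\QMM(\vec{x})}$ (Definition~\ref{def:CMMConvex}). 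The lemma then identifies $F$ with $I_{f^\ast}$ and hence $F^\ast$ with $I_f$, which by positive $1$-homogeneity of $\cMMHull$ is independent of the reference measure and is precisely the objective of \eqref{eq:HKMMSemiCoupling}, extended by $+\infty$ to signed $(\gamma_1,\ldots,\gamma_N)$; in particular the constraints $\gamma_i \geq 0$ and $\gamma_i \ll \gamma$ in \eqref{eq:HKMMSemiCoupling} are enforced automatically because $\cMMHull = +\infty$ off $\R_+^N$. Fenchel--Rockafellar then gives
\[ \tn{\eqref{eq:HKMMDual}} = \inf\left\{ G^\ast(-A^\ast(\gamma_1,\ldots,\gamma_N)) + F^\ast(\gamma_1,\ldots,\gamma_N) \,\middle|\, \gamma_1,\ldots,\gamma_N \in \meas(\Omega^N) \right\} = \tn{\eqref{eq:HKMMSemiCoupling}}, \]
and Theorem~\ref{thm:HKMMSemiCoupling} upgrades this to equality with \eqref{eq:HKMMLifted} $= \HKMM(\mu_1,\ldots,\mu_N)^2$.

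For the constraint qualification I would use the explicit description of $\QMM$ in Proposition~\ref{prop:QMM}: taking the constant functions $\psi_i \equiv -M$ with $M$ large, one has $-M \leq \lambda_i - \lambda_i^2$ and $\sum_{i=1}^N \lambda_i\,\Cos(|x_i-y|)^2/(1 + M/\lambda_i) \leq \sum_{i=1}^N \lambda_i/(1 + M/\lambda_i) \to 0$ uniformly in $\vec{x},y$, so a whole sup-norm ball around $A(\psi_1,\ldots,\psi_N)$ lies in $\{F = 0\}$; thus $F$ is finite and continuous there, $G$ is continuous everywhere, and $A$ is bounded linear. The Fenchel--Rockafellar theorem then additionally yields attainment on the dual side, i.e.\ a minimizer $(\gamma_1,\ldots,\gamma_N)$ of \eqref{eq:HKMMSemiCoupling}. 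Feeding this minimizer through the cone-lifting construction from the proof of Theorem~\ref{thm:HKMMSemiCoupling} (the analogue of Part~1 of Remark~\ref{rem:HKLiftedSCEquiv}: put $\gamma = \sum_i \gamma_i$, $u_i = \RadNik{\gamma_i}{\gamma}$ and $\hat{\gamma} = ((\pi_1,u_1),\ldots,(\pi_N,u_N))_\sharp \gamma$) produces a feasible competitor in \eqref{eq:HKMMLifted} with the same objective value, hence a minimizer of \eqref{eq:HKMMLifted} as well.

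The main obstacle I expect is the computation of $F^\ast$ via Lemma~\ref{lem:IntConjugation} together with the identification that the resulting integral functional coincides with the semi-coupling objective independently of the auxiliary measure, and --- closely tied to it --- the verification of the constraint qualification, which genuinely needs the explicit form of $\QMM(\vec{x})$ and a uniform-in-$\vec{x}$ interior point. The remaining ingredients (computing $G^\ast$ and $A^\ast$, the sign bookkeeping in Fenchel--Rockafellar, and transporting the optimal tuple to the cone-lifting picture) are routine and parallel Remark~\ref{rem:HKDual} and the proof of Theorem~\ref{thm:HKMMSemiCoupling}.
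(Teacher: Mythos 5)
Your proposal is correct and follows essentially the same route as the paper's proof: the same Fenchel--Rockafellar triple $(G,F,A)$, the same use of Lemma~\ref{lem:IntConjugation} (with lower semicontinuity supplied by Lemma~\ref{lem:CMMHullCaratheodory}) to identify $F^\ast$ with the semi-coupling objective, and the same transfer of the resulting minimizer of \eqref{eq:HKMMSemiCoupling} to \eqref{eq:HKMMLifted} via the cone-lifting construction of Theorem~\ref{thm:HKMMSemiCoupling}. You spell out the constraint-qualification check (constant functions $\psi_i\equiv -M$, made uniform in $\vec{x},y$ via the explicit description of $\QMM$ in Proposition~\ref{prop:QMM}) that the paper leaves implicit under the phrase ``in close analogy to Remark~\ref{rem:HKDual}''.
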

\begin{proof}
The proof works in close analogy to Remark \ref{rem:HKDual}.
We choose:
\begin{align*}
	G & : \cont(\Omega)^N \to \R, & & (\psi_1,\ldots,\psi_N) \mapsto -\sum_{i=1}^N \int_\Omega \psi_i\,\diff \mu_i, \\
	F & : \cont(\Omega^N)^N \to \RCupInf, & & (\xi_1,\ldots,\xi_N) \!\mapsto \!\begin{cases}
		0 & \tn{\!\!\!\!if } (\xi_1,\ldots,\xi_N)(x_1,\ldots,x_N) \in \QMM(x_1,\ldots,x_N) \\
		& \tn{\!\!\!\!for all } x_1,\ldots,x_N \in \Omega, \\
		+\infty & \tn{\!\!\!\!else,}
		\end{cases} \\
	A & : \cont(\Omega)^{N} \to \cont(\Omega^N)^N, & & (\psi_1,\ldots,\psi_N) \mapsto (\xi_1,\ldots,\xi_N) \\
	& & & \qquad \qquad \tn{with } \xi_i(x_1,\ldots,x_N) = \psi_i(x_i).
\end{align*}
With Lemmas \ref{lem:IntConjugation} and \ref{lem:CMMHullCaratheodory} conjugates of $F$ and $G$ and the adjoint of $A$ are obtained as before.
Once more with Fenchel--Rockafellar duality one finds
\begin{align*}
	\tn{\eqref{eq:HKMMDual}} & = -\inf \left\{G(\psi_1,\ldots,\psi_N) + F(A(\psi_1,\ldots,\psi_N))
		\middle| \psi_1,\ldots,\psi_N \in \cont(\Omega) \right\} \\
		& = \inf \left\{G^\ast(-A^\ast\gamma) + F^\ast(\gamma)
		\middle| \gamma \in \meast(\cone^N) \right\}
		= \tn{\eqref{eq:HKMMSemiCoupling}}
\end{align*}
and one directly implies existence of minimizers for \eqref{eq:HKMMSemiCoupling}.
Via the explicit construction underlying the proof of Theorem \ref{thm:HKMMSemiCoupling} (see also Remark \ref{rem:HKLiftedSCEquiv}) this implies existence of minimizers in \eqref{eq:HKMMLifted}.
\end{proof}

Now we have gathered all ingredients to show equivalence between the multi-marginal and the semi-coupling formulation.
Since we have established the existence of a measurable map $\HKT$ of tuples $\vec{x} \in \Omega^N$, $\vec{m} \in \R_+^N$ to corresponding minimizers $(y,s) \in \cone$ in \eqref{eq:CMM} only for $(\vec{x},\vec{m}) \in \CSet$, the reasoning from Proposition \ref{prop:WBarycenterEquivalence} is not entirely sufficient here. To overcome this difficulty we rely on the dual problems as well.
\begin{theorem}
\label{thm:HKCTMMMEquiv}
Let $\mu_1,\ldots,\mu_N \in \measp(\Omega)$.
\begin{enumerate}[(i)]
	\item One has $\HKCTM(\mu_1,\ldots,\mu_N)^2 = \HKMM(\mu_1,\ldots,\mu_N)^2$.
		\label{item:HKCTMMMEquivValues}
	\item This equality also holds if one uses $\cMM$ instead of $\cMMHull$ in $\HKMM(\mu_1,\ldots,\mu_N)^2$, \eqref{eq:HKMMLifted}.
		\label{item:HKCTMMMEquivNonRelax}
	\item There exist minimizers of \eqref{eq:HKMMLifted} that are concentrated on $\CSet$, see \eqref{eq:CSet}.
		\label{item:HKCTMMMEquivConcentration}
	\item If $\gamma$ minimizes \eqref{eq:HKMMLifted} and is concentrated on $\CSet$ then $\coneProj \HKT_\sharp \gamma$ is a \emph{barycenter}, i.e.~it minimizes \eqref{eq:HKCTM}.
		\label{item:HKCTMMMEquivProject}
	\item Conversely, for every minimizer $\nu$ of \eqref{eq:HKCTM} there is some $\gamma$ that minimizes \eqref{eq:HKMMLifted} with $\coneProj \HKT_\sharp \gamma=\nu$.
		\label{item:HKCTMMMEquivLift}
\end{enumerate}
\end{theorem}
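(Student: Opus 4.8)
The plan is to establish the five items in the order (i), (iii), (ii), (iv), (v): for the equality of values I would work entirely with the two dual representations \eqref{eq:HKCTMLiftedDual} and \eqref{eq:HKMMDual}, and for the structural statements I would use the Carathéodory decomposition of Lemma~\ref{lem:CMMHullCaratheodory} (together with a measurable selection) and the continuous pointwise‑barycenter map $\HKT$ of Proposition~\ref{prop:CMMMinimizerUnique}, mimicking the two halves of the proof of Proposition~\ref{prop:WBarycenterEquivalence}. For \emph{part~(i)}, the objectives of \eqref{eq:HKCTMLiftedDual} and \eqref{eq:HKMMDual} coincide and depend only on $(\psi_1,\dots,\psi_N)$, so it suffices to show that the two feasible sets have the same projection onto the $\psi$‑variables. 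If $(\psi_i,\phi_i)_i$ is feasible for \eqref{eq:HKCTMLiftedDual}, then each factor in $(1-\psi_i(x)/\lambda_i)(1-\phi_i(y)/\lambda_i)\ge\Cos(|x-y|)^2$ is non‑negative; dividing by $1-\psi_i(x)/\lambda_i$, multiplying by $\lambda_i$ and summing gives
\[
	\sum_{i=1}^N\frac{\lambda_i\,\Cos(|x_i-y|)^2}{1-\psi_i(x_i)/\lambda_i}\le\sum_{i=1}^N\lambda_i\big(1-\phi_i(y)/\lambda_i\big)=1-\sum_{i=1}^N\phi_i(y)\le1
\]
for all $x_1,\dots,x_N,y$, using $\sum_i\lambda_i=1$ and $\sum_i\phi_i\ge0$; taking $y=x_i$ in the non‑negative $i$‑th summand yields $\psi_i(x_i)\le\lambda_i-\lambda_i^2$, so $(\psi_i)_i$ is feasible for \eqref{eq:HKMMDual} by Proposition~\ref{prop:QMM}. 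Conversely, given $(\psi_i)_i$ feasible for \eqref{eq:HKMMDual}, put $\phi_i(y)\assign\lambda_i\big(1-\sup_{x\in\Omega}\Cos(|x-y|)^2/(1-\psi_i(x)/\lambda_i)\big)$; the supremum is finite (denominators $\ge\lambda_i>0$) and continuous in $y$ (uniform continuity on the compact $\Omega^2$), the $Q$‑constraint holds by construction, and $\sum_i\phi_i\ge0$ follows by choosing for each $i$ \emph{separately} a near‑maximiser $x_i$, applying the defining inequality of $\QMM(x_1,\dots,x_N)$, and letting the error tend to $0$. Hence the feasible sets, and thus the values, agree.

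For \emph{part~(iii)}, take a minimiser $\gamma$ of \eqref{eq:HKMMLifted} (which exists by Theorem~\ref{thm:HKMMDual}); a measurable selection of the decomposition in Lemma~\ref{lem:CMMHullCaratheodory} gives maps $S_j\colon(\vec x,\vec m)\mapsto(x_1,m^j_1,\dots,x_N,m^j_N)$ with $(\vec x,\vec m^j)\in\CSet_0$ and $\sum_j\cMM(\vec x,\vec m^j)=\cMMHull(\vec x,\vec m)$, and $\gamma'\assign\sum_{j=1}^N(S_j)_\sharp\gamma$ has the same projections $\coneProj\pi_{i\sharp}\gamma'=\mu_i$ (because $\sum_j m^j_i=m_i$) and the same cost (because $\cMM=\cMMHull$ on $\CSet_0$), hence is again optimal and concentrated on $\CSet_0$; deleting the part carried by $\{\vec m=0\}$, which has zero mass and zero cost, yields an optimiser concentrated on $\CSet$. \emph{Part~(ii)} is then immediate, since $\cMM\ge\cMMHull$ gives one inequality and the optimiser from~(iii), living where $\cMM=\cMMHull$, gives the other.

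For \emph{part~(iv)}, let $\gamma$ be an optimiser of \eqref{eq:HKMMLifted} concentrated on $\CSet$. Following the first half of the proof of Proposition~\ref{prop:WBarycenterEquivalence}, set $R_i\colon\CSet\to\cone^2$, $(\vec x,\vec m)\mapsto\big((x_i,m_i),\HKT(\vec x,\vec m)\big)$, which is continuous by Proposition~\ref{prop:CMMMinimizerUnique}, and $\gamma_i\assign(R_i)_\sharp\gamma\in\measpt(\cone^2)$. Then $\coneProj\pi_{1\sharp}\gamma_i=\mu_i$ and $\pi_{2\sharp}\gamma_i=\HKT_\sharp\gamma$ is independent of $i$, so with $\nu\assign\coneProj\HKT_\sharp\gamma$ the plan $\gamma_i$ is admissible for $\HK(\mu_i,\nu)^2$ in \eqref{eq:HKLifted}, and therefore
\[
	\sum_{i=1}^N\lambda_i\,\HK(\mu_i,\nu)^2\le\sum_{i=1}^N\lambda_i\int_{\cone^2}c\,\diff\gamma_i=\int_{\cone^N}\cMM\,\diff\gamma=\int_{\cone^N}\cMMHull\,\diff\gamma=\HKMM(\mu_1,\dots,\mu_N)^2=\HKCTM(\mu_1,\dots,\mu_N)^2
\]
by part~(i), so $\nu$ is a barycenter.

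\emph{Part~(v)} is the step I expect to be the real obstacle, because the analogue of ``gluing along a common second marginal'' requires the two‑marginal optimal plans on $\cone^2$ to share a common second \emph{cone}‑marginal, which is not automatic. Given a barycenter $\nu$, I would choose optimal cone‑lifting plans $\tilde\gamma_i\in\measpt(\cone^2)$ for $\HK(\mu_i,\nu)^2$, and then—this is the delicate point—replace them by optimal plans $\hat\gamma_i$ with a \emph{common} second cone‑marginal $\sigma$ with $\coneProj\sigma=\nu$, by splitting mass along the $\R_+$‑fibres over the target (which leaves the cost unchanged by the positive $1$‑homogeneity of $c$ in the masses and preserves both projected marginals) and taking $\sigma$ to be a common refinement of the $\pi_{2\sharp}\tilde\gamma_i$. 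Disintegrating $\hat\gamma_i=\int_{\cone}\hat\gamma_i^{(y,s)}\otimes\delta_{(y,s)}\,\diff\sigma(y,s)$ and gluing, $\gamma\assign\int_{\cone}\big(\hat\gamma_1^{(y,s)}\otimes\cdots\otimes\hat\gamma_N^{(y,s)}\big)\,\diff\sigma(y,s)\in\measpt(\cone^N)$, one checks $\coneProj\pi_{i\sharp}\gamma=\mu_i$ and
\[
	\int_{\cone^N}\cMMHull\,\diff\gamma\le\int_{\cone^N}\cMM\,\diff\gamma\le\sum_{i=1}^N\lambda_i\int_{\cone^2}c\,\diff\hat\gamma_i=\sum_{i=1}^N\lambda_i\,\HK(\mu_i,\nu)^2=\HKCTM(\mu_1,\dots,\mu_N)^2=\HKMM(\mu_1,\dots,\mu_N)^2 .
\]
Hence all inequalities are equalities: $\gamma$ is optimal, $\cMM=\cMMHull$ $\gamma$‑a.e.\ (so $\gamma$ is concentrated on $\CSet$ up to the cost‑ and mass‑free set $\{\vec m=0\}$), and tightness of the middle inequality forces the disintegration point $(y,s)$ to minimise in \eqref{eq:CMM} for $\gamma$‑a.e.\ $(\vec x,\vec m)$, whence $(y,s)=\HKT(\vec x,\vec m)$ by the uniqueness in Proposition~\ref{prop:CMMMinimizerUnique}, and therefore $\coneProj\HKT_\sharp\gamma=\coneProj\sigma=\nu$. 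Alternatively, (v) can be obtained from complementary slackness between \eqref{eq:HKMMLifted}, \eqref{eq:HKMMDual} and the dual potentials of the two‑marginal problems $\HK(\mu_i,\nu)^2$; I expect the cone‑marginal alignment to be the main technical hurdle in either approach.
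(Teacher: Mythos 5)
Your overall architecture is close to the paper's, but the allocation of work across the parts is different enough to be worth commenting on, and one step has a genuine gap.

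\textbf{Part (i).} Your bidirectional feasibility argument is a genuine alternative to the paper and is correct. The paper only uses duality for the direction $\HKCTM\le\HKMM$ (your forward implication), and obtains the reverse inequality as a byproduct of the gluing construction it performs up front. Your explicit construction
\[
\phi_i(y)\assign\lambda_i\Bigl(1-\sup_{x\in\Omega}\frac{\Cos(|x-y|)^2}{1-\psi_i(x)/\lambda_i}\Bigr)
\]
is new relative to the paper, and the verification goes through: the denominator is bounded below by $\lambda_i$ because Proposition~\ref{prop:QMM} forces $\psi_i\le\lambda_i-\lambda_i^2$, the sup over $x$ of a jointly continuous function on the compact $\Omega^2$ is continuous, the pair $\bigl(\psi_i(x)/\lambda_i,\phi_i(y)/\lambda_i\bigr)\in Q(x,y)$ holds by construction, and $\sum_i\phi_i\ge0$ follows because $\sum_i\sup_{x_i}\frac{\lambda_i\Cos(|x_i-y|)^2}{1-\psi_i(x_i)/\lambda_i}=\sup_{\vec x}\sum_i\frac{\lambda_i\Cos(|x_i-y|)^2}{1-\psi_i(x_i)/\lambda_i}\le1$ by the defining inequality of $\QMM(\vec x)$. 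This is arguably cleaner than the paper's route if one only wants (i); the paper's route, however, produces the glued plan $\hat\gamma$ as a by-product, which it then recycles for (ii)--(v).

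\textbf{Part (iii): the gap.} You invoke ``a measurable selection of the decomposition in Lemma~\ref{lem:CMMHullCaratheodory}''. This is not automatic: Lemma~\ref{lem:CMMHullCaratheodory} gives existence of minimizing $(\vec m^1,\ldots,\vec m^N)$ for each fixed $(\vec x,\vec m)$, but to define the maps $S_j$ you need the argmin multifunction to admit a measurable selector, which requires verifying (e.g.\ via Kuratowski--Ryll-Nardzewski) that it has nonempty closed values and is measurable in the appropriate sense. The paper explicitly advertises that it \emph{avoids} measurable selection theorems, and achieves (iii) differently: once (i) is known, the chain of inequalities in \eqref{eq:ProofHKCTMMMEquivSandwich} collapses to equalities, so the glued $\hat\gamma$ from the $\HKCTM\ge\HKMM$ direction integrates $\cMMHull$ and $\cMM$ to the same value and is therefore concentrated on $\CSet_0$; restricting away from $\{\vec m=0\}$ gives concentration on $\CSet$. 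Since you perform essentially that same gluing in your part (v), the measurable selection in (iii) is both unjustified and avoidable: reorder so that the gluing of (v) comes first, and extract (iii) and (ii) from it as the paper does.

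\textbf{Part (v).} You correctly identify the alignment of the second cone marginals as the crux. Your sketch of ``splitting mass along the $\R_+$-fibres'' is the right idea, but as written it is too vague to verify. The paper resolves exactly this point by invoking \cite[Lemma~7.10]{LieroMielkeSavare-HellingerKantorovich-2015a}, which produces optimal plans $\hat\gamma_i$ all having second cone marginal $\nu\otimes\delta_1$. You should either cite that lemma or make the fibre-rescaling construction precise (in particular, it is not enough to take an arbitrary ``common refinement'' of the $\pi_{2\sharp}\tilde\gamma_i$; one must exhibit a concrete disintegration-compatible reparametrisation and check it preserves both cost and the $\coneProj$-marginals). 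Once this is fixed, the remainder of (v) --- tightness of the middle inequality forcing $(y,s)=\HKT(\vec x,\vec m)$ $\gamma$-a.e., hence $\coneProj\HKT_\sharp\gamma=\nu$ --- matches the paper.

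\textbf{Parts (ii) and (iv).} These are fine and match the paper.
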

\begin{proof}
\textbf{Step 1: $\HKCTM(\mu_1,\ldots,\mu_N)^2 \geq \HKMM(\mu_1,\ldots,\mu_N)^2$.}
Let $\nu$ be a minimizer for $\HKCTM(\allowbreak\mu_1,\allowbreak \ldots,\allowbreak \mu_N)^2$, \eqref{eq:HKCTM} and let $\gamma_i \in \measpt(\cone^2)$ be minimizers for $\HK(\mu_i,\nu)^2$ in \eqref{eq:HKLifted} for $i \in \{1,\ldots,N\}$.
By the constraints of \eqref{eq:HKLifted} one has $\coneProj \pi_{2\sharp} \gamma_i = \nu$ for $i \in \{1,\ldots,N\}$ but the marginals $\pi_{2\sharp} \gamma_i$ are not necessarily all identical.
By virtue of \cite[Lemma 7.10]{LieroMielkeSavare-HellingerKantorovich-2015a} there are $\hat{\gamma}_i$ that are also minimal in \eqref{eq:HKLifted} for $\HK(\mu_i,\nu)^2$ and that satisfy
\begin{align*}
	\pi_{2\sharp} \hat{\gamma}_i = \nu \otimes \delta_1 \assignRe \rho \in \measpt(\cone).
\end{align*}
for $i \in \{1,\ldots,N\}$. Here $\rho=\nu \otimes \delta_1$ denotes the measure characterized by
\begin{align*}
	\int_\cone \phi(x,m)\,\diff \rho(x,m) = \int_\Omega \phi(x,1)\,\diff \nu(x)
	\qquad \tn{for } \phi \in \contNC(\cone).
\end{align*}
We can now repeat the construction \eqref{eq:WMMW2TMEquivGluing} from the proof of Proposition \ref{prop:WBarycenterEquivalence}: Let $(\hat{\gamma}_i^{(y,s)})_{(y,s) \in \cone}$ be the disintegration of $\hat{\gamma}_i$ with respect to its second marginal and introduce $\hat{\gamma} \in \measpt(\cone^N)$ by
\begin{align}
		\int_{\cone^N} \phi\,\diff \hat{\gamma} & \assign \int_{\cone^{N+1}} \phi((x_1,m_1),\ldots,(x_N,m_N))\,
			\diff \hat{\gamma}_1^{(y,s)}(x_1,m_1)\ldots \diff \hat{\gamma}_N^{(y,s)}(x_N,m_N)\,\diff \rho(y,s)
		\label{eq:ProofHKCTMMMEquivGammaConstruction}
\end{align}
for $\phi \in \contNC(\cone^N)$.
Analogous to Proposition \ref{prop:WBarycenterEquivalence} we find $\pi_{i\sharp} \hat{\gamma}=\hat{\gamma}_i$ for $i \in \{1,\ldots,N\}$ and by using
\begin{align*}
	\cMMHull(\vec{x},\vec{m}) \leq \cMM(\vec{x},\vec{m}) \leq \sum_{i=1}^N \lambda_i\,c(x_i,m_i,y,s)
	\qquad \tn{for all } (y,s) \in \cone
\end{align*}
one finds, in analogy to \eqref{eq:WMMWC2MEquivProofB}, that
\begin{align}
	\HKMM(\mu_1,\ldots,\mu_N)^2 & \leq \int_{\cone^N} \cMMHull(x_1,m_1,\ldots,x_N,m_N)
		\,\diff \hat{\gamma}((x_1,m_1),\ldots,(x_N,m_N))
		\nonumber \\
	& \leq \int_{\cone^N} \cMM(x_1,m_1,\ldots,x_N,m_N)
		\,\diff \hat{\gamma}((x_1,m_1),\ldots,(x_N,m_N))
		\nonumber \\
	& \leq \sum_{i=1}^N \lambda_i \int_{\cone^2} c(x,r,y,s)\,\diff \hat{\gamma}_i((x,r),(y,s))
		\nonumber \\
	& = \sum_{i=1}^N \lambda_i\,\HK(\mu_i,\nu)^2 = \HKCTM(\mu_1,\ldots,\mu_N)^2.
	\label{eq:ProofHKCTMMMEquivSandwich}
\end{align}

\textbf{Step 2: $\HKCTM(\mu_1,\ldots,\mu_N)^2 \leq \HKMM(\mu_1,\ldots,\mu_N)^2$.}
For the converse inequality we cannot replicate the argument of Proposition \ref{prop:WBarycenterEquivalence} since we have, in this article, not established the existence of a measurable map that takes $\Omega \times \R_+^N \ni (\vec{x},\vec{m})$ to minimizers $(y,s) \in \Omega\times \R_+$ in the definition of $\cMM(\vec{x},\vec{m})$ beyond the set $\CSet$.
Instead we argue via the dual problems.

By a simple density argument in \eqref{eq:HKCTMLiftedDual} we can add the constraint $\psi_i(x)<\lambda_i$ for all $i \in \{1,\ldots,N\}$, $x \in \Omega$ without changing the value of the supremum (the constraint $(\tfrac{\psi_i(x)}{\lambda_i},\tfrac{\phi_i(y)}{\lambda_i}) \in Q(x,y)$ already entails the constraint $\psi_i(x)\leq \lambda_i$ and any feasible $\psi_i$ does not become unfeasible by slightly decreasing its value. (We are not interested in the existence of maximizers in \eqref{eq:HKCTMLiftedDual}.)
Let now $(\psi_1,\ldots,\psi_N,\phi_1,\ldots,\phi_N) \in \cont(\Omega)^{2N}$ be feasible candidates for \eqref{eq:HKCTMLiftedDual} with $\psi_i < \lambda_i$. Then one finds with \eqref{eq:HKQ} for all $x_1,\ldots,x_N,y \in \Omega$ that
\begin{align}
	\sum_{i=1}^N \frac{\lambda_i\,\Cos(|x_i-y|)^2}{1-\psi_i(x_i)/\lambda_i} \leq 1- \sum_{i=1}^N \phi_i(y) \leq 1
\end{align}
where the last inequality is due to the constraint on $\phi_1,\ldots,\phi_N$ in \eqref{eq:HKCTMLiftedDual} and thus that
\begin{align*}
	(\psi_1(x_1),\ldots,\psi_N(x_N)) \in \QMM(x_1,\ldots,x_N).
\end{align*}
So $(\psi_1,\ldots,\psi_N)$ is feasible for \eqref{eq:HKMMDual} with the same score as for \eqref{eq:HKCTMLiftedDual}. Since we can do this for a dense set of candidates in \eqref{eq:HKCTMLiftedDual} one finds $\tn{\eqref{eq:HKCTMLiftedDual}} \leq \tn{\eqref{eq:HKMMDual}}$.

\textbf{Step 3: Conclusion.}
As in Proposition \ref{prop:WBarycenterEquivalence} these two inequalities establish \eqref{item:HKCTMMMEquivValues}. Combing this with the chain of inequalities \eqref{eq:ProofHKCTMMMEquivSandwich}, which we now know to be equalities, and $\cMMHull \leq \cMM$ one finds \eqref{item:HKCTMMMEquivNonRelax}.
This also implies that the $\hat{\gamma}$ constructed in \eqref{eq:ProofHKCTMMMEquivGammaConstruction} is a minimizer concentrated on $\CSet_0$.
Since $\cMMHull=0$ on $\CSet_0 \setminus \CSet$ and $\coneProj \pi_{i\sharp} (\hat{\gamma} \restr S)=\coneProj \pi_{i\sharp} \hat{\gamma}$ one has that $\hat{\gamma} \restr S$ is also a minimizer, which is concentrated on $S$, thus proving \eqref{item:HKCTMMMEquivConcentration}.
If $\gamma$ is a minimizer of \eqref{eq:HKMMLifted} that is concentrated on $\CSet$ we can work analogous to \eqref{eq:WMMWCTMEquivProofA} with the map $\HKT$ and the operator $\coneProj$ to show that $\coneProj \HKT_{\sharp} \gamma$ is a barycenter, yielding \eqref{item:HKCTMMMEquivProject}.
Finally, repeating the arguments from Proposition \ref{prop:WBarycenterEquivalence} one finds that $\HKT_\sharp \hat{\gamma}=\rho$ and eventually $\coneProj \HKT_\sharp \hat{\gamma}=\nu$ thus showing \eqref{item:HKCTMMMEquivLift}.
\end{proof}

\subsection[An explicit form for c\_MM and Q\_MM for d=1 and small distances]{An explicit form for $\cMM$ and $\QMM$ for $d=1$ and small distances}
\label{sec:CMMExplicit}
In one dimension and for sufficiently small distances the particular form of $c(x_1,m_1,x_2,m_2)$ allows more explicit representations of $\cMM$ and $\QMM$ which are instructive in their own right and useful for some additional results (e.g.~Section \ref{sec:NonExistence}).
\begin{proposition}
	\label{prop:CMMSimple}
	For $d=1$ and $\Omega=[0,\pi/2]$ one has
	\begin{align}
		\label{eq:CMMSimple}
		\cMM(\vec{x},\vec{m}) = \sum_{i=1}^N \lambda_i\,m_i - \sum_{i,j=1}^N \lambda_i\,\lambda_j \sqrt{m_i\,m_j} \cos(|x_i-x_j|)
	\end{align}
	and corresponding minimizers in \eqref{eq:CMM} are given by
	\begin{align}
		\label{eq:CMMSimpleMinimizers}
		y & \assign \arg\left(z\right), &
		s & \assign \left|z\right|^2 &
		\tn{for} \qquad z & \assign \sum_{i=1}^N \lambda_i \sqrt{m_i}\,e^{i\,x_i}
	\end{align}
	where $\arg(z)$ is the unique $\delta \in \Omega$ such that $z=|z| e^{i\delta}$ for $z\neq 0$ and we may choose an arbitrary value in $\Omega$ for $z=0$.
\end{proposition}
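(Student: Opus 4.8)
The plan is to dispose of the mass variable $s$ first and then reduce the remaining scalar optimization over $y$ to an elementary planar trigonometry problem, exploiting that on $\Omega=[0,\pi/2]$ all occurring distances are $\leq\pi/2$, so that $\Cos$ may be replaced by $\cos$. Concretely, I would start from Lemma~\ref{lem:CMMAux}: by \eqref{eq:CMMAux} and the first identity in \eqref{eq:MMAuxMMRelation},
\begin{align*}
	\cMM(\vec{x},\vec{m}) = \inf_{y\in\Omega}\Big[\sum_{i=1}^N\lambda_i\,m_i - g(y)^2\Big],
	\qquad g(y)\assign\sum_{i=1}^N\lambda_i\sqrt{m_i}\,\Cos(|x_i-y|),
\end{align*}
where the optimal $s$ for a given $y$ is recovered from the explicit maximization over $s\geq 0$ in \eqref{eq:CMMAux} as $s=g(y)^2$. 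Since $x_i,y\in[0,\pi/2]$ forces $|x_i-y|\leq\pi/2$ and hence $\Cos(|x_i-y|)=\cos(x_i-y)\geq 0$, we have $g\geq 0$ on $\Omega$, so computing $\cMM(\vec{x},\vec{m})$ amounts to \emph{maximizing} $g$ over $\Omega$.

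Next I would rewrite $g$ in complex form. With $z\assign\sum_{i=1}^N\lambda_i\sqrt{m_i}\,e^{\mathrm{i}x_i}$ one has, for $z\neq 0$,
\begin{align*}
	g(y)=\mathrm{Re}\big(e^{-\mathrm{i}y}z\big)=|z|\,\cos\big(\arg(z)-y\big).
\end{align*}
The key observation — and really the only non-mechanical step — is that each summand $\lambda_i\sqrt{m_i}(\cos x_i+\mathrm{i}\sin x_i)$ has non-negative real and imaginary part because $x_i\in[0,\pi/2]$, so $z$ lies in the closed first quadrant and $\arg(z)\in[0,\pi/2]=\Omega$. Hence the unrestricted maximizer $y=\arg(z)$ of $y\mapsto\cos(\arg(z)-y)$ is feasible, giving $\max_{y\in\Omega}g(y)=|z|$, attained uniquely at $y=\arg(z)$, and correspondingly $s=|z|^2$; this is precisely \eqref{eq:CMMSimpleMinimizers}.

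Finally I would substitute back to obtain $\cMM(\vec{x},\vec{m})=\sum_i\lambda_i m_i-|z|^2$ and expand
\begin{align*}
	|z|^2=\Big(\sum_i\lambda_i\sqrt{m_i}\cos x_i\Big)^2+\Big(\sum_i\lambda_i\sqrt{m_i}\sin x_i\Big)^2=\sum_{i,j=1}^N\lambda_i\lambda_j\sqrt{m_i\,m_j}\,\cos(x_i-x_j),
\end{align*}
using $\cos(x_i-x_j)=\cos(|x_i-x_j|)$, which yields \eqref{eq:CMMSimple}. It remains to treat the degenerate case: $z=0$ forces, termwise (all parts non-negative, $\lambda_i>0$), $\sqrt{m_i}\cos x_i=\sqrt{m_i}\sin x_i=0$ for every $i$, which is possible only if $\vec{m}=0$; then $\cMM(\vec{x},\vec{m})=0$ and any $y\in\Omega$ together with $s=0$ is a minimizer, consistent with the freedom in choosing $\arg(0)$ stated in the proposition. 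I do not anticipate a genuine obstacle here; the one place to be slightly careful is the boundary analysis of the maximization of $g$ over the interval $\Omega$, which is handled cleanly by the first-quadrant remark $\arg(z)\in\Omega$.
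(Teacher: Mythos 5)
Your proof is correct and rests on the same key idea as the paper's, namely the passage to the complex representation $p_i=\sqrt{m_i}\,e^{\mathrm{i}x_i}$, $z=\sum_i\lambda_i p_i$ (the paper identifies the cost with $|\sqrt{m}\,e^{\mathrm{i}x}-\sqrt{s}\,e^{\mathrm{i}y}|^2$ and minimizes $\sum_i\lambda_i|p_i-q|^2$ jointly over $q=\sqrt{s}\,e^{\mathrm{i}y}\in\C$, whereas you eliminate $s$ first via Lemma~\ref{lem:CMMAux} and then maximize $g(y)=\Re(e^{-\mathrm{i}y}z)$ over $\Omega$; both then verify that the unconstrained optimizer corresponds to a feasible $y\in\Omega$, you via the first-quadrant observation, the paper via a convex-hull/cone argument). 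The two differ only in bookkeeping, and your treatment of the degenerate case $z=0$ (which forces $\vec m=0$) is a clean addition.
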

\begin{proof}
	Since $|x-y| \leq \pi/2$ on $\Omega$, we have for $(x,m),(y,s) \in \cone$, $c(x,m,y,s)=|\sqrt{m}\,e^{ix}-\sqrt{s}\,e^{iy}|^2$ and therefore
	\begin{equation}
		\label{eq:CMMSimplePre}
		\cMM(\vec{x},\vec{m})
		= \min_{(y,s) \in \cone} \sum_{i=1}^N \lambda_i\,\left|\sqrt{m_i}\,e^{ix_i}-\sqrt{s}\,e^{iy}\right|^2
		= \min_{q \in \C} \sum_{i=1}^N \lambda_i\,\left|p_i-q\right|^2
	\end{equation}
	where we introduced $p_i \assign \sqrt{m_i}\,e^{ix_i}$ and the parametrization $q \assign \sqrt{s}\,e^{iy}$. Note that the minimizing $q$, given by $\sum_{i=1}^N \lambda_i\,p_i$, will lie in the convex hull of the $p_i$ and thus the minimizing $y$ will lie in the convex hull of the $x_i$ and thus in $\Omega$, and thus this reparametrization is indeed valid. This establishes \eqref{eq:CMMSimpleMinimizers}. A simple explicit computation yields that
	\begin{equation*}
		\cMM(\vec{x},\vec{m}) = \sum_{i,j=1}^N \frac{\lambda_i\,\lambda_j}{2}\,|p_i-p_j|^2
		= \sum_{i=1}^N \lambda_i\,m_i - \sum_{i,j=1}^N \lambda_i\,\lambda_j \sqrt{m_i\,m_j} \cos(|x_i-x_j|).
		\qedhere
	\end{equation*}
\end{proof}

In this specific setup it is also possible to explicitly determine the maximal value of the function $\sum_{i=1}^N \frac{\lambda_i \Cos(|x_i-y|)^2}{1-\psi_i/\lambda_i}$ in \eqref{eq:QMM} and thus to simplify the description of $\QMM(\vec{x})$. For this, it is more convenient to switch to scaled coordinates $\chi_i \assign \frac{\lambda_i}{1-\psi_i/\lambda_i}$ where the reparametrization $\psi_i \mapsto \chi_i$ is an increasing bijection on $(-\infty,\lambda_i-\lambda_i^2] \leftrightarrow (0,1]$.
\begin{proposition}
	\label{prop:QMMQuadratic} a) For $\vec{x} \in \Omega^N$, $\vec{\psi} \in \R^N$,
	\begin{align}
		\label{eq:QMMTildeEquiv}
		[\vec{\psi} \in \QMM(\vec{x})]
		\qquad \Leftrightarrow \qquad
		[\exists\, \vec{\chi} \in \QMMTilde(\vec{x}) \tn{ with }
		\psi_i=\lambda_i-\lambda_i^2/\chi_i
		\tn{ for } i \in \{1,\ldots,N\}]
	\end{align}
	where
	$$
        \QMMTilde(\vec{x}) = \left\{\vec{\chi}\in(0,1]^N \, \middle| \,
	    \sum_{i=1}^N\chi_i \Cos(|x_i-y|)^2\le 1 \, \forall\, y\in\Omega\right\}.
    $$
	b) For $d=1$ and $\Omega=[0,\pi/2]$,
	\begin{align}
		\label{eq:QMMTilde}
		\QMMTilde(\vec{x}) = \left\{ \vec{\chi} \in (0,1]^N \middle|
			\sum_{i=1}^N \chi_i - \sum_{1\leq j<k\leq N} \chi_j\chi_k \sin(x_j-x_k)^2 \le 1
			\right\}.
	\end{align}
\end{proposition}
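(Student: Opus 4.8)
I would prove (a) by a direct change of variables in the description of $\QMM(\vec{x})$ from Proposition~\ref{prop:QMM}. The substitution $\chi_i\assign\lambda_i/(1-\psi_i/\lambda_i)$, with inverse $\psi_i=\lambda_i-\lambda_i^2/\chi_i$, is — as recorded just before the proposition — an increasing bijection of $(-\infty,\lambda_i-\lambda_i^2]$ onto $(0,1]$ (because $1-\psi_i/\lambda_i$ runs over $[\lambda_i,\infty)$ exactly for $\psi_i\le\lambda_i-\lambda_i^2$). Since $\lambda_i\,\Cos(|x_i-y|)^2/(1-\psi_i/\lambda_i)=\chi_i\,\Cos(|x_i-y|)^2$, plugging this into \eqref{eq:QMM} turns the two constraints $\psi_i\le\lambda_i-\lambda_i^2$ and $\sum_i\lambda_i\,\Cos(|x_i-y|)^2/(1-\psi_i/\lambda_i)\le1$ for all $y\in\Omega$ verbatim into $\vec{\chi}\in(0,1]^N$ and $\sum_i\chi_i\,\Cos(|x_i-y|)^2\le1$ for all $y\in\Omega$, i.e.\ $\vec{\chi}\in\QMMTilde(\vec{x})$; this is \eqref{eq:QMMTildeEquiv}.

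For (b) the key step is to compute the maximal value $M(\vec{\chi})\assign\sup_{y\in\Omega}\sum_{i=1}^N\chi_i\,\Cos(|x_i-y|)^2$ in closed form for $\Omega=[0,\tfrac{\pi}{2}]$. On this $\Omega$ one has $|x_i-y|\le\tfrac{\pi}{2}$, hence $\Cos(|x_i-y|)=\cos(x_i-y)$, and the half-angle identity $\cos^2\theta=\tfrac12(1+\cos 2\theta)$ gives
\[
	\sum_{i=1}^N\chi_i\cos^2(x_i-y)=\tfrac12\sum_{i=1}^N\chi_i+\tfrac12\operatorname{Re}\!\big(e^{-2\mathrm{i}y}\,Z\big),
	\qquad Z\assign\sum_{i=1}^N\chi_i\,e^{2\mathrm{i}x_i}.
\]
Because $2x_i\in[0,\pi]$ and $\chi_i>0$, the number $Z$ lies in the closed upper half-plane, so $Z=|Z|\,e^{\mathrm{i}\alpha}$ with $\alpha\in[0,\pi]$ (any $\alpha$ if $Z=0$), and the admissible point $y=\alpha/2\in[0,\tfrac{\pi}{2}]=\Omega$ already attains the unconstrained maximum $\sup_{y\in\R}\operatorname{Re}(e^{-2\mathrm{i}y}Z)=|Z|$. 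Thus $M(\vec{\chi})=\tfrac12\sum_i\chi_i+\tfrac12|Z|$. Expanding $|Z|^2=\sum_{i,j}\chi_i\chi_j\cos\!\big(2(x_i-x_j)\big)$ and using $\cos 2\theta=1-2\sin^2\theta$ together with $\big(\sum_i\chi_i\big)^2=\sum_i\chi_i^2+2\sum_{j<k}\chi_j\chi_k$ one gets
\[
	|Z|^2=\Big(\sum_{i=1}^N\chi_i\Big)^{\!2}-4\sum_{1\le j<k\le N}\chi_j\chi_k\,\sin^2(x_j-x_k).
\]

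It then remains to rewrite the membership condition $\vec{\chi}\in\QMMTilde(\vec{x})\Leftrightarrow M(\vec{\chi})\le1$, i.e.\ $\tfrac12\sum_i\chi_i+\tfrac12|Z|\le1$, into the claimed form. This reads $|Z|\le 2-\sum_i\chi_i$, which (as $|Z|\ge0$) entails $\sum_i\chi_i\le2$; under that sign condition it is equivalent — by squaring and inserting the formula for $|Z|^2$, after which the $(\sum_i\chi_i)^2$ terms cancel — to $\sum_i\chi_i-\sum_{j<k}\chi_j\chi_k\sin^2(x_j-x_k)\le1$, which is the description in \eqref{eq:QMMTilde}. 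I expect the squaring step to be the one point needing care: for $N\le2$ the bound $\sum_i\chi_i\le2$ is automatic from $\vec{\chi}\in(0,1]^N$, so the quadratic inequality alone describes $\QMMTilde(\vec{x})$, whereas for $N\ge3$ the bound $\sum_i\chi_i\le2$ should be kept alongside \eqref{eq:QMMTilde}. Everything else — the admissibility of the maximizer $y=\alpha/2$ and the elementary expansion of $|Z|^2$ — is routine.
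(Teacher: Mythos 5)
Your proof follows the paper's step for step: the change of variables $\chi_i=\lambda_i/(1-\psi_i/\lambda_i)$ for part~(a), then the half-angle identity and the polar decomposition of $Z=\sum_j\chi_j e^{2ix_j}$ for part~(b), arriving at the same conclusion that $\max_{y\in\Omega}\sum_i\chi_i\cos^2(x_i-y)=\tfrac12\sum_i\chi_i+\tfrac12|Z|$. The one place where you diverge is the squaring step, and your caution there is not just ``a point needing care'' but exposes an actual flaw in the paper's argument. The paper asserts that, because $\chi_*\ge0$, the inequality $\chi_*\le 2-\sum_i\chi_i$ is \emph{equivalent} to $\chi_*^2\le(2-\sum_i\chi_i)^2$; this equivalence also requires $2-\sum_i\chi_i\ge0$, a sign condition that for $N\ge3$ is implied neither by $\vec{\chi}\in(0,1]^N$ nor by the quadratic inequality itself, so the right-hand set in \eqref{eq:QMMTilde} can be strictly larger than $\QMMTilde(\vec{x})$. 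Concretely, take $N=4$, $\vec{\chi}=(1,1,1,1)$ and $\vec{x}=(0,t,\tfrac{\pi}{2}-t,\tfrac{\pi}{2})$ with $t\in(0,\tfrac{\pi}{4})$. Then
\begin{equation*}
  \sum_{i=1}^4\chi_i-\sum_{1\le j<k\le 4}\chi_j\chi_k\sin^2(x_j-x_k)=4-\bigl(3+\cos^2(2t)\bigr)=\sin^2(2t)<1,
\end{equation*}
so $\vec{\chi}$ belongs to the set on the right of \eqref{eq:QMMTilde}, yet at $y=\tfrac{\pi}{4}$ one has $\sum_i\chi_i\Cos^2(|x_i-y|)=1+2\cos^2(\tfrac{\pi}{4}-t)>2$, so $\vec{\chi}\notin\QMMTilde(\vec{x})$. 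Your proposed remedy is the correct one: retain the constraint $\sum_i\chi_i\le 2$ alongside the quadratic inequality for $N\ge3$ (for $N=2$ it is automatic from $\chi_i\le 1$, which is why the use in Example \ref{ex:N2:1D} is unaffected). So you have both reconstructed the paper's proof and identified a genuine gap in it.
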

Thus in scaled coordinates the set $\QMM$ is described just by trivial linear upper bounds on each component and a single quadratic inequality. The boundary of the set satisfying the quadratic inequality is a two-sheeted hyperboloid and the linear bounds just serve to select the right branch.

\begin{proof}
a) is immediate from Proposition \ref{prop:QMM}. As for b), in the description of $\QMMTilde$ in a) the functions $\Cos(|x_i-y|)$ reduce to $\cos(|x_i-y|)$.  By the trigonometric identity $\cos(s)^2 = \frac12(1+\cos(2s))$ it follows that 
\begin{equation}
	\label{eq:fSumComplex}
   \sum_{i=1}^N \chi_i \cos(|y-x_i|)^2 = \frac12 \sum_{i=1}^N \chi_i + \frac12 \underbrace{\sum_{i=1}^N \chi_i \cos(2(y-x_i))}_{=:g(y)}.
\end{equation}
We now polar-decompose $\sum_{j=1}^N \chi_j e^{-2ix_j} = \chi_* e^{-2i\delta}$ with $\chi_*\ge 0$, $\delta\in\R$ (and in particular it is possible to choose $\delta$ in the convex hull of the $(x_i)_i$). It follows by multiplying with $e^{2iy}$ and taking real parts that $g(y)=\chi_*\cos(2(y-\delta))$ (physically, a superposition of one-dimensional same-frequency harmonics is again a harmonic of that frequency) and therefore $\max_{y\in\Omega} g(y)=\max_{y \in \R} g(y)=\chi_*$, and by taking absolute values that
$$
   \chi_* = \sqrt{\sum_{j=1}^N \chi_j^2 + 2 \sum_{1\le j<k\le N}\chi_j\chi_k\cos(2(x_j-x_k))}.
$$
Hence $\chi\in(0,1]^N$ belongs to $\tilde{Q}_{MM}$ if and only if
$$
      \frac12 \sum_{i=1}^N \chi_i + \frac{\chi_*}{2} \le 1
$$
with $\chi_*$ as above. The latter inequality is equivalent to 
$\chi_*\le 2 - \sum_{i=1}^N \chi_i$ and, because of the nonnegativity of $\chi_*$, to its analogon obtained by squaring both sides,  $\chi_*^2 \le (2-\sum_{i=1}^N\chi_i)^2$. After some rearrangement and using $\cos(2(x_j-x_k))=2\cos(x_j-x_k)^2-1$, it finally becomes
\begin{align*}
   \sum_{j=1}^N \chi_j - \sum_{1\le j<k\le N} \chi_j\chi_k \sin(x_j-x_k)^2 \le 1. & \qedhere
\end{align*}
\end{proof}

\begin{example}
\label{ex:N2:1D}
For $N=2$ and $|x_1-x_2| \leq \pi/2$ the computation of $\cMM$ can always be reduced to Proposition \ref{prop:CMMSimple} since the minimization over $y$ in \eqref{eq:CMM} can be restricted to the convex hull of $x_1,x_2$, i.e.~the one-dimensional line segment between $x_1$ and $x_2$ (see Lemma \ref{lem:CMMMinimizerExist} \eqref{item:CMMConvexHull}) which can be embedded isometrically into the interval $[0,\pi/2]$.
	
For $t \in (0,1)$ let $\lambda_1=1-t$, $\lambda_2=t$. Then one finds with \eqref{eq:CMMSimple}
\begin{align}
	\label{eq:ExampleN2CMM}
	\cMM(x_1,m_1,x_2,m_2) & = (1-t)\,t\,c(x_1,m_1,x_2,m_2)
\end{align}
and since this function is jointly convex in $(m_1,m_2)$ (for fixed $(x_1,x_2)$) this also equals $\cMMHull$.
In the dual picture we find with Proposition \ref{prop:QMMQuadratic},
\begin{align}
	\label{eq:ExampleN2QMM}
	\QMMTilde(\vec{x})=\left\{(\chi_1,\chi_2) \in (0,1]^2 \middle| \chi_1 + \chi_2 -\chi_1\,\chi_2 \sin(x_1-x_2)^2 \leq 1\right\}
\end{align}
which, after some rearranging, can be shown to be equivalent to $\QMM(\vec{x})=(1-t)\,t\,Q$ for $Q$ as in \eqref{eq:HKQ}, which is consistent with \eqref{eq:ExampleN2CMM}.
We will complete this discussion in Example \ref{ex:CMM:N2}.
\end{example}

\section{HK barycenter between Dirac measures}
\label{sec:HKMMDirac}
In this section we study the $\HK$ barycenter between Dirac measures $\mu_i\assign m_i \cdot \delta_{x_i}$, $(x_i,m_i) \in \cone$, $i \in \{1,\ldots,N\}$.
The case $N=2$ is already well understood, since here the barycenter corresponds to a point of the geodesic connecting the two marginals (cf.~Examples \ref{ex:N2:1D} and \ref{ex:CMM:N2}). For $|x_1-x_2|<\pi/2$ the barycenter is given by a single Dirac measure on the line segment between $x_1$ and $x_2$. For $|x_1-x_2|>\pi/2$ it is given by two Dirac measures at $x_1$ and $x_2$. At the cut locus $|x_1-x_2|=\pi/2$ any superposition of the two options is viable \cite{ChizatOTFR2015}.
In this section we observe that this behaviour is fully encoded in the behaviour of the function $\cMM$ and the relation to its convex hull $\cMMHull$.
For $N\geq 3$ we show that the `splitting' behaviour of the $\HK$ barycenter between Dirac measures no longer merely depends on the mutual distance of the points but also on the point masses.

We begin with some general results in Section \ref{sec:HKMMDiracGeneral} and then apply these to the cases $N=2$ and $N=3$ for illustration in Section \ref{sec:HKMMDiracExamples}.

\subsection{General results}
\label{sec:HKMMDiracGeneral}
The two main results of this section are Propositions \ref{prop:HKMMDirac} and \ref{prop:HKMMDiracDual}.
Roughly speaking, the former provides sufficient conditions for $\HK$ barycenters between Dirac measures and the latter, by leveraging duality between $\cMMHull$ and $\QMM$, necessary conditions.

The following lemma helps to reduce the complexity of the multi-marginal problem in the case that some of the marginals are Dirac measures. This will make it easier in the following to solve the `remaining problem'.
\begin{lemma}
  \label{lem:HKBarycenterSupportRestriction}
  Let $N$ measures $\mu_1,\ldots,\mu_N \in \measp(\Omega)$ be given, among which the first $I\le N$ are Diracs,
  i.e., $\mu_i=\bar m_i\cdot\delta_{\bar x_i}$, with $\bar x_i\in\Omega$ and $\bar m_i\in\R_+$, for $i=1,\ldots,I$.
  Then the minimum in the definition \eqref{eq:HKMMLifted} of $\HKMM$ is attained by some $\gamma\in\measpt(\cone^N)$
  that are concentrated on $\Theta\times\cone^{N-I}$,
  with $\Theta:=\prod_{i=1}^I\big(\{\bar x_i\}\times\R_+\big)\subset \cone^I$.

  Likewise, a minimum in formulation \eqref{eq:HKMMSemiCoupling} is attained by some $\gamma,\gamma_1,\ldots,\gamma_N \in \measp(\Omega)^N$ that are concentrated on $\left(\prod_{i=1}^I \{\bar{x}_i\}\right) \times \Omega^{N-I}$.
  Further, these arguments apply to the formulations for $\HK$, \eqref{eq:HKLifted} and \eqref{eq:HKSemiCoupling}.
\end{lemma}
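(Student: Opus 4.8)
The plan is to push everything down to the marginal constraint. The requirement $\coneProj\pi_{i\sharp}\gamma=\bar m_i\cdot\delta_{\bar x_i}$ already pins down the $i$-th base point of $\gamma$ wherever the corresponding mass coordinate is positive, while the $\{m_i=0\}$-part can be relocated for free. Concretely, $\coneProj\pi_{i\sharp}\gamma=p_\sharp(m_i\cdot\pi_{i\sharp}\gamma)$, so equality with $\bar m_i\delta_{\bar x_i}$ forces the non-negative measure $m_i\cdot\pi_{i\sharp}\gamma$ to be concentrated on $\{\bar x_i\}\times\R_+$; since $m_i\cdot\pi_{i\sharp}\gamma$ and $\pi_{i\sharp}\gamma$ are mutually absolutely continuous on $\{m_i>0\}$, the measure $\gamma$ restricted to $\{m_i>0\}$ has its $i$-th base-point coordinate equal to $\bar x_i$. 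On $\{m_i=0\}$ that base point is unconstrained, but also irrelevant: $c(x_i,0,y,s)=s$ does not depend on $x_i$, hence $\cMM(\vec x,\vec m)$ is independent of $x_i$ whenever $m_i=0$, and so is $\cMMHull$ by the Carathéodory representation of Lemma \ref{lem:CMMHullCaratheodory} (every decomposition $\vec m=\sum_{j}\vec m^j$ with $\vec m^j\in\R_+^N$ has $(\vec m^j)_i=0$ for all $j$ as soon as $m_i=0$).

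Given this, I would take any minimizer $\gamma$ of \eqref{eq:HKMMLifted} (which exists by Theorem \ref{thm:HKMMDual}) and define the Borel map $\Phi:\cone^N\to\cone^N$ that is the identity on all coordinates $j>I$ and, on coordinate $i\le I$, sends $(x_i,m_i)$ to $(\bar x_i,m_i)$ if $m_i=0$ and to $(x_i,m_i)$ if $m_i>0$ (measurable, since $m_i\mapsto\mathbf{1}_{\{m_i=0\}}$ is). Put $\hat\gamma\assign\Phi_\sharp\gamma$. Then $\Phi$ preserves all mass coordinates, so $\hat\gamma\in\measpt(\cone^N)$; the objective is unchanged because $\cMMHull\circ\Phi=\cMMHull$ by the previous paragraph; $\coneProj\pi_{i\sharp}\hat\gamma=\mu_i$ holds for $i>I$ trivially, and for $i\le I$ because $\Phi$ only modifies points with $m_i=0$, which carry zero weight in $m_i\cdot\pi_{i\sharp}(\cdot)$, so $m_i\cdot\pi_{i\sharp}\hat\gamma=m_i\cdot\pi_{i\sharp}\gamma$; and, applying the opening observation to $\hat\gamma$, the $i$-th base point of $\hat\gamma$ equals $\bar x_i$ on $\{m_i>0\}$ and was set to $\bar x_i$ on $\{m_i=0\}$, so $\hat\gamma$ is concentrated on $\Theta\times\cone^{N-I}$. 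This proves the first assertion.

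For the semi-coupling formulation \eqref{eq:HKMMSemiCoupling} I would feed $\hat\gamma$ into the construction behind Theorem \ref{thm:HKMMSemiCoupling} (the multi-marginal version of Part~2 of Remark \ref{rem:HKLiftedSCEquiv}): with $\gamma\assign(p,\ldots,p)_\sharp\hat\gamma$ and $\gamma_i\assign(p,\ldots,p)_\sharp(m_i\cdot\hat\gamma)$ one obtains a minimizer of \eqref{eq:HKMMSemiCoupling} with $\gamma_i\ll\gamma$, and since $(p,\ldots,p)$ maps $\Theta\times\cone^{N-I}$ onto $\big(\prod_{i=1}^I\{\bar x_i\}\big)\times\Omega^{N-I}$, all of $\gamma,\gamma_1,\ldots,\gamma_N$ are concentrated on that set. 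The statements for $\HK$ itself, i.e. for \eqref{eq:HKLifted} and \eqref{eq:HKSemiCoupling}, are the special case $N=2$, $I\in\{1,2\}$; the argument is verbatim the same with $c$ in place of $\cMMHull$ (no convex hull is needed, since $c$ is jointly convex in the masses), using $c(x_1,0,x_2,m_2)=m_2$ and, for the semi-coupling version, the construction of Remark \ref{rem:HKLiftedSCEquiv}.

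The only point that is not pure bookkeeping is the observation that the base point of a zero-mass particle is immaterial for the cost: this is immediate for $c$, but for $\cMMHull$ it is cleanest to argue through the Carathéodory decomposition of Lemma \ref{lem:CMMHullCaratheodory} rather than attempt a direct computation. Everything else — measurability of $\Phi$ and the push-forward identities for $\coneProj\pi_{i\sharp}$ — is routine.
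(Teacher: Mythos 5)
Your proof is correct and follows essentially the same strategy as the paper: push forward a minimizer by a measurable map that sends the $i$-th base point to $\bar x_i$, using that the position of a zero-mass particle does not affect the cost. The small differences --- you argue directly with $\cMMHull$ via the Carath\'eodory representation of Lemma \ref{lem:CMMHullCaratheodory} while the paper substitutes $\cMM$ for $\cMMHull$ using Theorem \ref{thm:HKCTMMMEquiv}~\eqref{item:HKCTMMMEquivNonRelax}, and your map $\Phi$ acts only on zero-mass particles while the paper's map replaces all $x_i$ by $\bar x_i$ unconditionally (the two coincide $\gamma$-a.e.\ for any feasible $\gamma$) --- are cosmetic, with your variant having the mild advantage that $\cMMHull\circ\Phi=\cMMHull$ holds as an identity on $\cone^N$ rather than only $\gamma$-almost everywhere.
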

\newcommand{\reducmap}{U}
\begin{proof}
  Let $\gamma\in\measpt(\cone^N)$ be a minimizer in \eqref{eq:HKMMLifted} where we substituted $\cMMHull$ by $\cMM$, as allowed by Theorem \ref{thm:HKCTMMMEquiv} \eqref{item:HKCTMMMEquivNonRelax}.
  Define $\reducmap:\cone^N\to\cone^N$ by
  \begin{align*}
    \reducmap(x_1,m_1,\ldots,x_I,m_I,x_{I+1},m_{I+1},\ldots,x_N,m_N)
    = (\bar x_1,m_1,\ldots,\bar x_I,m_I, x_{I+1},m_{I+1},\ldots,x_N,m_N).
  \end{align*}
  Clearly, $\reducmap_\sharp\gamma$ is concentrated on $\Theta\times\cone^{N-I}$.
  Since $\gamma$ is feasible in the minimization problem \eqref{eq:HKMMLifted},
  we have that $\coneProj \pi_{i\sharp} \gamma = \bar m_i\delta_{\bar x_i}$ for all $i=1,\ldots,I$,
  and so, $\gamma$-almost every $(x_1,m_1,\ldots,x_N,m_N)\in\cone^N$
  satisfies
  \begin{align}
    \label{eq:special}
    x_i=\bar x_i\quad\text{or}\quad m_i=0\qquad\text{for each $i=1,\ldots,I$}.
  \end{align}
  Therefore, $\coneProj \pi_{i\sharp} \gamma=\coneProj \pi_{i\sharp} \reducmap_\sharp \gamma$ for $i =1,\ldots,N$ and so $\reducmap_\sharp\gamma$ is an admissible competitor in \eqref{eq:HKMMLifted}.
  So the claim follows if we can show that $\int_{\Omega^N}\cMM\,\diff \reducmap_\sharp\gamma \le \int_{\Omega^N}\cMM \,\diff\gamma$.
  For the latter, it suffices to show that  $\cMM\circ \reducmap\le\cMM$ $\gamma$-almost everywhere.
  We are now going to prove that any $z\in\cone^N$ with the property \eqref{eq:special} satisfies $\cMM(\reducmap(z))=\cMM(z)$.
  For arbitrary $y\in\Omega$ and $s\in\R_+$, we have that $c(x_i,m_i,y,s)=c(\bar x_i,m_i,y,s)$ for $i=1,\ldots,I$,
  since unless $x_i=\bar x_i$, in which case the arguments on both sides are identical,
  one has that $m_i=0$, and both sides are equal to $s$.
  It now follows directly from the definition \eqref{eq:CMM} of $\cMM$ that $\cMM(\reducmap(z))=\cMM(z)$.
  
  By using the construction used in the equivalence proof of Theorem \ref{thm:HKMMSemiCoupling} (see also Remark \ref{rem:HKLiftedSCEquiv}), one can construct the required minimizers for the statement about \eqref{eq:HKMMSemiCoupling}.
\end{proof}

\begin{proposition}
\label{prop:HKMMDirac}
Let $\vec{x} \in \Omega^N$, $\vec{m} \in \R_+^N$ and set $\mu_i=m_i \cdot \delta_{x_i}$ for $i \in \{1,\ldots,N\}$.
\begin{enumerate}[(i)]
	\item Then $\HKMM(\mu_1,\ldots,\mu_N) = \cMMHull(\vec{x},\vec{m})$.
	\label{item:HKMMDiracCMMHull}
	\item Assume $\vec{m} \neq 0$. Then, if and only if $(\vec{x},\vec{m}) \in \CSet$ a $\HK$ barycenter between the measures $(\mu_1,\ldots,\mu_N)$ (for the weights $(\lambda_1,\ldots,\lambda_N)$) is a single Dirac $\nu=s \cdot \delta_y$. In this case $(y,s)=\HKT(\vec{x},\vec{m})$.
	\label{item:HKMMDiracSingle}
	\item More generally, given a decomposition
	\begin{align*}
		\vec{m} = \sum_{j=1}^J \vec{m}^j,
		\quad \vec{m}^j \in \R_+^N
		\tn{ for } j=1,\ldots,J,
		\quad \tn{such that } \cMMHull(\vec{x},\vec{m})=\sum_{j=1}^J \cMM(\vec{x},\vec{m}^j)
	\end{align*}
	where $J \in \{1,\ldots,N\}$ is a suitable integer, then $(\vec{x},\vec{m}^j) \in \CSet_0$ for $j \in \{1,\ldots,J\}$ and $\sum_{j=1}^J s_j \cdot \delta_{y_j}$ is a barycenter where $(y_j,s_j)=\HKT(\vec{x},\vec{m}^j)$ if $\vec{m}^j \neq 0$, and $s_j=0$ otherwise.
	Existence of such a decomposition for any $\vec{m}$ is provided by Lemma \ref{lem:CMMHullCaratheodory}.
	\label{item:HKMMDiracSplit}
\end{enumerate}
\end{proposition}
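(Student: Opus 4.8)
The plan is to deduce all three parts from the primal--dual equivalence of Theorem~\ref{thm:HKCTMMMEquiv} together with the structural facts about $\cMM$, $\cMMHull$ and the selection map $\HKT$ established earlier; no new estimate is needed, and the whole argument becomes short once those tools are in place. For part~\eqref{item:HKMMDiracCMMHull} I would use the dual problem \eqref{eq:HKMMDual}: for $\mu_i = m_i\cdot\delta_{x_i}$ the dual objective equals $\sum_{i=1}^N m_i\,\psi_i(x_i)$, and the pointwise constraint only restricts the values $(\psi_1(x_1),\ldots,\psi_N(x_N))$, which can be prescribed arbitrarily by continuous functions, so Theorem~\ref{thm:HKMMDual} gives
\[
	\HKMM(\mu_1,\ldots,\mu_N)^2 = \sup\Bigl\{ \sum_{i=1}^N m_i\,a_i \;:\; (a_1,\ldots,a_N)\in\QMM(\vec{x}) \Bigr\} = \cMMHull(\vec{x},\vec{m})
\]
directly from Definition~\ref{def:CMMConvex}. (An alternative: Lemma~\ref{lem:HKBarycenterSupportRestriction} with $I=N$ forces competitors to have fixed positions $\vec{x}$, after which convexity and positive $1$-homogeneity of $\cMMHull(\vec{x},\cdot)$ plus Jensen give ``$\geq$'', while $\gamma=\delta_{((x_1,m_1),\ldots,(x_N,m_N))}$, which is feasible since $\coneProj\pi_{i\sharp}\delta_{(x_i,m_i)}=m_i\delta_{x_i}$, gives ``$\leq$''.)

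For part~\eqref{item:HKMMDiracSingle}, in the direction $(\vec{x},\vec{m})\in\CSet \Rightarrow$ single-Dirac barycenter, I would take $\gamma\assign\delta_{((x_1,m_1),\ldots,(x_N,m_N))}$: by part~\eqref{item:HKMMDiracCMMHull} together with $\cMM(\vec{x},\vec{m})=\cMMHull(\vec{x},\vec{m})$ (Theorem~\ref{thm:HKCTMMMEquiv} \eqref{item:HKCTMMMEquivNonRelax}) it minimizes \eqref{eq:HKMMLifted}, and it is concentrated on $\CSet$ since its single atom lies there; hence by Theorem~\ref{thm:HKCTMMMEquiv} \eqref{item:HKCTMMMEquivProject} the measure $\coneProj\HKT_\sharp\gamma = \coneProj\,\delta_{\HKT(\vec{x},\vec{m})} = s\cdot\delta_y$, with $(y,s)=\HKT(\vec{x},\vec{m})$, is a barycenter. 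For the converse, suppose some barycenter is a single Dirac $\nu=s\cdot\delta_y$. Using $\HK(m_i\delta_{x_i},s\delta_y)^2 = c(x_i,m_i,y,s)$ (Proposition~\ref{prop:HKDirac}), part~\eqref{item:HKMMDiracCMMHull}, and Theorem~\ref{thm:HKCTMMMEquiv} \eqref{item:HKCTMMMEquivValues},
\[
	\cMMHull(\vec{x},\vec{m}) = \HKCTM(\mu_1,\ldots,\mu_N)^2 = \sum_{i=1}^N \lambda_i\,c(x_i,m_i,y,s) \geq \cMM(\vec{x},\vec{m}) \geq \cMMHull(\vec{x},\vec{m}),
\]
so all terms agree; since $\vec{m}\neq0$ this gives $(\vec{x},\vec{m})\in\CSet$, and $(y,s)$ attains the infimum in \eqref{eq:CMM}, hence $(y,s)=\HKT(\vec{x},\vec{m})$ by the uniqueness in Proposition~\ref{prop:CMMMinimizerUnique}.

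For part~\eqref{item:HKMMDiracSplit}, after padding with zero vectors to $N$ terms if $J<N$, the given decomposition realizes the minimum in \eqref{eq:CMMHullCaratheodory}, so Lemma~\ref{lem:CMMHullCaratheodory} yields $(\vec{x},\vec{m}^j)\in\CSet_0$ for every $j$, hence $(\vec{x},\vec{m}^j)\in\CSet$ whenever $\vec{m}^j\neq0$. I would then set $\gamma\assign\sum_{j:\,\vec{m}^j\neq0}\delta_{((x_1,m^j_1),\ldots,(x_N,m^j_N))}$. It is concentrated on $\CSet$; it is feasible for \eqref{eq:HKMMLifted} because $\coneProj\pi_{i\sharp}\gamma=\sum_j m^j_i\,\delta_{x_i}=m_i\,\delta_{x_i}=\mu_i$; and it is optimal since $\int_{\cone^N}\cMMHull\,\diff\gamma = \sum_{j:\,\vec{m}^j\neq0}\cMMHull(\vec{x},\vec{m}^j) = \sum_{j=1}^J\cMM(\vec{x},\vec{m}^j) = \cMMHull(\vec{x},\vec{m}) = \HKMM(\mu_1,\ldots,\mu_N)^2$, using $(\vec{x},\vec{m}^j)\in\CSet_0$ and $\cMM(\vec{x},0)=0$. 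Theorem~\ref{thm:HKCTMMMEquiv} \eqref{item:HKCTMMMEquivProject} then identifies $\coneProj\HKT_\sharp\gamma = \sum_{j=1}^J s_j\cdot\delta_{y_j}$ as a barycenter, with the convention $s_j=0$ absorbing the zero pieces; existence of the decomposition is exactly Lemma~\ref{lem:CMMHullCaratheodory}.

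The substance here is all borrowed: the convex-hull analysis of $\cMM$, the existence and continuity of $\HKT$ on $\CSet$, and the primal--dual equivalence were the hard results. The one place that needs genuine care is the \emph{only if} direction of \eqref{item:HKMMDiracSingle}: one must squeeze $\cMM(\vec{x},\vec{m})$ between $\cMMHull(\vec{x},\vec{m})$ and the upper bound read off from the candidate single-Dirac barycenter, and then invoke uniqueness in Proposition~\ref{prop:CMMMinimizerUnique} to conclude $(y,s)=\HKT(\vec{x},\vec{m})$; a small side point to record is that a degenerate $s=0$ cannot occur when $\vec{m}\neq0$, since the stationarity condition in the proof of Proposition~\ref{prop:CMMMinimizerUnique} forces the optimal mass to equal $\eta^2>0$.
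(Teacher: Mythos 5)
Your primary argument for part~\eqref{item:HKMMDiracCMMHull}, via the dual \eqref{eq:HKMMDual}, has a genuine gap. The claim that ``the pointwise constraint only restricts the values $(\psi_1(x_1),\ldots,\psi_N(x_N))$, which can be prescribed arbitrarily by continuous functions'' is not correct: the constraint in \eqref{eq:HKMMDual} requires $(\psi_1(z_1),\ldots,\psi_N(z_N))\in\QMM(z_1,\ldots,z_N)$ for \emph{all} $(z_1,\ldots,z_N)\in\Omega^N$, not just at $\vec{x}$, and $\QMM(\vec{z})$ shrinks as the $z_i$ collide (for $z_1=\cdots=z_N$ the constraint at $y=z_1$ reads $\sum_i\lambda_i/(1-\psi_i/\lambda_i)\leq 1$, strictly more restrictive than $\QMM(\vec{x})$ for distinct $x_i$). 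In particular, constant test functions $\psi_i\equiv a_i$ are generally infeasible. Thus the dual only yields the easy inequality $\HKMM(\mu_1,\ldots,\mu_N)^2\leq\cMMHull(\vec{x},\vec{m})$ (every feasible $\vec{\psi}$ gives a point of $\QMM(\vec{x})$); obtaining the converse along this route would require a nontrivial approximation argument (e.g.~$\psi_i$ decaying steeply away from $x_i$, with a uniform bound over $\vec{z}$) which you do not supply.

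Your parenthetical alternative via Lemma~\ref{lem:HKBarycenterSupportRestriction} with $I=N$ is the right move and essentially what the paper does, except that the paper applies the support restriction in the semi-coupling formulation \eqref{eq:HKMMSemiCoupling}: there the marginals are forced to be $\gamma_i=m_i\delta_{\vec{x}}$, $\gamma=\delta_{\vec{x}}$, and the objective \emph{is} $\cMMHull(\vec{x},\vec{m})$ with no Jensen step needed (also sidestepping the subtlety that measures in $\measpt(\cone^N)$ can have infinite total mass, which your Jensen-with-normalization argument silently assumes is finite). Parts~\eqref{item:HKMMDiracSingle} and~\eqref{item:HKMMDiracSplit} are correct; for \eqref{item:HKMMDiracSingle} you argue the ``only if'' direction directly while the paper uses the contrapositive, with the same sandwich of inequalities; for \eqref{item:HKMMDiracSplit} you build an explicit minimizing $\gamma$ concentrated on $\CSet$ and invoke Theorem~\ref{thm:HKCTMMMEquiv} \eqref{item:HKCTMMMEquivProject}, whereas the paper argues by subadditivity of $\nu\mapsto\sum_i\lambda_i\HK(\mu_i,\nu)^2$. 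Both are valid; your variant is a nice alternative but the subadditivity route avoids having to re-verify feasibility and optimality of the constructed $\gamma$.
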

\begin{proof}
	\textbf{\eqref{item:HKMMDiracCMMHull}:}
	We apply Lemma \ref{lem:HKBarycenterSupportRestriction} with $I=N$.
	This means we may restrict the support of the $\gamma_i$ and $\gamma$ in \eqref{eq:HKMMSemiCoupling} to $\{\vec{x}\}$ and therefore by the marginal constraints must have $\gamma_i = m_i \cdot \delta_{\vec{x}}$. W.l.o.g.~we may choose $\gamma=\delta_{\vec{x}}$ and so we find by \eqref{eq:HKMMSemiCoupling}
	\begin{align*}
		\HKMM(\mu_1,\ldots,\mu_N)^2 = 
		\int_{\Omega^N}
		\cMMHull\big(z_1,\RadNik{\gamma_1}{\gamma},\ldots,z_N,\RadNik{\gamma_N}{\gamma}\big)
		\diff \gamma(z_1,\ldots,z_N)
		= \cMMHull(\vec{x},\vec{m}).
	\end{align*}

	\textbf{\eqref{item:HKMMDiracSingle}:} Assume now $(\vec{x},\vec{m}) \in \CSet$. Plugging $\gamma \assign \delta_{((x_1,m_1),\ldots,(x_N,m_N))}$ into \eqref{eq:HKMMLifted} we find that it is optimal by \eqref{item:HKMMDiracCMMHull}. Since by construction $\gamma$ is concentrated on $\CSet$, by Theorem \ref{thm:HKCTMMMEquiv} \eqref{item:HKCTMMMEquivProject} $\nu \assign \coneProj \HKT_{\sharp} \gamma$ is a $\HK$ barycenter. One finds $\HKT_{\sharp} \gamma=\delta_{(y,s)}$ with $(y,s) = \HKT(\vec{x},\vec{m})$ and subsequently $\nu=s \cdot \delta_y$. Thus, there exists a single-Dirac barycenter.
	
	Conversely, let now $(\vec{x},\vec{m}) \notin \CSet$, $\vec{m} \neq 0$. Then, for any $\nu \assign s \cdot \delta_y$ we have $\HK(\mu_i,\nu)^2 = c(x_i,m_i,y,s)$ and plugging $\nu$ into \eqref{eq:HKCTM} one obtains
	\begin{align*}
		\sum_{i=1}^N \lambda_i c(x_i,m_i,y,s) \geq \cMM(\vec{x},\vec{m}) > \cMMHull(\vec{x},\vec{m}) = \HKMM(\mu_1,\ldots,\mu_N)
	\end{align*}
	and so $\nu$ cannot be optimal in \eqref{eq:HKCTM} and thus cannot be a $\HK$ barycenter.
	
	\textbf{\eqref{item:HKMMDiracSplit}:} Let $(\vec{m}^1,\ldots,\vec{m}^J)$ be such a decomposition of $\vec{m}$.
	Since by construction
	\begin{align*}	
	\cMMHull(\vec{x},\vec{m}) \leq \sum_{j=1}^J \cMMHull(\vec{x},\vec{m}^j)
	\quad \tn{and} \quad
	\cMMHull(\vec{x},\vec{m}^j) \leq \cMM(\vec{x},\vec{m}^j)
	\end{align*}
	one finds that $\cMMHull(\vec{x},\vec{m}^j) = \cMM(\vec{x},\vec{m}^j)$ and so $(\vec{x},\vec{m}^j) \in \CSet_0$.
	Therefore, by \eqref{item:HKMMDiracSingle} $\nu_j \assign s_j \cdot \delta_{y_j}$ with $(y_j,s_j) \assign \HKT(\vec{x},\vec{m}^j)$  is a HK barycenter for $(\vec{x},\vec{m}^j)$ if $\vec{m}^j \neq 0$. If $\vec{m}^j = 0$, the barycenter $\nu_j$ is trivially $0$.
	Since the function
	\begin{align*}
		(\mu_1,\ldots,\mu_N,\nu) \mapsto \sum_{i=1}^N \lambda_i\,\HK(\mu_i,\nu)^2
	\end{align*}
	is subadditive in its arguments, we thus find that $\nu \assign \sum_{j=1}^J \nu_j$ is a feasible candidate for the $\HK$ barycenter between $(\mu_1,\ldots,\mu_N)$ in \eqref{eq:HKCTM} with score at most $\sum_{j=1}^J \cMM(\vec{x},\vec{m}^j) = \cMMHull(\vec{x},\vec{m})$ and thus $\nu$ must be optimal by \eqref{item:HKMMDiracCMMHull}.
\end{proof}

Proposition \ref{prop:HKMMDiracDual} leverages duality between $\cMMHull$ and $\QMM$ and requires a characterization of the normal cone of $\QMM(\vec{x})$ (or equivalently, the subdifferential of $\iota_{\QMM(\vec{x})}$). As $\QMM(\vec{x})$ is described as an uncountable intersection of sets, see \eqref{eq:MMAuxMMRelation} and \eqref{eq:QMM}, this characterization is not entirely trivial and the following Lemma takes care of this technical aspect.
\begin{lemma}
	\label{lem:DropInactiveConstraints}
	Let $\vec{x} \in \Omega^N$, $\vec{m} \in \R_{++}^N$ and let $\vec{\psi}$ be the gradient of $\cMMHull(\vec{x},\cdot)$ at $\vec{m}$, cf.~Lemma \ref{lem:PsiUniqueness}.
	Let further
	\begin{align}
		\label{eq:ActiveConstraintSet}
		Y \assign \left\{y \in \Omega \middle|
			\sum_{i=1}^N \frac{\lambda_i\,\Cos(|x_i-y|)^2}{1-\psi_i/\lambda_i} = 1
			\right\},
		\qquad
		\QMM^Y \assign \bigcap_{y \in Y} \QMM(\vec{x},y).
	\end{align}
	
	\begin{enumerate}[(i)]
	\item Then $\cMMHull(\vec{x},\vec{m})=\iota_{\QMM^Y}^\ast(\vec{m})$ and $\vec{\psi}$ is also maximal for this conjugation, i.e.~$\vec{m}\!\in\!\partial \iota_{\QMM^Y}(\vec{\psi})$.
	\item If $Y$ is finite, then
	\begin{align}
		\label{eq:NormalConeSum}
		\partial \iota_{\QMM^Y}(\vec{\psi}) = \sum_{y \in Y} \partial \iota_{\QMM(\vec{x},y)}(\vec{\psi})
	\end{align}
	where the sum denotes the Minkowski sum for sets.
	\end{enumerate}
\end{lemma}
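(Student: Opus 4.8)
The plan is to reduce everything to the decomposition of $\cMMHull(\vec{x},\cdot)$ from Lemma~\ref{lem:CMMHullCaratheodory} and the pointwise duality $\cMM(\vec{x},\cdot,y)=\iota_{\QMM(\vec{x},y)}^\ast$ from Lemma~\ref{lem:CMMAux}. Recall that $\vec\psi\in\partial\cMMHull(\vec{x},\vec{m})$ means, by Lemma~\ref{lem:PsiUniqueness}, that $\vec\psi\in\QMM(\vec{x})$ (hence $\vec\psi\in\QMM(\vec{x},y)$ for every $y\in\Omega$, and $\psi_i\le\lambda_i-\lambda_i^2<\lambda_i$ for all $i$ by Proposition~\ref{prop:QMM}) and that $\sum_i\psi_i m_i=\cMMHull(\vec{x},\vec{m})$. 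Since $\vec\psi\in\QMM^Y$ in particular, for (i) it suffices to prove $\vec{m}\in\partial\iota_{\QMM^Y}(\vec\psi)$: the identity $\iota_{\QMM^Y}^\ast(\vec{m})=\sum_i\psi_i m_i=\cMMHull(\vec{x},\vec{m})$ is then the Fenchel--Young equality.

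To obtain $\vec{m}\in\partial\iota_{\QMM^Y}(\vec\psi)$, fix a decomposition $\vec{m}=\sum_{j=1}^N\vec{m}^j$ with $\vec{m}^j\in\R_+^N$, $(\vec{x},\vec{m}^j)\in\CSet_0$ and $\cMMHull(\vec{x},\vec{m})=\sum_j\cMM(\vec{x},\vec{m}^j)$ (Lemma~\ref{lem:CMMHullCaratheodory}). As $\vec\psi\in\QMM(\vec{x})$, the support-function description of $\cMMHull$ gives $\sum_i\psi_i m_i^j\le\cMMHull(\vec{x},\vec{m}^j)=\cMM(\vec{x},\vec{m}^j)$ for each $j$; summing and comparing with $\sum_i\psi_i m_i=\sum_j\cMM(\vec{x},\vec{m}^j)$ forces each inequality to be an equality. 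For every $j$ with $\vec{m}^j\neq0$, Proposition~\ref{prop:CMMMinimizerUnique} provides the unique minimiser $(y_j,s_j)=\HKT(\vec{x},\vec{m}^j)$, which by \eqref{eq:CMMAuxConjugate} satisfies $\cMM(\vec{x},\vec{m}^j)=\cMM(\vec{x},\vec{m}^j,y_j)=\iota_{\QMM(\vec{x},y_j)}^\ast(\vec{m}^j)$; together with $\vec\psi\in\QMM(\vec{x},y_j)$ and the equality just obtained this is the Fenchel--Young equality, so $\vec{m}^j\in\partial\iota_{\QMM(\vec{x},y_j)}(\vec\psi)$. Since $\psi_i<\lambda_i$ for all $i$, if the constraint $\sum_i\frac{\lambda_i\Cos(|x_i-y_j|)^2}{1-\psi_i/\lambda_i}\le1$ of \eqref{eq:QMMAux} were strict at $\vec\psi$ then $\vec\psi$ would be an interior point of $\QMM(\vec{x},y_j)$, forcing $\vec{m}^j=0$; hence the constraint is active, i.e.\ $y_j\in Y$. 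Because $\iota_{\QMM^Y}=\sup_{y\in Y}\iota_{\QMM(\vec{x},y)}$ takes the value $0$ at $\vec\psi$, the standard subgradient inequality for a supremum of convex functions gives $\partial\iota_{\QMM(\vec{x},y_j)}(\vec\psi)\subseteq\partial\iota_{\QMM^Y}(\vec\psi)$; and since $\partial\iota_{\QMM^Y}(\vec\psi)$ is a convex cone it is closed under addition, so $\vec{m}=\sum_{j:\,\vec{m}^j\neq0}\vec{m}^j\in\partial\iota_{\QMM^Y}(\vec\psi)$, which completes (i).

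For (ii), with $Y$ finite we have the genuine finite sum $\iota_{\QMM^Y}=\sum_{y\in Y}\iota_{\QMM(\vec{x},y)}$, so \eqref{eq:NormalConeSum} is precisely the Moreau--Rockafellar sum rule; the inclusion $\supseteq$ is automatic, and $\subseteq$ requires only a point common to the interiors of all $\QMM(\vec{x},y)$, $y\in Y$. The constant vector $\vec\phi=(c,\ldots,c)$ with $c$ sufficiently negative serves: then $c<\lambda_i$ for all $i$ and $\sum_i\frac{\lambda_i\Cos(|x_i-y|)^2}{1-c/\lambda_i}\le\sum_i\frac{\lambda_i}{1-c/\lambda_i}<1$ uniformly in $y\in\Omega$, so $\vec\phi$ satisfies all defining inequalities of $\QMM(\vec{x},y)$ strictly and hence lies in its interior for every $y$.

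The crux is that $Y$ can be infinite, so there need be no uniform slack in the inactive constraints and no finite sum rule applies directly to $\QMM(\vec{x})$ or to $\QMM^Y$ itself; the Carath\'eodory-type decomposition circumvents this by writing $\vec{m}$ as a sum of \emph{finitely many} normal-cone vectors, each pinned to a single active constraint $y_j\in Y$. Everything else is routine convex analysis, using only the facts about $\cMM$, $\QMM$ and $\HKT$ recorded in Lemmas~\ref{lem:CMMAux}, \ref{lem:PsiUniqueness}, \ref{lem:CMMHullCaratheodory} and Propositions~\ref{prop:QMM}, \ref{prop:CMMMinimizerUnique}.
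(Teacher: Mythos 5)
Your proof is correct but reaches part (i) by a genuinely different route. The paper argues by contradiction: assuming some $\vec\psi'$ strictly better for $\langle\vec m,\cdot\rangle$ existed in $\QMM^Y$, it constructs a perturbation $\vec\psi''$ that remains strictly better and is feasible for $\QMM(\vec x,y)$ at \emph{all} $y\in\Omega$, contradicting $\vec\psi\in\partial\cMMHull(\vec x,\vec m)$. The argument uses the Lipschitz continuity of $y\mapsto\Cos(|x_i-y|)^2$ and a two-step rescaling of $\vec\psi'$; it is direct but non-constructive. You instead start from the Carath\'eodory decomposition $\vec m=\sum_j\vec m^j$ of Lemma~\ref{lem:CMMHullCaratheodory}, pair each nonzero $\vec m^j$ with its unique minimizer $y_j=\HKT(\vec x,\vec m^j)$ from Proposition~\ref{prop:CMMMinimizerUnique}, verify $y_j\in Y$ via the interior-point argument (using $\psi_i<\lambda_i$ from Proposition~\ref{prop:QMM}), and then land in $\partial\iota_{\QMM^Y}(\vec\psi)$ using the two elementary facts that $\QMM^Y\subseteq\QMM(\vec x,y_j)$ reverses normal cones and that the normal cone is a convex cone. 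This is constructive and has a pleasant byproduct: it directly exhibits the decomposition $\vec m=\sum_j\vec m^j$ with $\vec m^j\in\partial\iota_{\QMM(\vec x,y_j)}(\vec\psi)$ and $y_j\in Y$ finite in number, which is essentially the content of Proposition~\ref{prop:HKMMDiracDual}~(i), so your version would slightly streamline what comes after. The only thing it does \emph{not} give by itself is the full equality~\eqref{eq:NormalConeSum} (that an arbitrary element of the left-hand normal cone decomposes over $Y$), for which both you and the paper appeal to the Moreau--Rockafellar sum rule under finiteness of $Y$; your verification of a common interior point $(c,\ldots,c)$ with $c\ll0$ is fine and matches the paper's observation that $(-\infty,0]^N\subset\QMM(\vec x,y)$ for every $y$.
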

\begin{proof}
	\textbf{Step 1: Preparation.}
	Throughout this proof let
	\begin{align*}
		f & : \R^N \times \Omega \to \RCupInf, &
		(\vec{\xi},y) \mapsto \begin{cases}
			+ \infty & \tn{if } \xi_i > \lambda_i \tn{ for some } i=1,\ldots,N, \\
			\sum_{i=1}^N \frac{\lambda_i\,\Cos(|x_i-y|)^2}{1-\psi_i/\lambda_i} & \tn{else}
			\end{cases}
	\end{align*}
	where we use convention \eqref{eq:QMMAuxConvention} for the situation when $\xi_i=\lambda_i$.
	$\vec{\xi} \in \QMM(\vec{x},y)$ is equivalent to $f(\vec{\xi},y)\leq 1$, otherwise we say that $\vec{\xi}$ violates the constraint at $y$.
	Further, let $h(\vec{\xi}) \assign \sum_{i=1}^N m_i\,\xi_i$.
	
	\textbf{Step 2: Contradiction.}
	Assume that some $\vec\psi'$ with $h(\vec\psi')>h(\vec\psi)$ satisfies $f(\vec\psi',y)\leq 1$ for all $y\in Y$.
	We may assume that $\psi'_i<\lambda_i$, since otherwise we can slightly decrease $\psi'_i$ such that one still has $h(\vec\psi')>h(\vec\psi)$.
	We are going to derive a contradiction to the maximality of $\vec\psi$
	by constructing a $\vec\psi''$ with $h(\vec\psi'')>h(\vec\psi)$ that satisfies $f(\vec\psi'',y)\le1$ even at all $y\in\Omega$.
	
	First, fix some $\delta>0$ sufficiently small such that $\vec\psi^\delta:=(\psi'_1-\delta,\ldots,\psi'_N-\delta)$
	still has $h(\vec\psi^\delta)>h(\vec\psi)$.
	By strict monotonicity of each $\chi_i: \psi_i \mapsto \tfrac{\lambda_i}{1-\psi_i/\lambda_i}$ on $(-\infty,\lambda_i)\to\R_{++}$,
	one has $\kappa:=\max_i\frac{\chi_i(\psi'_i-\delta)}{\chi_i(\psi_i')}<1$,
	and since the functions $\Cos^2$ are non-negative,
	it follows that $f(\vec\psi^\delta,y)\le\kappa f(\vec\psi',y)$ for all $y\in\Omega$,
	and in particular, $f(\vec\psi^\delta,y)\le\kappa$ for all $y\in Y$.
	Since the $\Cos^2$ are globally Lipschitz-continuous with Lipschitz constant one,
	it follows that
	\begin{align}
		\label{eq:ProofNormalConefBound}
		f(\vec\psi^\delta,y)\le1
		\quad\text{for all}\quad
		y\in Y^\delta \assign \left\{y''\in \Omega \middle|\inf_{y'\in Y}|y''-y'|<1-\kappa\right\}.
	\end{align}
	To finish the construction, observe that $\Omega\ni y\mapsto f(\vec\psi,y)$ is a continuous map,
	and recall that $f(\vec\psi,y)<1$ for all $y\in\Omega\setminus Y$ by definition of $Y$,
	to conclude that the supremum of $f(\vec\psi,\cdot)$ on the compact set $\Omega\setminus Y^\delta$ is less than one.
	Now define $\vec\psi'':=(1-\alpha)\vec\psi+\alpha\vec\psi^\delta$
	with $\alpha\in(0,1)$ so small that $f(\vec\psi'',y)\le1$ for all $y\in\Omega\setminus Y^\delta$;
	here we use that $f(\cdot,y)$ is a convex function.
	In view of \eqref{eq:ProofNormalConefBound}, it follows that $F(\vec\psi',y)\le1$ at all $y\in\Omega$,
	and trivially, $h(\vec\psi'')=(1-\alpha)h(\vec\psi)+\alpha h(\vec\psi^\delta)>h(\vec\psi)$.
	This is the sought for contradiction and $\vec{\psi}$ must be optimal with respect to $\QMM^Y$.

	\textbf{Step 3: Subdifferential.}
	The second part of the Lemma is a classical result from convex analysis, see e.g.~\cite[Theorem 23.8]{Rockafellar1972Convex} which applies since $(-\infty,0]^N \subset \QMM(\vec{x},y)$ for all $y \in \Omega$ and thus the functions $\iota_{\QMM(\vec{x},y)}$ have a common point in the (relative) interior of their domain.
\end{proof}

\begin{proposition}
	\label{prop:HKMMDiracDual}
	Let $\vec{x} \in \Omega^N$, $\vec{m} \in \R_{++}^N$, let $\vec{\psi}$ be the gradient of $\cMMHull(\vec{x},\cdot)$ at $\vec{m}$, cf.~Lemma \ref{lem:PsiUniqueness}, and assume that the set $Y$ from Lemma \ref{lem:DropInactiveConstraints} \eqref{eq:ActiveConstraintSet} is finite.
	\begin{enumerate}[(i)]
		\item Then there exists a decomposition
		\begin{align}
			\label{eq:MNormalConeDecomposition}
			\vec{m} = \sum_{y \in Y} \vec{m}^y,
			\qquad
			\vec{m}^y \in \partial \iota_{\QMM(\vec{x},y)}(\vec{\psi})
			\tn{ for } y \in Y
		\end{align}
		such that
		\begin{align}
		\label{eq:MNormalConeDecompositionApplication}
		\cMMHull(\vec{x},\vec{m})= \sum_{y \in Y} \cMM(\vec{x},\vec{m}^y,y)
		\quad \tn{and} \quad
		\cMM(\vec{x},\vec{m}^y,y) = \cMM(\vec{x},\vec{m}^y) = \cMMHull(\vec{x},\vec{m}^y).
		\end{align}
		\label{item:HKMMDiracDualDecomp}
		\item For this decomposition one finds $\sum_{y \in Y} s^y\,\delta_{y}$ is a $\HK$ barycenter between $\mu_1,\ldots,\mu_N$ with $(y,s^y) = \HKT(\vec{x},\vec{m}^y)$ for $y \in Y$.
		\label{item:HKMMDiracDualBarycenter}
		\item There are no $\HK$ barycenters between $\mu_1,\ldots,\mu_N$ that are supported outside of $Y$.
		This also holds when $Y$ is not finite.
		\label{item:HKMMDiracDualSupport}
		\item Any $\HK$ barycenter between $\mu_1,\ldots,\mu_N$ can be written in the way described in \eqref{item:HKMMDiracDualDecomp} and \eqref{item:HKMMDiracDualBarycenter}.
		\label{item:HKMMDiracDualNecessary}		
		\item If the subdifferentials $\partial \iota_{\QMM(\vec{x},y)}(\vec{\psi})$ for $y \in Y$ are linearly independent the barycenter is unique.
		\label{item:HKMMDiracDualUnique}
		\item Every $\mu_i=m_i \cdot \delta_{x_i}$ (where $m_i>0$ by assumption) is (partially) transported to some (possibly multiple) non-zero $s^y \cdot \delta_y$ in the $\HK$ barycenter, i.e.~no mass particle `vanishes completely'.
		\label{item:HKMMDiracDualNonZero}
	\end{enumerate}
\end{proposition}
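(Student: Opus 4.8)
The plan is to get parts (i) and (ii) quickly out of Lemma~\ref{lem:DropInactiveConstraints} and Proposition~\ref{prop:HKMMDirac}, to prove the support statement (iii) by combining Theorem~\ref{thm:HKCTMMMEquiv} with a Jensen‑equality argument, to prove the structure statement (iv) by gluing optimal two‑marginal plans, and to read (v) and (vi) off of (iv). For (i): by Lemma~\ref{lem:DropInactiveConstraints}(i), $\cMMHull(\vec{x},\vec{m})=\iota_{\QMM^Y}^\ast(\vec{m})$ and $\vec{m}\in\partial\iota_{\QMM^Y}(\vec{\psi})$; as $Y$ is finite, Lemma~\ref{lem:DropInactiveConstraints}(ii) turns the latter into a Minkowski decomposition $\vec{m}=\sum_{y\in Y}\vec{m}^y$ with $\vec{m}^y\in\partial\iota_{\QMM(\vec{x},y)}(\vec{\psi})$, i.e.~\eqref{eq:MNormalConeDecomposition}. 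For each $y$, since $\vec{\psi}\in\QMM(\vec{x})\subseteq\QMM(\vec{x},y)$ (cf.~\eqref{eq:MMAuxMMRelation}), \eqref{eq:CMMAuxConjugate} gives $\sum_i\psi_i m_i^y=\iota_{\QMM(\vec{x},y)}^\ast(\vec{m}^y)=\cMM(\vec{x},\vec{m}^y,y)$, while the chain $\cMMHull(\vec{x},\vec{m}^y)\le\cMM(\vec{x},\vec{m}^y)\le\cMM(\vec{x},\vec{m}^y,y)=\sum_i\psi_i m_i^y\le\cMMHull(\vec{x},\vec{m}^y)$ (last step because $\vec{\psi}\in\QMM(\vec{x})$) forces equality throughout, so $(\vec{x},\vec{m}^y)\in\CSet_0$; summing over $y$ and using $\sum_i\psi_i m_i=\cMMHull(\vec{x},\vec{m})$ (Lemma~\ref{lem:PsiUniqueness}) yields \eqref{eq:MNormalConeDecompositionApplication}. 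For (ii), this $\vec{m}=\sum_y\vec{m}^y$ is precisely a decomposition of the kind treated in Proposition~\ref{prop:HKMMDirac}(iii) — whose proof uses only the subadditivity of $(\mu_1,\ldots,\mu_N,\nu)\mapsto\sum_i\lambda_i\HK(\mu_i,\nu)^2$ and thus applies with $J=|Y|$ — so $\sum_{y\in Y}s^y\delta_y$ with $(y,s^y)=\HKT(\vec{x},\vec{m}^y)$ is a barycenter.

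\emph{Part (iii).} Let $\nu$ be any barycenter. By Theorem~\ref{thm:HKCTMMMEquiv}(v) there is a minimiser $\gamma$ of \eqref{eq:HKMMLifted} concentrated on $\CSet$ with $\coneProj\HKT_\sharp\gamma=\nu$; replacing $\gamma$ by its reduction $U_\sharp\gamma$ from Lemma~\ref{lem:HKBarycenterSupportRestriction} (which preserves minimality, concentration on $\CSet$ and the image $\coneProj\HKT_\sharp\gamma$, because $\HKT$ is unchanged on points where each $i$‑th mass is $0$ or the $i$‑th location already equals $x_i$) we may assume $\gamma$ is carried by $\prod_i(\{x_i\}\times\R_+)$ and identify it with a nonnegative measure $\beta$ on $\R_+^N$, so that $\int\vec{m}'\,\diff\beta=\vec{m}$ (finite, since $\gamma$ has finite mass‑moments) and $\int\cMMHull(\vec{x},\vec{m}')\,\diff\beta=\cMMHull(\vec{x},\vec{m})$ (minimality of $\gamma$ and Proposition~\ref{prop:HKMMDirac}(i)). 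Since $\cMMHull(\vec{x},\vec{m}'')\ge\cMMHull(\vec{x},\vec{m})+\sum_i\psi_i(m_i''-m_i)$ for all $\vec{m}''$, integrating against $\beta$ and using $\int(\vec{m}'-\vec{m})\,\diff\beta=0$ forces the integrand to vanish, i.e.~$\cMMHull(\vec{x},\vec{m}')=\sum_i\psi_i m_i'$ for $\beta$‑a.e.~$\vec{m}'$. For such $\vec{m}'$ (also in $\CSet$, so $\cMM(\vec{x},\vec{m}')=\cMMHull(\vec{x},\vec{m}')$ and $\vec{m}'\ne0$), write $(y',s')=\HKT(\vec{x},\vec{m}')$; then $\sum_i\psi_i m_i'=\cMM(\vec{x},\vec{m}')=\cMM(\vec{x},\vec{m}',y')=\iota_{\QMM(\vec{x},y')}^\ast(\vec{m}')$ with $\vec{\psi}\in\QMM(\vec{x},y')$, hence $\vec{m}'\in\partial\iota_{\QMM(\vec{x},y')}(\vec{\psi})$; since $\vec{m}'\ne0$ and $\psi_i\le\lambda_i-\lambda_i^2<\lambda_i$, the defining inequality of $\QMM(\vec{x},y')$ in \eqref{eq:QMMAux} must be active at $\vec{\psi}$, i.e.~$y'\in Y$. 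So $\nu=\coneProj\HKT_\sharp\gamma$ is concentrated on $Y$, and the argument never used finiteness of $Y$.

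\emph{Part (iv).} Assume in addition $Y$ finite, so by (iii) $\nu=\sum_{y\in Y}s^y\delta_y$. Pick optimal plans $\gamma_i$ for $\HK(\mu_i,\nu)^2$ in \eqref{eq:HKLifted}; after harmlessly reassigning all zero‑mass source/target points so that $\gamma_i$ is carried by $\bigcup_{y\in Y}(\{x_i\}\times\R_+)\times(\{y\}\times\R_+)$, split $\gamma_i=\sum_{y\in Y}\gamma_i^y$ by target location, with $\coneProj\pi_{2\sharp}\gamma_i^y=s^y\delta_y$, and set $m_i^y:=\int r\,\diff\gamma_i^y$, so $\sum_y\vec{m}^y=\vec{m}$ for $\vec{m}^y:=(m_1^y,\ldots,m_N^y)$. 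Joint convexity and joint positive $1$‑homogeneity of $c$ in the mass arguments (Jensen) give $\int c(x_i,r,y,s)\,\diff\gamma_i^y\ge c(x_i,m_i^y,y,s^y)$, hence $\HKCTM(\mu_1,\ldots,\mu_N)^2=\sum_i\lambda_i\HK(\mu_i,\nu)^2\ge\sum_{y}\sum_i\lambda_i c(x_i,m_i^y,y,s^y)\ge\sum_{y}\cMM(\vec{x},\vec{m}^y,y)\ge\sum_{y}\cMM(\vec{x},\vec{m}^y)\ge\cMMHull(\vec{x},\vec{m})=\HKCTM(\mu_1,\ldots,\mu_N)^2$, so every inequality is an equality. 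Term by term: $\sum_y\cMM(\vec{x},\vec{m}^y)=\cMMHull(\vec{x},\vec{m})$ together with $\cMM(\vec{x},\vec{m}^y)\ge\cMMHull(\vec{x},\vec{m}^y)\ge\sum_i\psi_i m_i^y$ and $\sum_y\sum_i\psi_i m_i^y=\cMMHull(\vec{x},\vec{m})$ forces $\cMM(\vec{x},\vec{m}^y)=\cMMHull(\vec{x},\vec{m}^y)=\sum_i\psi_i m_i^y$, whence $\vec{m}^y\in\partial\iota_{\QMM(\vec{x},y)}(\vec{\psi})$ exactly as in (i) (using $\vec{\psi}\in\QMM(\vec{x},y)$ and $\iota_{\QMM(\vec{x},y)}^\ast(\vec{m}^y)=\cMM(\vec{x},\vec{m}^y,y)=\sum_i\psi_i m_i^y$); and $\sum_i\lambda_i c(x_i,m_i^y,y,s^y)=\cMM(\vec{x},\vec{m}^y,y)=\cMM(\vec{x},\vec{m}^y)$ says $(y,s^y)$ attains the minimum in \eqref{eq:CMM} for $(\vec{x},\vec{m}^y)$, so for $\vec{m}^y\ne0$ one has $(\vec{x},\vec{m}^y)\in\CSet$ and, by uniqueness of that minimiser (Proposition~\ref{prop:CMMMinimizerUnique}), $\HKT(\vec{x},\vec{m}^y)=(y,s^y)$. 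Thus $\nu$ has exactly the form of (i)--(ii).

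\emph{Parts (v), (vi), and the main obstacle.} For (v): by (iv) a barycenter equals $\sum_{y\in Y}s^y\delta_y$ with $(y,s^y)=\HKT(\vec{x},\vec{m}^y)$ for a decomposition $\vec{m}=\sum_{y\in Y}\vec{m}^y$ into the cones $\partial\iota_{\QMM(\vec{x},y)}(\vec{\psi})$, so linear independence of those cones makes the decomposition, hence the barycenter, unique. For (vi): fix $i$; since $m_i=\sum_y m_i^y>0$ there is $y_0$ with $m_i^{y_0}>0$, so $\vec{m}^{y_0}\ne0$, $(\vec{x},\vec{m}^{y_0})\in\CSet$, $\HKT(\vec{x},\vec{m}^{y_0})=(y_0,s^{y_0})$, Proposition~\ref{prop:CMMMinimizerUnique} gives $|x_i-y_0|<\pi/2$, and $s^{y_0}=\bigl(\sum_j\lambda_j\sqrt{m_j^{y_0}}\,\Cos(|x_j-y_0|)\bigr)^2>0$ because the $j=i$ summand is strictly positive, so $\mu_i$ genuinely overlaps the nonzero atom $s^{y_0}\delta_{y_0}$ of $\nu$. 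I expect the delicate points to be: in (iii), the claim that $\vec{m}'\ne0\in\partial\iota_{\QMM(\vec{x},y')}(\vec{\psi})$ with $\psi_i<\lambda_i$ forces the quadratic‑type constraint of \eqref{eq:QMMAux} active at $\vec{\psi}$ (a normal‑cone computation, essentially the one behind Lemma~\ref{lem:DropInactiveConstraints} and Step~2 of Proposition~\ref{prop:CMMMinimizerUnique}); and in (iv), the bookkeeping of splitting the $\gamma_i$ by target location (handling the zero‑mass points) together with the discipline of reading off per‑$y$ information only from equality in the \emph{aggregated} chain, since $c(x_i,\cdot,y,\cdot)$ is merely jointly, not strictly, convex.
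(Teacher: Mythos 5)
Your proof of parts (i), (ii), (v) and (vi) is essentially identical to the paper's, and your part (iv) is close in spirit (the paper continues the complementary--slackness bookkeeping from (iii) directly, while you re-derive the atomic decomposition by gluing optimal two-marginal plans and applying a Jensen/homogeneity argument in the lift --- a valid but independent route). The interesting divergence is part (iii). The paper works entirely in the semi-coupling picture: it invokes Lemma~\ref{lem:HKBarycenterSupportRestriction} to reduce $\HKCTM$ to an explicit optimization over $\rho_1,\ldots,\rho_N,\nu,\gamma\in\measp(\Omega)$, then a chain of Fenchel--Young inequalities forces $\RadNik{\vec{\rho}}{\gamma}(y)\in\partial\iota_{\QMM(\vec{x},y)}(\vec{\psi})$ $\gamma$-a.e., which vanishes off $Y$, and a final observation that the optimal $\nu$-density must vanish wherever the $\rho_i$ do. Your proof instead goes through the cone-lifted multi-marginal plan: you invoke Theorem~\ref{thm:HKCTMMMEquiv}\eqref{item:HKCTMMMEquivLift} to obtain an $\CSet$-concentrated optimal $\gamma$ with $\coneProj\HKT_\sharp\gamma=\nu$, reduce via $U$, integrate the subgradient inequality $\cMMHull(\vec{x},\cdot)\ge\cMMHull(\vec{x},\vec{m})+\langle\vec{\psi},\cdot-\vec{m}\rangle$, and then use the normal-cone characterization of $\QMM(\vec{x},y')$ pointwise. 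Both approaches reach the same slackness condition, and your route has the small advantage of avoiding the semi-coupling disintegration and making the role of the map $\HKT$ transparent; the paper's route has the advantage of not relying on the full machinery of Theorem~\ref{thm:HKCTMMMEquiv} and of giving (iv) for free by continuing the same calculation.

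One step of your (iii) that deserves spelling out: you assert that $U_\sharp\gamma$ ``preserves concentration on $\CSet$.'' This is true but not automatic, because changing a location $x_i'$ to $\bar x_i$ at a point with $m_i=0$ leaves $\cMM$ unchanged (the proof of Lemma~\ref{lem:HKBarycenterSupportRestriction}) but a priori changes $\cMMHull$, which samples $\cMM$ at other mass vectors. The clean justification is: $U_\sharp\gamma$ is still optimal for \eqref{eq:HKMMLifted}, and by Theorem~\ref{thm:HKCTMMMEquiv}\eqref{item:HKCTMMMEquivNonRelax} the optimal value is the same with $\cMM$ in place of $\cMMHull$; combined with $\int\cMM\,\diff U_\sharp\gamma=\int\cMM\,\diff\gamma=\int\cMMHull\,\diff\gamma=\int\cMMHull\,\diff U_\sharp\gamma$ and $\cMM\ge\cMMHull$ pointwise, this forces $\cMM=\cMMHull$ $U_\sharp\gamma$-a.e., i.e.\ concentration on $\CSet_0$ (and the masses, hence $\CSet$-membership, are untouched by $U$). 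With this filled in, your argument is complete.
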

\begin{proof}
	\textbf{\eqref{item:HKMMDiracDualDecomp}:}
	By Lemma \ref{lem:DropInactiveConstraints} we have $\vec{m} \in \partial \iota_{\QMM^Y}(\vec{\psi})$ and
	\begin{align*}
		\partial \iota_{\QMM^Y}(\vec{\psi}) = \sum_{y \in Y} \partial \iota_{\QMM(\vec{x},y)}(\vec{\psi})
	\end{align*}
	thus decomposition \eqref{eq:MNormalConeDecomposition} exists. The relation $\vec{m}^y \in \partial \iota_{\QMM(\vec{x},y)}(\vec{\psi})$ implies (cf.~\eqref{eq:PsiMSubDiffRelation})
	$$\cMM(\vec{x},\vec{m}^y,y)=\sum_{i=1}^N \psi_i\,m_i^y$$
	and so
	\begin{multline*}
		\cMMHull(\vec{x},\vec{m})=\sum_{i=1}^N \psi_i\,m_i
		= \sum_{y \in Y} \sum_{i=1}^N \psi_i\,m_i^y
		= \sum_{y \in Y} \cMM(\vec{x},\vec{m}^y,y) \\
		\geq \sum_{y \in Y} \cMM(\vec{x},\vec{m}^y)
		\geq \sum_{y \in Y} \cMMHull(\vec{x},\vec{m}^y)
		\geq \cMMHull(\vec{x},\vec{m}).
	\end{multline*}		
	Thus, all inequalities must be equalities, and equality must also hold for each term in the sums separately. This proves \eqref{eq:MNormalConeDecompositionApplication}.
	
	\textbf{\eqref{item:HKMMDiracDualBarycenter}:}	The left part of \eqref{eq:MNormalConeDecompositionApplication} establishes that $(\vec{x},\vec{m}^y) \in \CSet$ and that $\HKT(\vec{x},\vec{m}^y)=(y,s^y)$ for some $s^y \in \R_+$.
	The fact that $\sum_{y \in Y} s^y \delta_y$ is a barycenter then follows from Proposition \ref{prop:HKMMDirac} \eqref{item:HKMMDiracSplit}.
	
	\textbf{\eqref{item:HKMMDiracDualSupport}:} We will prove this by a complimentary slackness argument. Let $\mu_i \assign m_i \cdot \delta_{x_i}$ and let $\nu \in \measp(\Omega)$ some non-negative measure. From Lemma \ref{lem:HKBarycenterSupportRestriction} we find that in \eqref{eq:HKSemiCoupling} for $\HK(\mu_i,\nu)^2$ we can restrict the support of $\gamma_1, \gamma_2$ and $\gamma$ to $\{x_i\} \times \Omega$. This implies that $\gamma_1=\delta_{x_i} \otimes \rho_i$ for some $\rho_i \in \measp(\Omega)$ with $\|\rho_i\|=m_i$ and $\gamma_2 = \delta_{x_i} \otimes \nu$ and thus one finds
	\begin{align*}
		\HK(\mu_i,\nu)^2 = \min
			\left\{ \int_{\Omega} c\big(x_i,\RadNik{\rho_i}{\gamma}(y),y,\RadNik{\nu}{\gamma}(y)\big)
			\diff \gamma(y)
			\middle|
			\rho_i,\gamma \in \measp(\Omega),\,\|\rho_i\|=m_i,\, \rho_i,\nu \ll \gamma
			\right\}.
	\end{align*}
	Plugging this into \eqref{eq:HKCTM} we get
	\begin{multline*}
		\HKCTM(\mu_1,\ldots,\mu_N)^2 = \min\left\{
			\int_\Omega \sum_{i=1}^N \lambda_i\,
				c\big(x_i,\RadNik{\rho_i}{\gamma}(y),y,\RadNik{\nu}{\gamma}(y)\big)\,\diff \gamma(y)
				\right| \\
				\left. \vphantom{\sum_{i=1}^N}
				\rho_1,\ldots,\rho_N,\nu,\gamma \in \measp(\Omega),\,
				\|\rho_i\|=m_i \tn{ for } i\in\{1,\ldots,N\},\,
				\rho_1,\ldots,\rho_N,\nu \ll \gamma
				\right\}.
	\end{multline*}
	Let now $\rho_1,\ldots,\rho_N,\nu,\gamma$ be a minimizer. With Theorem \ref{thm:HKCTMMMEquiv} and Proposition \ref{prop:HKMMDirac} we find
	\begin{align*}
		\cMMHull(\vec{x},\vec{m}) & = \HKMM(\mu_1,\ldots,\mu_N)^2 = \HKCTM(\mu_1,\ldots,\mu_N)^2 \\
		& = \int_\Omega \sum_{i=1}^N \lambda_i\,
				c\big(x_i,\RadNik{\rho_i}{\gamma}(y),y,\RadNik{\nu}{\gamma}(y)\big)\,\diff \gamma(y)
		\geq \int_\Omega \cMM\big(\vec{x},\RadNik{\vec{\rho}}{\gamma}(y),y\big)\,\diff \gamma(y)
		\intertext{(where $\RadNik{\vec{\rho}}{\gamma}(y)=(\RadNik{\rho_1}{\gamma}(y),\ldots,\RadNik{\rho_N}{\gamma}(y))$ is the vector of densities and we recall definition \eqref{eq:CMMAux})}
		& \geq \int_\Omega \left[ \sum_{i=1}^N \psi_i \cdot \RadNik{\rho_i}{\gamma}(y) \right]\,\diff \gamma(y)
		= \sum_{i=1}^N \psi_i \cdot m_i = \cMMHull(\vec{x},\vec{m})
	\end{align*}
	where we have used $\vec{\psi} \in \QMM(\vec{x}) \subset \QMM(\vec{x},y)$ for all $y \in \Omega$, see \eqref{eq:MMAuxMMRelation}.
	Once more, this implies that all inequalities are actually equalities and that the integrands must agree $\gamma$-almost everywhere. In particular
	\begin{align*}
		\cMM\big(\vec{x},\RadNik{\vec{\rho}}{\gamma}(y),y\big)=\sum_{i=1}^N \psi_i \cdot \RadNik{\rho_i}{\gamma}(y)
		\qquad \tn{$\gamma(y)$-almost everywhere}
	\end{align*}
	which means $\RadNik{\vec{\rho}}{\gamma}(y) \in \partial \iota_{\QMM(\vec{x},y)}(\vec{\psi})$ $\gamma(y)$-almost everywhere.
	Since $\partial \iota_{\QMM(\vec{x},y)}(\vec{\psi}) = \{0\}$ if $\vec{\psi}$ lies in the interior of $\QMM(\vec{x},y)$, which in turn is equivalent to $y \notin Y$, cf.~\eqref{eq:ActiveConstraintSet}, we find that $\RadNik{\vec{\rho}}{\gamma}$ is zero $\gamma$-almost everywhere outside of $Y$.
	Finally, by direct computation we find that the optimal $\nu$ must satisfy [$\RadNik{\vec{\rho}}{\gamma}=0$] $\Rightarrow$ [$\RadNik{\nu}{\gamma}=0$] $\gamma$-almost everywhere (since $s=0$ is the unique minimizer in \eqref{eq:CMMAux} for $m_1=\ldots=m_N=0$).
	Therefore, the barycenter $\nu$ must be concentrated on $Y$.
	
	\textbf{\eqref{item:HKMMDiracDualNecessary}:}
	We continue the argument from the previous part. The minimal $\rho_i$ must be concentrated on $Y$ and thus they can be written as $\rho_i = \sum_{y \in Y} m_i^y\,\delta_y$ and the constraint $\|\rho_i\|=m_i$ becomes $\sum_{y \in Y} m_i^y=m_i$. Likewise, the optimal $\nu$ can be written as $\sum_{y \in Y} s^y\,\delta_{y}$.
	The corresponding objective for $\HKCTM(\mu_1,\ldots,\mu_N)^2$ then is $\sum_{y \in Y} \sum_{i =1}^N \lambda_i\,c(x_i,m_i^y,y,s^y)$, which me minimize over non-negative decompositions $\vec{m}=\sum_{y \in Y} \vec{m}^y$ and masses $s^y$, $y \in Y$.
	Clearly, the $s^y$ must be minimizers for $\cMM(\vec{x},\vec{m}^y,y)$ in \eqref{eq:CMMAux} so that the objective becomes $\sum_{y \in Y} \cMM(\vec{x},\vec{m}^y,y)$.
	By Proposition \ref{prop:HKMMDirac} \eqref{item:HKMMDiracCMMHull} this implies that the optimal decomposition satisfies the left part of \eqref{eq:MNormalConeDecompositionApplication}, which implies the right part by Lemma \ref{lem:CMMHullCaratheodory}, which then, by convex analysis, implies the right part of \eqref{eq:MNormalConeDecomposition}.

	\textbf{\eqref{item:HKMMDiracDualUnique}:}
	If the subdifferentials are linearly independent the decomposition $\vec{m}=\sum_{y \in Y} \vec{m}^y$ is unique. Since, by \eqref{item:HKMMDiracDualNecessary} any barycenter can be written as such a decomposition, this must be the unique barycenter.
	
	\textbf{\eqref{item:HKMMDiracDualNonZero}:} For every $i \in \{1,\ldots,N\}$, $m_i^y>0$ for at least one $y \in Y$. By Proposition \ref{prop:CMMMinimizerUnique} $|y-x_i|<\pi/2$ and therefore by explicit computation (cf.~\eqref{eq:CMMAux}), $s_y>0$.
\end{proof}

\subsection{Examples and illustrations}
\label{sec:HKMMDiracExamples}
In this subsection we assume $\vec{x} \in \Omega^N$, $\vec{m} \in \R_{++}^N$ and $\vec{\psi} \in \QMM(\vec{x})$ is the unique maximizer of $\vec{\xi} \mapsto \sum_{i=1}^N \xi_i\,m_i$ over $\QMM(\vec{x})$, and hence the gradient of $\cMMHull(\vec{x},\cdot)$ at $\vec{m}$, cf.~Lemma \ref{lem:PsiUniqueness}.
Similar to the proof of Lemma \ref{lem:DropInactiveConstraints}, for $\vec{\xi} \in \R^N$, $\xi_i \in (-\infty,\lambda_i-\lambda_i^2]$, throughout this subsection we will use the function
\begin{align*}
	f(\vec{\xi},\cdot) & : \Omega \to \R, &
	y \mapsto \sum_{i=1}^N \frac{\lambda_i\,\Cos(|x_i-y|)^2}{1-\xi_i/\lambda_i}
\end{align*}
to study the set $\QMM(\vec{x})$.
By the assumption $\vec{\psi} \in \QMM(\vec{x})$, which implies $\psi_i \leq \lambda_i-\lambda_i^2$, the function $\vec{\xi} \mapsto f(\vec{\xi},y)$ is smooth at $\vec{\psi}$ for all $y \in \Omega$. This implies that for $f(\vec{\psi},y)=1$ the normal cone of $\QMM(\vec{x},y)$ at $\vec{\psi}$, which is equal to $\partial \iota_{\QMM(\vec{x},y)}(\vec{\psi})$, is given by
\begin{align}
	\label{eq:QMMyNormalCone}
	\partial \iota_{\QMM(\vec{x},y)}(\vec{\psi})
	= \R_+ \cdot \nabla_{\vec{\psi}} f(\vec{\psi},y)
	= \R_+ \cdot \left( \frac{\Cos(|x_1-y|)^2}{(1-\psi_1/\lambda_1)^2},\ldots,\frac{\Cos(|x_N-y|)^2}{(1-\psi_N/\lambda_N)^2}\right).
\end{align}

We now consider the case $N=2$.
\begin{example}[$N=2$]
\label{ex:CMM:N2}
By the `convex hull argument' (see Lemma \ref{lem:CMMMinimizerExist} \eqref{item:CMMConvexHull} and Proposition \ref{prop:QMM}) we can restrict our considerations to line segment between $x_1$ and $x_2$.

\textbf{Case $|x_1-x_2|<\pi/2$.}
This case has already been discussed in Example \ref{ex:N2:1D}.
On the line segment one has $\Cos(|x_i-y|)=\cos(|x_i-y|)$, $f(\vec{\xi},\cdot)$ is a sum of trigonometric functions and so as in Proposition \ref{prop:QMMQuadratic}, $f(\vec{\xi},\cdot)$ has a unique maximum on the line segment and the proposition implies that $\QMM(\vec{x})=\lambda_1\,\lambda_2\,Q$ with $Q$ as in \eqref{eq:HKQ}, see \eqref{eq:ExampleN2QMM}.
So $f(\vec{\psi},y)=1$ for a unique $y$ and by Proposition \ref{prop:HKMMDiracDual} $(\vec{x},\vec{m}) \in \CSet$ and the unique $\HK$ barycenter is given by $s \cdot \delta_y$ with
\begin{equation}
\label{eq:ExampleT}
(y,s)=\HKT(\vec{x},\vec{m})=\left(\arg\left(\sum_{i=1}^N \lambda_i \sqrt{m_i} e^{i x_i} \right),\left|\sum_{i=1}^N \lambda_i \sqrt{m_i} e^{i x_i}\right| \right)
\end{equation}
where the last expression is due to \eqref{eq:CMMSimpleMinimizers}.
One finds that $(y,s)$ is the point on the constant speed geodesic between the two marginals at time $t=1-\lambda_1=\lambda_2$, cf.~\cite[Theorem 4.1]{ChizatOTFR2015}.

\textbf{Case $|x_1-x_2|>\pi/2$.}
The function $f(\vec{\xi},\cdot)$ has two (local) maxima at $x_1$ and $x_2$, each implying the constraints $f(\vec{\xi},x_i)\leq1$, and so $\QMM(\vec{x})=\prod_{i=1}^N (-\infty,\lambda_i-\lambda_i^2]$. Since $\vec{\psi}$ must be maximal for $\sup_{\vec{\xi} \in \QMM(\vec{x})} \sum_{i=1}^N m_i \cdot \xi_i$ we find that $\psi_i=\lambda_i-\lambda_i^2$ and $f(\vec{\psi},x_i)=1$ (and strictly smaller elsewhere).
Since $\Cos(|x_1-x_2|)=0$ we find with \eqref{eq:QMMyNormalCone},
\begin{align*}
\QMM(\vec{x},x_1) & = (-\infty,\lambda_1-\lambda_1^2] \times (-\infty,\lambda_2], &
\QMM(\vec{x},x_2) & = (-\infty,\lambda_1] \times (-\infty,\lambda_2-\lambda_2^2], \\
\partial \iota_{\QMM(\vec{x},x_1)}(\vec{\psi}) & =\R_+ \times \{0\}, &
\partial \iota_{\QMM(\vec{x},x_2)}(\vec{\psi}) & = \{0\} \times \R_+.
\end{align*}
So the decomposition implied by Proposition \ref{prop:HKMMDiracDual} \eqref{item:HKMMDiracDualDecomp} is
\begin{align*}
	\vec{m} = \sum_{i=1}^2 \vec{m}^i,
	\quad
	\vec{m}^1 = (m_1,0) \in \partial \iota_{\QMM(\vec{x},x_1)},
	\quad
	\vec{m}^2 = (0,m_2) \in \partial \iota_{\QMM(\vec{x},x_2)}
\end{align*}
and is unique.
It follows that $\cMMHull(x_1,m_1,x_2,m_2) = \cMM(\vec{x},\vec{m}^1,x_1)+\cMM(\vec{x},\vec{m}^2,x_2)$
and that the unique barycenter is given by $\sum_{i=1}^2 s_i\,\delta_{x_i}$ where $s_i = \lambda_i^2\,m_i$ is obtained by explicitly solving \eqref{eq:CMMAux} for $\cMM(\vec{x},\vec{m}^i,x_i)$ (Proposition \ref{prop:HKMMDiracDual} \eqref{item:HKMMDiracDualBarycenter}). Again, this is consistent with the $\HK$ geodesic \cite[Theorem 4.1]{ChizatOTFR2015}. We deduce by Proposition \ref{prop:HKMMDirac} \eqref{item:HKMMDiracSingle} that $(\vec{x},\vec{m}) \notin \CSet$.

\textbf{Case $|x_1-x_2|=\pi/2$.} $\psi_i=\lambda_i-\lambda_i^2$ is still optimal and $f(\vec{\psi},\cdot)=1$ on the whole line segment. Thus, $Y$ is uncountable and Proposition \ref{prop:HKMMDiracDual} does not apply immediately.
\eqref{eq:QMMyNormalCone} yields that $\partial \iota_{\QMM(\vec{x},y)}=\R_+ \cdot (\Cos(|x_1-y|)^2/\lambda_1^2,\Cos(|x_2-y|)^2/\lambda_2^2)$. Then, applying the complementary slackness argument used in the proof of Proposition \ref{prop:HKMMDiracDual} \eqref{item:HKMMDiracDualSupport} we find that we can construct $\HK$ barycenters with essentially arbitrary support on the line segment. In addition, there is some $y$ such that $\vec{m} \in \partial \iota_{\QMM(\vec{x},y)}$ and so there also exists a barycenter which is a single Dirac measure, i.e.~again $(\vec{x},\vec{m}) \in \CSet$.

In conclusion, we find that for $\vec{m} \in \R_{++}^2$, $(\vec{x},\vec{m}) \in \CSet$ is equivalent to $|x_1-x_2|\leq \pi/2$ and we have reconstructed the behaviour of $\HK$ geodesics between Dirac measures via the barycenter for $N=2$.
\end{example}

\begin{figure}[hbtp]
	\centering
	{\def\imgw{0.35\textwidth}
	\begin{tikzpicture}[img/.style={anchor=south west,inner sep=0pt},y=\imgw,x=\imgw,
		lbl/.style={inner sep=0pt},arr/.style={line width=0.5pt,black}]
	\node[img] at (0,1.3) [label=below:(a)]{\includegraphics[width=\imgw]{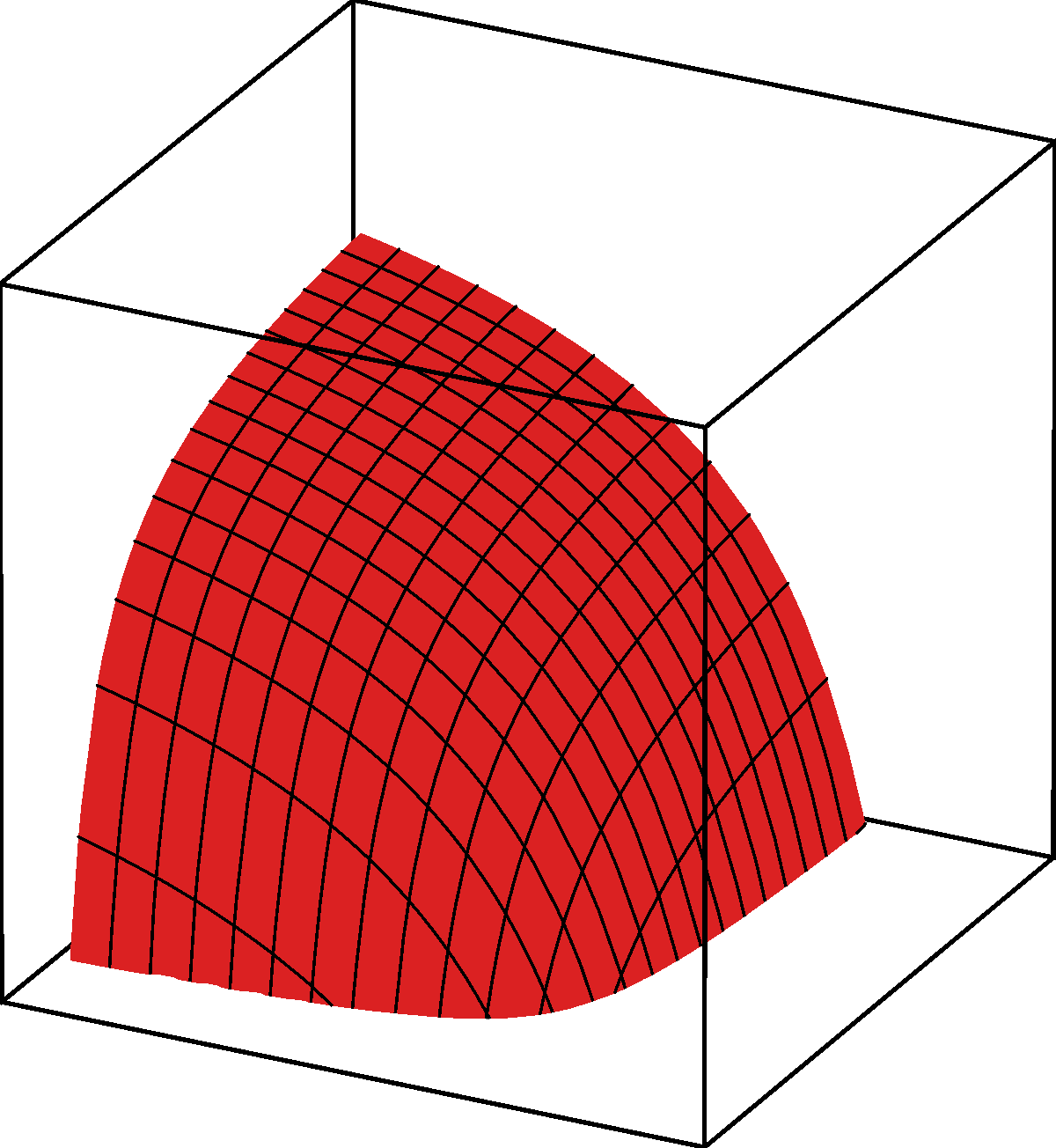}};
	\node[img] at (1.3,1.3) [label=below:(b)]{\includegraphics[width=\imgw]{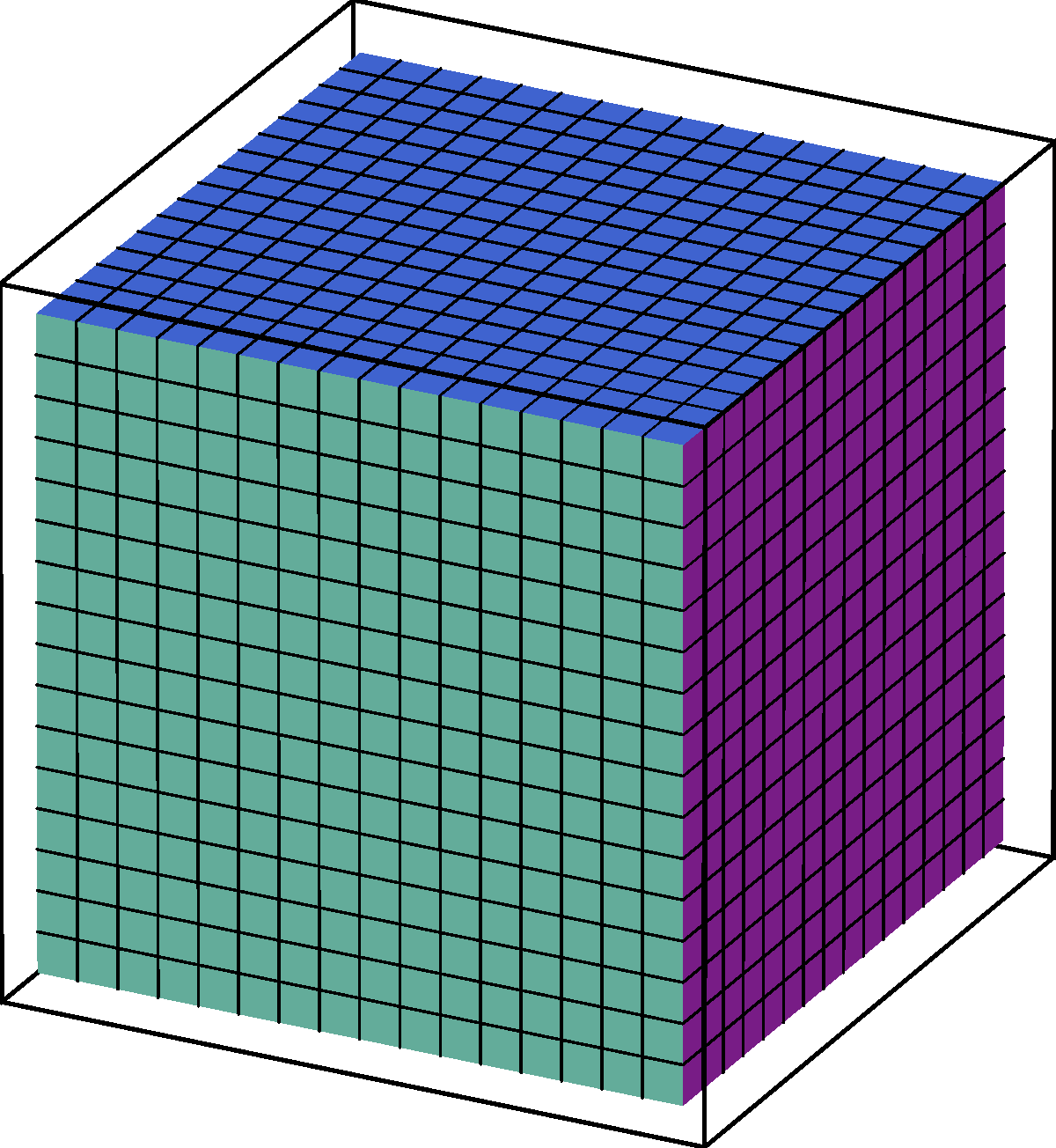}};
	\node[img] at (0,0) [label=below:(c)]{\includegraphics[width=\imgw]{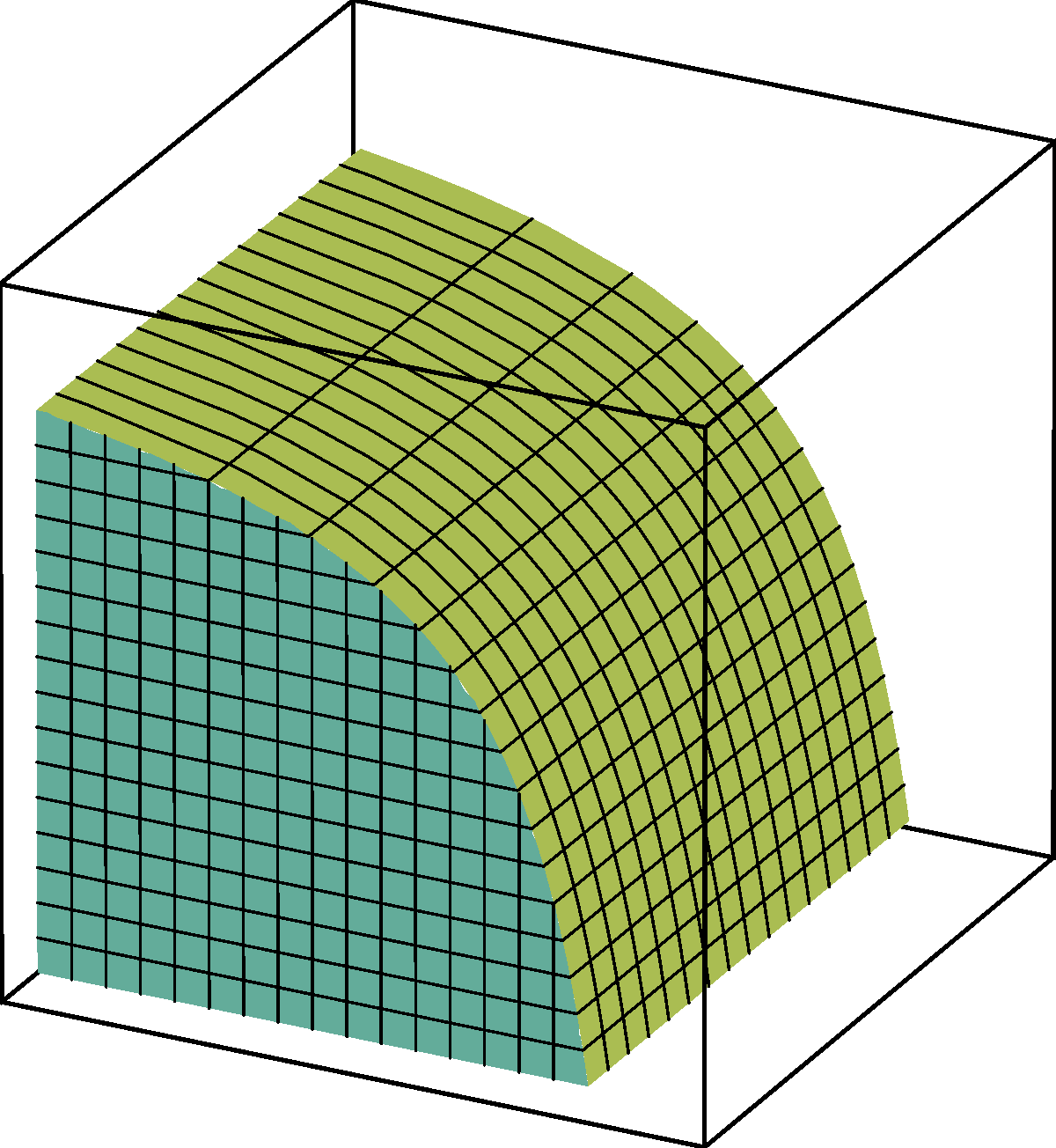}};
	\node[img] at (1.3,0) [label=below:(d)]{\includegraphics[width=\imgw]{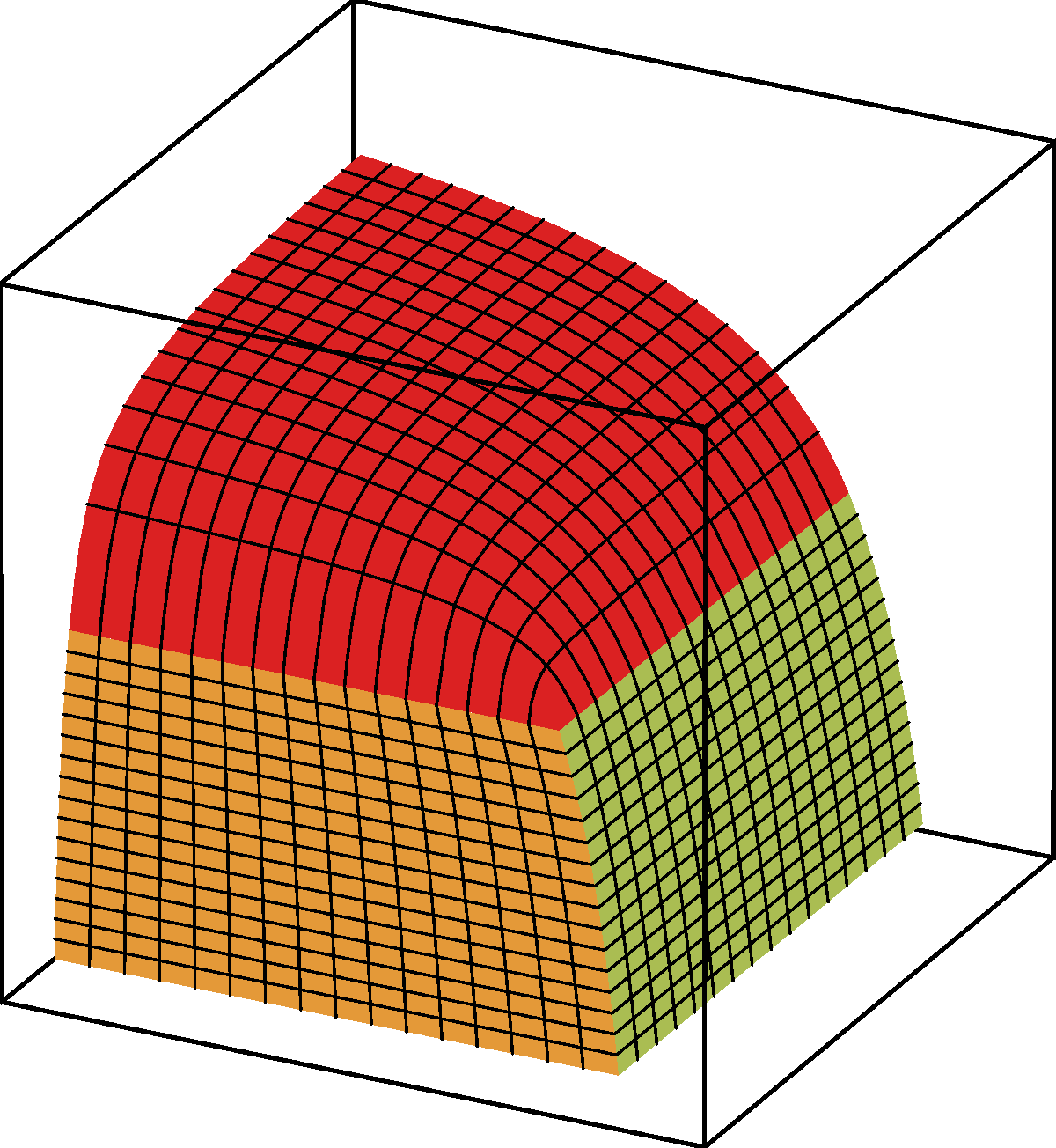}};
	\begin{scope}[shift={(0,2.3)},y=-\imgw]
		\draw[arr,->] (0.25,.940) -- ++(12:0.1);
 		\node[lbl] at (0.30,.990) []{$\psi_1$};
		\draw[arr,->] (0.9,.83) -- ++(140:0.1);
		\node[lbl] at (0.90,0.90) []{$\psi_3$};
		\draw[arr,->] (1.015,.48) -- ++(-90:0.1);
		\node[lbl] at (1.06,0.430) []{$\psi_2$};
	\end{scope}
	\end{tikzpicture}
	}
	\caption{Visualization of a part of the surface of $\QMM(\vec{x})$ for various $\vec{x}$ for $N=3$, $d=1$. The black box (approximately) represents the region $\bigtimes_{i=1}^N [0,\lambda_i-\lambda_i^2]$, orientation of the axes is given in (a).
	The cases (a) to (d) are described in Example \ref{ex:CMM:N3}.
	\newline	
	In the interior of each colored region the function $f(\vec{\psi},\cdot)$ has a unique maximum $y \in \Omega$. The location of $y$ is encoded by the color. Red: $|y-x_i|<\pi/2$ for $i=1,2,3$; purple: $|y-x_1|<\pi/2$, $|y-x_i|>\pi/2$ for $i=2,3$; analogous for blue ($x_2$) and teal ($x_3$); green: $|y-x_i|<\pi/2$ for $i=1,2$, $|y-x_3|>\pi/2$; orange: $|y-x_i|<\pi/2$ for $i=2,3$, $|y-x_1|>\pi/2$.
	\newline
	For $\vec{m} \in \R_{++}^N$ the vector $\vec{\psi} = \nabla_{\vec{m}} \cMMHull(\vec{x},\vec{m})$ is the unique point on the surface where $\vec{m}$ is normal to the surface.}
	\label{fig:QMM3D}
\end{figure}

\begin{example}[$N=3$]
\label{ex:CMM:N3}
For simplicity we restrict this discussion to one spatial dimension ($d=1$). In principle, higher dimensions can be treated analogously but the distinction between the separate cases becomes more complicated. We ignore cases with $|x_i-x_j|=\pi/2$, they can either be dealt with similarly to the $N=2$ example or the third point `breaks the symmetry'.
W.l.o.g.~we may assume $x_1 < x_2 < x_3$.
The sets $\QMM(\vec{x})$ for all cases are illustrated in Figure \ref{fig:QMM3D}.

\textbf{Case $|x_1-x_3|<\pi/2$, `all close', Figure \ref{fig:QMM3D} (a).}
The form of $\QMM(\vec{x})$ is given by Proposition \ref{prop:QMMQuadratic}, similar to the case $N=2$, $f(\vec{\xi},\cdot)$ has a unique maximum on the line segment and so $f(\vec{\psi},y)=1$ only for a single point $y$ in $(x_1,x_3)$. By Proposition \ref{prop:HKMMDiracDual} consists of a single Dirac $s \cdot \delta_y$ and is unique. By Proposition \ref{prop:HKMMDirac}, $(\vec{x},\vec{m})\in\CSet$. Applying Proposition \ref{prop:CMMSimple}, just like in \eqref{eq:ExampleT} we get
\begin{equation}
\label{eq:ExampleTThree}
(y,s)=\HKT(\vec{x},\vec{m})=\left(\arg\left(\sum_{i=1}^N \lambda_i \sqrt{m_i} e^{i x_i} \right),\left|\sum_{i=1}^N \lambda_i \sqrt{m_i} e^{i x_i}\right| \right).
\end{equation}

\textbf{Case $|x_1-x_2|>\pi/2$ $\wedge$ $|x_2-x_3|>\pi/2$, `all far', Figure \ref{fig:QMM3D} (b).}
Again, similarly to $N=2$, $f(\vec{\xi},\cdot)$ has three local maxima at $y \in \{x_1,x_2,x_3\}$ and the set $\QMM(\vec{x})$ becomes a product set, $\QMM(\vec{x})=\prod_{i=1}^N (-\infty,\lambda_i-\lambda_i^2]$.
Therefore one finds $f(\vec{\psi},y)=1$ for $y \in \{x_1,x_2,x_3\}$, i.e.~$\psi_i=\lambda_i-\lambda_i^2$ and $\vec{\psi}$ lies at the `vertex' of the three faces (see Figure).
The unique barycenter is given by $\sum_{i=1}^N s_i \delta_{x_i}$ with $s_i=\lambda_i^2\,m_i$ and $(\vec{x},\vec{m}) \notin \CSet$.

\textbf{Case $|x_1-x_2|<\pi/2$ $\wedge$ $|x_2-x_3|>\pi/2$, `two close, one far', Figure \ref{fig:QMM3D} (c).}
A careful but elementary piecewise analysis yields that $f(\vec{\xi},\cdot)$ has two maxima $y_1, y_2$, with $y_1 \in (x_1,x_2)$ and $y_2=x_3$. The two maxima can be studied separately, the former one similarly to Proposition \ref{prop:QMMQuadratic}, the latter one simply implying the constraint $f(\vec{\xi},x_3)\leq 1$. One finds $\QMM(\vec{x})=\QMM^{1,2} \times (-\infty,\lambda_3-\lambda_3^2]$ where
$$\QMM^{1,2}=\left\{ (\xi_1,\xi_2) \in \R^2 \middle|
	\xi_i \leq \lambda_i-\lambda_i^2,\, \chi_1+\chi_2-\chi_1\chi_2\sin(x_1-x_2)^2 \leq 1 \tn{ for } \chi_i=\tfrac{\lambda_i}{1-\xi_i/\lambda_i}\right\}.$$
So one has $f(\vec{\psi},y)=1$ for $y \in Y=\{y_1,y_2\}$ with $y_1 \in (x_1,x_2)$ and $y_2=x_3$, see \eqref{eq:ActiveConstraintSet}. $\vec{\psi}$ lies at the `edge' between the teal and green region of the surface (see Figure).
Any decomposition implied by Proposition \ref{prop:HKMMDiracDual} \eqref{item:HKMMDiracDualDecomp} must be of the form $\vec{m}=\vec{m}^1 + \vec{m}^2$, $\vec{m}^i \in \partial \iota_{\QMM(\vec{x},y_i)}(\vec{\psi})$ for $i=1,2$.
Since $\Cos(|x_1-y_2|)=\Cos(|x_2-y_2|)=\Cos(|x_3-y_1|)=0$, by \eqref{eq:QMMyNormalCone}, we must have $\vec{m}^1=(m_1,m_2,0)$, $\vec{m}^2=(0,0,m_3)$.
The unique barycenter is therefore given by $s_1 \delta_{y_1} + s_2 \delta_{y_2}$ with $(y_i,s_i)=\HKT(\vec{x},\vec{m}^i)$. Analogous to \eqref{eq:CMMSimpleMinimizers} one finds
\begin{align*}
	y_1 & = \arg(\lambda_1 \sqrt{m_1} e^{i x_1}+\lambda_2 \sqrt{m_2} e^{i x_2}), &
	s_1 & = |\lambda_1 \sqrt{m_1} e^{i x_1}+\lambda_2 \sqrt{m_2} e^{i x_2}|^2, \\
	y_2 & = \arg(\lambda_1 \sqrt{m_3} e^{i x_3})=x_3, &
	s_2 & = |\lambda_3 \sqrt{m_3} e^{i x_3}|^2=\lambda_3^2\,m_3,
\end{align*}
with the convention $y_1 \in [x_1,x_2]$ for the $\arg$ function in the first line.
Since the unique barycenter consists of two masses, $(\vec{x},\vec{m}) \notin \CSet$.
The case $|x_1-x_2|>\pi/2$ $\wedge$ $|x_2-x_3|<\pi/2$ is analogous.

\textbf{Case $|x_1-x_2|<\pi/2$ $\wedge$ $|x_2-x_3|<\pi/2$ $\wedge$ $|x_1-x_3|>\pi/2$, `intermediate', Figure \ref{fig:QMM3D} (d).}
This case is more complicated than the previous scenarios and the number of Dirac masses in the barycenter does not solely depend on $\vec{x}$, but also on $\vec{m}$.
We will reuse and refine arguments from Proposition \ref{prop:QMMQuadratic}.
Let $a \assign x_3-\pi/2$, $b \assign x_1+\pi/2$, $\chi_i \assign \frac{\lambda_i}{1-\xi_i/\lambda_i}$, $w_i \assign \chi_i\,e^{2i x_i}$, and $\arg: \C \setminus \{0\} \to [2x_2-\pi,2x_2+\pi)$ map a nonzero complex number to its unique complex phase in that interval.

By the convex hull argument (cf.~Proposition \ref{prop:QMM}), $f(\vec{\xi},\cdot)$ has its global (and thus a local) maximum on $[x_1,x_3]$.
On each of the intervals $[x_1,a]$, $[a,b]$, $[b,x_3]$, $f(\vec{\xi},\cdot)$ is of the form $y \mapsto \tfrac{A}{2}+ \tfrac{B}{2} \cos(2(y-\delta))$ for suitable $A,B$ and $\delta$, see \eqref{eq:fSumComplex}.
For instance, on $[a,b]$ one has $A=\sum_{i=1}^3 \chi_i$, $B=\big|\sum_{i=1}^3 w_i\big|$ and $\delta=\tfrac12 \arg(\sum_{i=1}^3 w_i)$. On $[x_1,a]$ and $[b,x_3]$ similar formulas hold where one only adds the contributions from $\{x_1,x_2\}$ and $\{x_2,x_3\}$ respectively.
In addition, $f(\vec{\xi},\cdot)$ is differentiable on $\R$.
So $f(\vec{\xi},\cdot)$ has a local maximum at some point if and only if the relevant function $y \mapsto \tfrac{A}{2}+ \tfrac{B}{2}\cos(2(y-\delta))$ has a maximum at that point.
For $w_1+w_2+w_3 \neq 0$ we find
\begin{equation*}
f(\vec{\xi},\cdot) \tn{ has local maximum on } \begin{cases}
	[x_1,a] & \tn{iff } \tfrac12 \arg(w_1+w_2) \in [x_1,a], \\
	[a,b] & \tn{iff } \tfrac12 \arg(w_1+w_2+w_3) \in [a,b], \\
	[b,x_3] & \tn{iff } \tfrac12 \arg(w_2+w_3) \in [b,x_3].
\end{cases}
\end{equation*}
Arguing geometrically via the addition of complex numbers (and using $\tfrac12 \arg(-w_3)=a$), we find
\begin{align*}
& & \big[ \tfrac12 \arg(w_1+w_2+w_3) \in [a,b] \big]
& \Rightarrow
\big[ \tfrac12 \arg(w_1+w_2) \notin [x_1,a) \big] \\
\tn{ and } & &
\big[ \tfrac12 \arg(w_1+w_2) \in [x_1,a) \big]
& \Rightarrow
\big[ \tfrac12 \arg(w_1+w_2+w_3) \notin [a,b] \big]
\big]
\end{align*}
and analogously with the roles of $w_1$ and $w_3$ swapped.
So for $w_1+w_2+w_3 \neq 0$, $f(\vec{\psi},\cdot)$ either has a unique maximum on $[a,b]$ or (at least) one local maximum on $[x_1,a) \cup (b,x_3]$.
When $w_1+w_2+w_3=0$, $f$ is maximal on the whole interval $[a,b]$ with value $\tfrac12 \sum_{i=1}^3 \chi_i$.
Arguing as in Proposition \ref{prop:QMMQuadratic} for each of the intervals $[x_1,a]$, $[a,b]$ and $[b,x_3]$, we conclude that $\vec{\chi} \in (0,1]^3$ lies in $\QMMTilde(\vec{x})$ if and only if
\begin{equation}
\label{eq:QMMTildeTransition}
\begin{cases}
\sum_{i=1}^3 \chi_i - \sum_{1 \leq j<k\leq 3} \chi_j \chi_k \sin(x_j-x_k)^2 \leq 1 & \tn{if } \sum_{i=1}^3 w_i \neq 0, \, \tfrac12 \arg(\sum_{i=1}^3 w_i) \in [a,b] \\
\begin{pmatrix}
\chi_1+\chi_2-\chi_1\chi_2\sin(x_1-x_2)^2 \leq 1 \tn{ and } \\
\chi_2+\chi_3-\chi_2\chi_3\sin(x_2-x_3)^2 \leq 1
\end{pmatrix}
& \tn{if } \sum_{i=1}^3 w_i \neq 0, \, \tfrac12 \arg(\sum_{i=1}^3 w_i) \notin [a,b] \\
\frac12\sum_{i=1}^3 \chi_i \leq 1 & \tn{if } \sum_{i=1}^3 w_i=0.
\end{cases}
\end{equation}
$\QMM(\vec{x})$ is then obtained by switching back from $\vec{\chi}$ to $\vec{\xi}$.

$f(\vec{\psi},\cdot)=1$ therefore either at a unique point in $(a,b)$, two separate points in $[x_1,a)$ and $(b,x_3]$ or on the interval $[a,b]$, corresponding to a single Dirac, two separate Diracs or a potentially diffuse barycenter.
In Figure \ref{fig:QMM3D} (d) this corresponds to the red region, the `edge' between the orange and green region, and the point where the three regions meet, respectively.

For given $\vec{m} \in \R_{++}^3$ the corresponding $\vec{\psi}$ and $\cMMHull(\vec{x},\vec{m})$ can be obtained as follows:
Assume first the barycenter were a single Dirac and compute $\cMM(\vec{x},\vec{m})$ with \eqref{eq:CMMSimple}.
Since $|x_1-x_3|<\pi$, the minimizing $y$, given by \eqref{eq:CMMSimpleMinimizers}, can be chosen in $[x_1,x_3]$ and satisfies $|x_i-y|\leq \pi$, which implies $|p_i-q|^2\geq c(x_i,m_i,y,s)$ and thus \eqref{eq:CMMSimple} gives an upper bound on $\cMMHull(\vec{x},\vec{m})$.
Now set $\vec{\psi}$ to be the $\vec{m}$-gradient of \eqref{eq:CMMSimple}, i.e.~$\psi_i=\lambda_i-\sum_{j=1}^N \lambda_i\,\lambda_j \sqrt{m_j/m_i} \cos(x_i-x_j)$ (note the regular $\cos$ instead of $\Cos$), and test if $\vec{\psi} \in \QMM(\vec{x})$ via \eqref{eq:QMMTildeTransition}.
If this is true, then we have found $\cMMHull(\vec{x},\vec{m})$ and the optimal $\vec{\psi}$ since
\begin{equation*}
	\cMMHull(\vec{x},\vec{m}) \leq \tn{\eqref{eq:CMMSimple}} = \sum_{i=1}^3 \psi_i\,m_i \leq \cMMHull(\vec{x},\vec{m})
\end{equation*}
where the equality follows from the positive 1-homogeneity of \eqref{eq:CMMSimple}. The unique barycenter is then a single Dirac $s \cdot \delta_y$ with $(y,s)$ given by \eqref{eq:ExampleTThree}.

If $\vec{\psi} \notin \QMM(\vec{x})$, assume the barycenter consists of two distinct Diracs in $[x_1,a)$ and $(b,x_3]$, such that the mass at $x_2$ is split in two. The corresponding candidate for $\cMMHull(\vec{x},\vec{m})$ is obtained by solving
\begin{align}
	& \min_{r \in [0,1]} \cMM(x_1,m_1,x_2,r\,m_2,x_3,0) + \cMM(x_1,0,x_2,(1-r)\,m_2,x_3,m_3) \nonumber \\
	= {} & \min_{r \in [0,1]} \left(\lambda_1 m_1 + \lambda_2 r\,m_2-\lambda_1 \lambda_2 \sqrt{m_1 r m_2} \cos(|x_1-x_2|) \right)
		\nonumber \\
		& \qquad +\left(\lambda_3 m_3 + \lambda_2 (1-r)\,m_2-\lambda_3 \lambda_2 \sqrt{m_3 (1-r) m_2} \cos(|x_3-x_2|) \right)
		\nonumber \\
	= {} & \sum_{i=1}^N \lambda_i\,m_i - \left(
		\lambda_1^2\lambda_2^2 m_1m_2 \cos(x_1-x_2)^2
		+\lambda_3^2\lambda_2^2 m_3m_2 \cos(x_3-x_2)^2
		\right)^{1/2}.
		\label{eq:CMMTransitionSplit}
\end{align}
Here we used \eqref{eq:CMMSimple} to obtain $\cMM(x_1,m_1,x_2,r\,m_2,x_3,0)$, which is admissible since $|x_1-x_2|<\pi/2$ and the mass argument for the third point is zero, and analogously for $\cMM(x_1,0,x_2,(1-r)\,m_2,x_3,m_3)$.
Again, we set $\vec{\psi}$ to the $\vec{m}$-gradient of \eqref{eq:CMMTransitionSplit} and test if $\vec{\psi} \in \QMM(\vec{x})$.
If this is true, the barycenter is given by $s_1\delta_{y_1} + s_2 \delta_{y_2}$ with
\begin{align*}
	y_1 & = \arg(\lambda_1 \sqrt{m_1} e^{ix_1}+ \lambda_2 \sqrt{r\,m_2} e^{ix_2}), &
	s_1 & = |\lambda_1 \sqrt{m_1} e^{ix_1}+ \lambda_2 \sqrt{r\,m_2} e^{ix_2}|^2, \\
	y_2 & = \arg(\lambda_3 \sqrt{m_3} e^{ix_3}+ \lambda_2 \sqrt{(1-r)\,m_2} e^{ix_2}), &
	s_2 & = |\lambda_3 \sqrt{m_3} e^{ix_3}+ \lambda_2 \sqrt{(1-r)\,m_2} e^{ix_2}|^2,
\end{align*}
where $r \in [0,1]$ is the minimizer in \eqref{eq:CMMTransitionSplit} and the convention for $\arg$ is $y_1 \in [x_1,x_2]$, $y_2 \in [x_2,x_3]$.

If once more $\vec{\psi} \notin \QMM(\vec{x})$, we must be in the diffuse regime where $\cMMHull(\vec{x},\vec{m})=\sum_{i=1}^3 \psi_i\,m_i$ and $\vec{\psi}$ is the unique vector where the corresponding $\vec{\chi}$ satisfies $\sum_{i=1}^3 w_i=0$ and $\tfrac12\sum_{i=1}^3 \chi_i=1$.

An illustration of $\cMM(\vec{x},\cdot)$ and $\cMMHull(\vec{x},\cdot)$ for this case is given in Figure \ref{fig:CMM3D}.
A visualization of the $\HK$ barycenter between three Dirac masses with corresponding $f(\vec{\psi},\cdot)$ for varying pairwise distances is given in Figure \ref{fig:ThreeDiracs}. We observe that in case (d) the diffuse barycenter not only occurs for specific `non-generic' configurations, but over extended intervals of point distances and masses, unlike in the case $N=2$ where a diffuse barycenter only occurs for the point distance $\pi/2$.
\end{example}

Moving from $N=2$ to $N=3$ we observed that the behaviour of the $\HK$ barycenter between Dirac measures became more complex.
Nearby points are `clustered' into a single Dirac mass, far separated points are approximated by distinct masses.
The transition between the two regimes is not purely based on the distance between the points but also depends on their masses. Two points at a distance between $\pi/2$ and $\pi$ may still be represented by the same Dirac mass in the barycenter if there is a suitable third point between them with sufficiently high mass.
This `clustering behaviour' could be particularly useful for computing meaningful barycenters between empirical measures where none of the samples accurately represents the whole underlying distribution but only their combination does.
It should be noted that clustering effect of the $\HK$ barycenter compared to the Wasserstein barycenter depends on the length scale of the data points which must be chosen in advance.
This suggests in data analysis applications to a priori consider \emph{all} re-scalings $(t \cdot x_1,\ldots,t \cdot x_N)$ of the positions of the Diracs; see the next section.

\begin{figure}[hbt]
	\centering
	{\def\imgw{0.17\textwidth}
	\begin{tikzpicture}[img/.style={anchor=south west,inner sep=0pt},y=\imgw,x=\imgw,border/.style={line width=0.5pt,black}]
	\node[img] (i1) at (0,0) []{\includegraphics[width=\imgw]{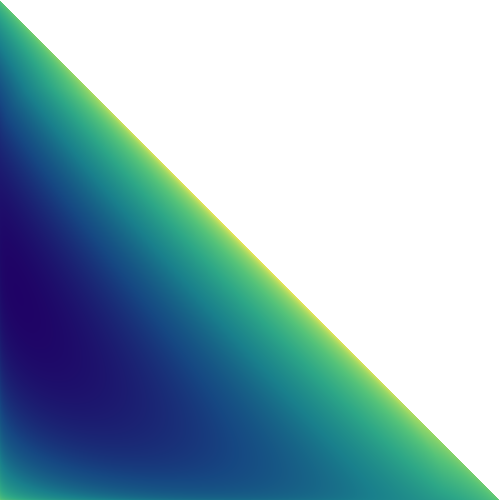}};
	\node[img] (i2) at (1.1,0) []{\includegraphics[width=\imgw]{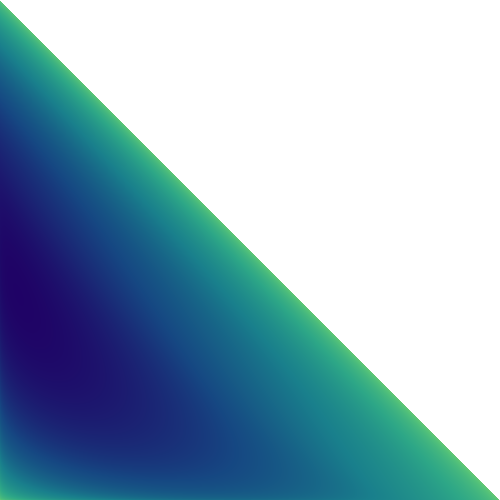}};
	\node[img] (i3) at (2.2,0) []{\includegraphics[width=\imgw]{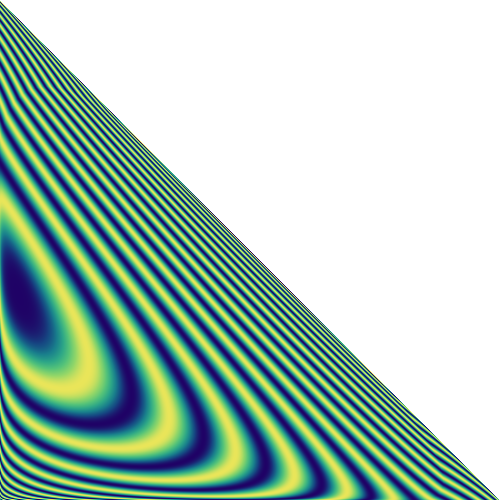}};
	\node[img] (i4) at (3.3,0) []{\includegraphics[width=\imgw]{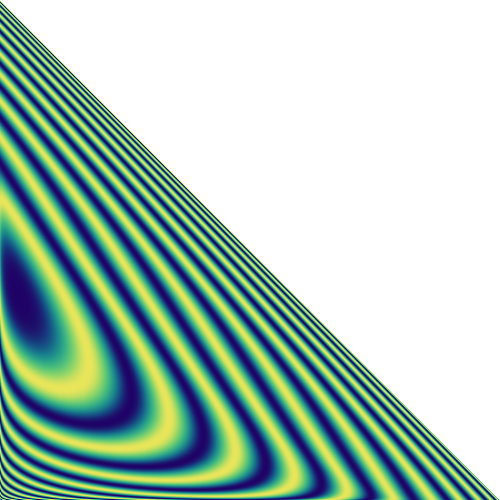}};
	\node[img] (i5) at (4.4,0) []{\includegraphics[width=\imgw]{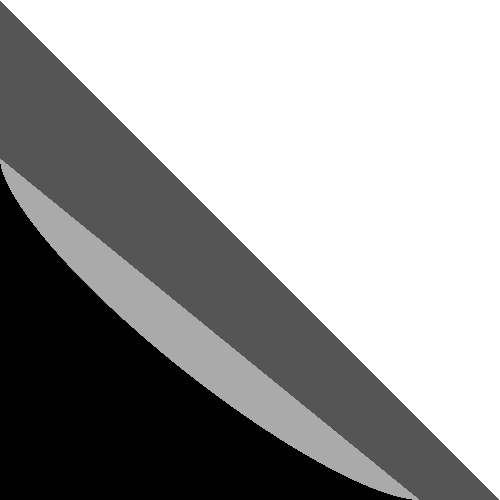}};
	{\scriptsize
	\node at ($(i1.south)+(0,-0.3)$) []{(a) $\cMM$};
	\node at ($(i2.south)+(0,-0.3)$) []{(b) $\cMMHull$};
	\node at ($(i3.south)+(0,-0.3)$) []{(c) $\cMM$ level sets};
	\node at ($(i4.south)+(0,-0.3)$) []{(d) $\cMMHull$ level sets};
	\node at ($(i5.south)+(0,-0.3)$) []{(e) splitting regimes};
	}
	\draw[border] (0,0) rectangle (\imgw,\imgw);
	{\scriptsize
	\draw[border] (0,0) -- ++(0,-0.05) node[below]{0};
	\draw[border] (1,0) -- ++(0,-0.05) node[below]{1};
	\draw[border] (0,0) -- ++(-0.05,0) node[left]{0};
	\draw[border] (0,1) -- ++(-0.05,0) node[left]{1};
	\node at (0.5,-0.05) [below]{$m_1$};
	\node at (-0.05,0.5) [left]{$m_3$};
	}
	\end{tikzpicture}
	}
	\caption{Visualization of $\cMM(\vec{x},\cdot)$ and $\cMMHull(\vec{x},\cdot)$ for $N=3$, $d=1$, $\vec{x}$ as in Figure \ref{fig:QMM3D} (d) for $m_1, m_3 \in [0,1]^2$, $m_1+m_3 \leq 1$ as in (a), $m_2 \assign 1-m_1-m_3$. By positive 1-homogeneity of $\cMM^{(\ast\ast)}$ this fully characterizes the functions. (a,b): absolute color scale (blue: small, yellow: large), (c,d): visualization of level sets via periodic color scale.
	\newline
	(e): mass splitting behaviour. Black: $m_2$ large, $m_1,m_3$ small. $\cMM(\vec{x},\cdot)=\cMMHull(\vec{x},\cdot)$, no splitting, this corresponds to the red region in Figure \ref{fig:QMM3D} (d). Dark gray: splitting into two Dirac masses, orange-green boundary in Figure \ref{fig:QMM3D} (d). Light gray: potentially diffuse barycenter, orange-green-red vertex in Figure \ref{fig:QMM3D} (d).}
	\label{fig:CMM3D}
\end{figure}

\begin{figure}[hbt]
	\centering
	{\footnotesize
	\def\imgwquotient{0.4}
	\def\imgw{\imgwquotient\textwidth}
	\pgfmathsetmacro\imghquotient{201/256*\imgwquotient}
	\def\imgh{\imghquotient\textwidth}
	\begin{tikzpicture}[
		img/.style={anchor=south west,inner sep=0pt},
		y=\imgh,x=\imgw,
		border/.style={line width=0.5pt,black},
		mass/.style={line width=1pt,dash pattern=on 2pt off 2pt,red},
		time/.style={line width=1pt,dash pattern=on 2pt off 2pt,white!60!black}
		]
	\begin{scope}[shift={(466/827+0.5/256,0)},x={(0.3035,0)}]
	\draw[mass] (0,0) -- (0,1);
	\draw[mass] (-2/3*0.5,0) -- (-2/3*2.5,1);
	\draw[mass] (1/2*0.5,0) -- (1/2*2.5,1);

	\draw plot[mark=*,mark size=0.5,only marks,mark options={blue}] file {fig/exact_clean.txt};
	
	{\def\tA{0.257841}
	\def\tB{0.281646}
	\draw[blue,fill=blue!50!white] ({1/2*(0.5+2*\tA)-1},\tA) -- ({-2/3*(0.5+2*\tA)+1},\tA)
		-- ({-2/3*(0.5+2*\tB)+1},\tB) -- ({1/2*(0.5+2*\tB)-1},\tB) -- cycle;
	}

	\foreach \t/\l in {0/0,1/{$\pi/2$},-1/{$-\pi/2$}} {
		\draw[border] (\t,0) -- ++(0,-0.02) node[below]{\l};
	}
	\end{scope}

	\foreach \t in {{5/28},0.5,0.75}{
		\draw[time] (0,\t) -- ++(1,0);
	}
	\foreach \t/\l in {0/{0.5},1/{2.5},{5/28}/{$t_1$},{0.5}/{$t_2$},{0.75}/{$t_3$}}{%
		\draw[border] (0,\t) -- ++(-0.02,0) node[left]{\l};
	}
	\node at (-0.10,0.5) [left,rotate=90,anchor=south]{dilation parameter $t$};

	\foreach \t/\l in {0.134872/a,0.269743/b,0.384872/c,0.625/d,0.875/e}{%
		\draw[border] (1,\t) -- ++(0.02,0) node [right]{\l};
	}

	\draw[border] (0,0) rectangle (1,1);
	\node at (0.5,-0.18) [anchor=north]{$\Omega$};

	\foreach \n/\l in {1/a,2/b,3/c,4/d,5/e} {
	\begin{scope}[shift={(1.3,0.2*(\n-1))}]

	\begin{scope}[shift={(466/827,0)},x={(0.3035,0)},y={(0,0.18)}]
	\draw[blue] plot[] file {fig/exact_clean_f_\n.txt};
%
%
	\end{scope}
%
%
%
	\draw[border] (0,0) rectangle (1,0.18);
	\node at (0,0.5*0.18) [anchor=east]{\l};
	\end{scope}
	}
	\begin{scope}[shift={(1.3,0)}]
		\begin{scope}[shift={(466/827,0)},x={(0.3035,0)},y={(0,1)}]
			\foreach \t/\l in {0/0,1/{$\pi/2$},-1/{$-\pi/2$}} {
				\draw[border] (\t,0) -- ++(0,-0.02) node[below]{\l};
			}
		\end{scope}
		\node at (0.5,-0.18) [anchor=north]{$\Omega$};
		\foreach \t in {0,1}{%
			\draw[border] (0,\t*0.18) -- ++(-0.02,0) node[left]{\t};
		}
	\end{scope}
	\end{tikzpicture}
	}
	\caption{Left: (Maximal) support of $\HK$ barycenter in $d=1$ between $N=3$ Dirac measures $\mu_i(t)=\delta_{x_i(t)}$ for various distances between the three points.
	\newline
	Horizontal axis: $\Omega \subset \R$, vertical axis: time for parametrizing moving points. Red lines: positions of points $x_i(t)$, $i$ increasing from left to right. Gray horizontal lines: transition between the cases discussed in Example \ref{ex:CMM:N3}.
	$t < t_1$: $|x_1-x_3|<\pi/2$. $t <t_2$: $|x_1-x_2|<\pi/2$, $t < t_3$: $|x_2-x_3|<\pi/2$.
	Blue markers: support of $\HK$ barycenter between the three points at their current locations.
	With increasing distance between the points the barycenter goes through the stages described in Example \ref{ex:CMM:N3}.
	The blue shaded area corresponds to the diffuse transition in case (d). It extends approximately over $t \in [1.02,1.06]$.
	\newline
	Right: Functions $f(\vec{\psi},\cdot)$ for the optimal $\vec{\psi}$ at times corresponding to the markers (a-e) on the left plot. $f(\vec{\psi},\cdot)$ has a single maximum in (a), is maximal on an interval in (b), has two maxima in (c,d) and three in (e).
	}
	\label{fig:ThreeDiracs}
\end{figure}

\section{HK barycenter tree}
\label{sec:HKBarycenterTree}

\begin{figure}[hbt]
	\centering
	{\footnotesize
	\def\imgwquotient{0.35}
	\def\imgw{\imgwquotient\textwidth}
	\pgfmathsetmacro\imghquotient{300/256*\imgwquotient}
	\def\imgh{\imghquotient\textwidth}
	\begin{tikzpicture}[
		img/.style={anchor=south west,inner sep=0pt},
		y=\imgh,x=\imgw,
		border/.style={line width=0.5pt,black},
		mass/.style={line width=1pt,dash pattern=on 2pt off 2pt,red},
		time/.style={line width=1pt,dash pattern=on 2pt off 2pt,white!60!black}
		]
	\begin{scope}[shift={(466/827+0.5/256,0)},x={(0.3035,0)}]
	\draw[mass] (0,0) -- (0,1);
	\draw[mass] (-2/3*2.5,0) -- (-2/3*2.5,1);
	\draw[mass] (1/2*2.5,0) -- (1/2*2.5,1);

	\draw plot[mark=*,mark size=0.5,only marks,mark options={blue}] file {fig/exact_clean_tree.txt};
	
	{\def\tA{0.257841}
	\def\tB{0.281646}
	\draw[blue,fill=blue!50!white] ({(1/2*2.5-1*2.5/(0.5+2*\tA))},\tA) -- ({(-2/3*2.5+1*2.5/(0.5+2*\tA))},\tA)
		-- ({(-2/3*2.5+1*2.5/(0.5+2*\tB))},\tB) -- ({(1/2*2.5-1*2.5/(0.5+2*\tB))},\tB) -- cycle;
	}

	\end{scope}

	\foreach \t in {{5/28},0.5,0.75}{
		\draw[time] (0,\t) -- ++(1,0);
	}
	\foreach \t/\l in {0/{0.5},1/{2.5},{5/28}/{$t_1$},{0.5}/{$t_2$},{0.75}/{$t_3$}}{%
		\draw[border] (0,\t) -- ++(-0.02,0) node[left]{\l};
	}
	\node at (-0.10,0.5) [left,rotate=90,anchor=south]{dilation parameter $t$};

	\draw[border] (0,0) rectangle (1,1);
	\node at (0.5,-0.05) [anchor=north]{$\Omega$};

	\begin{scope}[shift={(1.3,0)}]

	\node[img] (i1) at (0,0) []{\includegraphics[width=\imgw,height=\imgh]{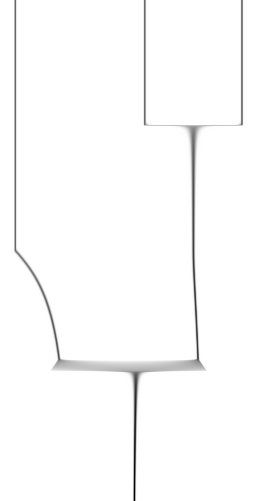}};
	\begin{scope}[shift={(466/827+0.5/256,0)},x={(0.3035,0)}]
	\draw[mass] (0,0) -- (0,1);
	\draw[mass] (-2/3*2.5,0) -- (-2/3*2.5,1);
	\draw[mass] (1/2*2.5,0) -- (1/2*2.5,1);


	\end{scope}

	\foreach \t in {{5/28},0.5,0.75} {
		\draw[time] (0,\t) -- ++(1,0);
	}

	\draw[border] (0,0) rectangle (1,1);
	\node at (0.5,-0.05) [anchor=north]{$\Omega$};

	\end{scope}

	\end{tikzpicture}
	}
	\caption{Visualization of the $\HK$ barycenter tree for the example from Figure \ref{fig:ThreeDiracs}. %
	Left: Exact support, as described in Example \ref{ex:CMM:N3}. This is the same plot as in Figure \ref{fig:ThreeDiracs} except that now each time-slice is re-scaled by $D_t^{-1}$.
	Right: Numerical approximation of the $\HK$ barycenter tree, computed with entropic regularization on a grid of 256 points and Sinkhorn algorithm introduced in \cite{ChizatEntropicNumeric2018} using the code from \cite{SchmitzerScaling2019} (black: high mass, white: no mass).
	When the barycenter is not unique (e.g.~during the transitions) the entropic smoothing selects a diffuse solution. This also happens close to the transitions where, even though the minimal barycenter is unique, the cost functional is very shallow around it.
	}
	\label{fig:ThreeDiracsTree}
\end{figure}

As demonstrated in the previous section, the local clustering behaviour of the HK barycenter depends on the length scale of the support of the marginal measures. With the (somewhat arbitrary) choice of relative importance of transport and source term in \eqref{eq:HKBB}, the critical distance between transportation and teleportation is set at $\pi/2$. In data analysis applications it makes sense to not fix this scale a priori, but to introduce the 1-parameter family of HK barycenters corresponding to all choices of this scale.

To this end, recall our general set-up that we are interested in the HK barycenter of $N$ nonnegative measures $(\mu_1,\ldots,\mu_N)$ defined on some compact, convex subset $\Omega$ of $\R^d$.

\begin{definition}
For $t > 0$, denote the dilation by a factor $t$ by
\begin{align*}
            D_t & : \Omega \to \Omega_t = t\cdot \Omega, &
            x & \mapsto t\cdot x.
\end{align*}
We say that $\nu_t$ is a HK barycenter at length scale $t > 0$ if $D_{t\sharp} \nu_t$ is a HK barycenter of $(D_{t\sharp} \mu_1, \ldots, D_{t \sharp} \mu_N)$.
\end{definition}

Thus, informally, the HK barycenter at scale $t$ is obtained by dilating the marginals via $D_t$, taking the scale-1 HK barycenter, and applying the inverse dilation.
Visualizations of examples for this 1-parameter family of barycenters are given in Figure \ref{fig:HKBarycenterTree}.
Motivated by Figure \ref{fig:HKBarycenterTree}, we call the set
\begin{equation*}
        \left\{(t,\nu_t) \, \middle|\, t \in \R_{++},\, \nu_t \tn{ a HK  barycenter of } (\mu_1,\ldots,\mu_N) \tn{ at scale } t \right\}
\end{equation*}
the \emph{$\HK$ barycenter tree}.

\begin{figure}
	\centering
	{\def\imgw{0.35\textwidth}
	\def\imgwb{0.4\textwidth}
	\footnotesize
	\begin{tikzpicture}[
		img/.style={anchor=south west,inner sep=0pt},
		y=\imgw,x=\imgw,
		border/.style={line width=0.5pt,black},
		mass/.style={line width=1pt,dash pattern=on 2pt off 2pt,red},
		time/.style={line width=1pt,dash pattern=on 2pt off 2pt,white!80!black},
		pt/.style={red,line width=0.5pt}
		]

	\node[img] (i1) at (0,0) []{\includegraphics[width=\imgw,height=\imgw]{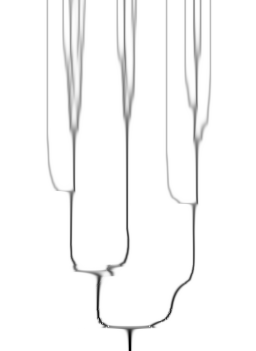}};
	\foreach \t/\l in {1/{$e^0$},{2.5/3.5}/{$e^{-1}$},{1.5/3.5}/{$e^{-2}$},{0.5/3.5}/{$e^{-3}$}} {
		\draw[border] (0,\t) -- ++(-0.02,0) node[left]{\l};
	}
	\draw[border] (0,0) rectangle (1,1);
	\node at (-0.15,0.5) [left,rotate=90,anchor=south]{dilation parameter $t$};

	\foreach \x in {72.801,80.366,69.568,62.637,46.848,77.016,68.224,84.569,133.459,124.175,126.227,%
	122.099,116.728,123.512,130.557,136.514,184.672,209.896,192.499,193.524,198.939,194.607,166.083,201.224} {
		\draw[pt] (\x/256+0.5/256-0.015,1.015) -- ++ (0.03,-0.03);
		\draw[pt] (\x/256+0.5/256+0.015,1.015) -- ++ (-0.03,-0.03);
	}

	\foreach \t/\l in {-40/{-20\pi},0/0,40/{20\pi}} {
		\draw[border] (0.5+0.01*\t+1/256,0) -- ++(0,-0.02) node[below]{$\l$};
	}

	\node at (0.5,-0.1) [anchor=north]{$\Omega$};

	\begin{scope}[shift={(1.2,0)}]
	\node[img] (i1) at (0,0) [anchor=south west]{\includegraphics[width=\imgwb]{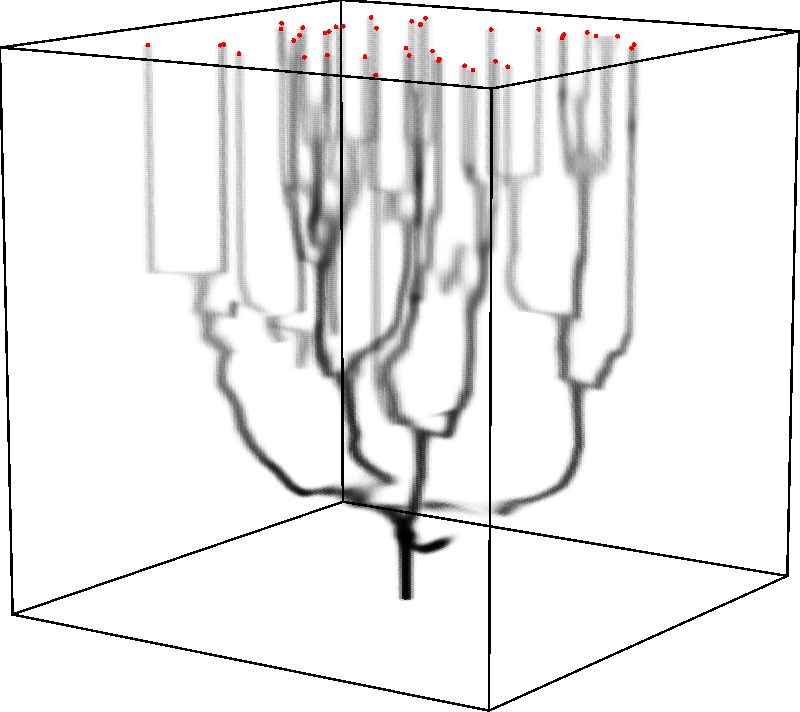}};
	\end{scope}

\end{tikzpicture}
}
\caption{Visualizations of the HK barycenter tree in one and two dimensions.\newline
	Left: $d=1$, $N=24$, $\mu_i=\delta_{x_i}$ with $(x_i)_{i}$ sampled from a mixture of three Gaussians (points marked in red on upper image boundary). For each value of the dilation parameter $t$ (vertical axis, logarithmic scaling), the corresponding horizontal slice shows the entropic approximation of the HK barycenter at scale $t$, computed as in Figure \ref{fig:ThreeDiracsTree}. The three branches near the bottom correspond to the underlying Gaussians.\newline
	Right: Similar visualization for $d=2$, $N=40$ and a mixture of four Gaussians. Again, the four branches near the bottom of the tree are prominently visible. See also Figures \ref{fig:PointClouds} and \ref{fig:Betti}.}
\label{fig:HKBarycenterTree}
\end{figure}

To illustrate the usefulness of this concept we apply it to the basic problem in data science to analyze point clouds sampled from a mixture of Gaussians and inferring the number and location of the underlying Gaussians.

\begin{figure}[hbt]
	\centering
	{\def\imgw{0.28\textwidth}
	\begin{tikzpicture}[
		img/.style={anchor=south west,inner sep=0pt},
		y=\imgw,x=\imgw,
		border/.style={line width=0.5pt,black},
		mass/.style={line width=1pt,dash pattern=on 2pt off 2pt,red},
		time/.style={line width=1pt,dash pattern=on 2pt off 2pt,white!80!black}
		]

	\begin{scope}[shift={(0,0)}]
	\node[img] (i1) at (0,0) [label=below:{(a) single points, $W$}]{\includegraphics[width=\imgw]{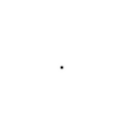}};
	\begin{scope}[shift={(0.5/128*\imgw,\imgw+0.5/128*\imgw)},x={(0,-\imgw/64.)},y={(\imgw/64.,0)}]
		\draw plot[mark=x,only marks,mark options={red}] file {fig/pointclouds/txt_single_sigma14.txt};
	\end{scope}
	\draw[border] (0,0) rectangle (1,1);
	\end{scope}

	\begin{scope}[shift={(1.3,0)}]
	\node[img] (i1) at (0,0) [label=below:{(b) single points, $\HK$}]{\includegraphics[width=\imgw]{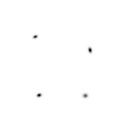}};
	\begin{scope}[shift={(0.5/128*\imgw,\imgw+0.5/128*\imgw)},x={(0,-\imgw/64.)},y={(\imgw/64.,0)}]
		\draw plot[mark=x,only marks,mark options={red}] file {fig/pointclouds/txt_single_sigma14.txt};
	\end{scope}
	\draw[border] (0,0) rectangle (1,1);
	\end{scope}

	\begin{scope}[shift={(0,-1.2)}]
	\node[img] (i1) at (0,0) [label=below:{(c) multiple points, $W$}]{\includegraphics[width=\imgw]{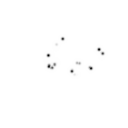}};
	\begin{scope}[shift={(0.5/128*\imgw,\imgw+0.5/128*\imgw)},x={(0,-\imgw/64.)},y={(\imgw/64.,0)}]
		\draw plot[mark=x,only marks,mark options={red}] file {fig/pointclouds/txt_multi3_sigma14.txt};
	\end{scope}
	\draw[border] (0,0) rectangle (1,1);
	\end{scope}

	\begin{scope}[shift={(1.3,-1.2)}]
	\node[img] (i1) at (0,0) [label=below:{(d) multiple points, $\HK$}]{\includegraphics[width=\imgw]{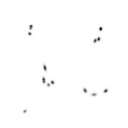}};
	\begin{scope}[shift={(0.5/128*\imgw,\imgw+0.5/128*\imgw)},x={(0,-\imgw/64.)},y={(\imgw/64.,0)}]
		\draw plot[mark=x,only marks,mark options={red}] file {fig/pointclouds/txt_multi3_sigma14.txt};
	\end{scope}
	\draw[border] (0,0) rectangle (1,1);
	\end{scope}
	
	\end{tikzpicture}
	}
	\caption{Comparison of Wasserstein and Hellinger--Kantorovich barycenters for point clouds in $d=2$ according to Example \ref{ex:PointClouds}.
	At the chosen length scale each image region represents the area $[0,5.12]^2$. Points marked with red crosses were sampled from a mixture of Gaussians. Image color coding shows the approximate $W$ or $\HK$ barycenter between marginal measures generated from the points (black: high mass, white: no mass), computed on a $128 \times 128$ grid with entropic regularization methods, see \cite{ChizatEntropicNumeric2018}, using code from \cite{SchmitzerScaling2019}.
	(b) is a horizontal slice of Figure \ref{fig:HKBarycenterTree} (right).
	}
	\label{fig:PointClouds}
\end{figure}

\begin{example}[Point clouds sampled from mixtures of Gaussians]
	\label{ex:PointClouds}
	Let $(x_i)_{i=1}^N$ be a set of points in $\R^d$, sampled from a mixture of Gaussians.
	
	If we set $\mu_i=\delta_{x_i}$ the Wasserstein barycenter of $(\mu_1,\ldots,\mu_N)$ (for simplicity with uniform weights $\lambda_i=1/N$) is given by $\nu=\delta_{\ol{x}}$ with $\ol{x}=T(x_1,\ldots,x_N)$. The information about the mixture of Gaussians is lost.

	Let now $\nu_t$ be a $\HK$ barycenter of $(\mu_1,\ldots,\mu_N)$ at scale $t$.
	Formally we expect $\nu_t$ to approach the Wasserstein barycenter as $t\to 0$.
	For $t$ sufficiently large, such that $t \cdot |x_i-x_j|>\pi/2$ for all $i,j$, $\nu_t=\tfrac{1}{N^2} \sum_{i=1}^N \delta_{x_i}$ is a sum of $N$ separate Dirac measures at the positions $x_i$. This equals the Hellinger barycenter.
	See \cite[Section 7.7]{LieroMielkeSavare-HellingerKantorovich-2015a} and \cite[Theorem 5.10]{ChizatDynamicStatic2018} for related convergence results.
	For intermediate $t$, $\nu_t$ will typically consist of multiple Dirac masses that can be interpreted as a clustering of sufficiently close points $x_i$.
	If the distance between the means of the underlying Gaussians is larger than their widths, for suitable $t$, each of the Gaussians will be represented by a separate Dirac mass.

	Numerical examples of the path $t \mapsto \nu_t$ for points sampled from mixtures of Gaussians in $d=1$ and $2$ are visualized in Figure \ref{fig:HKBarycenterTree}. In both cases, the separation into the underlying Gaussians is prominently visible.
	A comparison between the Wasserstein barycenter and the $\HK$ barycenter at a suitably chosen scale is shown in Figure \ref{fig:PointClouds} (a) and (b).
	
	The situation is similar when $\mu_i=\sum_{j=1}^{n_i} \delta_{x_{i,j}}$ with points $x_{i,j}$ sampled from some underlying distribution, but now each $\mu_i$ contains $n_{i}>0$ points (and the $n_i$ might not all be equal). The Wasserstein barycenter (after normalization of all $\mu_i$) becomes a combination of Dirac measures near the global center of mass of the sampling distribution (Figure \ref{fig:PointClouds} (c)) whereas the $\HK$ barycenter contains Dirac masses near the means of the underlying Gaussians, provided the length scale is larger than the width of the individual distributions but smaller than the distance between their centers (Figure \ref{fig:PointClouds} (d)). Once more, this provides the more accurate representation.
\end{example}

\begin{figure}[hbt]
	\centering
	\includegraphics[]{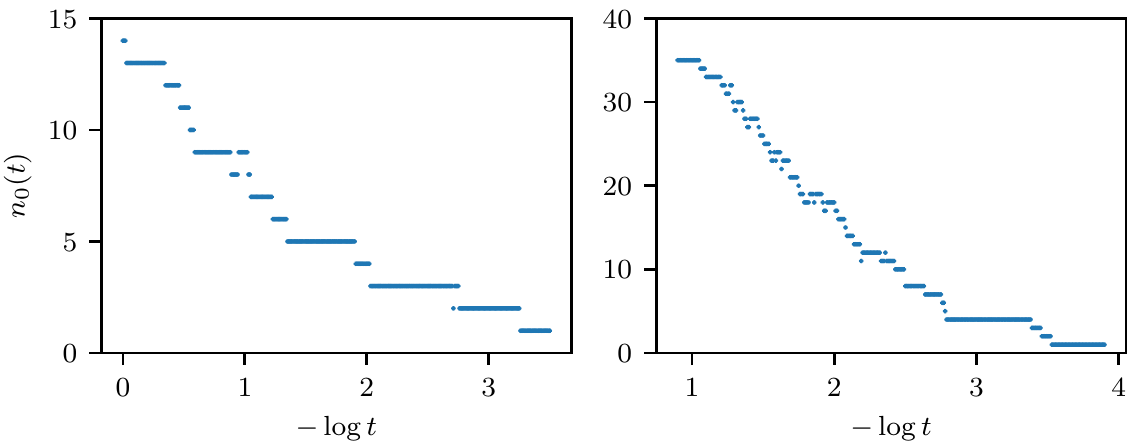}
	\caption{Number of Dirac masses $n_0(t)$ in HK barycenter $\nu_t$ between Dirac measures at different scales $t$ for examples shown in Figure \ref{fig:HKBarycenterTree}. Value is determined by thresholding the entropic approximation of the barycenter and counting the connected components. Near transitions this leads to small numerical fluctuations.\newline
	As expected, for sufficiently small $t$, $n_0(t)$ becomes 1, for large $t$ it approaches $N$ (left: $N=24$, right: $N=40$).
	The division into the underlying Gaussians leads to a plateau with $n_0(t)=3$ (left) and $n_0(t)=4$ (right).}
	\label{fig:Betti}
\end{figure}

We remark that analyzing data sets simultaneously at all scales is common in topological data analysis, for example in hierarchical clustering methods such as single linkage-based algorithms (see \cite{ClusterTreeEstimation2014} for an overview, additional references and in particular for the question of consistency as the number of samples increases), or the notion of barcode of the Rips or \v{C}zech complexes at all scales \cite{Ghrist-Barcodes-2008}. For instance, if the $\mu_i$ are Diracs at $x_i$, the number $n_0(t)$ of connected components of the support of the HK barycenter as a function of the scale $t$ is tantalizingly similar to the zeroth Betti number $b_0(\veps)$ of the \v{C}zech complex of the $x_i$ as a function of scale $\veps$, i.e., the number of connected components of the union of the closed radius-$\veps/2$ balls around the $x_i$. However, the number $n_0(t)$ also takes into account the masses of the Diracs, and naturally extends to arbitrary marginal measures $\mu_i$. A numerical computation of $n_0(t)$ for the HK barycenter trees in Example \ref{ex:PointClouds} (visualized in Figure \ref{fig:HKBarycenterTree}) is shown in Figure \ref{fig:Betti}. 

\section{Non-existence of a multi-marginal soft-marginal formulation}
\label{sec:NonExistence}
In Section \ref{sec:HK} various formulations for $\HK$ were given.
Formulation \eqref{eq:HKLifted} is based on a lifting to a transport problem on the higher-dimensional cone $\cone$, formulation \eqref{eq:HKSemiCoupling} involves multiple `semi-couplings' and a non-linear transport cost.
It is remarkable that there also exists formulation \eqref{eq:HKSoftMarginal} in terms of a `simple' transport problem on $\Omega$ for a particular cost function $\cKL$ and relaxed marginal constraints.

In this section we study whether an analogous simplification for the $\HK$-barycenter problem exists and show that the answer is negative for $N\geq 3$ under natural structural assumptions.
More we precisely, we want to see if
\begin{align}
	\label{eq:HKMMSoftMarginalForm}
	\HKMM(\mu_1,\ldots,\mu_N)^2 = \inf\left\{
		\int_{\Omega^N} \cKLMM(\vec{x})\,\diff \gamma(\vec{x})
		+ \sum_{i=1}^N H_i(\pi_{i\sharp} \gamma)
		\middle|
		\gamma \in \measp(\Omega^N)
		\right\}	
\end{align}
for suitable choices of $\cKLMM : \Omega^N \to \RCupInf$ and $H_i : \measp(\Omega) \to \RCupInf$.
For very particular choices of $\cKLMM$ and $H_i$ this is certainly possible (for instance, set $\cKLMM(\vec{x}) \assign \HKMM(\mu_1,\ldots,\mu_N)^2$ and $H_i(\sigma) = 0$ if $\|\sigma\|=1$ and $+\infty$ otherwise, but it is not mathematically interesting) and we are interested whether it is possible under additional natural structural assumptions:
\begin{enumerate}[({A}1)]
	\item $\cKLMM$ is lower-semicontinuous in $\vec{x}$ and depends only on the weights $(\lambda_1,\ldots,\lambda_N)$ and the pairwise distances $|x_i-x_j|$, $i,j \in \{1,\ldots,N\}$ but not on $(\mu_1,\ldots,\mu_N)$.
	\label{asp:C}
	\item $H_i$ only depends on $\mu_i$ (not on the other reference measures) and the dependency is local, i.e.~via an integral representation. More precisely, $H_i$ can be written as
	\begin{align}
		\label{eq:HKMMSoftMarginalF}
		H_i(\sigma) = \begin{cases}
			\lambda_i \cdot \left[ \int_\Omega h\left(\RadNik{\sigma}{\mu_i}\right)\,\diff \mu_i + h^\infty \cdot \|\sigma^\perp\| \right] & \tn{if } \sigma \geq 0, \\
			+ \infty & \tn{else.}
		\end{cases}
	\end{align}
	for a proper convex lower-semicontinuous function $h : \R \to \RCupInf$ where $h^\infty \assign \lim_{s \to \infty} h(s)/s \in \RCupInf$ is the recession constant of $h$ and $\sigma=\RadNik{\sigma}{\mu_i} \cdot \mu_i + \sigma_i^\perp$ is the Lebesgue decomposition of $\sigma$ with respect to $\mu_i$. Without loss of generality we may assume that $h(s)=+\infty$ for $s<0$.
	\label{asp:F}
\end{enumerate}

\begin{remark}[On the assumptions]
Indeed, the joint dependency of $H_i$ on $\sigma$ and $\mu_i$ must be positively 1-homogeneous (so that under rescaling all reference measures $\mu_i$, the minimizing $\gamma$ is rescaled as well) implying that by the assumption of local dependency on $\mu_i$, $H_i$ can be written as
\begin{align*}
	H_i(\sigma) = \int_\Omega h_i\left(x,\RadNik{\sigma}{\mu_i}\right)\,\diff \mu_i + \int_{\Omega} h_i^\infty(x)\,\diff \sigma_i^\perp(x)
\end{align*}
with a function $h_i(\cdot,\cdot)$ that may, a priori, also explicitly depend on $x \in \Omega$ and $i$ (and thus, so does the recession constant).
$h_i(\cdot,\cdot)$ must be jointly lower-semicontinuous in both arguments and convex in the second argument, $\cKLMM$ must be lower-semicontinuous to yield a well-defined minimization problem.
Since $\HKMM(\mu_1,\ldots,\mu_N)^2$ is invariant under applying isometric transformations to the measures $\mu_i$ (as long as their support remains within $\Omega$), $h_i(\cdot,\cdot)$ cannot depend explicitly on the location $x$ in $\Omega$ and $\cKLMM$ only depends on pairwise distances and the weights $(\lambda_1,\ldots,\lambda_N)$.
If one were to reweigh all $\lambda_i$ by a constant factor $\eta>0$ (ignoring the normalization condition $\sum_{i=1}^N \lambda_i=1$) then $\HKMM(\mu_1,\ldots,\mu_N)^2$ is also rescaled by $\eta$ (cf.~\eqref{eq:HKCTM}). Thus, the dependency of $H_i$ on $\lambda_i$ must be positively 1-homogeneous.
Applying a permutation to the order of $(\mu_1,\ldots,\mu_N)$ and $(\lambda_1,\ldots,\lambda_N)$ must leave $\HKMM(\mu_1,\ldots,\mu_N)^2$ invariant. Thus, $h_i(\cdot,\cdot)$ cannot explicitly depend on the index $i$.
This leaves us with \eqref{eq:HKMMSoftMarginalF}.
\end{remark}

We will show that $\HKMM(\mu_1,\ldots,\mu_N)^2$ cannot be written in the form \eqref{eq:HKMMSoftMarginalForm} under assumptions (A\ref{asp:C}) and (A\ref{asp:F}) by considering a dual problem to \eqref{eq:HKMMSoftMarginalForm} and show that it cannot agree with \eqref{eq:HKMMDual} for any choices of $\cKLMM$ and $h$. Since a rigorous derivation of the dual problem would only be possible if additional properties of $\cKLMM$ and $h$ were known (e.g.~the details of the asymptotic behaviour of $h$ at $\infty$, the behaviour of $h^\ast$ at the boundaries of its domain, the sign of $\inf \cKLMM+h^\infty$), we do a formal derivation of the dual problem instead and show that its form is incompatible with \eqref{eq:HKMMDual}.
\begin{remark}
\label{rem:HKMMSoftMarginalDual}
Formally, a dual for the optimization problem in \eqref{eq:HKMMSoftMarginalForm} is given by
\begin{align}
	\label{eq:HKMMSoftMarginalDual}
	\sup \left\{ \sum_{i=1}^N \int_\Omega \xi_i\,\diff \mu_i
	\middle| \xi_1,\ldots,\xi_N \in \cont(\Omega),\,
	\sum_{i=1}^N \lambda_i\,g(\xi_i(x_i)/\lambda_i) \leq \cKLMM(\vec{x})
	\,\forall\, \vec{x} \in \Omega^N
	\right\}	
\end{align}
where the convex function $g$ is given by
\begin{align*}
	g(s) & \assign \inf \{ r \in \R | -h^\ast(-r) \geq s \}.
\end{align*}
\end{remark}
\begin{proof}[Sketch of proof]
Since $h(s)=\infty$ for $s<0$, $h^\ast$ is an increasing function. Therefore, so is $s \mapsto -h^\ast(-s)$ and consequently also $g$ is increasing.
Since $s \mapsto -h^\ast(-s)$ is concave, $g$ is convex. Therefore, $g$ is continuous on (the interior of) its domain. Then, by a change of variables $\psi_i \assign \lambda_i\,g(\xi_i/\lambda_i)$, we find that \eqref{eq:HKMMSoftMarginalDual} is equivalent to
\begin{align*}
	\sup \left\{ \sum_{i=1}^N \int_\Omega -\lambda_i\,h^\ast\big(-\tfrac{\psi_i}{\lambda_i}\big)\,\diff \mu_i
	\middle| \psi_1,\ldots,\psi_N \in \cont(\Omega),\,
	\sum_{i=1}^N \psi_i \leq \cKLMM(\vec{x})
	\,\forall\, \vec{x} \in \Omega^N
	\right\}	
\end{align*}
which in turn can be written as
\begin{align*}
	-\inf \left\{ G(\vec{\psi}) + F(A\vec{\psi}) \middle| \vec{\psi} \in \cont(\Omega)^N \right\}
\end{align*}
with
\begin{align*}
	G & : \cont(\Omega)^N \to \RCupInf, & & \vec{\psi} \mapsto 
	\begin{cases}
		\sum_{i=1}^N
		\int_\Omega \lambda_i\,h^\ast\big(-\tfrac{\psi_i}{\lambda_i}\big)\,\diff \mu_i
		& \tn{if } \psi_i/\lambda_i \geq -h^\infty \tn{ for } i=1,\ldots,N, \\
		+ \infty & \tn{else.}
		\end{cases} \\
	F & : \cont(\Omega^N) \to \RCupInf, & & \phi \mapsto \begin{cases}
		0 & \tn{if } \phi(\vec{x}) \leq \cKLMM(\vec{x}) \tn{ for all } \vec{x} \in \Omega^N, \\
		+\infty & \tn{else,}
		\end{cases} \\
	A & : \cont(\Omega)^N \to \cont(\Omega^N), & & \vec{\psi} \mapsto \phi
	\tn{ with } \phi(\vec{x}) = \sum_{i=1}^N \psi_i(x_i).
\end{align*}
The conjugate of $F$ and the adjoint of $A$ are analogous to Remark \ref{rem:WDuality}. The conjugate of $G$ is formally given by
$G^\ast(\vec{\sigma}) = \sum_{i=1}^N H_i(-\sigma_i)$.
Therefore, the formal dual problem
\begin{align*}
	\inf\left\{ G^\ast(-A^\ast \gamma) + F^\ast(\gamma) \middle| \gamma \in \meas(\Omega^N) \right\}
\end{align*}
coincides with \eqref{eq:HKMMSoftMarginalForm} with $H_i$ given by \eqref{eq:HKMMSoftMarginalF}.
\end{proof}

Now we observe that by Proposition \ref{prop:HKMMDirac} $\cMMHull$ is the unique function that is convex and positively 1-homogeneous in its mass arguments that can be plugged into \eqref{eq:HKMMLifted} or \eqref{eq:HKMMSemiCoupling} to yield $\HKMM$ ($\cMM$ can also be used in \eqref{eq:HKMMLifted} but it is not everywhere convex in its mass arguments). Further, $\QMM$ is the unique family of closed convex sets that satisfies the relation $\cMMHull(\vec{x},\cdot)=\iota_{\QMM(\vec{x})}^\ast$ (because $\cMM^{\ast\ast\ast}(\vec{x},\cdot)=\iota_{\QMM(\vec{x})}$).
Therefore, in the dual formulation \eqref{eq:HKMMDual} the constraint set $\QMM(\vec{x})$ is the unique choice to yield $\HKMM$ in combination with the objective $\sum_{i=1}^N \int_\Omega \psi_i\,\diff \mu_i$.
So finally, for \eqref{eq:HKMMSoftMarginalForm} with \eqref{eq:HKMMSoftMarginalDual} as its (formal) dual to be an equivalent formulation of $\HKMM$ one must have that
\begin{align}
	\label{eq:QMMSoftMarginalEquiv}
	\left[\vec{\psi} \in \QMM(\vec{x})\right]
	\qquad \Leftrightarrow \qquad
	\left[\sum_{i=1}^N \lambda_i\,g(\psi_i/\lambda_i) \leq \cKLMM(\vec{x})\right]
	\qquad \forall\, \vec{x} \in \Omega^N,\,\vec{\psi} \in \R^N
\end{align}
for some choice of $g$ and $\cKLMM$. The following Proposition shows that this is not possible for $N \geq 3$ and thus no formulation of $\HKMM$ in the form of \eqref{eq:HKMMSoftMarginalForm} under assumptions (A\ref{asp:C}) and (A\ref{asp:F}) is possible.

\begin{proposition}
	For $N \geq 3$ there exists no function $g : \R \to \RCupInf$ and no function $\cKLMM : \Omega^N \to \RCupInf$ such that \eqref{eq:QMMSoftMarginalEquiv} is true.
\end{proposition}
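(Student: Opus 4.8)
The strategy is to derive a contradiction by exploiting the explicit description of $\QMM(\vec{x})$ (Proposition \ref{prop:QMM}, Proposition \ref{prop:QMMQuadratic}) together with the rigid \emph{separable} form forced on the right-hand side of \eqref{eq:QMMSoftMarginalEquiv}. First I would reformulate the task: suppose for contradiction that $g$ and $\cKLMM$ exist so that \eqref{eq:QMMSoftMarginalEquiv} holds. Since the left-hand side $[\vec\psi\in\QMM(\vec x)]$ is, for each fixed $\vec x$, the indicator of a closed convex set, and the right-hand side is a sub-level set $\{\sum_i \lambda_i g(\psi_i/\lambda_i)\le \cKLMM(\vec x)\}$ of a separable convex function, we learn two structural facts. (1) $g$ must be convex, lower-semicontinuous and nondecreasing (the last because $\QMM(\vec x)$ is a lower set in each coordinate, cf.\ the constraints $\psi_i\le\lambda_i-\lambda_i^2$ and monotonicity of the other constraint). (2) For each $\vec x$ the number $\cKLMM(\vec x)$ is the value $\sup_{\vec\psi\in\QMM(\vec x)}\sum_i\lambda_i g(\psi_i/\lambda_i)$, so $\cKLMM$ is determined by $g$; the substantive content is that the \emph{boundary} of $\QMM(\vec x)$ coincides with a level set $\{\sum_i\lambda_i g(\psi_i/\lambda_i)=\cKLMM(\vec x)\}$ of one fixed separable function, for \emph{all} $\vec x$ simultaneously.

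Next I would pin down $g$ by testing against the simplest geometries. Take $N=3$ and consider configurations where two of the three points coincide while the third is far, or where the three points are mutually ``far'' ($|x_i-x_j|>\pi/2$ for all $i\ne j$): by Example \ref{ex:CMM:N3} (case (b)) the set $\QMM(\vec x)$ is then the box $\prod_i(-\infty,\lambda_i-\lambda_i^2]$, which is a level set of $\max_i$-type, not of an additively separable function with finite value unless $g$ blows up exactly at $\lambda_i-\lambda_i^2$ after rescaling — more precisely $g(s)=+\infty$ for $s> 1-\lambda_i$ would have to be consistent across $i$, forcing $g(s)=+\infty$ for $s>\min_i(1-\lambda_i)$, which is false on other configurations where $\psi_i$ can approach $\lambda_i-\lambda_i^2$ with $\lambda_i$ close to $1$. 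Rather than chase that, I would instead use the $N=2$-embedded ``all close'' case: when all three points lie within a segment of length $<\pi/2$, Proposition \ref{prop:QMMQuadratic}(b) gives the single \emph{quadratic} inequality $\sum_i\chi_i-\sum_{j<k}\chi_j\chi_k\sin(x_j-x_k)^2\le1$ in the scaled coordinates $\chi_i=\lambda_i/(1-\psi_i/\lambda_i)$. Matching this to a separable sub-level set $\{\sum_i\lambda_i g(\psi_i/\lambda_i)\le\cKLMM\}$ forces $g$ (in $\chi$-coordinates) to be affine — because a strictly separable convex function whose level set is the single quadric $\sum\chi_i-\sum_{j<k}c_{jk}\chi_j\chi_k=1$ for \emph{generic} coefficients $c_{jk}$ simply cannot exist once there is a genuine cross term $\chi_j\chi_k$; the Hessian of a separable function is diagonal, but the level surface of the quadric has nonzero off-diagonal second fundamental form whenever $c_{jk}\ne0$. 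This is the crux and I develop it carefully: fix a point on the boundary quadric, compute the two different implicit descriptions of the surface near it, and compare mixed second derivatives.

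Concretely the key step runs as follows. In $\chi$-coordinates write $P(\vec\chi)=\sum_i\chi_i-\sum_{j<k}c_{jk}\chi_j\chi_k$ with $c_{jk}=\sin(x_j-x_k)^2\in[0,1)$ freely tunable (for $d=1$, any $x_1<x_2<x_3$ in $[0,\pi/2)$ realizes a triple with $c_{12},c_{23}>0$), and let $\Phi(\vec\chi)=\sum_i\lambda_i g(\psi_i(\chi_i)/\lambda_i)$ be the separable function whose level set at height $\cKLMM(\vec x)$ equals $\{P=1\}\cap(0,1]^3$. Since both $P$ and $\Phi$ are $C^2$ on an open region (using that $g$ is convex hence $C^1$-a.e., and by picking a regularity point — or first passing to the known explicit $\psi\mapsto\chi$ diffeomorphism which is real-analytic), their level sets agreeing locally forces $\nabla\Phi=\rho\,\nabla P$ for a positive scalar field $\rho$. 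Differentiating the $i$-th equation with respect to $\chi_k$ for $k\ne i$: the left side is $\partial_k\partial_i\Phi=0$ by separability, while the right side is $\partial_k\rho\cdot\partial_i P+\rho\cdot\partial_k\partial_i P=\partial_k\rho\cdot(1-\sum_{l\ne i}c_{il}\chi_l)-\rho\,c_{ik}$. Doing this for the three pairs $(i,k)\in\{(1,2),(2,3),(3,1)\}$ yields a linear system in the three unknowns $\partial_1\rho,\partial_2\rho,\partial_3\rho$; solving and using $c_{12},c_{23}>0$ while $c_{13}$ is small (choose $x_1,x_3$ near the endpoints so $|x_1-x_3|$ close to $\pi/2$, making $c_{13}$ close to $1$ — or small by choosing $x_1,x_3$ close), one derives $\rho\equiv 0$ on an open set, contradicting positivity. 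Therefore no such separable $\Phi$ (equivalently, no such $g$) exists even when $\cKLMM(\vec x)$ is allowed to be arbitrary, and a fortiori \eqref{eq:QMMSoftMarginalEquiv} fails for $N=3$; for $N>3$ one restricts to marginals supported so that only three of the points are ``active'' (send the remaining masses to $0$ via the reduction in Lemma \ref{lem:HKBarycenterSupportRestriction}), reducing to the $N=3$ case.

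The main obstacle I anticipate is \textbf{regularity}: $g$ is a priori only convex (hence possibly non-differentiable on a countable set, and possibly with $+\infty$ values), so the clean ``$\nabla\Phi=\rho\nabla P$'' argument must be justified on the set where $g'$ exists and is continuous, and one must check that this set is large enough (it is dense, and the quadric varies with $\vec x$, so by varying $\vec x$ we can always land the relevant boundary point in a differentiability region). A secondary subtlety is ensuring the coefficients $c_{jk}$ can be chosen genuinely ``generic'' within the allowed range $[0,1)$ while keeping all three points inside a common segment of length $<\pi/2$ — this is where one may need to invoke Proposition \ref{prop:QMMQuadratic}(a) for $d\ge2$ (where $\Cos$ rather than $\cos$ appears and the geometry is richer) or simply note that $d=1$ already supplies enough freedom: as $(x_1,x_2,x_3)$ ranges over ordered triples in $[0,\pi/2)$, the pair $(\sin^2(x_1-x_2),\sin^2(x_2-x_3))$ sweeps out an open subset of $(0,1)^2$ with $\sin^2(x_1-x_3)$ determined, and that one-parameter-less family is still enough to run the contradiction. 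I would present the $d=1$, $N=3$ argument in full and then remark that the general case follows by the support-restriction reduction.
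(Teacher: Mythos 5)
Your proposed route is genuinely different from the paper's, but it has a real gap at its crux. The claim you rely on --- that a separable convex function cannot have a level set equal to a quadric with nonzero cross terms --- is false as stated: the paper's own Remark after this proposition observes that for $N=2$ the quadric $\chi_1+\chi_2-\chi_1\chi_2\sin(x_1-x_2)^2\le1$ \emph{is} exactly a sublevel set of the separable function $g(\psi_1)+g(\psi_2)$ with $g(s)=-\log(1-s)$, and that is precisely why the soft-marginal formulation \eqref{eq:HKSoftMarginal} exists for $N=2$. So the presence of a genuine cross term alone cannot be the source of the contradiction; whatever obstruction exists for $N\ge3$ is subtler, and your argument never isolates it.

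The differential computation you sketch compounds this. You write $\nabla\Phi=\rho\,\nabla P$ and then take ambient partial derivatives $\partial_k\rho$, but the scalar $\rho$ is defined only on the level surface, so $\partial_k\rho$ is meaningless without a (non-canonical) extension. The correct object is the matching of second fundamental forms, i.e.\ the requirement that $\mathrm{Hess}(\Phi)-\rho\,\mathrm{Hess}(P)$ annihilate the tangent space; for $N=3$ this is a $3$-equation constraint on the $4$ ``slack'' parameters $a$, $w$ in the normal-rank-one correction $a\,nn^T+nw^T+wn^T$ and is therefore solvable pointwise, so no contradiction can come from a single point. The contradiction --- if there is one along these lines --- must come from integrating that constraint over the surface against the separability of $\Phi$, and you do not carry out that computation; you assert that a linear system forces $\rho\equiv0$ without exhibiting the determinant. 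You also flag but do not resolve the regularity obstacle (convex $g$ need not be $C^2$).

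By contrast, the paper's proof is elementary and avoids all regularity questions. It chooses uniform weights $\lambda_i=1/N$ (so that $\sum_i\lambda_i g(\psi_i/\lambda_i)$ is invariant under permuting the entries of $\vec\psi$), and the degenerate configuration $x_2=\cdots=x_N$ with $0<|x_1-x_2|<\pi/2$. It then exhibits a vector $\vec\psi$ with $\psi_1=\hat\psi>0$, $\psi_2=\cdots=\psi_N=0$ that lies exactly on $\partial\QMM(\vec x)$, and shows by direct evaluation of the quadratic form in $\chi$-coordinates that the permuted vector $(0,\hat\psi,0,\ldots,0)$ violates the constraint by a strictly positive amount once $N\ge3$. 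Since \eqref{eq:QMMSoftMarginalEquiv} with uniform weights forces $\QMM(\vec x)$ to be permutation-invariant, this is an immediate contradiction. This argument isolates the $N\ge3$ obstruction cleanly --- with $N=2$ the configuration $x_2=\cdots=x_N$ collapses and the asymmetry disappears --- whereas in your approach the $N$-dependence is never made to bite.
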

\begin{proof}
We prove the result by providing a counterexample.
Assume that $g$ and $\cKLMM$ exist such that \eqref{eq:QMMSoftMarginalEquiv} is true.
It suffices to produce a contradiction for some particular choice of the $x_i$;
take $x_1\in\Omega$ and $x_2=\cdots=x_N\in\Omega$,
distinct but closer than $\pi/2$,
and define $\alpha:=\sin(|x_1-x_2|)^2\in(0,1)$.
For simplicity we choose $\lambda_i=1/N$ for $i \in \{1,\ldots,N\}$ but the example can be extended to general weights.

By Lemma \ref{lem:CMMMinimizerExist} \eqref{item:CMMConvexHull} we can restrict the minimization over $y$ in \eqref{eq:CMM} to the convex hull of $x_1$ and $x_2=\ldots=x_N$, i.e.~the line segment between the points.
Thus the setting of Proposition \ref{prop:QMMQuadratic} and the representation \eqref{eq:QMMTildeEquiv} for $\QMM$ applies.

The following choice of $\vec{\psi}$ lies in $\QMM(\vec{x})$:
\begin{align*}
  \psi_1= \hat\psi:=\left(\frac{1}{N}-\frac{1}{N^2}\right)\,\alpha,
  \quad
  \psi_2=\cdots=\psi_N=0.
\end{align*}
The corresponding $\vec{\chi}$ for $\QMMTilde(\vec{x})$, cf.~\eqref{eq:QMMTildeEquiv}, is given by
\begin{align*}
  \chi_1= \hat\chi:=\frac{1}{N-(N-1)\,\alpha},
  \quad
  \chi_2=\cdots=\chi_N=\frac{1}{N}.
\end{align*}
Indeed, observe that
\begin{align*}
  \sum_{i=1}^N \chi_i & = \hat{\chi}+\frac{N-1}{N}, &
  \sum_{1 \leq j<k\leq N} \chi_j\,\chi_k \sin(|x_j-x_k|)^2
  = \hat{\chi} \frac{N-1}{N} \alpha
\end{align*}
and therefore,
\begin{align*}
  \sum_{i=1}^N \chi_i - \sum_{1 \leq j<k\leq N} \chi_j\,\chi_k \sin(|x_j-x_k|)^2 & = \hat{\chi} \left(1-\frac{N-1}{N} \alpha\right)+\frac{N-1}{N}=1.	
\end{align*}

By the assumed equivalence \eqref{eq:QMMSoftMarginalEquiv},
\begin{align*}
	\frac{1}{N}\,g(N\,\hat{\psi}) + \left(1-\frac{1}{N}\right)\,g(0) \leq \cKLMM(\vec{x}).
\end{align*}
Using that equivalence in the other direction, we conclude
--- recall that the $\psi_i$ enter symmetrically, since $\lambda_i=1/N$ ---
that also the choice
\begin{align*}
  \psi_1=0,\quad \psi_2=\hat\psi,\quad
  \psi_3=\cdots=\psi_N=0
\end{align*}
should lie in $\QMM(\vec{x})$ and therefore,
\begin{align*}
  \chi_1=\frac{1}{N},\quad \chi_2=\hat\chi,\quad
  \chi_3=\cdots=\chi_N=\frac{1}{N}
\end{align*}
should lie in $\QMMTilde(\vec{x})$.
We show that this cannot be true.
Namely, observe that
\begin{align*}
  \sum_{i=1}^N \chi_i & = \hat{\chi}+\frac{N-1}{N}, &
  \sum_{1 \leq j<k\leq N} \chi_j\,\chi_k \sin(|x_j-x_k|)^2
  = \left(\frac{\hat{\chi}}{N}+\frac{N-2}{N^2}\right) \alpha
\end{align*}
and so
\begin{align*}
  \sum_{i=1}^N \chi_i - \sum_{1 \leq j<k\leq N} \chi_j\,\chi_k \sin(|x_j-x_k|)^2 & =
  \hat{\chi} \!\!\!\!\!\!\!\!\!\!\underbrace{\left(1-\frac{\alpha}{N}\right)}_{=\frac{1}{N}[N-(N-1)\alpha+(N-2)\alpha]} \!\!\!\!\!\!\!\!\!\! +\frac{N-1}{N} - \frac{(N-2)\,\alpha}{N^2} \\
  & = \frac{1}{N} +\hat{\chi}\,\frac{(N-2)\,\alpha}{N}+\left(1-\frac{1}{N}\right)-\frac{(N-2)\,\alpha}{N^2} \\
  & = 1 + (N-2) \left( \frac{1}{N[N-(N-1)\alpha]}-\frac{1}{N^2}\right)\alpha
\end{align*}
which is strictly greater than 1 since $N\geq 3$ and $\alpha \in (0,1)$.
\end{proof}
\begin{remark}
The above argument hinges on the fact that the set $\QMM(\vec{x})$ is not invariant under permutations of the entries of $\vec{\psi}$ (even when taking into account the weights $\lambda_1,\ldots,\lambda_N$) and thus cannot be adapted to the case $N=2$.
Indeed, for $N=2$ we find with \eqref{eq:ExampleN2CMM} that
\begin{align*}
	\left[(\psi_1,\psi_2) \in \QMM(x_1,x_2)\right]
	\qquad \Leftrightarrow \qquad
	\left[\frac{(\psi_1,\psi_2)}{t\,(1-t)} \in Q(x_1,x_2)\right]
\end{align*}
and from \eqref{eq:HKQ} we see that $Q(x_1,x_2)$ can be represented as
\begin{align*}
	Q(x_1,x_2) = \left\{
		(\psi_1,\psi_2) \in (-\infty,1]^2 \middle|
		g(\psi_1) + g(\psi_2) \leq \cKL(x_1,x_2) \right\}
\end{align*}
for $g(s)=-\log(1-s)$ for $s \in (-\infty,1)$ and $g(1)=+\infty$. Then, retracing the duality arguments for Remark \ref{rem:HKMMSoftMarginalDual} we arrive at the formulation \eqref{eq:HKSoftMarginal} which is indeed of the form \eqref{eq:HKMMSoftMarginalForm}.
\end{remark}

\section{Conclusion}
In this article we have studied several ultimately equivalent formulations for the barycenter with respect to the $\HK$ metric on non-negative measures over $\R^d$. Particular attention was paid to the barycenter between Dirac measures which illustrates the fundamental difference to the Wasserstein barycenter. In this case the Wasserstein barycenter is always a single Dirac measure whereas the $\HK$ barycenter `clusters' only sufficiently close masses into a single Dirac, while far separated masses are represented by separate Dirac measures. This clustering behaviour as a function of length scale as encoded in the HK barycenter tree yields interesting information for data analysis applications.

Fruitful questions for future research suggested by the results reported here are the following. Give a more explicit characterization of the set $\CSet$ where the multi-marginal cost function agrees with its convex hull, which governs the splitting behaviour of the HK barycenter. Understand whether optimal plans in the multi-marginal formulation exhibit a `Monge-type' sparsity as established by Agueh and Carlier for the Wasserstein barycenter \cite{WassersteinBarycenter}. Analyze the structure of the HK barycenter tree; for instance: can the barycenters at scale $t$ be chosen in such a way that the number of connected components of the support is increasing in $t$ (with the possible exception of transitions)? Does the HK barycenter tree together with the associated optimal plans induce a useful notion of hierarchical simplicial complexes, and higher Betti numbers?
It would also be interesting to study the barycenter with respect to related transport metrics such as the spherical Hellinger--Kantorovich distance \cite{LaMi2017}.
Does the notion of $\HK$-barycenter extend to an infinite number (or even a continuous distribution) of reference measures, as studied in \cite{BrendanInfMarginal2013,LeGouicLoubes2017,BigotKleinBarycenter2018} for the Wasserstein barycenter? It seems plausible that arguments for existence, consistency and uniqueness employed there can be adapted to the coupled-two-marginal formulation \eqref{eq:HKCTM}. Is it also possible to generalize the dual problem? Can the $\HK$-barycenter between a continuum of Dirac measures still be a finite combination of Dirac measures? The latter would be related to the consistency of density-based clustering, see \cite{ClusterTreeEstimation2014}.
\paragraph{Acknowledgements} Bernhard Schmitzer was supported by the Emmy Noether Programme of the DFG.

\bibliography{references}{}
\bibliographystyle{plain}

\end{document}